	\UndeclareTextCommand{\textasteriskcentered}{TS1}
	\DeclareTextSymbolDefault{\textasteriskcentered}{T1}
	\DeclareTextCommand{\textasteriskcentered}{T1}{\ensuremath{*}}
\definecolor{orcidlogocol}{HTML}{A6CE39}
\newcommand{\orcidlogo}{\BeginAccSupp{method=escape,Alt={ORCiD ID}}\begin{tikzpicture}[yscale=-1,transform shape]
\fill[orcidlogocol] svg{M256,128c0,70.7-57.3,128-128,128C57.3,256,0,198.7,0,128C0,57.3,57.3,0,128,0C198.7,0,256,57.3,256,128z};
    \fill[white] svg{M86.3,186.2H70.9V79.1h15.4v48.4V186.2z}
                 svg{M108.9,79.1h41.6c39.6,0,57,28.3,57,53.6c0,27.5-21.5,53.6-56.8,53.6h-41.8V79.1z M124.3,172.4h24.5c34.9,0,42.9-26.5,42.9-39.7c0-21.5-13.7-39.7-43.7-39.7h-23.7V172.4z}
                 svg{M88.7,56.8c0,5.5-4.5,10.1-10.1,10.1c-5.6,0-10.1-4.6-10.1-10.1c0-5.6,4.5-10.1,10.1-10.1C84.2,46.7,88.7,51.3,88.7,56.8z};
\end{tikzpicture}\EndAccSupp{}}
\newcommand\orcidlogosized[1]{\raisebox{-#1/5}{\resizebox{!}{#1}{\orcidlogo}}}
\newcommand\orcid[2][1em]{\mbox{\href{https://orcid.org/#2}{\orcidlogosized{#1}\hspace{\dimexpr #1/2}\nolinkurl{#2}}}}
\definecolor{DarkGreen}{rgb}{0.15,0.5,0.15}
\definecolor{DarkRed}{rgb}{0.6,0.2,0.2}
\definecolor{DarkBlue}{rgb}{0.15,0.15,0.55}
\definecolor{DarkPurple}{rgb}{0.4,0.2,0.4}
	\let\Cref\crtCref
	\let\cref\crtcref
\def\titlephi/{\texorpdfstring{$\phi$}{\ifpdfstringunicode{\unichar{"1D719}}{phi}}}
\newenvironment{enum}{\begin{enumerate}

\setlength{\itemsep}{0pt}}{\end{enumerate}}
\DeclarePairedDelimiter{\paren}{(}{)}
\DeclarePairedDelimiterX{\divergenceparen}[2]{(}{)}{#1\;\delimsize\|\;\mathopen{}#2}
\DeclarePairedDelimiter{\norm}{\lVert}{\rVert}
\DeclareMathOperator{\KL}{D}
\newcommand{\kl}[3][\divergenceparen*]{\KL#1{#2}{#3}}
\newcommand{\chisq}[3][\divergenceparen*]{\chi^2#1{#2}{#3}}
\newcommand{\pushforward}[2]{#2_*#1}
\newcommand{\di}[3][\phi]{\KL_{#1}\divergenceparen*{#2}{#3}}
\newcommand{\tv}[2]{{\rm TV}\paren*{#1,#2}}
\newcommand{\ipmsymb}[1]{d_{#1}}
\newcommand{\ipm}[3][\G]{\ipmsymb{#1}\paren*{#2,#3}}
\renewcommand{\L}{\mathcal{L}}
\newcommand{\M}{\mathcal{M}}
\DeclareDocumentCommand\G{}{{\mathcal{G}}}
\newcommand{\A}{\mathcal{F}}
\newcommand{\B}{\mathcal{B}}
\newcommand{\pos}[1]{#1_{\scriptscriptstyle +}}
\newcommand{\con}[1]{#1^\star}
\newcommand{\bicon}[1]{#1^{\star\star}}
\newcommand{\conder}[1]{#1^{\star\prime}}
\newcommand{\ip}[2]{\langle #1, #2\rangle}
\DeclareMathOperator{\dom}{\mathrm{dom}}
\DeclareMathOperator{\inter}{\mathrm{int}}
\DeclareMathOperator{\extreme}{\mathrm{ext}}
\DeclareMathOperator{\convhull}{\mathrm{co}}
\newcommand{\ind}{\mathbf{1}}
\newcommand{\ident}{\mathrm{Id}}
\newcommand{\supp}[1]{h_{#1}}
\newcommand{\lbsymbol}{\mathscr{L}}
\NewDocumentCommand{\lb}{o m m}{\IfValueTF{#1}{\lbsymbol_{#2,#3,#1}}{\lbsymbol_{#2,#3}}}
\newcommand{\lbsing}[3][\perp]{\lbsymbol_{#2,#3,#1}}
\newcommand{\lbipm}[2]{\lbsymbol_{#1,#2,#2}}
\newcommand{\cgf}[3][\phi]{K_{#2,#3}}
\newcommand{\cgfxi}[4][\phi]{K_{#2,#3,#4}}
\newcommand{\optimcgf}[4][\phi]{F_{#2,#3,#4}}
\newcommand{\subexp}[1][\phi]{S^{#1}}
\newcommand{\ssubexp}[1][\phi]{S_{\scriptscriptstyle \heartsuit}^{#1}}
\newcommand{\orhrt}[1]{L_{\scriptscriptstyle \heartsuit}^{#1}}
\newcommand{\intf}{\@ifstar\tildeintegral\integral}
\newcommand{\dif}{\@ifstar\tildedifunc\difunc}
\newcommand{\integral}[2]{I_{#1,#2}}
\newcommand{\difunc}[2]{\KL_{#1,#2}}
\newcommand{\tildeintegral}[2]{\tilde{I}_{#1,#2}}
\newcommand{\tildedifunc}[2]{\widetilde{\KL}_{#1,#2}}
\newcommand{\sls}[1]{\operatorname{sls}_{#1}}
\newcommand{\height}[1]{\operatorname{hgt}_{#1}}
\newcommand{\ex}{\@ifstar\exBig\exSmall}
\newcommand{\exBig}[2]{\int #2\,\mathrm{d}#1}
\newcommand{\exSmall}[3][\paren*]{#2#1{#3}}
\DeclarePairedDelimiter{\abs}{\lvert}{\rvert}
\renewcommand{\land}{\,\wedge\,}
\newcommand\cdotarg{\:\cdot\:}
\providecommand\given{}
\newcommand\SetGiven[1][]{%
\nonscript\:#1\vert\allowbreak\nonscript\:\mathopen{}}
\DeclarePairedDelimiterX\set[1]\{\}{%
\renewcommand\given{\SetGiven[\delimsize]}#1}
\newcommand{\R}{\mathbb{R}}
\newcommand{\eR}{\overline\R}
\newcommand{\eps}{\varepsilon}
\providecommand{\eqdef}{\coloneqq}
\DeclareMathOperator{\esssup}{ess\,sup}
\DeclareMathOperator{\essinf}{ess\,inf}
\DeclareMathOperator{\essimop}{ess\,im}
\newcommand{\essim}[2]{\essimop_{#1}(#2)}
\newtheorem{theorem}{Theorem}[subsection]
\newtheorem{lemma}[theorem]{Lemma}
\newtheorem{proposition}[theorem]{Proposition}
\newtheorem{corollary}[theorem]{Corollary}
\theoremstyle{definition}
\newtheorem{definition}[theorem]{Definition}
\theoremstyle{remark}
\newtheorem{remark}[theorem]{Remark}
\newtheorem{example}[theorem]{Example}
		\NewDocumentCommand\citea{o m m}{\IfValueTF{#1}{#1 }{}\citet{#3}}
		\newcommand\ifnatbib[2]{#1}
		\LetLtxMacro\citet\cite
		\LetLtxMacro\citep\cite
		\newcommand\citea[3][]{#2 \citet{#3}}
		\newcommand\ifnatbib[2]{#2}
\let\oldcite\cite
\let\cite\useciteptainstead
\title{Optimal Bounds between \texorpdfstring{$f\hspace{-0.3ex}$}{\unichar{"1D453}}-Divergences\\ and Integral Probability Metrics\footnote{An extended abstract of this work appeared as \citet{AH20}.}}
\author{%
Rohit Agrawal%
\thanks{Harvard John A. Paulson School of Engineering and Applied Sciences,
	Cambridge, MA 02138, USA. \orcid{0000-0001-5563-7402}. Supported by the Department
of Defense (DoD) through the National Defense Science \& Engineering Graduate
Fellowship (NDSEG) Program.}\\Harvard University\and
Thibaut Horel%
\thanks{Institute for Data, Systems, and Society, Massachusetts
Institute of Technology, Cambridge, MA 02139, USA. \orcid{0000-0002-4889-6833}.}%
\\MIT}
\begin{document}
\maketitle

\begin{abstract}
The families of $f\hspace{-0.3ex}$-divergences (e.g.\ the Kullback--Leibler
divergence) and Integral Probability Metrics (e.g.\ total variation distance or
maximum mean discrepancies) are widely used to quantify the similarity between
probability distributions. In this work, we systematically study the
relationship between these two families from the perspective of convex duality.
Starting from a tight variational representation of the
$f\hspace{-0.3ex}$-divergence, we derive a generalization of the
moment-generating function, which we show exactly characterizes the best lower
bound of the $f\hspace{-0.3ex}$-divergence as a function of a given IPM. Using
this characterization, we obtain new bounds while also recovering in a unified
manner well-known results, such as Hoeffding's lemma, Pinsker's inequality and
its extension to subgaussian functions, and the Hammersley--Chapman--Robbins
bound. This characterization also allows us to prove new results on
topological properties of the divergence which may be of independent interest.
 \end{abstract}

\section{Introduction}
\label{sec:intro}

Quantifying the extent to which two probability distributions differ from one
another is central in most, if not all, problems and methods in machine
learning and statistics. In a line of research going back at least to the work
of \citea{Kullback}{K59}, information theoretic measures of dissimilarity
between probability distributions have provided a fruitful and unifying
perspective on a wide range of statistical procedures. A prototypical example
of this perspective is the interpretation of maximum likelihood estimation as
minimizing the Kullback--Leibler divergence between the empirical
distribution---or the ground truth distribution in the limit of infinitely
large sample---and a distribution chosen from a parametric family.

A natural generalization of the Kullback--Leibler divergence is provided by the
family of $\phi$-divergences\footnote{Henceforth, we use $\phi$-divergence
instead of $f\hspace{-0.3ex}$-divergence and reserve the letter $f$ for
a generic function.} \citep{C63,C67} also known in statistics as Ali--Silvey
distances \citep{AS66}.\footnote{$\phi$-divergences had previously been considered
\citep{R61,M63}, though not as an independent
object of study.} Informally, a $\phi$-divergence quantifies the divergence
between two distributions $\mu$ and $\nu$ as an average cost of the likelihood
ratio, that is, $\di\mu\nu \eqdef \ex*\nu{\phi(d\mu/d\nu)}$ for a convex cost
function $\phi:\R_{\geq 0}\to\R_{\geq 0}$. Notable examples of
$\phi$-divergences include the Hellinger distance, the $\alpha$-divergences (a
convex transformation of the Rényi divergences), and the $\chi^2$-divergence.

Crucial in applications of $\phi$-divergences are their so-called
\emph{variational representations}. For example, the Donsker--Varadhan
representation \citep[Theorem 5.2]{DV76} expresses the
Kullback--Leibler divergence $\kl\mu\nu$ between probability distributions $\mu$ and $\nu$ as
\begin{equation}
	\label{eq:dv-intro}
	\kl\mu\nu = \sup_{g\in \L^b}\set*{\ex*\mu{\!g}
	-\log\!\!\ex*\nu{\!\!e^g}}
	\,,
\end{equation}
where $\L^b$ is the space of bounded measurable functions.
Similar variational representations were for example used by
\citet{NWJ08, NWJ10, RRGP12, BBROBCH18} to construct estimates of
$\phi$-divergences by restricting the optimization problem \eqref{eq:dv-intro}
to a class of functions $\G\subseteq \L^b$ for which the problem
becomes tractable (for example when $\G$ is a RKHS or representable by a given
neural network architecture). In recent work, \citet{NCT16, NCMQW17}
conceptualized an extension of generative adversarial networks (GANs) in
which the problem of minimizing a $\phi$-divergence is expressed via 
representations such as \eqref{eq:dv-intro} as a two-player game between 
neural networks, one minimizing over probability distributions $\mu$, the other
maximizing over $g$ as in \eqref{eq:dv-intro}.

Another important class of distances between probability distributions is given
by Integral Probability Metrics (IPMs) defined by \citet{M97} and taking the
form
\begin{equation}
	\label{eq:ipm-intro}
	\ipm\mu\nu = \sup_{g\in\G}\set*{\abs*{\ex*\mu g - \ex*\nu g}}
	\,,
\end{equation}
where $\G$ is a class of functions parametrizing the distance. Notable examples
include the total variation distance ($\G$ is the class of all functions taking
value in $[-1,1]$), the Wasserstein metric ($\G$ is a class of Lipschitz
functions) and Maximum Mean Discrepancies ($\G$ is the unit ball of a RKHS).
Being already expressed as a variational problem, IPMs are amenable  to
estimation, as was exploited by \citet{SFGSL12, GBRSS12}. MMDs have also been
used in lieu of $\phi$-divergences to train GANs as was first done by
\citet{DRG15}.

Rewriting the optimization problem \eqref{eq:dv-intro} as
\begin{equation}
	\label{eq:dv-intro-bis}
	\sup_{g\in \L^b}\set*{\ex*\mu{\!g}
		-\ex*\nu{\!g}
	-\log\!\!\ex*\nu{\!\!e^{(g-\ex*\nu{\!g})}}}
\end{equation}
reveals an important connection between $\phi$-divergences and IPMs. Indeed,
\eqref{eq:dv-intro-bis} expresses the divergence as the solution to
a regularized optimization problem in which one attempts to maximize the mean
deviation $\ex*\mu g-\ex*\nu g$, as in \eqref{eq:ipm-intro}, while also
penalizing functions $g$ which are too ``complex'' as measured by
the centered log moment-generating function of $g$. In this
work, we further explore the connection between $\phi$-divergences and IPMs,
guided by the following question:
\begin{center}
	\emph{what is the best lower bound of a given $\phi$-divergence\\
		as a function of a given integral probability metric?}
\end{center}
Some specific instances of this question are already well understood. For
example, the best lower bound of the Kullback--Leibler divergence by
a quadratic function of the total variation distance is known as Pinsker's
inequality. More generally, describing the best lower bound of a
$\phi$-divergence as a function of the total variation distance (without being
restricted to being a quadratic), is known as Vajda's problem, to which an
answer was given by \citet{FHT03} for the Kullback--Leibler divergence and by
\citet{G06} for an arbitrary $\phi$-divergence.

Beyond the total variation distance---in particular, when the class $\G$ in
\eqref{eq:ipm-intro} contains unbounded functions---few results are known.
Using \eqref{eq:dv-intro-bis}, \citet[\S 4.10]{BLM13} shows that Pinsker's
inequality holds as long as the log moment-generating function grows at most
quadratically. Since this is the case for bounded functions (via Hoeffding's
lemma), this recovers Pinsker's inequality and extends it to the class of
so-called \emph{subgaussian} functions. This was recently used by \citet{RZ20}
to control bias in adaptive data analysis.

In this work, we systematize the convex analytic perspective underlying many
of these results, thereby developing the necessary tools to resolve the above
guiding question. As an application, we recover in a unified manner the known
bounds between $\phi$-divergences and IPMs, and extend them along several
dimensions.  Specifically, starting from the observation of \citet{RRGP12} that
the variational representation of $\phi$-divergences commonly used in the
literature is not ``tight'' for probability measures (in a sense which will be
made formal in the paper), we make the following contributions:
\begin{itemize}
	\item we derive a tight representation of $\phi$-divergences for probability
		measures, exactly generalizing the Donsker--Varadhan representation of the
		Kullback--Leibler divergence.
	\item we define a generalization of the log moment-generating function and
		show that it exactly characterizes the best lower bound of
		a $\phi$-divergence by a given IPM. As an application, we show that
		this function grows quadratically if and only if the $\phi$-divergence
		can be lower bounded by a quadratic function  of the given IPM and
		recover in a unified manner the extension of Pinsker's inequality to
		subgaussian functions and the Hammersley--Chapman--Robbins bound.
	\item we characterize the existence of \emph{any} non-trivial lower
		bound on an IPM in terms of the generalized log moment-generating
		function, and give implications for topological properties of the
		divergence, for example regarding compactness of sets of measures with
		bounded $\phi$-divergence and the relationship between convergence in
		$\phi$-divergence and weak convergence.
	\item the answer to Vajda's problem for bounded functions is
		re-derived in a principled manner, providing a new geometric
		interpretation on the optimal lower bound of the $\phi$-divergence
		by the total variation distance. From this, we derive a refinement
		of Hoeffding's lemma and generalizations of Pinsker's inequality
		to a large class of $\phi$-divergences.
\end{itemize}

The rest of this paper is organized as follows: \cref{sec:related}
discusses related work, \cref{sec:prelims} gives a brief overview of
concepts and tools used in this paper, \cref{sec:var} derives the tight
variational representation of the $\phi$-divergence, \cref{sec:singleg}
focuses on the case of an IPM given by a single function $g$ with
respect to a reference measure $\nu$, deriving the optimal bound in this
case and discussing topological applications, and \cref{sec:ipm} extends
this to arbitrary IPMs and sets of measures, with applications to
subgaussian functions and Vajda's problem.
 \section{Related work}
\label{sec:related}

The question studied in the present paper is an instance of the broader problem
of the constrained minimization of a $\phi$-divergence, which has been
extensively studied in works spanning information theory, statistics and convex
analysis.

\paragraph{Kullback--Leibler divergence.} The problem of minimizing the
Kullback--Leibler divergence \citep{KL51} subject to a convex constraint can be
traced back at least to \citet{S57} in the context of large deviation theory and
to \citet{K59} for the purpose of formulating an information theoretic approach
to statistics. In information theory, this problem is known as an
$I$-projection \citep{C75,CM03}. The case where the convex set is defined by
finitely many affine equality constraints, which is closest to our work, was
specifically studied in \citet{BC77,BC79} via a convex duality approach. This
special case is of particular relevance to the field of statistics, since the
exponential family arises as the optimizer of this problem.

\paragraph{Convex integral functionals and general $\phi$.}

With the advent of the theory of convex integral functionals, initiated in
convex analysis by \citet{R66,R68}, the problem is generalized to arbitrary
$\phi$-divergences, sometimes referred to as $\phi$-entropies, especially when
seen as functionals over spaces of functions, and increasingly studied via
a systematic application of convex duality \citep{TV93}. In the case of affine
constraints, the main technical challenge is to identify constraint
qualifications guaranteeing that strong duality holds: \citet{BL91,BL93,BK06}
investigate the notion of quasi-relative interior for this purpose, and
\citet{L01_Inverse,L01} consider integrability conditions on the functions
defining the affine constraints. A comprehensive
account of this case can be found in \citet{CM12}. We also
note the work \citet{AS06}, which shows a duality between \emph{approximate}
divergence minimization---where the affine constraints are only required to
hold up to a certain accuracy---and maximum a posteriori estimation in
statistics.

At a high level, in our work we show in \cref{sec:ipm} that one can
essentially reduce the problem of minimizing the divergence on probability
measures subject to a constraint on an IPM to the problem of minimizing the
divergence on finite measures subject to two affine constraints: the first
restricting to probability measures, and the second constraining the mean
deviation of a single function in the class defining the IPM.  For the
restriction to probability measures, we prove that constraint qualification
always holds, a fact which was not observed in the aforecited works, to the
best of our knowledge. For the second constraint, we show in
\cref{sec:inf-compact} that by focusing on a single function, we can relate
strong duality of the minimization problem to compactness properties of the
divergence. In particular, we obtain strong duality under similar assumptions
as those considered in \citet{L01_Inverse}, even when the usual interiority
conditions for constraint qualification do not hold.

\paragraph{Relationship between $\phi$-divergences.}

A specific case of the minimization question which has seen significant work is
when the feasible set is defined by other $\phi$-divergences, and most notably
is a level set the total variation distance. The best-known result in this
line is Pinsker's inequality, first proved in a weaker form in \citet{P60,P63}
and then strengthened independently in \citet{K67, K69, C67}, which gives the
best possible quadratic lower bound on the Kullback--Leibler divergence by the
total variation distance. More recently, for $\phi$-divergences other than the
Kullback--Leibler divergence, \citet{G10} identified conditions on $\phi$ under
which quadratic ``Pinsker-type'' lower bounds can be obtained.

More generally, the problem of finding the best lower bound of the
Kullback--Leibler divergence as a (possibly non-quadratic) function of the
total variation distance was introduced by Vajda in \citet{V70} and generalized to
arbitrary $\phi$-divergences in \citet{V72}, and is therefore sometimes referred
to as \emph{Vajda's problem}. Approximations of the best lower bound were
obtained in \citet{BH79,V70} for the Kullback--Leibler divergence and in
\citet{V72,G08,G10} for $\phi$-divergences under various assumptions on $\phi$.
The optimal lower bound was derived in \citet{FHT03} for the Kullback--Leibler
divergence and in \citet{G06} for any $\phi$-divergence.  As an example
application of Section~\ref{sec:ipm}, in Section~\ref{sec:bounded} we rederive
the optimal lower bound as well as its quadratic relaxations in a unified
manner.

In \citet{RW09,RW11}, the authors consider the generalization of Vajda's
problem of obtaining a tight lower bound on an arbitrary $\phi$-divergence
given multiple values of \emph{generalized total variation distances}; their
result contains \citet{G06} as a special case. Beyond the total variation
distance, \citet{HV11} introduced the general question of studying the
\emph{joint range} of values taken by an arbitrary pair of $\phi$-divergences,
which has its boundary given by the best lower bounds of one divergence as a
function of the other. \citet{GSS14} generalize this further and consider the
general problem of understanding the joint range of multiple
$\phi$-divergences, i.e.~minimizing a $\phi$-divergence subject to a finite
number of constraints on other $\phi$-divergences. A key conceptual
contribution in this line of work is to show that these optimization problems,
which are defined over (infinitely dimensional) spaces of measures, can be
reduced to finite dimensional optimization problems. A related line of work
\citep{SV16,S18} deriving relations between $\phi$-divergences instead approaches
the problem by defining integral representations of $\phi$-divergences in terms
of simple ones.

Our work differs from results of this type since we are primarily concerned
with IPMs other than the total variation distance, and in particular with
those containing unbounded functions. It was shown in
\citet{KFG06,KFG07,SGFLS09,SFGSL12} that the class of $\phi$-divergences and
the class of pseudometrics (including IPMs) intersect \emph{only} at the total
variation distance. As such, the problem studied in the present paper cannot
be phrased as the one of a joint range between two $\phi$-divergences, and to
the best of our knowledge cannot be handled by the techniques used in studying
the joint range.

\paragraph{Transport inequalities.} Starting with the work of \citea{Marton}{M86},
transportation inequalities upper bounding the Wasserstein distance
by a function of the relative entropy have been instrumental in the study of
the concentration of measure phenomenon (see e.g.\ \citet{GL10} for a survey).
These inequalities are related to the question studied in this work since the
$1$-Wasserstein distance is an IPM when the probability space is a Polish space
and coincides with the total variation distance when the probability space is
discrete and endowed with the discrete metric. In an influential paper, 
\citea{Bobkov and Götze}{BG99} proved that upper bounding the $1$-Wasserstein distance
by a square root of the relative entropy is equivalent to upper bounding the
log moment-generating function of all $1$-Lipschitz functions by a quadratic
function. The extension of Pinsker's inequality in \citet[\S 4.10]{BLM13},
which was inspired by \citet{BG99}, is also based on quadratic upper
bounds of the log moment-generating function and we in turn follow similar
ideas in \cref{sec:var-prob,sec:char} of the present work.
 \section{Preliminaries}
\label{sec:prelims}

\subsection{Measure Theory}
\paragraph{Notation.}
Unless otherwise noted, all the probability measures in this paper are defined
on a common measurable space $(\Omega,\A)$, which we assume is non-trivial in
the sense that $\set{\emptyset,\Omega}\subsetneq \A$, as otherwise all questions
considered in this paper become trivial. We denote by $\M(\Omega, \A)$,
$\M^+(\Omega, \A)$ and $\M^1(\Omega,\A)$ the sets of finite signed measures,
finite non-negative measures, and probability measures respectively.
$\L^0(\Omega,\A)$ denotes the space of all measurable functions from $\Omega$ to
$\R$, and $\L^b(\Omega,\A)\subseteq\L^0(\Omega,\A)$ is the set of all bounded
measurable functions. For $\nu\in\M(\Omega, \A)$, and $1\leq p\leq\infty$,
$\L^p(\nu,\Omega, \A)$ denotes the space of measurable functions with finite
$p$-norm with respect to $\nu$, and $L^p(\nu,\Omega,\A)$ denotes the space
obtained by taking the quotient with respect to the space of functions which are
$0$ $\nu$-almost everywhere. Similarly, $L^0(\nu,\Omega,\A)$ is the space of all
measurable functions $\Omega$ to $\R$ up to equality $\nu$-almost everywhere.
When there is no ambiguity, we drop the indication $(\Omega, \A)$. For a
measurable function $f\in\L^0$ and measure $\nu\in\M$, $\ex\nu f\eqdef \ex*\nu
f$ denotes the integral of $f$ with respect to $\nu$.

For two measures $\mu$ and $\nu$, $\mu\ll\nu$ (resp.\ $\mu\perp\nu$) denotes
that $\mu$ is absolutely continuous (resp.\ singular) with respect to $\nu$
and we define $\M_c(\nu)\eqdef\set{\mu\in\M\given \mu\ll\nu}$ and
$\M_s(\nu)\eqdef\set{\mu\in\M\given \mu\perp \nu}$, so that by the Lebesgue
decomposition theorem we have the direct sum $\M = \M_c(\nu)\oplus\M_s(\nu)$.
For $\mu\in\M_c(\nu)$, $\frac{d\mu}{d\nu}\in L^1(\nu)$ denotes the
Radon--Nikodym derivative of $\mu$ with respect to $\nu$.  For a signed
measure $\nu\in\M$, we write the Hahn--Jordan decomposition $\nu=\nu^+-\nu^-$
where $\nu^+,\nu^-\in\M^+$, and denote by $\abs\nu=\nu^++\nu^-$ the total
variation measure.

More generally, given a $\sigma$-ideal $\Sigma\subseteq\A$ we
write $\mu\ll \Sigma$ to express that $\abs\mu(A)=0$ for all $A\in\Sigma$ and
define $\M_c(\Sigma)\eqdef\set{\mu\in\M\given \mu\ll\Sigma}$. Similarly,
$L^0(\Sigma)$ denotes the quotient of $\L^0$ by the space of functions equal to
$0$ except on an element of $\Sigma$. For a measurable function $f\in\L^0$, and
$\sigma$-ideal $\Sigma$, $\essim\Sigma f \eqdef
\bigcap_{\eps>0}\set*{x\in\R\given f^{-1}\paren[\big]{(x-\eps,x+\eps)}\notin
\Sigma}$ is the essential range of $f$ with respect to $\Sigma$, and
$\esssup_\Sigma f\eqdef\sup\essim\Sigma f$ and $\essinf_\Sigma
f\eqdef\inf\essim\Sigma f$ denote the $\Sigma$-essential supremum and infimum
respectively. Finally $L^\infty(\Sigma)$ denotes the space of a functions whose
$\Sigma$-essential range is bounded, up to equality except on an element of
$\Sigma$. When $\Sigma$ is the $\sigma$-ideal of null sets of a measure $\nu$,
we abuse notations and write $\essim\nu f$ for $\essim\Sigma f$ and similarly
for the essential supremum and infimum.

Finally, for brevity, we define for a subspace $X\subseteq \M$ of finite
signed measures the subsets $X^+\eqdef X\cap\M^+$ and $X^1\eqdef X\cap\M^1$,
and for $\nu\in\M$ we also define $X_c(\nu) \eqdef X\cap\M_c(\nu)$ and
$X_s(\nu)\eqdef X\cap\M_s(\nu)$.

\paragraph{Integral Probability Metrics.}

\begin{definition}
	For a non-empty set of measurable functions $\G\subseteq\L^0$,
	the \emph{integral probability metric} associated with $\G$ is defined by
	\begin{displaymath}
		\ipm\mu\nu\eqdef\sup_{g\in\G}\set*{\abs*{\ex*\mu g-\ex*\nu g}}
		\,,
	\end{displaymath}
	for all pairs of measures $(\mu,\nu)\in\M^2$ such that all functions in
	$\G$ are absolutely $\mu$- and $\nu$-integrable.  We extend this definition
	to all pairs of measures $(\mu,\nu)\in\M^2$ by $\ipm\mu\nu=+\infty$ in cases
	where there exists a function in $\G$ which is not $\mu$- or $\nu$-
	integrable.
\end{definition}
\begin{remark}
	When the class $\G$ is closed under negation, one can drop the absolute value
	in the definition.
\end{remark}
\begin{example}
	The total variation distance $\tv\mu\nu$ is obtained when $\G$ is the class
	of measurable functions taking values in $[-1,1]$.\footnote{Note that total
	variation distance is sometimes defined as half of this quantity,
	corresponding to functions taking values in $[0,1]$.}
\end{example}
\begin{example}
	\label{ex:ipm-rv}
	Note that the integrals $\ex*\mu g$ and $\ex*\nu g$ depend only on the
	pushforward measures $\pushforward\mu g$ and $\pushforward\nu g$ on $\R$.
	Equivalently, when $\mu$ and $\nu$ are the probability distributions of
	random variables $X$  and $Y$ taking values in $\Omega$, we have that
	$\ex*\mu g=\ex*{\pushforward\mu g}{\ident_{\R}} =\mathbb{E}[g(X)]$, the
	expectation of the random variable $g(X)$, and similarly $\ex*\nu
	g = \mathbb{E}[g(Y)]$. The integral probability metric $\ipmsymb\G$ thus
	defines the distance between random variables $X$ and $Y$ as the largest
	difference in expectation achievable by ``observing'' $X$ and $Y$ through
	a function from the class $\G$.
\end{example}

\subsection{Convex analysis}

Most of the convex functions considered in this paper will be defined over
spaces of measures or functions. Consequently, we will apply tools from convex
analysis in its general formulation for locally convex topological vector
spaces. References on this subject include \citet{BCR84} and \citet[II. and
IV.\S1]{B87} for the topological background, and \citet[Part I]{ET99} and
\citet[Chapters 1 \& 2]{Z02} for convex analysis. We now briefly review the main
concepts appearing in the present paper.

\begin{definition}[Dual pair]
	A \emph{dual pair} is a triplet $(X, Y, \ip\cdotarg\cdotarg)$ where $X$ and $Y$
	are real vector spaces, and $\ip\cdotarg\cdotarg:X\times Y\to\R$ is a bilinear
	form satisfying the following properties:
	\begin{enum}
	\item\label{it:dp-x} for every $x\in X\setminus\{0\}$, there exists $y\in
		Y$ such that $\ip x y \neq 0$.
	\item\label{it:dp-y} for every $y\in Y\setminus\{0\}$, there exists $x\in
		X$ such that $\ip x y \neq 0$.
	\end{enum}
	We say that the \emph{pairing} $\ip\cdotarg\cdotarg$ \emph{puts $X$
	and $Y$ in (separating) duality}. Furthermore, a topology $\tau$ on $X$ is
	said to be \emph{compatible} with the pairing if it is locally convex and
	if the topological dual $\con X$ of $X$ with respect to $\tau$ is
	isomorphic to $Y$. Topologies on $Y$ compatible with the pairing are
	defined similarly.
\end{definition}

\begin{example}
	\label{ex:weak}
	For an arbitrary dual pair $(X, Y, \ip\cdotarg\cdotarg)$, the
	\emph{weak topology} $\sigma(X, Y)$ induced by $Y$ on $X$ is defined to be
	the coarsest topology such that for each $y\in Y$, $x\mapsto \ip x y$ is
	a continuous linear form on $X$. It is a locally convex Hausdorff topology
	induced by the family of seminorms $p_y: x\mapsto \abs{\ip x y}$ for $y\in Y$
	and is thereby compatible with the duality between $X$ and $Y$.

	Note that in finite dimension, all Hausdorff vector space topologies
	coincide with the standard topology.
\end{example}

In the remainder of this section, we fix a dual pair $(X, Y, \ip\cdotarg\cdotarg)$
and endow $X$ and $Y$ with topologies compatible with the pairing. As is
customary in convex analysis, convex functions take values in the set of
extended reals $\eR\eqdef\R\cup\set{-\infty,+\infty}$ to which the addition
over $\R$ is extended using the usual conventions, including
$(+\infty)+(-\infty)=+\infty$. In this manner, convex functions can always be
extended to be defined on the entirety of their domain by assuming the value
$+\infty$ when they are not defined. For a convex function $f:X\to\overline\R$,
$\dom f\eqdef\set{x\in X\given f(x)<+\infty}$ is the \emph{effective domain} of
$f$ and $\partial f(x)\eqdef\set{y\in Y\given \forall x'\in X,f(x')\geq
f(x)+\ip{x'-x}y}$ denotes its subdifferential at $x\in X$.

\begin{definition}[Lower semicontinuity, inf-compactness]
	The function $f:X\to\eR$ is \emph{lower semicontinuous (lsc)}
	(resp.\ \emph{inf-compact}) if for every $t\in\R$ the sublevel set
	$f^{-1}(-\infty, t]\eqdef \set{x\in X\given f(x)\leq t}$ is closed (resp.\
	compact).
\end{definition}

\begin{lemma}
\label{lem:optimal-value-convex}
	If $f:X\times C\to\eR$ is a convex function for $C$  a convex subset of some
	linear space, then $g:X\to\eR$ defined as $g(x)\eqdef\inf_{c\in C}f(x,c)$ is
	convex. Furthermore, if for some topology on $C$ the function $f$ is
	inf-compact with respect to the product topology, then $g$ is also
	inf-compact.
\end{lemma}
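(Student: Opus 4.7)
The plan is to treat the two claims separately: convexity follows from a standard $\eps$-approximation of the infima defining $g$, while inf-compactness is obtained by expressing each sublevel set of $g$ as a countable intersection of continuous projections of sublevel sets of $f$.

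For convexity, I would take $x_1, x_2 \in X$ and $\lambda \in [0,1]$, and for each $\eps > 0$ pick $c_1, c_2 \in C$ with $f(x_i, c_i) \leq g(x_i) + \eps$ (with $f(x_i, c_i)$ chosen arbitrarily negative when $g(x_i) = -\infty$). Convexity of $f$ on the product then yields
\[
g\bigl(\lambda x_1 + (1-\lambda) x_2\bigr) \leq f\bigl(\lambda x_1 + (1-\lambda) x_2,\, \lambda c_1 + (1-\lambda) c_2\bigr) \leq \lambda f(x_1, c_1) + (1-\lambda) f(x_2, c_2),
\]
so that sending $\eps \to 0^+$ gives the desired convexity inequality for $g$. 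The edge cases where some $g(x_i) = +\infty$ are immediate from the convention $(+\infty) + (-\infty) = +\infty$ adopted in the paper.

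For inf-compactness, fix $t \in \R$, let $\pi_X: X\times C \to X$ denote the continuous projection, and establish the identity
\[
g^{-1}((-\infty, t]) = \bigcap_{n \geq 1} \pi_X\bigl(f^{-1}((-\infty, t + 1/n])\bigr).
\]
The inclusion $\supseteq$ is immediate from $g(x) \leq f(x, c)$ for any $c$, while $\subseteq$ follows by picking, for each $n$, some $c \in C$ with $f(x,c) \leq t + 1/n$, which exists by the definition of the infimum. By hypothesis each set $f^{-1}((-\infty, t + 1/n])$ is compact in the product $X \times C$, so its image under the continuous map $\pi_X$ is compact in $X$; since $X$ is locally convex Hausdorff (its topology being compatible with the separating duality with $Y$), each such image is closed, and hence the intersection is a closed subset of any single one of them, so it is itself compact.

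The main subtlety---and the reason for the countable intersection rather than a single projection---is that one does \emph{not} have the cleaner identity $g^{-1}((-\infty, t]) = \pi_X(f^{-1}((-\infty, t]))$ in general, since the infimum defining $g(x)$ need not be attained, so the single projection can miss points where $g$ equals exactly $t$. Enlarging the thresholds to $t + 1/n$ and intersecting is precisely what is needed to bypass this issue without invoking any attainment argument.
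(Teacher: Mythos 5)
Your proof is correct. The paper states \cref{lem:optimal-value-convex} without proof, treating it as a standard fact, so there is no authorial argument to compare against; what you give is a sound and self-contained derivation.

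One remark on the ``subtlety'' you highlight at the end: under the lemma's inf-compactness hypothesis, the infimum in $g(x) = \inf_{c\in C}f(x,c)$ actually \emph{is} attained whenever $g(x) < +\infty$. Indeed, for fixed $x$ the set $\set{c\in C \given f(x,c)\leq s}$ can be identified with $f^{-1}\paren[\big]{(-\infty,s]}\cap\paren[\big]{\set{x}\times C}$, a closed (using that $\set x$ is closed in Hausdorff $X$) subset of a compact set, hence compact; so $f(x,\cdotarg)$ is lsc and inf-compact on $C$ and attains its infimum when proper. This would justify the cleaner single-projection identity $g^{-1}\paren[\big]{(-\infty,t]} = \pi_X\paren[\big]{f^{-1}\paren[\big]{(-\infty,t]}}$ directly. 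Your countable-intersection route is a valid alternative that deliberately sidesteps this attainment argument and has the minor advantage of not needing $C$ to be Hausdorff; the cost is the extra step of verifying closedness of the projections (which you correctly supply from the Hausdorffness of $X$). Both reach the same conclusion; either is acceptable. A small imprecision in the convexity part: if $g(x_i)=-\infty$, it need not be possible to make $f(x_i,c_i)$ both \emph{finite} and arbitrarily negative (the section $f(x_i,\cdotarg)$ might take only the values $\pm\infty$). However, allowing $f(x_i,c_i)=-\infty$ in that case and tracking the paper's convention $(+\infty)+(-\infty)=+\infty$ through the inequality still yields the desired bound, so the conclusion is unaffected.
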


\begin{definition}[Properness]
	A convex function $f:X\to\eR$ is \emph{proper} if $\dom f\neq\emptyset$ and
	$f(x)>-\infty$ for all $x\in X$.
\end{definition}

\begin{definition}[Convex conjugate]
	The \emph{convex conjugate} (also called Fenchel dual or Fenchel--Legendre
	transform) of $f:X\to\eR$ is the function $\con f:
	Y\to\eR$ defined for $y\in Y$ by
	\begin{displaymath}
		\con f(y) \eqdef \sup_{x\in X} \set[\big]{\ip x y - f(x)}
		\,.
	\end{displaymath}
\end{definition}

For a set $C\subseteq X$, $\delta_C:X\to\eR_{\geq 0}$ denotes the
characteristic function of $C$, that is $\delta_C(x)$ is $0$ if $x\in C$ and
$+\infty$ elsewhere. The support function of $C$ is $\supp C:Y\to
\R\cup\set{+\infty}$ defined by $h_C(y) = \sup_{x\in C}\ip x y$. If $C$ is
closed and convex then  $(\delta_C, \supp C)$ form a pair of convex
conjugate functions.

\begin{proposition}
	\label{prop:conj}
	Let $f:X\to\eR$ be a function. Then:
	\begin{enumerate}
		\setlength{\itemsep}{0pt}
		\item $\con f:Y\to\eR$ is convex and lower semicontinuous.
		\item\label{it:fy} for all $x\in X$ and $y\in Y$, $f(x) + \con f(y)\geq \ip x y$
			with equality iff $y\in\partial f(x)$.
		\item\label{it:fm} $\bicon f\leq f$ with
			equality iff $f$ is proper convex lower semicontinuous, $f\equiv+\infty$ or
			$f\equiv-\infty$.
		\item if $f\leq g$ for some $g:X\to\eR$, then $\con g\geq \con f$.
	\end{enumerate}
\end{proposition}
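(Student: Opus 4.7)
The plan is to prove each item of the proposition largely in order, with items (1), (2), and (4) following quickly from the definition of $\con f$, and item (3) requiring the main work via the Fenchel--Moreau theorem.

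For item (1), I would observe that $\con f(y) = \sup_{x\in X}\set{\ip x y - f(x)}$ is the pointwise supremum of the family of affine functions $y\mapsto \ip x y - f(x)$ indexed by $x\in X$. Each such map is continuous (the pairing is continuous in $y$ for fixed $x$ since the topology on $Y$ is compatible with the pairing) and convex, and convexity and lower semicontinuity are preserved under pointwise suprema. For item (4), the monotonicity is immediate: if $f\leq g$, then $\ip x y - f(x)\geq \ip x y - g(x)$ pointwise in $x$, and taking the supremum yields $\con f(y)\geq \con g(y)$.

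For item (2), the Fenchel--Young inequality $f(x)+\con f(y)\geq \ip x y$ is a direct rearrangement of the definition of the supremum. For the equality case, one rewrites the definition of the subdifferential: $y\in\partial f(x)$ means that for all $x'\in X$, $\ip{x'}y - f(x') \leq \ip x y - f(x)$, which is precisely the statement that the sup defining $\con f(y)$ is attained at $x$ with value $\ip x y - f(x)$, i.e. $\con f(y) = \ip x y - f(x)$.

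Item (3) is the main obstacle and is essentially the Fenchel--Moreau theorem. The inequality $\bicon f\leq f$ is easy: by Fenchel--Young, $\ip x y - \con f(y) \leq f(x)$ for all $y$, and taking the supremum over $y$ gives $\bicon f(x)\leq f(x)$. For the equality statement, one direction is straightforward: the conjugate of any function is convex and lsc (by part (1)), and $\bicon f$ takes the value $+\infty$ everywhere iff $\con f\equiv -\infty$ iff $f\equiv +\infty$, and dually $\bicon f\equiv -\infty$ iff $f\equiv -\infty$. So if $\bicon f = f$ then $f$ falls into one of the three listed cases. The nontrivial direction requires showing that any proper convex lsc $f$ equals its biconjugate; here I would invoke the Hahn--Banach separation theorem applied to the epigraph of $f$ (which is closed convex) and a point $(x_0, t)$ with $t < f(x_0)$, to produce a continuous affine minorant of $f$ with value greater than $t$ at $x_0$, thereby showing $\bicon f(x_0)\geq t$. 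Letting $t\uparrow f(x_0)$ gives $\bicon f(x_0)\geq f(x_0)$. A technical wrinkle is that one must first rule out $f\equiv +\infty$ to ensure continuous affine minorants exist (which is where properness enters), and then confirm that even points $x_0$ where $f(x_0)=+\infty$ satisfy $\bicon f(x_0)=+\infty$, by noting that the affine minorants can be chosen to grow without bound near such $x_0$ via a separation argument on the epigraph.
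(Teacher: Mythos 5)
The paper states \cref{prop:conj} as background from convex analysis and gives no proof, citing standard references; there is no in-paper argument to compare against. Your sketch follows the usual textbook route---items (1), (2), (4) directly from the definition of the supremum, item (3) via Hahn--Banach separation of the closed convex epigraph---and the overall structure, including the observation that vertical separating hyperplanes must be combined with a non-vertical affine minorant at points where $f(x_0)=+\infty$, is the right one.

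One intermediate claim is wrong as stated. You assert that ``dually $\bicon f\equiv-\infty$ iff $f\equiv-\infty$,'' but the forward implication fails in general: with $X=Y=\R$ and $f(x)=-x^2$ one has $\con f\equiv+\infty$, hence $\bicon f\equiv-\infty$, yet $f\not\equiv-\infty$. (It is not a genuine dual of the first claim; $\con f\equiv+\infty$ holds iff $f$ admits no continuous affine minorant, a strictly weaker condition than $f\equiv-\infty$.) Your argument only ever invokes this implication under the running hypothesis $\bicon f=f$, where it collapses to a tautology, so the proof as a whole survives---but you should replace the ``dually'' sentence with the correct step: if $f$ is convex, lsc, equal to $\bicon f$, and not proper, then either $\dom f=\emptyset$ (so $f\equiv+\infty$) or $f$ takes the value $-\infty$ somewhere, in which case $\con f\equiv+\infty$, so $\bicon f\equiv-\infty$, and therefore $f=\bicon f\equiv-\infty$. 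A smaller caveat: in item (2) your rearrangement implicitly assumes $f(x)$ is finite; the cases $f(x)=\pm\infty$ need to be treated separately using the extended-arithmetic convention $(+\infty)+(-\infty)=+\infty$ adopted in the paper.
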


\begin{remark}
	In \cref{prop:conj}, \cref{it:fy} is known as the
	Fenchel--Young inequality and \cref{it:fm} as the Fenchel--Moreau
	theorem.
\end{remark}

In the special case of $X=\R=Y$ and a proper convex function $f:\R\to\eR$,
we can be more explicit about some properties of $\con f$ and $\bicon f$.

\begin{lemma}\label{lem:biconj-R}
	If $f:\R\to\eR$ is a proper convex function, then $x\in\R$ is such that
	$f(x)\neq\bicon f(x)$ only if $\dom f$ has non-empty interior and $x$ is
	one of the (at most two) points on its boundary, in which case
	$\bicon f(x)$ is the limit of $f(x')$ as $x'\to x$ within $\dom f$.
\end{lemma}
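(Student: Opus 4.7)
The plan is to identify $\bicon f$ with the lower semicontinuous hull $\bar f$ of $f$, defined by $\bar f(x) \eqdef \liminf_{x' \to x} f(x')$. Since $f$ is convex and proper, $\dom f$ is a non-empty interval. If $\dom f$ is a singleton $\{x_0\}$, then $\con f(y) = x_0 y - f(x_0)$ is affine in $y$, and a direct computation gives $\bicon f = f$ (finite at $x_0$, equal to $+\infty$ elsewhere), so there are no exceptional points. In the main case $\dom f$ has non-empty interior $(a,b)$ with $-\infty \le a < b \le +\infty$, and the goal is to show $\bicon f = \bar f$. The conclusion of the lemma then follows because a proper convex function on $\R$ is continuous on the interior of its domain, so $\bar f$ agrees with $f$ on $(a,b)$ and equals $+\infty$ outside $[a,b]\cap\R$; the only possible discrepancies between $\bar f$ and $f$ are at the (at most two) finite boundary endpoints, and at such a point $\bar f(x)$ is by construction $\lim_{x' \to x,\, x' \in \dom f} f(x')$.

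The identification $\bicon f = \bar f$ proceeds in two steps. First, I would verify that $\bar f$ is proper convex lower semicontinuous, so that the Fenchel--Moreau theorem (\cref{prop:conj}) yields $\bicon{\bar f} = \bar f$. Lower semicontinuity is automatic from the $\liminf$ definition, convexity follows by a routine limit argument from convexity of $f$, and properness requires ruling out $\bar f(x) = -\infty$ at a finite boundary endpoint: this is handled by an interpolation argument, writing an interior point $x_0$ with $f(x_0)\in\R$ as a convex combination of a boundary-approaching point and another fixed interior point, and deriving a contradiction if the boundary limit were $-\infty$. Second, I would show $\con f = \con{\bar f}$. The inequality $\con{\bar f} \ge \con f$ is immediate from $\bar f \le f$; for the reverse, at a finite boundary point $x$ one has
\[
xy - \bar f(x) = \lim_{x' \to x,\, x' \in (a,b)} \bigl(x'y - f(x')\bigr) \le \sup_{x' \in (a,b)} \bigl(x'y - f(x')\bigr) \le \con f(y),
\]
so the supremum defining $\con{\bar f}(y)$ is not increased by including boundary or exterior points of $\dom f$.

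Combining these two steps, $\bicon f = \bicon{\bar f} = \bar f$, and hence the set of $x$ where $\bicon f(x) \ne f(x)$ coincides with the set of $x$ where $\bar f(x) \ne f(x)$. By the discussion of the first paragraph, this set consists of at most the two finite boundary points of $\dom f$, and at any such $x$ one has $\bicon f(x) = \bar f(x) = \lim_{x' \to x,\, x' \in \dom f} f(x')$, as claimed. I expect the main obstacle to be the convexity-based interpolation argument ruling out $\bar f(x) = -\infty$ at finite boundary endpoints (needed for properness of $\bar f$) and the careful verification that conjugation does not distinguish $f$ from $\bar f$; once these are in place, everything reduces to Fenchel--Moreau duality applied to the well-behaved function $\bar f$.
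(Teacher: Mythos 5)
The paper states \cref{lem:biconj-R} without proof (it is a standard fact of one-dimensional convex analysis recorded in the preliminaries), so there is nothing in the text to compare your argument against; your job here is just to get the fact right, and you do. Your route through the lower semicontinuous hull $\bar f$ is the standard textbook argument: establish $\bar f$ proper convex lsc, verify $\con f = \con{\bar f}$, and invoke Fenchel--Moreau on $\bar f$. The pieces are all present and correct. The interpolation argument you sketch to rule out $\bar f(x)=-\infty$ at a finite endpoint of $\dom f$ is exactly what is needed for properness, and the estimate showing $\con{\bar f}\le\con f$ by taking interior limits at boundary points is right.

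One step you label ``by construction'' deserves one more sentence. At a finite boundary point $x$ of $\dom f$ with $x\in\dom f$, the lsc hull is $\bar f(x)=\min\bigl\{f(x),\lim_{x'\to x,\,x'\in\inter\dom f} f(x')\bigr\}$, whereas the lemma asserts $\bicon f(x)$ equals the limit alone; to identify these you must also observe that convexity forces $\lim_{x'\to x,\,x'\in\inter\dom f}f(x')\le f(x)$ (write any interior point near $x$ as a convex combination of $x$ and a fixed interior point and let it slide to $x$). This is a routine variation of the same interpolation argument you already use for properness, so it is a cosmetic omission rather than a genuine gap, but you should include it so that ``$\bar f(x)$ is the limit'' is justified rather than merely asserted.
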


\begin{definition}
\label{def:derinfty}
	For $f:\R\to\eR$ a proper convex function, we define for
	$\ell\in\set{-\infty,+\infty}$ the quantity
	$f'(\ell)\eqdef\lim_{x\to\ell}f(x)/x\in\R\cup\set{+\infty}$.
\end{definition}
\begin{remark}
	The limit is always well-defined in $\R\cup\set{+\infty}$ for proper convex
	functions. The name $f'(\ell)$ is motivated by the fact that when $f$ is
	differentiable, we have $f'(\ell)=\lim_{x\to\ell}f'(x)$.
\end{remark}

\begin{lemma}
\label{lem:domconj}
	If $f:\R\to\eR$ is a proper convex function, then the domain of $\con
f:\R\to\eR$ satisfies $\inter(\dom\con
f)=\paren[\big]{f'(-\infty),f'(+\infty)}$.
\end{lemma}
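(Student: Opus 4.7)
The plan is to analyze $\con f(y)=\sup_{x\in\R}\{xy-f(x)\}$ directly and determine for which $y\in\R$ this supremum is finite by examining the asymptotic behavior of $x\mapsto xy-f(x)$ as $x\to\pm\infty$. I will prove the stated equality by a pair of matching inclusions.

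For the inclusion $\inter(\dom\con f)\subseteq(f'(-\infty),f'(+\infty))$, it suffices to show $\dom\con f\subseteq[f'(-\infty),f'(+\infty)]$ and take interiors. For $y>f'(+\infty)$, I would pick $\delta\in(0,y-f'(+\infty))$; by \cref{def:derinfty}, $f(x)/x\leq y-\delta$ for all sufficiently large $x$, so $xy-f(x)\geq\delta x\to+\infty$ and thus $\con f(y)=+\infty$. The case $y<f'(-\infty)$ is symmetric but requires care with the inequality flip when multiplying by $x<0$: from $f(x)/x\geq y+\delta$ for $x$ sufficiently negative one derives $f(x)\leq(y+\delta)x$, whence $xy-f(x)\geq-\delta x\to+\infty$ as $x\to-\infty$.

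For the reverse inclusion, it is enough to show $(f'(-\infty),f'(+\infty))\subseteq\dom\con f$, since this interval is already open. Fixing $y$ in the interval, I would pick $\delta>0$ small enough that $y\pm\delta$ still lie inside, and run the previous computations with reversed inequalities to obtain $M>0$ such that $xy-f(x)$ is bounded above (in fact tends to $-\infty$) on $\{x\in\R:\abs{x}\geq M\}$. On the compact interval $[-M,M]$, $f$ is bounded below: as a proper convex function on $\R$ it admits an affine minorant (obtained from a subgradient at an interior point of $\dom f$, or trivially if $\dom f$ is a singleton), which yields a finite lower bound on any bounded set. Combining these two bounds gives $\con f(y)<+\infty$, as desired.

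The main obstacle is the bookkeeping around the degenerate cases $f'(-\infty)=-\infty$ or $f'(+\infty)=+\infty$, which arise when $\dom f$ is unbounded only on the other side; in these cases the corresponding half of the asymptotic argument trivializes but must still be justified from the conventions underlying the definition of $f(x)/x$ when $x\notin\dom f$ used in \cref{def:derinfty}. A secondary subtlety is verifying that a proper convex $f:\R\to\eR$ always admits an affine minorant, which is required to bound $f$ from below on compact intervals.
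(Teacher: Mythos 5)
Your proof is correct and complete. Note that the paper states \cref{lem:domconj} in the preliminaries \emph{without} a proof, as a standard fact in one-dimensional convex analysis (cf.\ Rockafellar, \emph{Convex Analysis}, Theorem~23.4 and its surroundings), so there is no paper proof to compare against; your self-contained verification directly from \cref{def:derinfty} is a perfectly reasonable route.

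The details check out: the divergence of $x\mapsto xy-f(x)$ when $y\notin[f'(-\infty),f'(+\infty)]$, the eventual decay outside a compact interval together with boundedness on it when $y$ lies strictly inside, and the observation that a proper convex function on $\R$ always admits an affine minorant (via a subgradient at an interior point of $\dom f$, or trivially when $\dom f$ is a singleton, which is the only case in which a nonempty convex subset of $\R$ can have empty interior). One small slip in your closing remarks: the degenerate value $f'(+\infty)=+\infty$ does not arise \emph{only} when $\dom f$ is bounded above; it also occurs when $\dom f=\R$ and $f$ grows superlinearly, e.g.\ $f(x)=x^2$. This is immaterial to the argument, which handles both causes uniformly once the conventions for arithmetic in $\eR$ are fixed, but the characterization you gave of when those cases arise is inaccurate as stated.
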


\begin{lemma}\label{lem:bicon-inf}
	Let $(f_i)_{i\in I}$ be a collection of convex functions from $\R$ to $\eR$
	which are non-decreasing over some convex set $C\subseteq \R$. Then for all $x\in\inter C$
	\begin{displaymath}
		\lim_{x'\to x^{-}}\inf_{i\in I} f_i(x')
		\leq \inf_{i\in I} \bicon{f_i}(x)\leq \inf_{i\in I} f_i(x)
		\,.
	\end{displaymath}
\end{lemma}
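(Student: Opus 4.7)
The right inequality $\inf_i \bicon{f_i}(x)\leq \inf_i f_i(x)$ is immediate from the Fenchel--Moreau inequality $\bicon{f_i}\leq f_i$ (\cref{prop:conj}, \cref{it:fm}) applied termwise.

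For the left inequality, I would first observe that $x'\mapsto\inf_i f_i(x')$ is non-decreasing on $C$ as an infimum of non-decreasing functions, so $\lim_{x'\to x^-}\inf_i f_i(x')$ exists and equals $\sup_{x'<x,\,x'\in C}\inf_i f_i(x')$. Set $m\eqdef \inf_i \bicon{f_i}(x)$; the bound is trivial if $m=+\infty$, so assume $m$ is finite. Given $\eps>0$, pick $i^*\in I$ with $\bicon{f_{i^*}}(x)\leq m+\eps/2$, which we may assume corresponds to a proper $f_{i^*}$ (the degenerate cases being handled separately). It then suffices to find $x'<x$ in $C$ with $f_{i^*}(x')\leq \bicon{f_{i^*}}(x)+\eps/2$, since then $\inf_i f_i(x')\leq f_{i^*}(x')\leq m+\eps$ and $\eps$ is arbitrary.

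To produce such an $x'$, I would apply \cref{lem:biconj-R} to $f_{i^*}$. If $x\in\inter\dom f_{i^*}$, then $\bicon{f_{i^*}}(x)=f_{i^*}(x)$ and $f_{i^*}$ is continuous at $x$ by convexity, so any $x'<x$ in $C$ sufficiently close to $x$ works (and exists since $x\in\inter C$). Otherwise $x\in\partial\dom f_{i^*}$ and $\bicon{f_{i^*}}(x)$ is the one-sided limit of $f_{i^*}$ along $\dom f_{i^*}$. The key step is to rule out that $x$ is the \emph{left} endpoint of $\dom f_{i^*}$: if it were, $f_{i^*}$ would equal $+\infty$ on some interval $(x-\delta,x)\subseteq C$, and monotonicity on $C$ would propagate this to all of $C\cap(x-\delta,+\infty)$, contradicting the nonempty interior of $\dom f_{i^*}$ immediately to the right of $x$ (required by \cref{lem:biconj-R} for $\bicon{f_{i^*}}(x)$ to be finite). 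Hence $x$ is the right endpoint and $\bicon{f_{i^*}}(x)=\lim_{x'\to x^-,\,x'\in\dom f_{i^*}} f_{i^*}(x')$ directly furnishes the needed $x'$. This boundary analysis---specifically the use of monotonicity to exclude the left-endpoint case---is the main technical obstacle.
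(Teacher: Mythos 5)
Your proposal is substantively correct and rests on the same two ingredients as the paper's proof: \cref{lem:biconj-R} for the boundary behavior of $\bicon{f_i}$, and the monotonicity of $f_i$ on $C$ near $x$ to rule out that $x$ is the \emph{left} endpoint of $\dom f_i$. The paper compresses this into a single assertion $\bicon{f_i}(x)\in\set{f_i(x),\lim_{x'\to x^-}f_i(x')}$ (with the left-endpoint exclusion left implicit), and then finishes with the swap $\lim_{x'\to x^-}\inf_i f_i(x')\leq\inf_i\lim_{x'\to x^-}f_i(x')$. Your $\eps$-argument spells out the boundary analysis more explicitly, which is a reasonable alternative presentation of the same idea.

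One logical imprecision is worth flagging. As written, the step ``it suffices to find $x'<x$ in $C$ with $f_{i^*}(x')\leq\bicon{f_{i^*}}(x)+\eps/2$'' does not by itself imply the conclusion. You correctly note at the start that $\lim_{x'\to x^-}\inf_i f_i(x')$ equals the \emph{supremum} $\sup_{x'<x,\,x'\in C}\inf_i f_i(x')$; exhibiting a single point $x'_\eps$ at which $\inf_i f_i$ is at most $m+\eps$ only provides a lower bound on that supremum, not an upper bound. What is actually needed — and what your construction does deliver in both cases — is an entire left-neighborhood of $x$ on which $f_{i^*}\leq\bicon{f_{i^*}}(x)+\eps/2$: in the interior case by local continuity of the convex function $f_{i^*}$, and in the right-endpoint case because monotonicity on $C$ gives $f_{i^*}(x')\leq\lim_{x''\to x^-}f_{i^*}(x'')=\bicon{f_{i^*}}(x)$ for all $x'$ in a left-neighborhood of $x$ contained in $C\cap\dom f_{i^*}$. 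Once phrased that way the argument closes: $\lim_{x'\to x^-}\inf_i f_i(x')\leq m+\eps$ for every $\eps>0$, hence $\leq m$. So the gap is expository rather than conceptual, but it should be repaired.
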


\begin{proof}
	For each $i\in I$ we have by \cref{lem:biconj-R} that
	$\bicon f_i(x)\in\set{f_i(x),\lim_{x'\to x^{-}}f_i(x')}$, so since $f_i$
	is non-decreasing over $C$ and $\bicon f_i\leq f_i$ by \cref{prop:conj},
	the result follows by taking the infimum over $i\in I$ as
  $\lim_{x'\to x^{-}} \inf_{i\in I}f_i(x')\leq \inf_{i\in I}\lim_{x'\to x^{-}} f_i(x')$.
\end{proof}

Fenchel duality theorem is arguably the most fundamental result in convex
analysis, and we will use it in this paper to compute the convex conjugate and
minimum of a convex function subject to a linear constraint. The following
proposition summarizes the conclusions obtained by instantiating the duality
theorem to this specific case.

\begin{proposition}
\label{prop:fenchelcor}
	Let $f:X\to(-\infty, +\infty]$ be a convex
	function. For $y\in Y$ and $\eps\in\R$, define
	$f_{y,\eps}:X\to(-\infty,+\infty]$ by
	\begin{displaymath}
		f_{y,\eps}(x)
\eqdef f(x)+\delta_{\set{\eps}}\paren[\big]{\ip xy}
=\begin{cases}f(x)&\text{ if }\ip{x}y=\eps\\+\infty&\text{ otherwise}\end{cases}
	\end{displaymath}
	for all $x\in X$.
	\begin{enumerate}
		\item Assume that $f$ is lower semicontinuous and define
		$\ip{\dom f}y\eqdef\set{\ip xy\given x\in\dom f}$. If
	$\eps\in\inter\paren[\big]{\ip{\dom  f}y}$, then
	$\con{f_{y,\eps}}(\con x) =\inf_{\lambda\in\R} \con f(\con x+\lambda
	y) - \lambda\cdot\eps$ for all $\con x\in Y$, where
			the infimum is reached whenever $\con{f_{y,\eps}}(\con x)$ is finite.
\item Assume that $f$ is non-negative and satisfies $f(0) = 0$. Define
	the \emph{marginal value function}
	\begin{equation}
		\lbsymbol_{y, f}(\eps)
		\eqdef\inf_{x\in X} f_{y,\eps}(x)
		=\inf\set*{ f(x)\given x\in X\land \ip xy=\eps}\,.
		\label{eqn:ipmPhiy}
	\end{equation}
	Then $\lbsymbol_{y, f}$ is a non-negative convex function satisfying
	$\lbsymbol_{y, f}(0) = 0$ and its convex conjugate is given by
	$\con{\lbsymbol_{y, f}}(t) =\con f(ty)$. 
	Furthermore, $\lbsymbol_{y, f}$ is lower semicontinuous at $\eps$, that
	is $\lbsymbol_{y, f}(\eps)=\bicon{\lb y f}(\eps)$, if and only if
	strong duality holds for problem \eqref{eqn:ipmPhiy}, i.e.~if and
	only if
	\[
		\inf\set*{ f(x)\given x\in X\land \ip xy=\eps}
			=
		\sup\set*{t\cdot\eps - \con f(t\cdot y)\given t\in\R}
		\,.
	\]
	\end{enumerate}

\end{proposition}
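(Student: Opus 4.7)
The plan for part 1 is to introduce the value function
\[
V(\eps') \eqdef \inf\set*{f(x) - \ip{x}{\con x} \given x \in X,\ \ip{x}{y} = \eps'}
\]
so that $\con{f_{y,\eps}}(\con x) = -V(\eps)$ directly from the definition of the conjugate, and then apply Fenchel--Moreau to $V$. Convexity of $V$ follows from \cref{lem:optimal-value-convex} applied to the jointly convex map $(\eps', x) \mapsto f(x) - \ip x {\con x} + \delta_{\set{\eps'}}(\ip x y)$, or equivalently by a direct convex-combination argument on feasible primal solutions. A swap of suprema then yields
\[
\con V(\lambda) = \sup_{\eps'\in\R}\,\sup_{x\,:\,\ip x y = \eps'}\bigl[\lambda\eps' - f(x) + \ip x {\con x}\bigr] = \sup_{x\in X}\bigl[\ip x {\con x + \lambda y} - f(x)\bigr] = \con f(\con x + \lambda y),
\]
so that the claimed identity reduces to $V(\eps) = \bicon V(\eps)$ together with attainment of the supremum defining $\bicon V(\eps)$.

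The hypothesis $\eps \in \inter(\ip{\dom f}{y})$ is what drives the duality: since $\dom V = \ip{\dom f}{y}$ by construction, the point $\eps$ lies in $\inter(\dom V)$. In the improper case where $V$ assumes $-\infty$ somewhere, convexity forces $V \equiv -\infty$ on $\inter(\dom V)$ and $\con V \equiv +\infty$, making both sides of the claimed identity trivially $+\infty$. Otherwise $V$ is a proper convex function on $\R$, and \cref{lem:biconj-R} gives $V(\eps) = \bicon V(\eps)$ at the interior point $\eps$; moreover, proper convex functions on $\R$ are continuous at interior points of their domain and therefore admit a nonempty subdifferential there, so any $\lambda \in \partial V(\eps)$ saturates Fenchel--Young and realizes the infimum in $\inf_\lambda[\con f(\con x + \lambda y) - \lambda \eps] = -\bicon V(\eps)$, handling attainment whenever $\con{f_{y,\eps}}(\con x) = -V(\eps)$ is finite.

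For part 2, the non-negativity of $\lb y f$ is immediate from $f \geq 0$, while $\lb y f(0) = 0$ follows from the squeeze $0 \leq \lb y f(0) \leq f(0) = 0$; convexity is once more a consequence of \cref{lem:optimal-value-convex}. The conjugate formula reuses the same swap of suprema with $\con x = 0$: $\con{\lb y f}(t) = \sup_x[t\ip x y - f(x)] = \con f(ty)$. The final equivalence is then an unfolding of definitions: strong duality at $\eps$ reads $\lb y f(\eps) = \sup_t[t\eps - \con f(ty)] = \sup_t[t\eps - \con{\lb y f}(t)] = \bicon{\lb y f}(\eps)$, and \cref{lem:biconj-R} identifies the set of equality points as exactly those where $\lb y f$ is lower semicontinuous.

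The main obstacle lies in part 1, in linking the Slater-type interiority hypothesis on $\ip{\dom f}{y}$ to both the Fenchel--Moreau equality for $V$ and the existence of a subgradient at $\eps$. Both rely crucially on the fact that $V$ is a convex function on the real line: the regularity statements in \cref{lem:biconj-R} and the automatic continuity of proper convex functions at interior points of their domain are specific to $\R$, which is why reducing to a single linear equality constraint is precisely what makes this instance of Fenchel duality accessible.
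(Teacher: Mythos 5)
Your proof is correct, but it departs from the paper's argument in a substantive and instructive way. The paper proves part~1 by citing the Fenchel duality theorem directly \citep[Corollary 2.6.4, Theorem 2.8.1]{Z02} and part~2 via the perturbation function $F(x,\eps)=f_{y,\eps}(x)$ with further citations to the same source; constraint qualification is handled by the cited infinite-dimensional machinery. You instead observe that both primal and dual only see the constraint through the scalar value $\eps'$, which lets you collapse the entire problem onto the auxiliary convex function $V:\R\to\eR$. From there the argument is purely one-dimensional: the interiority hypothesis $\eps\in\inter(\ip{\dom f}{y})=\inter(\dom V)$ together with \cref{lem:biconj-R} gives $V(\eps)=\bicon V(\eps)$, and the dual attainment comes from the (elementary) fact that a proper convex function on $\R$ is continuous, hence subdifferentiable, at interior points of its domain. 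This buys a self-contained proof that needs only $\R$-level convex analysis in place of the general Fenchel duality theorem for locally convex spaces, and it makes transparent why the interiority condition is exactly the right hypothesis: it is the Slater-type condition for a scalar convex problem, where such conditions are nearly automatic. The price is a slightly longer argument, and one should note that the ``improper'' case you dispose of separately ($V$ taking $-\infty$, forcing $\con V\equiv+\infty$) is worth the care you give it; the complementary improper case $V\equiv+\infty$ is excluded by the very hypothesis $\eps\in\inter(\dom V)$. Your treatment of part~2 is essentially what the paper does once the citations are unfolded, so the genuine novelty is in part~1.
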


\begin{proof}
	\begin{enumerate}
		\item This follows from a direct application of Fenchel's duality
			theorem (see e.g.\ \citet[Corollary 2.6.4, Theorem 2.8.1]{Z02}).
		\item Define the \emph{perturbation function} $F:X\times\R \to\eR$ by
			$F(x, \eps)\eqdef f_{y,\eps}(x) = f(x) + \delta_{\set
			0}\paren[\big]{\ip xy-\eps}$ so that $\lbsymbol_{y,
			f}(\eps)=\inf_{x\in X}F(x,\eps)$. Since $F$ is non-negative, jointly
			convex over the convex set $X\times\R$ and $F(0,0) = 0$, we get that
			$\lbsymbol_{y, f}$ is itself convex, non-negative, and satisfies
			$\lbsymbol_{y, f}(0)=0$.  Furthermore, $\con F(\con x, t)=\con f(\con
			x + ty)$ and $\con{\lbsymbol_{y, f}}(t)=\con F(0,t)=\con f(ty)$ by
			e.g.\ \citet[Theorem 2.6.1, Corollary 2.6.4]{Z02}.\qedhere
	\end{enumerate}
\end{proof}

Finally, we will use the following result giving a sufficient condition for
a convex function to be bounded below. Most such results in convex analysis
assume that the function is either lower semicontinuous or bounded above on an
open set. In contrast, the following lemma assumes that the function is upper
bounded on a closed, convex, bounded set of a Banach space, or more generally
on a \emph{cs-compact} subset of a real Hausdorff topological vector space.

\begin{lemma}[{cf.~\citet[Example 1.6(0), Remark 1.9]{K86}}]
\label{lem:convex-bounded-cs-compact}
	Let $C$ be a cs-compact subset of a real Hausdorff topological
	vector space. If $f:C\to\R$ is a convex function such that
	$\sup_{x\in C}f(x)<+\infty$, then $\inf_{x\in C}f(x)>-\infty$.
	In particular, if $f:C\to\R$ is linear, then $\sup_{x\in C}
	f(x)<+\infty$ if and only if $\inf_{x\in C}f(x)>-\infty$.
\end{lemma}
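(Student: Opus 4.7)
The plan is to argue by contradiction: assume $\inf_{x\in C} f(x) = -\infty$ and exhibit a point $y\in C$ at which $f$ would be forced to take the value $-\infty$, contradicting the hypothesis that $f$ is real-valued. The point of cs-compactness is that genuinely infinite convex combinations of points of $C$ land back in $C$, so any unboundedness of $f$ from below can be concentrated at a single point via a geometric-series convex combination.

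Concretely, I would pick a sequence $(x_n)_{n\geq 1}$ in $C$ with $f(x_n)\leq -4^n$, and set $y\eqdef\sum_{n=1}^{\infty}2^{-n}x_n$; this convex series converges in the ambient space to a point of $C$ by cs-compactness. The key observation is that for every $N\geq 1$, the tail $z_N\eqdef \sum_{m=1}^{\infty} 2^{-m} x_{N+m}$ is itself a convex series of elements of $C$, and so belongs to $C$; by continuity of the vector-space operations one then obtains the finite convex decomposition
\begin{displaymath}
    y \;=\; \sum_{n=1}^N 2^{-n} x_n \;+\; 2^{-N} z_N
    \,.
\end{displaymath}
Writing $M\eqdef\sup_{x\in C}f(x)<+\infty$ and applying Jensen's inequality for this finite convex combination yields
\begin{displaymath}
    f(y) \;\leq\; \sum_{n=1}^N 2^{-n} f(x_n) + 2^{-N} f(z_N)
    \;\leq\; -\sum_{n=1}^N 2^n + 2^{-N} M
    \,,
\end{displaymath}
which tends to $-\infty$ as $N\to\infty$, contradicting $f(y)\in\R$. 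The ``in particular'' linear case then follows by applying the first part to both $f$ and $-f$: linearity of $f$ makes $-f$ convex on $C$ with $\sup_C(-f)=-\inf_C f$, so each of the two finiteness conditions implies the other.

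The only point of care is verifying that the formal series identity for $y$ is a genuine equality in the ambient Hausdorff topological vector space, which amounts to splitting the convergent series $\sum_{n=1}^{\infty} 2^{-n}x_n$ into a finite head and a tail and using continuity of addition and scalar multiplication together with the cs-compact-guaranteed convergence of the tail series to $z_N$. Beyond this routine bookkeeping, no lower semicontinuity, interior-point, or Baire-category hypothesis enters, which matches the paper's motivation for stating the lemma in this form.
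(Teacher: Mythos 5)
Your proof is correct and follows essentially the same path as the paper's: the core move is the head--tail decomposition $y=\sum_{n=1}^N 2^{-n}x_n + 2^{-N}z_N$ with Jensen's inequality applied to the head and the finite upper bound $M$ applied to the tail, which is precisely what the paper does (with $r_0=\Lambda_n s_n + (1-\Lambda_n)r_n$). The only difference is organizational: the paper factors the argument through the intermediate notion of \emph{cs-convexity} (\cref{lem:cs-closed-cs-convex} establishes it, \cref{lem:cs-convex-cs-compact} uses it), whereas you fuse the two steps into a single direct contradiction, which is slightly more economical but mathematically identical.
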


The notion of cs-compactness (called $\sigma$-convexity in \citet{K86}) was
introduced and defined \citea[in]{by Jameson in}{J72}, and Proposition
2 of the same paper states that closed, convex, bounded sets of Banach spaces
  are cs-compact. For completeness, we include a proof of
  \cref{lem:convex-bounded-cs-compact} in \cref{sec:convex-bounded-proof}.

\subsection{Orlicz spaces}
\label{sec:orlicz-prelims}
We will use elementary facts from the theory of Orlicz spaces which we now
briefly review (see for example \citet{Leonard07} for a concise exposition
or \citet{RR91} for a more complete reference).  A function
$\theta:\R\to[0,+\infty]$ is a \emph{Young function} if it is a convex,
lower semicontinuous, and even function with $\theta(0)=0$ and
$0<\theta(s)<+\infty$ for some $s>0$. Then writing $\intf\theta\nu:
f\mapsto\ex*\nu{\theta(f)}$ for $\nu\in\M$,
one defines\footnote{The definition and theory of Orlicz spaces holds more
generally for $\sigma$-finite measures. The case of finite measures already
covers all the applications considered in this paper whose focus is primarily
on probability measures.} two spaces associated with
$\theta$:
\begin{itemize}
	\item the Orlicz space $L^\theta(\nu)\eqdef\set[\big]{f\in L^0(\nu)\given
		\exists\alpha>0, \intf\theta\nu(\alpha f)<\infty}$,
	\item the Orlicz heart \citep{ES89}
		$\orhrt\theta(\nu)\eqdef\set[\big]{f\in L^0(\nu)\given\forall\alpha>0,
		\intf\theta\nu(\alpha f)<\infty}$, also known as the
 Morse--Transue space \citep{MT50},
\end{itemize}
which are both Banach spaces when equipped with the Luxemburg norm $\norm
f_\theta\eqdef \inf\set{t>0\given \intf\theta\nu(f/t)\leq 1}$.  Furthermore,
$\orhrt\theta(\nu)\subseteq L^\theta(\nu)\subseteq L^1(\nu)$ and
$L^\infty(\nu)\subseteq L^\theta(\nu)$ for all $\theta$, and
$L^\infty(\nu)\subseteq \orhrt\theta(\nu)$ when $\dom\theta=\R$.  If
$\con\theta$ is the convex conjugate of $\theta$, we have the following
analogue of Hölder's inequality:
$\ex*\nu{f_1f_2}\leq2\norm{f_1}_\theta\norm{f_2}_{\con \theta}$, for all
$f_1\in L^\theta(\nu)$ and $f_2\in L^{\con\theta}(\nu)$, implying that
$(L^\theta, L^{\con\theta})$ are in dual pairing. Furthermore, if
$\dom\theta=\R$, we have that the dual Banach space
$(\orhrt\theta,\norm{\cdotarg}_\theta)^\star$ is isomorphic to
$(L^{\con\theta},\norm\cdotarg_{\con\theta})$.
 \section{Variational representations of \titlephi/-divergences}
\label{sec:var}

In the rest of this paper, we fix a convex and lower semicontinuous function
$\phi:\R\to\R\cup\set{+\infty}$ such that $\phi(1)=0$.  After defining
$\phi$-divergences in \cref{sec:phi-div}, we start with the usual variational
representation of the $\phi$-divergence in \cref{sec:var-gen}, which we then
strengthen in the case of probability measures in \cref{sec:var-prob}. A
reader interested primarily in optimal bounds between $\phi$-divergences
and IPMs can skip \cref{sec:var-gen,sec:var-prob} at a first reading.

\subsection{Convex integral functionals and \titlephi/-divergences}
\label{sec:phi-div}

The notion of a $\phi$-divergence is closely related to the one of a convex
integral functional that we define first.

\begin{definition}[Integral functional]
	\label{def:intf}
	For $\nu\in\M^+$ and $f:\R\to\R\cup\{\infty\}$ a proper convex function, the
	convex integral functional associated with $f$ and $\nu$ is the function
	$\intf f\nu: L^1(\nu)\to\R\cup\{\infty\}$ defined for $g\in L^1(\nu)$ by
	\begin{displaymath}
		\intf f\nu (g) = \ex*\nu {f\circ g}\,.
	\end{displaymath}
\end{definition}
The systematic study of convex integral functionals from the perspective of
convex analysis was initiated by \citea{Rockafellar in}{R68, R71}, who
considered more generally functionals of the form $g\mapsto \ex*\nu{f(\omega,
g(\omega))}$ for $g:\Omega\to\R^n$ and $f:\Omega\times\R^n\to\R$ such that
$f(\omega,\cdot)$ is convex $\nu$-almost everywhere. A good introduction to the
theory of such functionals can be found in \citet{R76, R98_14}. The specific case
of \cref{def:intf} is known as an \emph{autonomous} integral functional, but we
drop this qualifier since it applies to all functionals studied in this paper.

\begin{definition}[$\phi$-divergence]
	\label{def:div}
	For $\mu\in\M$ and $\nu\in\M^+$, write $\mu
	= \mu_c + \mu_s$ with $\mu_c\ll\nu$ and $\mu_s\perp \nu$, the Lebesgue
	decomposition of $\mu$ with respect to $\nu$, and $\mu_s=\mu_s^+-\mu_s^-$
	with $\mu_s^+,\mu_s^-\in\M^+$, the Hahn--Jordan decomposition of
	$\mu_s$. The $\phi$-\emph{divergence} of $\mu$ with respect to $\nu$ is
	the quantity $\di\mu\nu\in\R\cup\set{\infty}$ defined by
	\begin{displaymath}
		\di\mu\nu
		\eqdef\ex*\nu{\phi\paren*{\frac{d\mu_c}{d\nu}}}
		+ \ex{\mu_s^+}{\Omega}\cdot\phi'(\infty)
		- \ex{\mu_s^-}{\Omega}\cdot\phi'(-\infty)
		\,,
	\end{displaymath}
	with the convention $0\cdot(\pm\infty)=0$.
\end{definition}

\begin{remark}
\label{rmrk:divergence-symmetric-defn}
	An equivalent definition of $\di\mu\nu$ which does not require decomposing
	$\mu$ is obtained by choosing $\lambda\in\M^+$ dominating both $\mu$ and
	$\nu$ (e.g.\ $\lambda=\abs\mu+\nu$) and defining
	\begin{displaymath}
		\di\mu\nu
		= \ex*\lambda{\frac{d\nu}{d\lambda}\cdot\phi\paren*{\frac{d\mu/d\lambda}{d\nu/d\lambda}}},
	\end{displaymath}
	with the conventions coming from continuous extension that $0\cdot\phi(a/0)
	= a\cdot\phi'(\infty)$ if $a\geq 0$ and $0\cdot \phi(a/0)
	= a\cdot\phi'(-\infty)$ if $a\leq 0$ (see \cref{def:derinfty}). It is easy
	to check that this definition does not depend on the choice of $\lambda$
	and coincides with \cref{def:div}.
\end{remark}

\begin{table}[t]
	\centering
	\renewcommand{\arraystretch}{1.2}
		\begin{tabular}{@{}m{8.5em}cccm{9em}} \toprule
			Name & $\phi$ & $\phi'(\infty)<\infty$? & $\phi(0)<\infty$? & Notes\\ \midrule
			$\alpha$-divergences & $\frac{x^\alpha-1}{\alpha(\alpha-1)}$ & when $\alpha<1$
			& when $\alpha>0$ &$\phi_\alpha^\dagger=\phi_{1-\alpha}$\\
			KL & $x\log x$ & No & Yes & Limit of $\alpha\to1^-$\\
			reverse KL & $-\log x$ & Yes & No & Limit of $\alpha\to0^+$\\
			squared Hellinger & $(\sqrt{x}-1)^2$ & Yes & Yes & Scaling of $\alpha=\frac12$\\
			$\chi^2$-divergence & $(x-1)^2$ & No & Yes & Scaling of $\alpha=2$\\
			Jeffreys & $(x-1)\log x$ & No & No & KL $+$ reverse KL\\
			$\chi^\alpha$-divergences & $\abs{x-1}^\alpha$ & when $\alpha=1$ & Yes & For $\alpha\geq 1$\ifnatbib{\newline}{ }\citep{V73}\\[0.1em]
			Total variation & $\abs{x-1}$ & Yes & Yes & $\chi^1$-divergence\\[0.2em]
			Jensen--Shannon &\parbox[m]{7em}{\centering $x\log x-\newline(1+x)\log\paren*{\frac{1+x}2}$} & Yes & Yes & a.k.a.~total divergence\newline to the average\\
			Triangular\newline discrimination & $\frac{(x-1)^2}{x+1}$ & Yes & Yes & a.k.a.~Vincze--Le Cam distance \\ \bottomrule
		\end{tabular}
		\caption{Common $\phi$-divergences (see e.g.~\citet{SV16})}
		\label{tab:phi-divergences}
\end{table}

The notion of $\phi$-divergence between probability measures was introduced by
\citea{Csiszár in}{C63,C67} in information theory and independently by
\citea{Ali and Silvey}{AS66} in statistics. The generalization to finite
signed measures is from \citet{CGG99}. Some useful properties of the
$\phi$-divergence include: it is jointly convex in both its arguments, if
$\mu(\Omega)=\nu(\Omega)$ then $\di\mu\nu\geq 0$, with equality if and only if
$\mu=\nu$ assuming that $\phi$ is strictly convex at $1$.

\begin{remark} \label{rem:phi'-finite-infinite}
If $\mu\ll\nu$, the definition simplifies to $\di\mu\nu
= \ex\nu{\phi\circ\frac{d\mu}{d\nu}}$.  Furthermore, if $\phi'(\pm\infty)
= \pm\infty$, then $\di\mu\nu = +\infty$ whenever $\mu\not\ll\nu$. When either
$\phi'(+\infty)$ or $\phi'(-\infty)$ is finite, some authors implicitly or
explicitly redefine $\di\mu\nu$ to be $+\infty$ whenever $\mu\not\ll\nu$, thus
departing from \cref{def:div}. This effectively defines $\di\cdot\nu$ as the
integral functional $\intf\phi\nu$ and the rich  theory of convex integral
functionals can be readily applied. As we will see in this paper, this change
of definition is unnecessary and the difficulties arising from the case
$\mu\not\ll\nu$ in \cref{def:div} can be addressed by separately treating the
component of $\mu$ singular with respect to $\nu$.

An important reason to prefer the general definition is the equality
$\di\nu\mu = \di[\phi^\dagger]\mu\nu$ where $\phi^\dagger:x\mapsto x\phi(1/x)$
is the Csiszár dual of $\phi$, which identifies the \emph{reverse}
$\phi$-divergence---\allowbreak where the arguments are swapped---with the divergence
associated with $\phi^\dagger$. Consequently, any result obtained for the
partial function $\mu\mapsto\di\mu\nu$ can be translated into results for the
partial function $\nu\mapsto\di\mu\nu$ by swapping the role of $\mu$ and $\nu$
and replacing $\phi$ with $\phi^\dagger$. Note that $(\phi^\dagger)'(\infty)
=\lim_{x\to0^+}\phi(x)$ and $(\phi^\dagger)'(-\infty)=\lim_{x\to0^-}\phi(x)$,
and for many divergences of interest (including the Kullback--Leibler divergence)
at least one of $\phi'(\infty)$ and $\phi(0)$ is finite. See
\cref{tab:phi-divergences} for some examples.
\end{remark}

\subsection{Variational representations: general measures}
\label{sec:var-gen}

In this section, we fix a finite and non-negative measure
$\nu\in\M^+\setminus\set 0$ and study the convex functional
$\dif\phi\nu:\mu\mapsto\di\mu\nu$ over a vector space $X$ of finite measures
containing $\nu$. Our primary goal is to derive a \emph{variational
representation} of $\dif\phi\nu$, expressing it as the solution of an
optimization problem over $Y$, a vector space of functions put in dual pairing
with $X$ via $\ip\mu h =\ex\mu h$, for a measure $\mu\in X$ and a function
$h\in Y$.

Care must be taken in specifying the dual pair $(X, Y)$, since the variational
representation we obtain depends on it, or more precisely, on the null
$\sigma$-ideal $\Xi$ of $(\Omega,\A)$ consisting of the measurable sets that
are null for all measures in $X$. Note that, as discussed in
\cref{rem:phi'-finite-infinite}, this ideal $\Xi$ is irrelevant when
$\phi'(\pm\infty)=\pm\infty$ (e.g. when $\KL_\phi$ is the KL divergence), since
then $\di\mu\nu=+\infty$ whenever $\mu\not\ll\nu$, but when, $\phi'(+\infty)$ or
$\phi'(-\infty)$ is finite, it is possible to have $\di\mu\nu<\infty$ even when
$\mu\not\ll\nu$, and we wish to obtain a variational representation for such
discontinuous measures as well.
Denoting by $N$ the $\sigma$-ideal of $\nu$-null
sets, we always have $\Xi\subseteq N$ since we assume that $\nu\in X$. If
$\Xi=N$, then we have $X\subseteq \M_c(\nu)$, corresponding to case of only absolutely
continuous measures, but if $\Xi\subset N$ is a proper subset of $N$ then
$N\setminus \Xi$ quantifies the ``amount of $\nu$-singularities'' of measures
in $X$. The extreme case where $\Xi=\set{\emptyset}$ allows for arbitrary
singularities, since this implies that for any measurable set $A\in\A$, there
exists a measure in $X$ with positive variation on $A$. Furthermore, for common measurable
spaces $\Omega$, there is usually an ambient measure $\lambda$ for which it is
natural to assume that $X\subseteq \M_c(\lambda)$ (e.g. $\lambda$ could be the
Lebesgue measure on $\R$ or more generally the Haar measure on a locally
compact unimodular group).  Denoting by $L$ the null $\sigma$-ideal of this
ambient measure, we could then take $L\subseteq \Xi$, thus restricting the
singularities of measures in $X$.

Formally, we require that the pair $(X, Y)$ satisfies
a \emph{decomposability} condition that we define next. It is closely related
to Rockafellar's notion of a decomposable space \citep[Section
3]{R76} which plays an important role in the theory of convex integral
  functionals.

\begin{definition}[Decomposability]
	\label{assmp:decomp}
	Let $X\subseteq \M(\Omega, \A)$ be a vector space of finite measures and
	define $\Xi \eqdef \set{A\in\A\given \forall \mu\in X,\; \abs\mu(A)=0}$ the
	$\sigma$-ideal of measurable sets that are null for all measures in $X$.
	Let $Y$ be a vector space of measurable functions and let
	$\nu\in X$ be a finite non-negative measure. We say that the
	pair $(X,Y)$ is $\nu$-decomposable if:
	\begin{enumerate}
		\item the pairing $(\mu, h)\mapsto \ex*\mu h$ puts
			 $X$ and $Y$ in separating duality.
		\item $\set*{\mu\in\M_c(\nu)\given \frac{d\mu}{d\nu}\in
			L^\infty(\nu)}\subseteq X$
			and $L^\infty(\Xi)\subseteq Y\subseteq L^0(\Xi)$.
		\item for all $A\notin\Xi$, there
			exists $\mu\in X^+\setminus\set 0$ such that
			$\ex\mu{\Omega\setminus A}=0$.
	\end{enumerate}
\end{definition}

\begin{remark}\label{rem:decomp}
	Note that items $2$ and $3$ together imply that the duality is
	necessarily separating, so the definition would remain identical by only
	requiring in item $1$ that $\ex\mu h$ be finite for each $\mu\in X$ and
	$h\in Y$.
	Furthermore, if we strengthen condition $2$ by requiring that $\set{\mu\in
	\M_c(\mu')\given \frac{d\mu}{d\mu'}\in L^\infty(\mu')}\subseteq X$ for all
	measures $\mu'\in X$, then $3$ is implied. Thus, starting from an arbitrary
	dual pair $(X, Y)$ in separating duality, one can extend $X$ by taking its
	sum with the space of all measures of bounded derivative with respect to
	measures in $X$ and extend $Y$ by taking its sum with $L^\infty(\Xi)$. The
	resulting pair of extended spaces will then be decomposable with respect to
	any measure in $X$.
\end{remark}

\begin{example}
	If $X\subseteq \M_c(\nu)$, then item $2$ implies item $3$,
	and $\nu$-decomposability then simply expresses that $X$ and $Y$
	form a dual pair of decomposable spaces in the sense of \citet[Section
	3]{R76}, once $\M_c(\nu)$ is identified with $L^1(\nu)$ via the
		Radon--Nikodym theorem. An example of a $\nu$-decomposable pair in this
	case is given by $\paren[\big]{\M_c(\nu), L^\infty(\nu)}$. More generally,
	if $\Xi$ is a proper subset of the $\sigma$-ideal of $\nu$-null sets, then
	item 3 requires $X$ to contain ``sufficiently many'' $\nu$-singular measures.
	An example of a $\nu$-decomposable pair for which $\Xi=\set{\emptyset}$ is
	given by $X=\M$ and $Y=\L^b(\Omega)$. An intermediate example which will be
	useful when considering IPMs can be obtained by constructing the largest dual
	pair $(X, Y)$ such that $Y$ contains a class of functions $\G$ of interest.
	The details of the construction are given in \cref{defn:G-dual-pair} and
	decomposability is stated and proved in \cref{lem:xg-yg-decomposable}.
\end{example}

With \cref{assmp:decomp} at hand, our approach to obtain variational
representations of the divergence is simple. We first compute the convex
conjugate $\con{\dif\phi\nu}$ of $\dif\phi\nu$ defined for $h\in Y$ by
\begin{equation}
	\label{eq:div-dual}
	\con{\dif\phi\nu}(h) = \sup_{\mu\in X}\set*{\ex\mu h - \dif\phi\nu(\mu)}
\end{equation}
and prove that $\dif\phi\nu$ is lower semicontinuous. By the Fenchel--Moreau
theorem, we thus obtain the representation $\dif\phi\nu(\mu)
= \bicon{\dif\phi\nu}(\mu)=\sup_{h\in Y}\set[\big]{\ex\mu
h - \con{\dif\phi\nu}(h)}$.

We start with the simplest case where $X\subseteq \M_c(\nu)$, that is when all
the measures in $X$ are $\nu$-absolutely continuous. Since $\dif\phi\nu$
coincides with the integral functional $\intf\phi\nu$ in this case, this lets
us exploit the well-known fact that under our decomposability condition,
$(\intf\phi\nu, \intf{\con\phi\!}\nu)$ form a pair of convex conjugate
functionals. This fact was first observed in \citet{LZ56} in the context of
Orlicz spaces, and then generalized in \citet{R68,R71}.

\begin{proposition}
	\label{prop:i-dual}
	Let $\nu\in\M^+$ be non-negative and finite, and let $(X, Y)$ be
	$\nu$-decom\-posable with $X\subseteq \M_c(\nu)$. Then the convex conjugate
	$\con{\dif\phi\nu}$ of $\dif\phi\nu$ over $X$ is given for all $h\in Y$ by
	\begin{displaymath}
		\con{\dif\phi\nu} (h)= \intf{\con\phi\!}{\nu}(h)=\ex*\nu{\con\phi\circ
		h}.
	\end{displaymath}
	Furthermore $\dif\phi\nu$ is lower semicontinuous, therefore for all $\mu\in
	X$
	\begin{equation}
		\label{eq:var}
		\di\mu\nu = \sup_{h\in Y}\set*{\ex*\mu h-\ex*\nu{\con\phi\circ h}}.
	\end{equation}
\end{proposition}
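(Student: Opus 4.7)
The plan is to invoke the classical theorem of Rockafellar on the convex conjugate of an integral functional over a decomposable dual pair, and then deduce lower semicontinuity together with the variational representation via Fenchel--Moreau.

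First, because $X\subseteq \M_c(\nu)$ and $\nu\in X$, the $\sigma$-ideal $\Xi$ of sets null for every measure in $X$ coincides with the $\sigma$-ideal $N$ of $\nu$-null sets. The Radon--Nikodym theorem then identifies $X$ with a subspace of $L^1(\nu)$ containing $L^\infty(\nu)$ (by $\nu$-decomposability), and $Y$ with a subspace of $L^0(\nu)$ also containing $L^\infty(\nu)$, under the pairing $\ex*\mu h = \ex*\nu{g\cdot h}$ with $g = d\mu/d\nu$. Under this identification $\dif\phi\nu$ coincides with the integral functional $g\mapsto\ex*\nu{\phi\circ g}$.

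To compute its conjugate, the inequality $\con{\dif\phi\nu}(h)\le \ex*\nu{\con\phi\circ h}$ follows by integrating the pointwise Fenchel--Young inequality $\phi(a)+\con\phi(b)\ge ab$. For the reverse inequality I would use a measurable selection argument: for each integer $n\ge 1$, a standard selection theorem applied to the normal integrand $(a,\omega)\mapsto \phi(a)-a\,h(\omega)$ on $a\in[-n,n]$ produces a measurable $a_n\colon\Omega\to[-n,n]$ satisfying $a_n(\omega)h(\omega)-\phi(a_n(\omega))\ge \min\paren*{\con\phi(h(\omega)),n}-1/n$ $\nu$-almost everywhere. Since $a_n\in L^\infty(\nu)$, decomposability places the measure $\mu_n$ with $d\mu_n/d\nu = a_n$ inside $X$; taking $n\to\infty$ via monotone convergence then yields $\con{\dif\phi\nu}(h)\ge \ex*\nu{\con\phi\circ h}$. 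Applying the symmetric argument with $\phi$ and $\con\phi$ swapped (permissible since $\phi=\bicon\phi$ is proper convex lower semicontinuous and since $L^\infty(\nu)\subseteq Y$ supplies enough bounded test functions) then shows $\bicon{\dif\phi\nu}=\dif\phi\nu$. This simultaneously establishes lower semicontinuity of $\dif\phi\nu$ and, via the Fenchel--Moreau theorem (\cref{prop:conj}), the desired variational representation.

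The main obstacle is the measurable selection step, which must accommodate the boundary behavior of $\phi$ (where $\phi$ may take the value $+\infty$) as well as the possibility that $\con\phi(h(\omega))=+\infty$ on a set of positive $\nu$-measure. Truncating the pointwise optimization to $a\in[-n,n]$ circumvents both issues at the cost of a monotone-convergence limit, and simultaneously ensures that the constructed densities are bounded, so that the associated measures lie in $X$ via the hypothesis $\set*{\mu\in\M_c(\nu)\given d\mu/d\nu\in L^\infty(\nu)}\subseteq X$.
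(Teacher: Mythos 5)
You are essentially reproving by hand the result the paper cites as a black box: after the Radon--Nikodym identification of $X\subseteq\M_c(\nu)$ with a decomposable subspace of $L^1(\nu)$, the conjugate formula and lower semicontinuity of $\dif\phi\nu$ are exactly \citet[Theorem~3C]{R76}, which the paper's proof simply invokes. Your plan — Fenchel--Young for one inequality, a measurable selection of bounded near-optimal densities plus a monotone-convergence passage to the limit for the other, and the symmetric argument with $\phi$ and $\con\phi$ swapped for lower semicontinuity — is precisely the mechanism behind that theorem, so there is no difference of approach, only of presentation.

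One intermediate claim is false as stated: there need not exist $a_n\colon\Omega\to[-n,n]$ with $a_n(\omega)h(\omega)-\phi(a_n(\omega))\ge\min\paren*{\con\phi(h(\omega)),n}-1/n$, because $F_n(\omega)\eqdef\sup_{a\in[-n,n]}\paren[\big]{a\,h(\omega)-\phi(a)}$ can fall far short of $\min\paren*{\con\phi(h(\omega)),n}$ when the unconstrained maximizer lies outside $[-n,n]$ (take $\phi=\delta_{[0,M]}$ with $M$ huge, $h$ small and positive, and $n\ll M$). The fix is simply to take $a_n$ to be a measurable selection of the \emph{exact} argmax over $[-n,n]$, so that $a_nh-\phi(a_n)=F_n$; then $F_n\uparrow\con\phi\circ h$ pointwise, $F_n\ge 1\cdot h-\phi(1)=h\in L^1(\nu)$, and monotone convergence gives $\ex*\nu{F_n}\to\ex*\nu{\con\phi\circ h}$, after which your argument runs unchanged.
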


\begin{proof}
	Since $\nu\in X$ by assumption, the function $\dif\phi\nu$ is proper and
	convex over $X$. The proposition is then immediate consequence of
	\citet[Theorem 3C]{R76} after identifying $\M_c(\nu)$ with $L^1(\nu)$ by the
	Radon--Nikodym theorem and noting that $X$ and $Y$ are
	decomposable \citep[Section 3]{R76} by \cref{assmp:decomp}.
\end{proof}

\begin{example}
	\label{ex:stupid-dv}
	Consider the case of the Kullback--Leibler divergence, corresponding to the
	function $\phi:x \mapsto x\log x$. A simple computation gives $\con\phi(x)
	= e^{x-1}$ and \eqref{eq:var} yields as a variational representation, for all
	$\mu\in X$
	\begin{equation}
		\label{eq:stupid-dv}
		\kl\mu\nu
		= \sup_{g\in Y} \set*{\ex\mu g - \ex*\nu{e^{g-1}}},
	\end{equation}
	Note that this representation differs from the Donsker--Varadhan
	representation~\eqref{eq:dv-intro} discussed in the introduction. This
	discrepancy will be explained in the next section.
\end{example}

The variational representation of the $\phi$-divergence in
\cref{prop:i-dual} is well-known (see e.g.\ \citet{RRGP12}). However, as already
discussed, the case where $X$ contains $\nu$-singular measures is also of
interest and has been comparatively less studied in the literature. The
following proposition generalizes the expression for $\con{\dif\phi\nu}$
obtained in \cref{prop:i-dual} to the general case of an arbitrary
$\nu$-decomposable pair $(X, Y)$, without requiring that $X\subseteq
\M_c(\nu)$.

\begin{proposition}
\label{prop:finite-dagger-dual}
Let $\nu\in\M^+$ be a non-negative and finite measure and assume that $(X,Y)$
is $\nu$-decomposable. Then, the functional $\dif\phi\nu$ over $X$ has convex
conjugate $\con{\dif\phi\nu}$ given for all $g\in Y$ by
	\begin{equation}
		\con{\dif\phi\nu}(h)
		= \begin{cases}
		\intf{\con\phi\!}\nu (h)&\text{if } \essim\Xi h\subseteq [\phi'(-\infty),\phi'(\infty)]\\
		+\infty &\text{otherwise}
	\end{cases} \,,
	\label{eqn:finite-dagger-dual}
	\end{equation}
	where $\Xi \eqdef \set{A\in\A\given \forall \mu\in X,\; \abs\mu(A)=0}$ is
	the null $\sigma$-ideal of $X$.
\end{proposition}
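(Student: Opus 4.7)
The strategy is to analyze the Lebesgue decomposition $\mu = \mu_c + \mu_s^+ - \mu_s^-$ relative to $\nu$ of any $\mu \in X$ (as in \cref{def:div}) and handle the absolutely continuous and singular contributions separately. A key observation is that since every $\mu \in X$ annihilates sets in $\Xi$ and $\abs{\mu_s^\pm} \leq \abs\mu$, the positive singular measures $\mu_s^+, \mu_s^- \in \M^+$ also vanish on $\Xi$-sets. This lets me rewrite
\[
\ex\mu h - \di\mu\nu = \Bigl[\ex\nu{h \cdot \tfrac{d\mu_c}{d\nu}} - \intf\phi\nu\bigl(\tfrac{d\mu_c}{d\nu}\bigr)\Bigr] + \Bigl[\ex{\mu_s^+}h - \mu_s^+(\Omega)\phi'(\infty)\Bigr] - \Bigl[\ex{\mu_s^-}h - \mu_s^-(\Omega)\phi'(-\infty)\Bigr]
\]
and then bound each bracket independently.

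Assume first $\essim\Xi h \subseteq [\phi'(-\infty), \phi'(\infty)]$. The Fenchel--Young inequality $h\cdot x - \phi(x) \leq \con\phi(h)$ applied pointwise with $x = d\mu_c/d\nu$ and integrated against $\nu$ shows the absolutely continuous bracket is at most $\intf{\con\phi\!}\nu(h)$. For the singular brackets, $\mu_s^+$ vanishing on $\Xi$-sets gives $\ex{\mu_s^+}h \leq \mu_s^+(\Omega)\esssup_\Xi h \leq \mu_s^+(\Omega)\phi'(\infty)$, making the middle bracket nonpositive; the third bracket is nonpositive by the symmetric argument on $\mu_s^-$. Combining yields the upper bound $\con{\dif\phi\nu}(h) \leq \intf{\con\phi\!}\nu(h)$. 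For the matching lower bound, I would restrict the supremum defining $\con{\dif\phi\nu}(h)$ to $\mu \in X \cap \M_c(\nu)$ (so the singular brackets vanish identically) and invoke \cref{prop:i-dual} on the restricted pair $(X \cap \M_c(\nu), Y)$. The restricted pair inherits $\nu$-decomposability because condition 2 already puts all bounded-density absolutely continuous measures in $X \cap \M_c(\nu)$, and the pairing descends to $Y$ modulo $\nu$-a.e.\ equality without altering the suprema; \cref{prop:i-dual} then gives $\sup_{\mu \in X \cap \M_c(\nu)}\bigl\{\ex\mu h - \intf\phi\nu(d\mu/d\nu)\bigr\} = \intf{\con\phi\!}\nu(h)$.

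For the complementary case $\essim\Xi h \not\subseteq [\phi'(-\infty), \phi'(\infty)]$, assume without loss of generality (by the symmetric argument) that $h \geq \phi'(\infty) + \epsilon$ on some set $A \notin \Xi$ for some $\epsilon > 0$; this forces $\phi'(\infty) < +\infty$. By condition 3 of decomposability there exists $\mu^* \in X^+ \setminus \set 0$ concentrated on $A$, and scaling $t\mu^* \in X^+$ for $t > 0$ gives $\ex{t\mu^*}h \geq t\mu^*(\Omega)(\phi'(\infty) + \epsilon)$, while the definition $\phi'(\infty) = \lim_{x \to \infty}\phi(x)/x$ combined with dominated convergence on the absolutely continuous part of $\mu^*$ yields $\di{t\mu^*}\nu = t\mu^*(\Omega)\phi'(\infty) + o(t)$. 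Thus $\ex{t\mu^*}h - \di{t\mu^*}\nu \to +\infty$ as $t \to \infty$, giving $\con{\dif\phi\nu}(h) = +\infty$.

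I expect the main obstacle to be the lower-bound step: verifying carefully that the restricted pair $(X \cap \M_c(\nu), Y)$ satisfies the hypotheses of \cref{prop:i-dual}, especially since $Y$ naturally lives in $L^0(\Xi)$, whereas \cref{prop:i-dual} is phrased at the $\nu$-a.e.\ level; one must check that the suprema coincide after passing to the $\nu$-a.e.\ quotient of $Y$, and that the decomposability conditions transfer verbatim.
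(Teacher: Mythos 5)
Your proof matches the paper's approach in the two-sided estimate on the absolutely continuous part: the Fenchel--Young upper bound via the Lebesgue decomposition and the restriction to $X\cap\M_c(\nu)$ followed by \cref{prop:i-dual} are exactly what the paper does (and your worry about quotienting $Y$ by $\nu$-a.e.\ equality is the right thing to check; the paper does precisely this, taking $Y_\nu = Y/\!\!\sim_\nu$).

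The gap is in the complementary case $\essim\Xi h\not\subseteq[\phi'(-\infty),\phi'(\infty)]$. Your candidate family $t\mu^*$ with $\mu^*$ concentrated on $A$ does not in general satisfy $\di{t\mu^*}\nu = t\mu^*(\Omega)\phi'(\infty)+o(t)$. The problem arises whenever $\phi(0)=+\infty$ (e.g.\ the reverse KL or Jeffreys divergences, both of which also have $\phi'(\infty)<\infty$, so this case is live): if $\nu(A)=0$, then $\mu^*\perp\nu$ so the $\nu$-density of $\mu^*_c$ is identically zero, and $\di{t\mu^*}\nu=\nu(\Omega)\phi(0)+t\mu^*(\Omega)\phi'(\infty)=+\infty$ for every $t>0$; even when $\nu(A)>0$, the generic $\mu^*$ has $d\mu^*_c/d\nu=0$ on a $\nu$-nonnull set (e.g.\ all of $\Omega\setminus A$), with the same conclusion. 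In either situation the optimand is $-\infty$ for every $t$, so the family tells you nothing, and the dominated-convergence argument never gets off the ground since there is no integrable dominating function. The paper avoids this by doing two things you omit: (a) it first observes that the lower bound $\sup_{\mu\in X}\{\ex\mu h-\dif\phi\nu(\mu)\}\geq\intf{\con\phi\!}\nu(h)$ holds unconditionally, so that the sub-case $\nu(A)>0$ is already done because $\con\phi$ is $+\infty$ above $\phi'(\infty)$ and hence $\intf{\con\phi\!}\nu(h)=+\infty$; and (b) in the remaining sub-case $\nu(A)=0$, it scales \emph{around} $\nu$, taking $\mu=\nu+c\mu_A$, so that the $\nu$-absolutely continuous part has density $1$ and contributes $\phi(1)=0$, yielding $\di{\nu+c\mu_A}\nu=c\mu_A(\Omega)\phi'(\infty)<\infty$ and then $\ex{\nu+c\mu_A}h-\di{\nu+c\mu_A}\nu\geq\ex\nu h+c\mu_A(\Omega)\bigl(\alpha-\phi'(\infty)\bigr)\to+\infty$. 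With those two adjustments your proof goes through; without them it fails for exactly the divergences with $\phi(0)=+\infty$ and $\phi'(\infty)<\infty$.
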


\begin{proof}
	For $h\in Y$, let $C(h)$ be the right-hand side of \cref{eqn:finite-dagger-dual},
	our claimed expression for $\con{\dif\phi\nu}(h)$.

First, we show that $\sup_{\mu\in X}\set[\big]{\ex\mu h - \dif\phi\nu(\mu)}\leq C(h)$. For this, we
assume that $\essim\Xi h\subseteq [\phi'(-\infty),\phi'(\infty)]$,
as otherwise $C(h)=+\infty$ and there is nothing to prove. For $\mu\in X$, write
$\mu=\mu_c+\mu_s^+-\mu_s^-$ with $\mu_c\in\M_c(\nu)$ and
$\mu_s^+,\mu_s^-\in\M^+_s(\nu)$, so that
\begin{equation}
	\label{eq:foo-dec}
	\ex\mu h - \dif\phi\nu(\mu) = \ex{\mu_c}
	h - \intf\phi\nu\paren*{\frac{d\mu_c}{d\nu}}
	+ \ex{\mu_s^+}h - \mu_s^+(\Omega)\cdot\phi'(\infty)
	- \ex{\mu_s^-}h + \mu_s^-(\Omega)\cdot\phi'(-\infty).
\end{equation}
Observe that $\ex{\mu_c} h - \intf\phi\nu\paren*{\frac{d\mu_c}{d\nu}}
= \ex\nu{\frac{d\mu_c}{d\nu}\cdot h - \phi\circ\frac{d\mu_c}{d\nu}}\leq
\ex\nu{\con\phi\circ h}=\intf{\con\phi\!}\nu(h)$, by the Fenchel--Young
inequality applied to $\phi$ and monotonicity of the integral with respect to
the non-negative measure $\nu$. Since $\mu\ll\Xi$ by definition of $\Xi$ and
thus $\mu_s^+\ll\Xi$, we have $\phi'(\infty)\geq\esssup_\Xi h\geq
\esssup_{\mu_s^+} h$ so that $\ex{\mu_s^+}h -\mu_s^+(\Omega)\cdot\phi'(\infty)
= \ex{\mu_s^+}{h - \phi'(\infty)}\leq 0$.  Similarly
$\mu_s^-(\Omega)\cdot\phi'(-\infty) - \ex{\mu_s^-}h\leq 0$. Using these bounds
in \eqref{eq:foo-dec} yields $\ex\mu h - \dif\phi\nu(\mu)\leq C(h)$ as desired.

Next, we show that $\sup_{\mu\in X}\set[\big]{\ex\mu h - \dif\phi\nu(\mu)}\geq C(h)$.
Observe that
\begin{equation}
	\label{eq:sing-cont-case}
\sup_{\mu\in X}\set[\big]{\ex\mu h - \dif\phi\nu(\mu)}
\geq\sup_{\mu\in X_c(\nu)}\set[\big]{\ex\mu h - \dif\phi\nu(\mu)}
=\intf{\con\phi\!}\nu(h)\,,
\end{equation}
where the equality follows from \cref{prop:i-dual} applied to $X_\nu=X_c(\nu)$
and $Y_\nu=Y/\!\!\sim_\nu$ where $\sim_\nu$ is the equivalence relation of
being equal $\nu$-almost everywhere. If $\essim\Xi
h\subseteq[\phi'(-\infty),\phi'(\infty)]$, then $\intf{\con\phi}\nu(h)=C(h)$
and \eqref{eq:sing-cont-case} gives the desired conclusion.  If $\esssup_\Xi
h>\phi'(\infty)$, let $\alpha\in\R$ such that $\phi'(\infty)<\alpha<\esssup_\Xi
h$. Then $A=\set{\omega\in\Omega\given h(\omega)>\alpha}$ is a measurable set
in $\A\setminus\Xi$.
If $\nu(A)>0$,  then $\intf{\con\phi\!}\nu(h)=\infty=C(h)$, since
$\dom\con\phi\subseteq [\phi'(-\infty),\phi'(\infty)]$ and
\eqref{eq:sing-cont-case} again gives the desired conclusion.
If $\nu(A)>0$, then by \cref{assmp:decomp} there exists $\mu_A\in
X^+\setminus\set{0}$ such that $\mu_A(\Omega\setminus A)=0$. But then
\begin{align*}
\sup_{\mu\in X}\set[\big]{\ex\mu h - \dif\phi\nu(\mu)}
&\geq \sup_{c>0}\set[\big]{\ex{(\nu + c\mu_A)}h - \dif\phi\nu(\nu + c\mu_A)}\\
&= \ex\nu h+\sup_{c>0}\set[\big]{c\ex{\mu_A}h - c\mu_A(\Omega)\cdot\phi'(\infty)}\\
&\geq \ex\nu h
+\sup_{c>0}\set[\big]{c\mu_A(\Omega)\cdot\paren[\big]{\alpha-\phi'(\infty)}}
=+\infty=C(h)\,,
\end{align*}
where the first equality is because
$\intf\phi\nu\paren*{\frac{d\nu}{d\nu}}=\phi(1)=0$ and $\mu_A\in X_s^+(\nu)$,
and the second is because $\ex{\mu_A}\Omega>0$ and $\alpha>\phi'(\infty)$. The
case $\essinf_\Xi h(\Omega)<\phi'(-\infty)$ is analogous.
\end{proof}

\begin{remark}
	\label{rem:par}
	Although the expression of $\con{\dif\phi\nu}$ obtained in
	\cref{prop:finite-dagger-dual} should coincide with the one obtained in
	\cref{prop:i-dual} when $X\subseteq \M_c(\nu)$ (in which case $\Xi$
	coincides with the $\sigma$-ideal of $\nu$-null sets), it appears different
	at first glance because of the explicit constraint on the $\Xi$-essential
	range of $g$ present in \eqref{eqn:finite-dagger-dual}.  However, this
	constraint is also present, though implicit, in \cref{prop:i-dual} since
	$\overline{\dom\con\phi}=[\phi'(-\infty),\phi'(\infty)]$ and thus
	$\intf{\con\phi}\nu(h)=+\infty$ whenever $\essim\nu
	h\not\subseteq[\phi'(-\infty),\phi(\infty)]$. When $X$ is allowed to
	contain measures which are not absolutely continuous with respect to $\nu$,
	this implicit constraint on the $\nu$-essential range is simply
	strengthened to restrict the $\Xi$-essential range instead. In the extreme case
	where $\Xi=\set{\emptyset}$ then the true range of $h$ is constrained.
\end{remark}

Finally, we prove that $\dif\phi\nu$ is lower semicontinuous over $X$,
yielding a variational representation of $\di\mu\nu$ in the general case.

\begin{proposition}
\label{prop:finite-dagger-lsc}
Let $\nu\in\M^+$ be a non-negative and finite measure and assume that $(X,Y)$
is $\nu$-decomposable. Then, $\dif\phi\nu$ is lower semicontinuous over $X$.
Equivalently, we have for all $\mu\in X$ the biconjugate representation
	\begin{displaymath}
		\di\mu\nu
		=\sup\set[\big]{ \ex\mu g-\intf{\con\phi\!}\nu(g)
	\given g\in Y\land \essim\Xi g\subseteq [\phi'(-\infty),\phi'(\infty)]} \,,
	\end{displaymath}
	where $\Xi = \set{A\in\A\given \forall \mu\in X,\; \abs\mu(A)=0}$ is
	the null $\sigma$-ideal of $X$.
\end{proposition}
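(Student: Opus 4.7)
The plan is to deduce lower semicontinuity from the Fenchel--Moreau theorem using the conjugate already computed in \cref{prop:finite-dagger-dual}. Since $\dif\phi\nu(\nu)=\phi(1)=0$ and $\dif\phi\nu$ is convex, it is a proper convex functional on $X$; by \cref{prop:conj}, \cref{it:fm}, it is lower semicontinuous if and only if $\bicon{\dif\phi\nu}=\dif\phi\nu$, and substituting the expression of \cref{prop:finite-dagger-dual} into the definition of the biconjugate turns the latter identity into exactly the claimed representation. The proposition therefore reduces to verifying $\bicon{\dif\phi\nu}(\mu)=\dif\phi\nu(\mu)$ for each $\mu\in X$. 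The inequality $\bicon{\dif\phi\nu}\le \dif\phi\nu$ is the Fenchel--Young bound from \cref{prop:conj}, \cref{it:fy}, and can also be read off from the ``$\le C(h)$'' half of the proof of \cref{prop:finite-dagger-dual}.

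For the reverse inequality I would decompose $\mu=\mu_c+\mu_s^+-\mu_s^-$ with $\mu_c\ll\nu$ and $\mu_s^+,\mu_s^-$ non-negative and $\nu$-singular, concentrated on disjoint measurable $\nu$-null sets $A_+$ and $A_-$. When $\dif\phi\nu(\mu)$ is finite (which forces $\phi'(\pm\infty)$ to be finite whenever the corresponding singular part is nonzero), \cref{prop:i-dual} applied to the pair $(X_c(\nu),Y/\!\!\sim_\nu)$ supplies, for every $\eps>0$, a representative $h_c\in Y$ with
\begin{displaymath}
	\ex{\mu_c}{h_c}-\intf{\con\phi\!}\nu(h_c)\ge \di{\mu_c}\nu-\eps.
\end{displaymath}
Finiteness of $\intf{\con\phi\!}\nu(h_c)$ forces $h_c\in[\phi'(-\infty),\phi'(\infty)]$ $\nu$-almost everywhere, so truncating $h_c$ into this interval does not affect either side; modifying the result further on the $\nu$-null set $A_+\cup A_-$ to equal $\phi'(\infty)$ on $A_+$ and $\phi'(-\infty)$ on $A_-$ yields a bounded measurable function $h\in L^\infty(\Xi)\subseteq Y$ with $\essim\Xi h\subseteq[\phi'(-\infty),\phi'(\infty)]$ satisfying
\begin{displaymath}
	\ex\mu h-\intf{\con\phi\!}\nu(h)\ge \dif\phi\nu(\mu)-\eps
\end{displaymath}
by direct computation. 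The remaining cases where $\dif\phi\nu(\mu)=+\infty$---either from $\phi'(\pm\infty)$ being infinite with a matching nonzero singular part, or from $\intf\phi\nu(d\mu_c/d\nu)=+\infty$---are handled by scaled indicators $\pm n\cdot\ind_{A_\pm}$ combined with an admissible baseline, or by unbounded test functions produced by \cref{prop:i-dual}, any of which drive the supremum to $+\infty$.

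The main technical hurdle is ensuring that the assembled function $h$ genuinely lies in $Y$ with $\Xi$-essential range inside $[\phi'(-\infty),\phi'(\infty)]$: truncating $h_c$ first so that it is bounded and then exploiting the inclusion $L^\infty(\Xi)\subseteq Y$ from \cref{assmp:decomp} to absorb the bounded modifications on the null set $A_+\cup A_-$ should sidestep this, after which the verification is a routine bookkeeping of the contributions from $\mu_c$, $\mu_s^+$, and $\mu_s^-$.
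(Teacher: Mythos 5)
Your overall plan is the same as the paper's---reduce to $\bicon{\dif\phi\nu}\geq\dif\phi\nu$ via Fenchel--Moreau and build near-optimal test functions from \cref{prop:i-dual}---but there is a genuine gap in how you guarantee the assembled function lies in $Y$. Applying \cref{prop:i-dual} to the pair $(X_c(\nu),Y/\!\!\sim_\nu)$ only hands you an $h_c\in Y$ that may be unbounded. Truncating $h_c$ into $[\phi'(-\infty),\phi'(\infty)]$ produces an element of $L^\infty(\Xi)\subseteq Y$ only when \emph{both} endpoints are finite. In the mixed case, say $\phi'(-\infty)=-\infty$ and $\phi'(\infty)<\infty$ (so $\mu_s^+\neq 0$ with $\di\mu\nu<\infty$ is still possible), clipping from above leaves a function that is still unbounded below, and you cannot conclude that it lies in $Y$: it differs from $h_c$ by $-(h_c-\phi'(\infty))^+$, which is supported on a $\nu$-null (but generally not $\Xi$-null) set and may be unbounded, while \cref{assmp:decomp} only guarantees that $Y$ contains $L^\infty(\Xi)$, not arbitrary functions supported on $\nu$-null sets. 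The ``unbounded test functions produced by \cref{prop:i-dual}'' in your sketch of the case $\intf\phi\nu(d\mu_c/d\nu)=+\infty$ have the same membership problem.

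The paper sidesteps this by applying \cref{prop:i-dual} to $(\M_c(\nu),L^\infty(\nu))$ rather than to $(X_c(\nu),Y/\!\!\sim_\nu)$, so the near-optimizer $g_c$ is bounded from the outset; one can then choose a representative $\tilde g_c\in\L^b(\Omega)$ whose range lies everywhere in $[\phi'(-\infty),\phi'(\infty)]$, and all subsequent modifications on $\nu$-null sets by finite constants keep the function in $\L^b(\Omega)$, hence in $L^\infty(\Xi)\subseteq Y$. The paper also uses $\beta,\gamma\in\R\cap[\phi'(-\infty),\phi'(\infty)]$ and passes to the limit, which handles the infinite-endpoint and infinite-divergence cases uniformly; your choice of using the endpoint values $\phi'(\pm\infty)$ on $A_\pm$ directly (legitimate whenever $\mu_s^\pm\neq 0$ and the divergence is finite) is a small simplification over the limiting argument, but it only becomes available once the boundedness of the test function is secured by the $L^\infty(\nu)$ device.
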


\begin{proof}
	Since $\dif\phi\nu$ is proper, by the Fenchel--Moreau theorem it suffices to
	show that $\bicon{\dif\phi\nu}\geq\dif\phi\nu$.  For $\mu\in X$, write
	$\mu=\mu_c+\mu_s^+-\mu_s^-$ with $\mu_c\in\M_c(\nu)$, and
	$\mu_s^+,\mu_s^-\in\M_s^+(\nu)$ by the Lebesgue and Hahn--Jordan
	decompositions. Furthermore, let $(C,P,N)\in \A^3$ be a partition of
	$\Omega$ such that $\abs{\mu_c}(\Omega\setminus C)=\nu(\Omega\setminus
	C)=0$, $\mu_s^+(\Omega\setminus P)=0$ and $\mu_s^-(\Omega\setminus N)=0$.
	By \cref{prop:finite-dagger-dual},
  \begin{equation}
		\label{eq:foo-bicon}
    \begin{aligned}
      \bicon{\dif\phi\nu}(\mu)
      = \sup\big\{\ex{\mu_c} g - \intf{\con\phi\!}\nu (g)
      &+\ex{\mu_s^+} g -\ex{\mu_s^-} g\\&\SetGiven[\big] g\in Y\land
        \essim\Xi g\subseteq [\phi'(-\infty),\phi'(\infty)]\big\} \,.
    \end{aligned}
  \end{equation}

	Let $\alpha\in\R$ such that $\alpha
	< \intf\phi\nu\paren*{\frac{d\mu_c}{d\nu}}$. Applying \cref{prop:i-dual}
	with $X_\nu=\M_c(\nu)$ and $Y_\nu=L^\infty(\nu)$, we get the existence of $g_c\in
	L^\infty(\nu)$ such that $\ex{\mu_c} {g_c}
	- \intf{\con\phi\!}\nu(g_c)>\alpha$. Furthermore, since
	$\dom{\con\phi}\subseteq[\phi'(-\infty),\phi'(\infty)]$, we have that
	$g_c\in[\phi'(-\infty), \phi'(\infty)]$ $\nu$-almost everywhere.
	Consequently, there exists a representative $\tilde g_c\in\L^b(\Omega)$
	of $g_c$ such that $\tilde g_c(\Omega)\subseteq
	[\phi'(-\infty),\phi'(\infty)]$.

	For $\beta,\gamma\in\R\cap[\phi'(-\infty),\phi'(\infty)]$ (which is
	nonempty since it contains $\dom\con\phi$ and $\phi$ is convex and proper),
	define $\tilde g:\Omega\to \R$ by
	\begin{displaymath}
		\tilde g(\omega) = \begin{cases}
			\tilde g_c(\omega) &\text{if }\omega\in C\\
			\beta &\text{if }\omega\in P\\
			\gamma &\text{if }\omega\in N
		\end{cases}\,.
	\end{displaymath}
	By construction $\tilde g\in\L^b(\Omega)$, hence its equivalence class $g$
	in $L^\infty(\Xi)$ belongs to $Y$ by \cref{assmp:decomp}.  Furthermore,
	since $\mu\ll\Xi$ we have $\ex{\mu_c} g -\intf{\con\phi\!}\nu (g)=
	\ex{\mu_c} {\tilde g_c}-\intf{\con\phi\!}\nu (\tilde
	g_c)=\ex{\mu_c}{g_c}-\intf{\con\phi\!}\nu(g_c) >\alpha$, $\ex{\mu_s^+}
	g = \mu_s^+(\Omega)\cdot\beta$, and $\ex{\mu_s^-}
	g = \mu_s^-(\Omega)\cdot\gamma$. Since $\tilde
	g(\Omega)\subseteq[\phi'(-\infty),\phi'(\infty)]$ by construction, for this
	choice of $g\in Y$, the optimand in \eqref{eq:foo-bicon} is at least
	$\alpha+\ex{\mu_s^+}\Omega\cdot\beta -\ex{\mu_s^-}\Omega\cdot\gamma$. This
	concludes the proof since $\alpha,\beta,\gamma$ can be made arbitrarily
	close to $\intf\phi\nu\paren*{\frac{d\mu_c}{d\nu}}$, $\phi'(\infty)$, and
	$\phi'(-\infty)$ respectively.
\end{proof}

\subsection{Variational representations: probability measures}
\label{sec:var-prob}

When applied to \emph{probability measures}, which are the main focus of this
paper, the variational representations provided by
\cref{prop:i-dual,prop:finite-dagger-lsc} are loose. This fact was first
explicitly mentioned in \citet{RRGP12}, where the authors also suggested that
tighter representations could be obtained by specializing the derivation to
probability measures.

Specifically, given a dual pair $(X,Y)$ as in \cref{sec:var-gen}, we restrict
$\dif\phi\nu$ to probability measures by defining $\dif*\phi\nu:\mu\mapsto
\dif\phi\nu(\mu) +\delta_{\M^1}(\mu)$ for $\mu\in X$. For $g\in Y$ we get
\begin{equation}
	\label{eq:div-dual-prob}
\con{\dif*\phi\nu}(g)
	= \sup_{\mu\in X}\set[\big]{\ex\mu g-\dif*\phi\nu(\mu)}
	=\sup_{\mu\in X^1}\set[\big]{\ex\mu g - \dif\phi\nu(\mu)}\,.
\end{equation}
Observe that compared to \eqref{eq:div-dual}, the supremum is now taken over
the smaller set $X^1=X\cap\M^1$, and thus $\con{\dif*\phi\nu}\leq
\con{\dif\phi\nu}$. When $\dif*\phi\nu$ is lower semicontinuous we then get for
$\mu\in X^1$
\begin{equation}
	\label{eq:var-prob}
	\di\mu\nu = \dif*\phi\nu(\mu) = \bicon{\dif*\phi\nu}(\mu)
=\sup_{g\in Y}\set[\big]{\ex\mu g - \con{\dif*\phi\nu}(g)}\,.
\end{equation}
This representation should be contrasted with the one obtained in
\cref{sec:var-gen}, $\di\mu\nu = \sup_{g\in Y}\set[\big]{\ex\mu
g - \con{\dif\phi\nu}(g)}$, which holds for any $\mu\in X$ and in which the
optimand is smaller than in \eqref{eq:var-prob} for all $g\in Y$ (see also
\cref{exmp:pos-shift,ex:dv} below for an illustration).

In the rest of this section, we carry out the above program by giving an
explicit expression for $\con{\dif*\phi\nu}$ defined in
\eqref{eq:div-dual-prob} and showing that $\dif*\phi\nu$ is lower
semi-continuous. We will assume in the rest of this paper that $\dom\phi$
contains a neighborhood of $1$, as otherwise the $\phi$-divergence on
probability measures becomes the discrete divergence $\di\mu\nu =
\delta_{\set{\nu}}(\mu)$ which is only finite when $\mu=\nu$ and for which the
questions studied in this work are trivial.  We start with the following lemma
giving a simpler expression for $\dif*\phi\nu$.

\begin{lemma}
	\label{lem:encoding}
	Define $\pos\phi:x\mapsto \phi(x) + \delta_{\R_{\geq 0}}(x)$ for
	$x\in\R$. Then for all $\mu\in\M$
	\begin{displaymath}
		\dif*\phi\nu(\mu) = \dif{\pos\phi}\nu(\mu)
	+ \delta_{\set{1}}\paren[\big]{\mu(\Omega)}\,.
	\end{displaymath}
\end{lemma}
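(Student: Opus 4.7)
The plan is to show that using $\pos\phi$ instead of $\phi$ in the definition of the divergence exactly absorbs the non-negativity constraint on $\mu$, so that the separate indicator $\delta_{\M^1}$ can be replaced by the weaker indicator $\delta_{\set{1}}$ on $\mu(\Omega)$. The whole argument reduces to a case analysis on the sign structure of $\mu$, together with two elementary observations about $\pos\phi$: since $\pos\phi$ and $\phi$ agree on $[0,+\infty)$, we have $\pos\phi'(+\infty)=\phi'(+\infty)$; and since $\pos\phi$ is identically $+\infty$ on $(-\infty,0)$, the limit $\pos\phi'(-\infty)=\lim_{x\to-\infty}\pos\phi(x)/x$ equals $-\infty$ (interpreting $(+\infty)/x=-\infty$ for $x<0$).

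With these in hand I would split into three cases. (a) If $\mu\in\M^1$, both indicators vanish, $\mu_s^-=0$, and $d\mu_c/d\nu\geq 0$ $\nu$-a.e.; hence $\pos\phi\circ(d\mu_c/d\nu)=\phi\circ(d\mu_c/d\nu)$ $\nu$-a.e., while the term $-\mu_s^-(\Omega)\cdot\pos\phi'(-\infty)$ equals $0\cdot(-\infty)=0$ by the convention of \cref{def:div}, and the remaining two summands of $\dif{\pos\phi}\nu(\mu)$ coincide with those of $\dif\phi\nu(\mu)$. (b) If $\mu(\Omega)\neq 1$, both sides are $+\infty$ directly from the indicators. (c) If $\mu(\Omega)=1$ but $\mu\notin\M^+$, then either $d\mu_c/d\nu<0$ on a $\nu$-positive set, forcing $\ex\nu{\pos\phi\circ(d\mu_c/d\nu)}=+\infty$, or $\mu_s^-(\Omega)>0$, forcing $-\mu_s^-(\Omega)\cdot\pos\phi'(-\infty)=+\infty$; in either subcase the right-hand side equals $+\infty$, matching the left-hand side which is $+\infty$ because $\mu\notin\M^1$.

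The only subtle point is verifying in case (c) that no other term in $\dif{\pos\phi}\nu(\mu)$ contributes $-\infty$ to cancel the guaranteed $+\infty$. This follows from the standard fact that any proper convex $\phi$ admits an affine minorant $y\mapsto ay+b$, which together with the finiteness of $\nu$ and of $\abs\mu$ bounds each summand in \cref{def:div} from below by a finite quantity; hence every term lies in $(-\infty,+\infty]$ and a single $+\infty$ in the sum cannot be cancelled. Apart from this bookkeeping with extended-real arithmetic, the proof is a direct verification.
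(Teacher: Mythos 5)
Your proof is correct and follows essentially the same route as the paper: both rest on the two observations that $\pos\phi'(-\infty)=-\infty$ and that $\pos\phi\equiv+\infty$ on $(-\infty,0)$, which together force $\dif{\pos\phi}\nu(\mu)=\dif\phi\nu(\mu)+\delta_{\M^+}(\mu)$. The paper states this identity directly and then concludes from $\delta_{\M^+}(\mu)+\delta_{\set{1}}\paren[\big]{\mu(\Omega)}=\delta_{\M^1}(\mu)$, whereas you unpack the same content as a three-way case analysis on $\mu$. Your explicit affine-minorant check that no summand can contribute $-\infty$ is a worthwhile piece of bookkeeping that the paper leaves implicit (it is hidden in the assertion of \cref{def:div} that $\di\mu\nu\in\R\cup\set{\infty}$).
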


\begin{proof}
	Using the same notations as in \cref{def:div}, and since
	$\pos\phi'(-\infty)=-\infty$, it is easy to see that
	$\dif{\pos\phi}\nu(\mu)$ equals $+\infty$ whenever $\mu_s^-\neq 0$ or
	$\nu\paren[\big]{\set[\big]{\omega\in\Omega\given
	\frac{d\mu_c}{d\nu}(\omega)<0}}\neq 0$  and equals $\dif\phi\nu(\mu)$
	otherwise. In other words, $\dif{\pos\phi}\nu(\mu) = \dif\phi\nu(\mu)
	+ \delta_{\M^+}(\mu)$. This concludes the proof since $\delta_{\M^+}(\mu)
+ \delta_{\set{1}}\paren[\big]{\mu(\Omega)}=\delta_{\M^1}(\mu)$.
\end{proof}

In the expression of $\dif*\phi\nu$ given by \cref{lem:encoding}, the
non-negativity constraint on $\mu$ is ``encoded'' directly in the definition
of $\pos\phi$ (cf.~\citet{BL91}), only leaving the constraint
$\mu(\Omega)=1$ explicit. Since $\mu(\Omega)=\ex*\mu{\ind_\Omega}$, this is an
affine constraint which is well-suited to a convex duality treatment. In
particular, we can use \cref{prop:fenchelcor} to compute $\con{\dif*\phi\nu}$.

\begin{proposition}
	\label{prop:r-dual}
	Assume that $(X,Y)$ is a $\nu$-decomposable dual pair for some
	$\nu\in\M^1$. Then the convex conjugate of $\dif*\phi\nu$ with respect to
	$(X,Y)$ is given for all $g\in  Y$, by
	\begin{equation}
		\label{eq:dual-prob-unif}
		\con{\dif*\phi\nu}(g)
		= \inf\set*{\ex*\nu{\con{\pos\phi}(g+\lambda)}-\lambda
		\given \lambda+\esssup_\Xi g\leq \phi'(\infty)}\,,
	\end{equation}
	where $\pos\phi:x\mapsto\phi(x)+ \delta_{\R_{\geq 0}}(x)$
	and $\Xi = \set{A\in\A\given \forall \mu\in X,\; \abs\mu(A)=0}$.

	In \eqref{eq:dual-prob-unif} the infimum is reached if it is finite, which
	holds in particular whenever $g\in L^\infty(\Xi)$.
\end{proposition}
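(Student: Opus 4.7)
The plan is to recognize $\dif*\phi\nu$ as an affinely constrained version of a standard integral functional and invoke Proposition~\ref{prop:fenchelcor}(1). By \cref{lem:encoding}, we can rewrite $\dif*\phi\nu(\mu) = \dif{\pos\phi}\nu(\mu) + \delta_{\set{1}}\paren*{\ip{\mu}{\ind_\Omega}}$, matching the form $f_{y,\eps}$ of Proposition~\ref{prop:fenchelcor} with $f = \dif{\pos\phi}\nu$, $y = \ind_\Omega$ (which lies in $Y$ because $L^\infty(\Xi)\subseteq Y$ by $\nu$-decomposability), and $\eps = 1$.

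Since $\pos\phi$ is proper, convex, lower semicontinuous, and satisfies $\pos\phi(1) = 0$, the results of \cref{prop:finite-dagger-dual,prop:finite-dagger-lsc} apply verbatim with $\pos\phi$ in place of $\phi$: $\dif{\pos\phi}\nu$ is lower semicontinuous on $X$, and its conjugate at $h\in Y$ equals $\ex*\nu{\con{\pos\phi}\circ h}$ when $\essim\Xi h \subseteq [\pos\phi'(-\infty), \pos\phi'(\infty)]$ and $+\infty$ otherwise. Because $\pos\phi \equiv +\infty$ on $(-\infty,0)$, we have $\pos\phi'(-\infty) = -\infty$ in the extended sense, so the lower constraint is vacuous, while $\pos\phi'(\infty) = \phi'(\infty)$.

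Next I would verify the constraint qualification $1\in\inter\paren{\ip{\dom\dif{\pos\phi}\nu}{\ind_\Omega}}$. This is where the section's standing assumption that $\dom\phi$ contains a neighborhood of $1$ comes in: for $t$ in such a neighborhood, the measure $t\nu$ lies in $X$ by decomposability (its Radon--Nikodym derivative is the constant $t\in L^\infty(\nu)$), satisfies $\dif{\pos\phi}\nu(t\nu) = \phi(t) < \infty$, and has $\ip{t\nu}{\ind_\Omega} = t$, so the image covers a neighborhood of $1$. Proposition~\ref{prop:fenchelcor}(1) then yields
\[
\con{\dif*\phi\nu}(g) = \inf_{\lambda\in\R}\bigl[\con{\dif{\pos\phi}\nu}(g + \lambda\ind_\Omega) - \lambda\bigr],
\]
with the infimum attained whenever finite. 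Substituting the explicit formula for $\con{\dif{\pos\phi}\nu}$ and simplifying the essential-range constraint via $\esssup_\Xi(g + \lambda\ind_\Omega) = \esssup_\Xi g + \lambda$ produces exactly the claimed expression~\eqref{eq:dual-prob-unif}.

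For the final claim that the infimum is attained for $g\in L^\infty(\Xi)$, it suffices to exhibit one feasible $\lambda$ making the objective finite. By \cref{lem:domconj} applied to $\pos\phi$, $\inter(\dom\con{\pos\phi}) = (-\infty, \phi'(\infty))$, on which $\con{\pos\phi}$ is continuous and therefore locally bounded. Choosing any $\lambda < \phi'(\infty) - \esssup_\Xi g$, the essentially bounded function $g+\lambda$ takes values in a compact subinterval of $(-\infty, \phi'(\infty))$, so $\con{\pos\phi}(g+\lambda)$ is essentially bounded and hence integrable against the finite measure $\nu$. I expect the main delicate point to be verifying the constraint qualification; the rest is a clean instantiation of the Fenchel duality machinery developed earlier in the preliminaries.
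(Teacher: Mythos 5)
Your proof is correct and follows essentially the same route as the paper's: applying Proposition~\ref{prop:fenchelcor}(1) to $f = \dif{\pos\phi}\nu$ with $y = \ind_\Omega$, $\eps = 1$, after invoking \cref{lem:encoding}, verifying the constraint qualification via the measures $t\nu$ for $t$ near $1$, and substituting the conjugate from \cref{prop:finite-dagger-dual}. The one place where you are slightly more explicit than the paper—noting that lower semicontinuity of $\dif{\pos\phi}\nu$ (from \cref{prop:finite-dagger-lsc}) is a hypothesis of \cref{prop:fenchelcor}(1)—is a genuine prerequisite that the paper leaves implicit, so your care there is warranted.
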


\begin{proof}
	We use \cref{lem:encoding} and apply
	\cref{prop:fenchelcor} with $f = \dif{\pos\phi}\nu$, $y=\ind_\Omega$ and
	$\eps=1$. We need to verify that
	$1\in\inter\paren[\big]{\set{\ex\mu{\ind_\Omega}\given \mu\in
	\dom\dif{\pos\phi}\nu}}$, but this is immediate since $(1\pm\alpha)\nu
	\in\dom\dif{\pos\phi}\nu$ for sufficiently small $\alpha$
	by the assumption that $1\in\inter\dom\phi$.

	Thus, by \cref{prop:fenchelcor}, for all $g\in Y$
	\begin{displaymath}
		\con{\dif*\phi\nu}(g)
		= \inf_{\lambda\in\R}\set*{\con{\dif{\pos\phi}\nu}(g+\lambda)
		- \lambda}
		\,,
	\end{displaymath}
	where the infimum is reached whenever it is finite.
	Equation~\eqref{eq:dual-prob-unif} follows by using
	\cref{prop:finite-dagger-dual} and observing that
	$\pos\phi'(\infty) = \phi'(\infty)$ and $\pos\phi'(-\infty)
	= -\infty$.

	It remains to verify the claims about finiteness of
$\con{\dif*\phi\nu}(g)$. For $g\in L^\infty(\Xi)$, write $M\eqdef \esssup_{\Xi}
g$.  Since $\inter(\dom\con{\pos\phi})=\paren[\big]{-\infty,\phi'(\infty)}$,
for any $A<\phi'(\infty)$, the choice of $\lambda = A-M$ makes the optimand in
\eqref{eq:dual-prob-unif} finite.
\end{proof}

\begin{remark}
	As in \cref{rem:par} above, when $X\subseteq\M_c(\nu)$ the constraint on
	$\lambda$ in \eqref{eq:dual-prob-unif} can be dropped, leading to a simpler
	expression for $\con{\dif*\phi\nu}(g)$ in this case. Indeed,
	$\overline{\dom\con{\pos\phi}} =\bigl(-\infty,\phi'(\infty)\bigr]$ and thus
	the optimand in \eqref{eq:dual-prob-unif} equals $+\infty$ whenever
	$\esssup_\Xi g = \esssup_\nu g>\phi'(\infty)-\lambda$.
\end{remark}
\begin{example}
\label{exmp:pos-shift}
	The effect of the restriction to probability measures is particularly
	pronounced for the total variation distance, which is the $\phi$-divergence
	for $\phi(x)=\abs{x-1}$. In the unrestricted case, a simple calculation
	shows $\phi$ has convex conjugate $\con\phi(x)=x + \delta_{[-1,1]}(x)$, so
	that the conjugate of the unrestricted divergence $\con{\dif\phi\nu}(g)$ is
	$+\infty$ unless $\essim\Xi g\subseteq[-1,1]$.  In the case of probability
	measures, the restriction $\pos\phi$ of $\phi$ to the non-negative reals
	has conjugate $\con{\pos\phi}(x)=x$ when $\abs x\leq 1$,
	$\con{\pos\phi}(x)=+\infty$ when $x>1$, but $\con{\pos\phi}(x)=-1$ when
	$x<-1$. Thus, $\con{\dif{\pos\phi}\nu}(g)<+\infty$ whenever $\essim\Xi
	g\subseteq (-\infty,1]$.  Furthermore, because of the additive $\lambda$
	shift in \cref{eq:dual-prob-unif}, we have $\con{\dif*\phi\nu}(g)<+\infty$
	whenever $\esssup_\Xi g<+\infty$, in particular whenever $g\in
	L^\infty(\Xi)$.
\end{example}

As a corollary, we obtain a different variational representation of
the $\phi$-divergence, valid for probability measures and containing as
a special case the Donsker--Varadhan representation of the Kullback--Leibler
divergence. 

\begin{corollary}
	\label{cor:dv}
	Assume that $(X,Y)$ is $\nu$-decomposable for some $\nu\in \M^1$. Then,
	$\dif*\phi\nu$ is lower semicontinuous over $X$. In particular for all
	probability measures $\mu\in X^1=X\cap\M^1$
			\begin{displaymath}
		\di\mu\nu
		= \sup_{g\in Y}\set*{
		\ex\mu g- \inf\set[\big]{\intf{\con{\pos\phi}}\nu(g+\lambda)-\lambda
	\given \lambda + \esssup_\Xi g \leq \phi'(\infty)}}
		,
	\end{displaymath}
	where $\pos\phi:x\mapsto\phi(x)+ \delta_{\R_{\geq 0}}(x)$
	and $\Xi = \set{A\in\A\given \forall \mu\in X,\; \abs\mu(A)=0}$.
\end{corollary}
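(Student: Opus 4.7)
The plan is to apply the Fenchel--Moreau theorem (\cref{prop:conj}, item \ref{it:fm}) to the functional $\dif*\phi\nu$ on $X$: once we establish that it is proper, convex, and lower semicontinuous, we get $\dif*\phi\nu = \bicon{\dif*\phi\nu}$, and substituting the explicit formula for $\con{\dif*\phi\nu}$ provided by \cref{prop:r-dual} yields the claimed variational representation.

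First I would verify the easy properties. Properness follows from $\dif*\phi\nu(\nu) = \dif\phi\nu(\nu) + \delta_{\M^1}(\nu) = 0$ (since $\nu \in X^1$ by assumption), together with the fact that $\dif*\phi\nu$ takes values in $[0,+\infty]$ on $X^1$ and equals $+\infty$ off $X^1$. Convexity is inherited from the joint convexity of $\dif\phi\nu$ and convexity of $\M^1$.

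The crux is lower semicontinuity, and here I would exploit the decomposition supplied by \cref{lem:encoding}:
\[
    \dif*\phi\nu(\mu) = \dif{\pos\phi}\nu(\mu) + \delta_{\set{1}}\paren[\big]{\mu(\Omega)}.
\]
The function $\pos\phi$ is itself proper, convex, lower semicontinuous, and satisfies $\pos\phi(1)=0$, so \cref{prop:finite-dagger-lsc} applies with $\pos\phi$ in place of $\phi$ and shows that $\mu \mapsto \dif{\pos\phi}\nu(\mu)$ is lower semicontinuous on $X$. For the second term, decomposability guarantees $\ind_\Omega \in L^\infty(\Xi) \subseteq Y$, so the linear functional $\mu \mapsto \mu(\Omega) = \ex\mu{\ind_\Omega}$ is continuous on $X$; pulling back the closed set $\set{1} \subseteq \R$ through it shows that $\mu \mapsto \delta_{\set{1}}(\mu(\Omega))$ is lower semicontinuous. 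Since the sum of two $(-\infty,+\infty]$-valued lower semicontinuous functions is lower semicontinuous, $\dif*\phi\nu$ is lower semicontinuous on $X$.

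Having established lower semicontinuity, Fenchel--Moreau gives $\dif*\phi\nu(\mu) = \sup_{g\in Y}\set[\big]{\ex\mu g - \con{\dif*\phi\nu}(g)}$ for all $\mu \in X$; restricting to $\mu \in X^1$, the left-hand side equals $\di\mu\nu$, and substituting the expression for $\con{\dif*\phi\nu}(g)$ from \eqref{eq:dual-prob-unif} in \cref{prop:r-dual} yields precisely the displayed formula. The main obstacle is the lower semicontinuity step, but the encoding of \cref{lem:encoding} reduces it cleanly to \cref{prop:finite-dagger-lsc} (already proved for the general case) plus the continuity of one explicit linear functional, so no new technical machinery is required beyond what is available from \cref{sec:var-gen} and \cref{prop:r-dual}.
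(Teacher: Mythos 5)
Your proof is correct and follows essentially the same route as the paper: decompose $\dif*\phi\nu$ via \cref{lem:encoding}, invoke \cref{prop:finite-dagger-lsc} (applied to $\pos\phi$) for lower semicontinuity of the first summand, observe that the constraint $\mu(\Omega)=1$ defines a lower semicontinuous term because $\ind_\Omega\in Y$ makes $\mu\mapsto\mu(\Omega)$ continuous, and then pass to the biconjugate via Fenchel--Moreau and \cref{prop:r-dual}. The only difference is that you spell out properness and convexity explicitly, which the paper leaves implicit.
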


\begin{proof}
Since $\ind_\Omega\in Y$ the linear form
$\mu\mapsto \ex\mu{\ind_\Omega}$ is continuous for any topology compatible
with the dual pair $(X, Y)$.  Consequently, the function
$\mu\mapsto\delta_{\set{1}}\paren[\big]{\mu(\Omega)}$ is lower
semicontinuous as the composition of the lower semicontinuous function
$\delta_{\set{1}}$ with a continuous function. Finally, $\dif{\pos\phi}\nu$
is lower semicontinuous by \cref{prop:i-dual,prop:finite-dagger-lsc}. Hence
$\dif*\phi\nu$ is lower semicontinuous as the sum of two lower
semicontinuous functions, by using the expression in \cref{lem:encoding}.
The variational representation immediately follows by expressing
$\dif*\phi\nu$ as its biconjugate.
\end{proof}

\begin{example}
	\label{ex:dv}
	As in \cref{ex:stupid-dv}, we consider the case of the Kullback--Leibler
	divergence, given by $\phi(x)=\pos\phi(x)=x\log x$. For a $\nu$-decomposable
	dual pair $(X,Y)$, since $\con\phi(x) = e^{x-1}$ \cref{prop:r-dual}
	implies for $\nu\in\M^1$ and $g\in Y$ that 
	\begin{displaymath}
		\con{\dif*\phi\nu}(g)
	= \inf_{\lambda\in\R}\ex*\nu{e^{g+\lambda-1}} - \lambda
	=\log\ex*\nu {e^g}\,,
	\end{displaymath}
	where the last equality comes from the optimal choice of
	$\lambda=-\log\ex*\nu{e^{g-1}}$. Using \cref{cor:dv} we obtain for all
	probability measure $\mu\in X^1$
	\begin{align*}
		\kl\mu\nu
		= \sup_{g\in Y} \set*{\ex\mu g - \log\ex*\nu{e^g}}
		= \sup_{g\in Y} \set*{\ex\mu g - \ex\nu
			g - \log\ex*\nu{e^{\paren[\big]{g-\ex\nu g}}}}
			\,,
	\end{align*}
	which is the Donsker--Varadhan representation of the Kullback--Leibler
	divergence \citep{DV76}.  For $\mu\in X^1$, the variational representation
	obtained in \eqref{eq:stupid-dv} can be equivalently written
	\begin{displaymath}
		\kl\mu\nu
		= \sup_{g\in Y} \set*{1+\ex\mu g - \ex*\nu{e^g}}\,.
	\end{displaymath}
	Using the inequality $\log(x)\leq x-1$ for $x>0$, we see that the optimand
	in the previous supremum is smaller than the optimand in the
	Donsker--Varadhan representation for all $g\in Y$. We thus obtained
	a ``tighter'' representation by restricting the divergence to probability
	measures.
\end{example}

\begin{example}
	Consider the family of divergences $\phi(x)=\abs{x-1}^\alpha/\alpha$ for
	$\alpha\geq 1$. A simple computation gives $\con\phi(y)=y + \abs
	y^\beta/\beta$ where $\beta\geq 1$ is such that
	$\frac1\alpha+\frac1\beta=1$. \citea{The paper}{JHW17} uses the variational representation
	given by \cref{prop:i-dual}, that is
	$\di\mu\nu=\sup_g \ex\mu g-\ex\nu{\con\phi(g)}$.
	However, \cref{cor:dv} shows that the tight representation uses
	$\con{\pos\phi}(y)$ which has the piecewise definition $y+\abs y^\beta/\beta$
	when $y\geq -1$ and the constant $-1/\alpha$ when $y\leq -1$,
	and writes 
	$\di\mu\nu=\sup_g \ex\mu g-\inf_\lambda\ex\nu{\con{\pos\phi}(g+\lambda)}$.
	Note that the additive $\lambda$ shift, in e.g.\ the case $\alpha=2$, reduces the second
	term from the raw second moment $\ex\nu{g^2}$ to something
	no larger than the variance $\ex\nu{(g-\ex\nu g)^2}$, which is
	potentially much smaller.
\end{example}
 \section{Optimal bounds for a single function and reference measure}
\label{sec:singleg}

As a first step to understand the relationship between a $\phi$-divergence and
an IPM, we consider the case of a single fixed probability measure $\nu\in\M^1$
and measurable function $g\in\L^0$, and study the optimal lower bound of
$\di\mu\nu$ as a function of the \emph{mean deviation} $\mu(g)-\nu(g)$. We
characterize this optimal lower bound and its convex conjugate in
\cref{sec:char} and then present implications for topological question
regarding the divergence itself in subsequent sections.

In the remainder of this work, since we are interested in probability measures,
which are in particular non-negative, we assume without loss of generality that
$\phi$ is infinite on the negative reals, that is $\phi(x) = \pos\phi(x)
= \phi(x) + \delta_{\R\geq 0}(x)$. As seen in \cref{sec:var-prob}
(in particular \cref{lem:encoding}), this does not change the value of the
divergence on non-negative measures, that is $\di[\phi]\mu\nu
=\di[\pos\phi]\mu\nu$ for $\mu\in\M^+$, but yields a tighter variational
representation since $\con{\pos\phi}\leq \con\phi$.

Furthermore, since for probability measures $\di\mu\nu$ is invariant to affine
shifts of the form $\widetilde\phi(x)=\phi(x)+c\cdot(x-1)$ for $c\in\R$, it
will be convenient to assume that $0\in\partial\phi(1)$ (e.g.~$\phi'(1)=0$),
equivalently that $\phi$ is non-negative and has global minimum at $\phi(1)=0$.
This can always be achieved by an appropriate choice of $c$ and is therefore
without loss of generality.  As an example, we now write for the
Kullback--Leibler divergence $\phi(x)=x\log x-x+1$ which is
non-negative with $\phi'(1)=0$, and equivalent to the standard definition
$\phi(x)=x\log x$ for probability measures.

\subsection{Derivation of the bound}
\label{sec:char}

We first define the optimal lower bound function, which comes in two flavors
depending on whether the mean deviation or the absolute mean deviation is
considered.

\begin{definition}
\label{defn:lb-function-single-g}
	For a probability measure $\nu\in\M^1$, a function $g\in\L^1(\nu)$, and set
	of probability measures $M$ integrating $g$, the \emph{optimal lower bound
	on $\di\mu\nu$ in terms of the mean deviation} is the function $\lb[M]
	g\nu$ defined for $\eps\in\R$ by:
	\begin{align}
		\lb[M]g\nu\paren*{\eps}
		&\eqdef
		\inf\set[\Big]{\di\mu\nu \given \mu\in M\land
	\ex\mu g-\ex\nu g=\eps}\nonumber\\
		&=\inf_{\mu\in M}\set[\big]{\di\mu\nu+\delta_{\set 0}(\ex\mu g
		-\ex\nu g-\eps)}
		\label{eqn:lb-optimization}\\
		\lb[M]{\set{\pm g}}\nu\paren*{\eps}
		&\eqdef
		\inf\set[\Big]{\di\mu\nu \given \mu\in M\land
		\abs{\ex\mu g-\ex\nu g}=\eps}\nonumber\\
		&=\min\set{\lb[M]g\nu(\eps),\lb[M]g\nu(-\eps)}
		\label{eqn:abslb}
	\end{align}
	where we follow the standard convention that the infimum of the empty set
	is $+\infty$.
\end{definition}

\begin{lemma}
\label{lem:lb-properties}
	For every $\nu\in\M^1$, $g\in\L^1(\nu)$, and convex set $M$ of probability
	measures integrating $g$, the function $\lb[M]g\nu$ is convex and
	non-negative. Furthermore, $\lb[M]g\nu(0)=0$ whenever $\nu\in M$,
	and if $\phi'(\infty)=\infty$ then $\lb[M]g\nu=\lb[M\cap \M_c(\nu)]g\nu$.
\end{lemma}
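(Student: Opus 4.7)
The plan is to verify each of the four claims (convexity, non-negativity, vanishing at $0$, and the reduction to $\M_c(\nu)$) directly from the defining infimum \eqref{eqn:lb-optimization} and basic properties of $\dif\phi\nu$.

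For \emph{convexity}, I would define $F:M\times\R\to\eR$ by $F(\mu,\eps)\eqdef\dif\phi\nu(\mu)+\delta_{\set 0}\paren[\big]{\ex\mu g-\ex\nu g-\eps}$ so that $\lb[M]g\nu(\eps)=\inf_{\mu\in M}F(\mu,\eps)$. The divergence $\dif\phi\nu$ is convex, hence so is $(\mu,\eps)\mapsto\dif\phi\nu(\mu)$. The second summand is the indicator of the affine set $\set{(\mu,\eps)\given \ex\mu g-\ex\nu g=\eps}$ in $M\times\R$, hence convex. Therefore $F$ is jointly convex on the convex set $M\times\R$, and applying \cref{lem:optimal-value-convex} yields convexity of $\lb[M]g\nu$.

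For \emph{non-negativity}, if no $\mu\in M$ satisfies the constraint the infimum is $+\infty$ by convention; otherwise, every feasible $\mu$ is a probability measure (since $M\subseteq\M^1$) with the same total mass as $\nu\in\M^1$, so $\dif\phi\nu(\mu)\geq 0$ by the non-negativity property of $\phi$-divergences between measures of equal mass recalled just after \cref{def:div}. For the value at $0$, taking $\mu=\nu\in M$ gives $\ex\mu g-\ex\nu g=0$ and $\dif\phi\nu(\nu)=\phi(1)=0$, so $\lb[M]g\nu(0)\leq 0$, and combining with non-negativity yields equality.

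Finally, the reduction to $\M_c(\nu)$ when $\phi'(\infty)=\infty$ follows from \cref{rem:phi'-finite-infinite}: under this assumption $\dif\phi\nu(\mu)=+\infty$ whenever $\mu\not\ll\nu$, so measures outside $\M_c(\nu)$ contribute $+\infty$ to the infimum and can be discarded without changing its value; the reverse inequality is trivial since $M\cap\M_c(\nu)\subseteq M$. None of these steps should present a serious obstacle, as they all follow from general properties of $\dif\phi\nu$ already established or recalled in the paper.
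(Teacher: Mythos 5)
Your proof is correct and follows essentially the same approach as the paper's: convexity via \cref{lem:optimal-value-convex}, non-negativity from $\di\cdot\nu\geq 0$ on probability measures, the choice $\mu=\nu$ at zero, and discarding $\nu$-singular measures when $\phi'(\infty)=\infty$. The paper's proof is a one-line version of your more detailed argument.
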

\begin{proof}
	Convexity is immediate from \cref{lem:optimal-value-convex}
	applied to \cref{eqn:lb-optimization}, non-negativity follows from non-negativity
	of $\di\cdot\nu$, the choice $\mu=\nu$ implies $\lb[M]g\nu(0)=0$ when
	$\nu\in M$, and if $\phi'(\infty)=\infty$ then $\di\mu\nu=+\infty$ when
	$\mu\in M\setminus\M_c(\nu)$.
\end{proof}

We compute the convex conjugate of $\lb g\nu$ by applying Fenchel duality to 
\cref{eqn:lb-optimization}.

\begin{proposition}
\label{prop:lb-conjugate}
Let $(X,Y)$ be a $\nu$-decomposable pair for some probability measure
$\nu\in\M^1$ and let $\Xi = \set{A\in\A\given \forall \mu\in X,\;
\abs\mu(A)=0}$.  Then for all $g\in Y$ and $t\in\R$,
	\begin{equation}
		\con{\lb[\,X^1] g\nu}(t)
		=\inf\set*{\ex*\nu{\con\phi(tg+\lambda)}
		-t\cdot \ex\nu g-\lambda
	\given \lambda+\esssup_\Xi(t\cdot g)\leq\phi'(\infty)}\,.
		\label{eqn:lbdual}
	\end{equation}
Furthermore, $\lb[\,X^1] g\nu (\eps)=\bicon{\lb[\,X^1] g\nu}(\eps)$ if and only
if strong duality holds in \cref{eqn:lb-optimization}.
\end{proposition}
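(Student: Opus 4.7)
The plan is to recognize $\lb[\,X^1]g\nu$ as a translate of the marginal value function from \cref{prop:fenchelcor}.2 applied to the convex functional $f = \dif*\phi\nu$ studied in \cref{sec:var-prob}, and then to substitute the explicit formula for $\con{\dif*\phi\nu}$ provided by \cref{prop:r-dual}. Concretely, I would first rewrite \cref{eqn:lb-optimization} as
\[
\lb[\,X^1]g\nu(\eps)
= \inf\set[\big]{\dif*\phi\nu(\mu) \given \mu\in X\land \ip\mu g = \ex\nu g + \eps}\,,
\]
where, as in \cref{sec:var-prob}, $\dif*\phi\nu$ encodes the constraint $\mu\in X^1$ by taking the value $+\infty$ outside of $X^1$. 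Setting $\tilde\eps = \ex\nu g + \eps$, this exhibits $\lb[\,X^1]g\nu$ as the translate $\lb[\,X^1]g\nu(\eps) = \lbsymbol_{g,\dif*\phi\nu}(\tilde\eps)$ of the marginal value function.

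Since $\dif*\phi\nu$ is convex (as the sum of the jointly convex divergence and a convex indicator), the conjugate computation in \cref{prop:fenchelcor}.2 applies (the argument for $\con{\lbsymbol_{y,f}}(t)=\con f(ty)$ only uses convexity of $f$, not the normalization $f(0)=0$) and gives $\con{\lbsymbol_{g,\dif*\phi\nu}}(t) = \con{\dif*\phi\nu}(tg)$. A translation in the primal corresponds to subtracting a linear term from the conjugate, so
\[
\con{\lb[\,X^1]g\nu}(t) = \con{\dif*\phi\nu}(tg) - t\cdot\ex\nu g\,.
\]
Substituting the expression for $\con{\dif*\phi\nu}(tg)$ from \cref{prop:r-dual}, and using the standing assumption of this section that $\phi=\pos\phi$ so that $\con\phi = \con{\pos\phi}$, directly yields \cref{eqn:lbdual}.

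For the strong-duality characterization, the key point is that translating the argument preserves both the function value and its biconjugate, so $\lb[\,X^1]g\nu(\eps) = \bicon{\lb[\,X^1]g\nu}(\eps)$ is equivalent to $\lbsymbol_{g,\dif*\phi\nu}(\tilde\eps) = \bicon{\lbsymbol_{g,\dif*\phi\nu}}(\tilde\eps)$, which by the last clause of \cref{prop:fenchelcor}.2 is itself equivalent to strong duality in the constrained minimization defining $\lbsymbol_{g,\dif*\phi\nu}(\tilde\eps)$, i.e.~in \cref{eqn:lb-optimization}. I do not expect substantive obstacles; the main care required is bookkeeping around the additive shift by $\ex\nu g$, and verifying that the conjugate half of \cref{prop:fenchelcor}.2 still applies in the absence of the normalization $\dif*\phi\nu(0)=0$ (which is not needed here, and in any event the normalization $\lb[\,X^1]g\nu(0)=0$ required for the iff of lower semicontinuity and strong duality in \cref{prop:fenchelcor}.2 is already supplied by \cref{lem:lb-properties} whenever $\nu\in X^1$).
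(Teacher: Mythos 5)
Your proof is correct and follows essentially the paper's own route: both reduce to \cref{prop:fenchelcor}.2 applied to the constrained minimization of $\dif*\phi\nu$ and substitute the formula from \cref{prop:r-dual}. The only cosmetic difference is that the paper first recenters the functional in the primal by defining $\Phi(x)=\dif*\phi\nu(x+\nu)$ so that the hypotheses of \cref{prop:fenchelcor}.2 (non-negativity, $0$ at $0$) hold verbatim, whereas you translate the argument $\eps$ instead and justify that the conjugate identity of \cref{prop:fenchelcor}.2 does not require the normalization $f(0)=0$ — a valid observation that yields the same computation.
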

\begin{proof}
	Define $\Phi:X\to\eR$ by $\Phi(x)=\dif*\phi\nu(x+\nu)$ so that $\Phi$ is
	convex, lsc, non-negative, and $0$ at $0$. Furthermore,
	$\con\Phi(h)=\con{\dif*\phi\nu}(h)-\ex\nu h$ for $h\in Y$, and  $\lb[X^1]
	g\nu(\eps)=\inf\set{\Phi(x)\given x\in X\wedge \ip xg=\eps}$. The result
	then follows by \cref{prop:fenchelcor,prop:r-dual}.
\end{proof}
\begin{remark}
	Since $\dom\con\phi\subseteq[-\infty,\phi'(\infty)]$, $\lambda$ is always
	implicitly restricted in \cref{eqn:lbdual} to satisfy $\lambda
	+ \esssup_\nu tg \leq \phi'(\infty)$. When $\Xi$ is a proper subset of the
	null $\sigma$-ideal of $\nu$, the constraint in \cref{eqn:lbdual} is
	stronger to account for measures in $X$ which are not continuous with
	respect to $\nu$.

	If $\phi'(\infty)=\infty$, then the infimum in \cref{eqn:lbdual} is taken
	over all $\lambda\in\R$ and in particular, does not depend on $\Xi$. This
	is consistent with the fact that, in this case, $\dif\phi\nu$ is infinite
	on singular measures, hence $\lb[\,X^1]g\nu = \lb[\,X^1_c(\nu)]g\nu$ where
	$X_c(\nu)= X\cap\M_c(\nu)$.
\end{remark}
\begin{remark}
	Unlike in \cref{prop:r-dual}, it is not always true that the
	interiority constraint qualification conditions hold, and indeed strong
	duality does not always hold for the optimization problem
	\eqref{eqn:lb-optimization}.  For example, for $\Omega=(-1/2,1/2)$, $\nu$
	the Lebesgue measure, $g$ the canonical injection into $\R$, and
	$\phi:x\mapsto \abs{x-1}$ corresponding to the total variation distance, we
	have $\lb[\M^1]g\nu(\pm 1/2)=\infty$ but $\lb[\M^1] g\nu(x)\leq 2$ for
	$\abs x<1/2$.  However, as noted in \cref{thm:equiv} below, this generally
	does not matter since it only affects the boundary of the domain of $\lb
	g\nu$, which contains at most two points. Furthermore, we will show in
	\cref{cor:L-lsc} via a compactness argument that when
$\phi'(\infty)=\infty$ and $\dom{\con{\lb g\nu}}=\R$---e.g.\ when $g\in
L^\infty(\nu)$---strong duality holds in \eqref{eqn:lb-optimization}.
\end{remark}

We can simplify the expressions in \cref{prop:lb-conjugate} by introducing the
function $\psi:x\mapsto\phi(x+1)$. We state some useful properties of its
conjugate $\con\psi$ below.
\begin{lemma}
The function $\con\psi:x\mapsto\con\phi(x)-x$ is non-negative, convex, and inf-compact.
Furthermore, it satisfies $\con\psi(0)=0$, $\con\psi(x)\leq -x$ when $x\leq 0$, and
$\inter(\dom\con\psi)=\paren[\big]{-\infty,\phi'(\infty)}$.
\label{lem:con-psi-properties}
\end{lemma}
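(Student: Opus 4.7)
The plan is to derive the explicit formula $\con\psi(t)=\con\phi(t)-t$ by a change of variables, and then read off each stated property from this identity combined with the standing assumptions on $\phi$ (namely $\phi\geq 0$, $\phi(1)=0$, $\phi=\pos\phi$, and $\dom\phi$ contains a neighborhood of $1$). Substituting $y=x+1$ in the definition of the conjugate,
\[
\con\psi(t)=\sup_{x\in\R}\{tx-\phi(x+1)\}=\sup_{y\in\R}\{t(y-1)-\phi(y)\}=\con\phi(t)-t,
\]
which immediately gives the claimed identity. Convexity and lower semicontinuity of $\con\psi$ are automatic for any Fenchel conjugate by \cref{prop:conj}.

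For non-negativity and $\con\psi(0)=0$, note $\psi(0)=\phi(1)=0$, so the Fenchel--Young inequality yields $\con\psi(t)\geq 0\cdot t-\psi(0)=0$ for every $t$, while $\con\psi(0)=\sup_x\{-\psi(x)\}=-\inf_y\phi(y)=0$ using $\phi\geq 0$. For $t\leq 0$, the inequality $\con\psi(t)\leq -t$ reduces via the above identity to $\con\phi(t)\leq 0$; since $\phi=\pos\phi$, the supremum defining $\con\phi(t)$ is taken over $x\geq 0$, where $tx\leq 0$ and $\phi(x)\geq 0$, so $tx-\phi(x)\leq 0$.

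For the domain, $\dom\con\psi=\dom\con\phi$ because they differ by a finite affine function. Applying \cref{lem:domconj} to $\phi$ gives $\inter(\dom\con\phi)=\paren[\big]{\phi'(-\infty),\phi'(\infty)}$, and the previous paragraph showed $(-\infty,0]\subseteq\dom\con\phi$, which forces $\phi'(-\infty)=-\infty$. Hence $\inter(\dom\con\psi)=(-\infty,\phi'(\infty))$ as claimed.

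Finally, for inf-compactness, sublevel sets of $\con\psi$ are closed by lower semicontinuity, so it suffices to show they are bounded in $\R$. Since $\dom\phi$ contains a neighborhood of $1$, there is $\delta>0$ with $\psi(\pm\delta)<\infty$, and the Fenchel--Young inequality gives $\con\psi(t)\geq\pm t\delta-\psi(\pm\delta)$, whence $\con\psi(t)\to+\infty$ as $\abs{t}\to\infty$. Closed bounded convex subsets of $\R$ being compact, inf-compactness follows. I do not expect a serious obstacle here; the most delicate point is the implicit use of $\phi=\pos\phi$ (i.e.\ $\dom\phi\subseteq[0,\infty)$) to conclude that $\phi'(-\infty)=-\infty$ and thus that the interior of $\dom\con\psi$ extends all the way to $-\infty$.
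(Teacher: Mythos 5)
Your proof is correct and follows essentially the same route as the paper's: the change-of-variables identity $\con\psi = \con\phi - \mathrm{Id}$, Fenchel--Young for non-negativity and the inf-compactness bound from a neighborhood of $1\in\dom\phi$, and \cref{lem:domconj} combined with $\dom\phi\subseteq\R_{\geq 0}$ for the domain claim. The only cosmetic difference is that you bound $\con\phi(t)\leq 0$ for $t\leq 0$ directly rather than bounding $\con\psi(t)$ over $y\geq -1$, and you infer $\phi'(-\infty)=-\infty$ from $(-\infty,0]\subseteq\dom\con\phi$ rather than directly from $\dom\phi\subseteq\R_{\geq 0}$; both steps are equivalent and equally valid.
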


Recall that at the beginning of \cref{sec:singleg} we assumed, without loss of
generality, that $0\in\partial\phi(1)$ and $\dom\phi\subseteq\R_{\geq
0}$, which is necessary for \cref{lem:con-psi-properties} to hold. The proof
  follows immediately from basic results in convex analysis on $\R$;  for
  completeness, a proof is included in \cref{sec:cgf-deferred}.

The right-hand side of  \cref{eqn:lbdual}, expressed in terms of
$\con\psi$, will be central to our theory, so we give it a name in the
following definition. 

\begin{definition}[Cumulant generating function]
	\label{def:cumulant}
	For a $\sigma$-ideal $\Xi$ and probability measure $\nu\in\M^1_c(\Xi)$, the
	\emph{$(\phi,\nu,\Xi)$-cumulant generating function} $\cgfxi
	g\nu\Xi:\R\to\overline\R$ of a function $g\in L^0(\Xi)$ is defined for all
	$t\in\R$ by
	\begin{equation}
		\label{eq:cumul}
		\cgfxi g\nu\Xi(t)
		\eqdef \inf\set*{\ex*\nu{\con\psi(tg+\lambda)}
		\given
		\lambda+\esssup_\Xi\paren[\big]{t\cdot g} \leq \phi'(\infty) }
		\,.
	\end{equation}
	Note that since $\nu\in\M_c(\Xi)$, we always have $\Xi\subseteq
	N\eqdef\set{A\in\A\given \nu(A)=0}$, hence $\cgfxi g\nu\Xi\geq \cgfxi g\nu
	N$. In the common case where $\Xi=N$ we abbreviate $\cgf g\nu\eqdef
	\cgfxi g\nu N$.
	
Note also that $\esssup_\Xi\paren[\big]{t\cdot g}$ is
the piecewise-linear function
	\begin{displaymath}
		\esssup_\Xi\paren[\big]{t\cdot g}
		=
		\begin{cases}
			t\cdot \esssup_\Xi g&t\geq 0\\
			t\cdot \essinf_\Xi g&t\leq 0
		\end{cases}
	\,.
	\end{displaymath}
\end{definition}

\begin{example}
	\label{ex:kl-cum}
	For the Kullback--Leibler divergence, $\cgf
	g\nu(t)=\log\ex\nu{e^{t(g-\ex\nu g)}}$ by \cref{ex:dv}, which is the
	standard (centered) cumulant generating function, thereby justifying the
	name.
\end{example}

Note that the $(\phi, \nu)$-cumulant generating function $\cgf g\nu$ 
depends only on the pushforward measure $\pushforward\nu g$ of $\nu$ through $g$.
In particular, when $\nu$ is the probability distribution of a random
variable $X$, as in \cref{ex:ipm-rv}, $\cgf g\nu(t)$ can be equivalently
written as
\begin{equation}
	\label{eq:cumul-expectation}
	\cgf g\nu(t)= \inf_{\lambda\in\R}\mathbb{E}[\con\psi(t\cdot
		g(X)+\lambda)]\,,
\end{equation}
highlighting the fact that $\cgf g\nu$ only depends on $g(X)$. This contrasts with
$\cgfxi g\nu\Xi$, for an arbitrary $\Xi\gg\nu$, for which the constraint on
$\lambda$ depends on the $\Xi$-essential range of $g$, which is not solely
a property of the random variable $g(X)$ since it can depend on the value of $g$
on $\nu$-null sets.

Furthermore, since for $t\in\R$, the function $\lambda\mapsto
\intf{\con\psi}\nu(tg+\lambda)$ is convex in $\lambda$, the
$(\phi,\nu)$-cumulant generating function is defined by a single-dimensional
convex optimization problem whose objective function is expressed as an
integral with respect to a probability measure
(\ref{eq:cumul},~\ref{eq:cumul-expectation}).  Hence, the rich spectrum of
stochastic approximation methods, such as stochastic gradient descent, can be
readily applied, leading to efficient numerical procedures to evaluate $\cgf
g\nu (t)$, as long as the pushforward measure $\pushforward\nu g$ is
efficiently samplable.

\begin{remark}
Since the mean deviation, and thus the optimal bound $\lb g\nu$ is invariant to
shifting $g$ by a constant, we are in fact implicitly working in the quotient
space $L^1(\nu)/\R\ind_\Omega$. As such,
$g\mapsto\inf_{\lambda\in\R}\intf{\con\psi}\nu(g+\lambda)$ can be interpreted
as the integral functional induced by $\intf{\con\psi}\nu$ on this quotient
space, by considering its infimum over all representatives of a given
equivalence class. This is analogous to the definition of a norm on a quotient
space.
\end{remark}

The following proposition states some basic properties of the cumulant
generating function. As with \cref{lem:con-psi-properties}, they follow from
basic results in convex analysis, and we defer the proof to
\cref{sec:cgf-deferred}.

\begin{proposition}
\label{prop:cgf-properties}
For every $\sigma$-ideal $\Xi$, probability measure $\nu\in\M^1_c(\Xi)$, and
$g\in L^0(\Xi)$, $\cgfxi g\nu\Xi:\R\to\overline\R$ is non-negative, convex,
lower semicontinuous, and satisfies $\cgfxi g\nu\Xi(0)=0$.

	Furthermore, if $g$ is not $\nu$-essentially constant then $\cgfxi g\nu\Xi$
	is inf-compact. If there exists $c\in\R$ such that $g=c$ $\nu$-almost
	surely, then there exists $t>0$ (resp.\ $t<0$) such that $\cgfxi
	g\nu\Xi(t)>0$ if and only if $\phi'(\infty)<\infty$ and $\esssup_\Xi g>c$
	(resp.\ $\essinf_\Xi g<c$).
\end{proposition}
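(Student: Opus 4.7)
The plan is to apply \cref{lem:optimal-value-convex} to the joint function $F\colon\R\times\R\to\eR$ given by
\begin{equation*}
F(t,\lambda) \eqdef \ex*\nu{\con\psi(tg+\lambda)} + \delta_{D}(t,\lambda),\qquad D \eqdef \set*{(t,\lambda)\given \lambda + \esssup_\Xi\paren[\big]{t\cdot g} \leq \phi'(\infty)},
\end{equation*}
whose pointwise infimum over $\lambda$ is exactly $\cgfxi g\nu\Xi(t)$. Non-negativity of $\cgfxi g\nu\Xi$ is immediate from $\con\psi\geq 0$ (\cref{lem:con-psi-properties}), and $\cgfxi g\nu\Xi(0)=0$ follows by taking $\lambda=0$: the constraint holds since $\phi'(\infty)\geq 0$ under the standing normalization $0\in\partial\phi(1)$, and the objective evaluates to $\con\psi(0)=0$. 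Joint convexity of $F$ in $(t,\lambda)$ holds because the integrand $(t,\lambda)\mapsto\con\psi(tg(\omega)+\lambda)$ is convex for each $\omega$ (composition of the convex $\con\psi$ with a linear map), integration preserves convexity, and $D$ is convex as the $\phi'(\infty)$-sublevel set of the maximum of the two linear forms $\lambda + t\esssup_\Xi g$ and $\lambda + t\essinf_\Xi g$. Thus $\cgfxi g\nu\Xi$ is convex by \cref{lem:optimal-value-convex}.

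For lower semicontinuity and inf-compactness when $g$ is not $\nu$-essentially constant, the goal is to show $F$ is jointly lsc and jointly inf-compact. Joint lower semicontinuity follows from Fatou's lemma applied to the non-negative integrand (using lsc of $\con\psi$) together with closedness of $D$. For joint inf-compactness, pick two distinct points $m_1<m_2$ in $\essim{\nu}{g}$ and $\eps>0$ small enough that the sets $A_i\eqdef\set{g\in(m_i-\eps,m_i+\eps)}$ are disjoint of positive $\nu$-measure and $g$ is bounded on each. If $F(t,\lambda)\leq M$, then applying Jensen's inequality to the conditional probability $\nu_i\eqdef\ind_{A_i}\nu/\nu(A_i)$ yields $\con\psi(t\bar m_i + \lambda)\leq M/\nu(A_i)$, where $\bar m_i\eqdef\ex*{\nu_i}{g}$ satisfies $\bar m_2-\bar m_1\geq (m_2-m_1)/2>0$. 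Since $\con\psi$ is inf-compact (\cref{lem:con-psi-properties}), each $t\bar m_i+\lambda$ lies in a bounded interval; subtracting the two bounds controls $t$, after which $\lambda$ is controlled as well. Thus the sublevel sets of $F$ are compact, and \cref{lem:optimal-value-convex} yields inf-compactness (in particular lsc) of $\cgfxi g\nu\Xi$.

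When $g=c$ $\nu$-almost surely, the integral collapses to $\con\psi(tc+\lambda)$, so the substitution $s\eqdef tc+\lambda$ reduces the problem to
\begin{equation*}
\cgfxi g\nu\Xi(t) = \inf\set*{\con\psi(s)\given s\leq B(t)},\qquad B(t)\eqdef\phi'(\infty)-t\cdot(\esssup_\Xi g-c)
\end{equation*}
for $t\geq 0$ (and symmetrically with $\essinf_\Xi g - c$ for $t\leq 0$). Since $\con\psi$ is non-negative, minimized at $0$, and non-increasing on $(-\infty,0]$ (\cref{lem:con-psi-properties}), this infimum equals $0$ when $B(t)\geq 0$ and equals $\con\psi(B(t))$ when $B(t)<0$; lsc of $\cgfxi g\nu\Xi$ then follows from continuity of $\con\psi$ on the interior of its domain, which contains every negative value of $B(t)$. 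For the positivity characterization: if $\phi'(\infty)=\infty$ or $\esssup_\Xi g=c$, then $B(t)\geq 0$ for all $t\geq 0$, so $\cgfxi g\nu\Xi\equiv 0$ on $[0,\infty)$; conversely, if $\phi'(\infty)<\infty$ and $\esssup_\Xi g>c$, then $B(t)\to-\infty$ as $t\to\infty$, and inf-compactness of $\con\psi$ forces $\con\psi(B(t))\to\infty$, so $\cgfxi g\nu\Xi(t)>0$ for $t$ sufficiently large. The case $t<0$ is analogous.

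The main technical step is the Jensen-based argument yielding joint inf-compactness of $F$ in the non-constant case; the remainder of the proof is a routine combination of the properties of $\con\psi$ recorded in \cref{lem:con-psi-properties} with the preservation of convexity and inf-compactness under partial minimization (\cref{lem:optimal-value-convex}).
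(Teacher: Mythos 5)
Your proof is correct, but takes a genuinely different route from the paper's. The paper introduces the joint function $\optimcgf g\nu\Xi(t,\lambda)$ (identical to your $F$), shows in \cref{lem:optim-cgf-lsc} that it is non-negative, jointly convex, lsc, and has \emph{compact argmin at the single slice} $t=0$, and then invokes Rockafellar--Wets's partial-minimization result \citep[Prop.~3.32]{R98}: compactness of the argmin at one parameter value propagates lower semicontinuity of the marginal $\inf_\lambda F(t,\lambda)$ to all $t$. Inf-compactness of $\cgfxi g\nu\Xi$ is then \emph{not} deduced from any inf-compactness of $F$; instead the paper observes that a non-negative convex lsc function vanishing at $0$ is inf-compact precisely when it is eventually positive on both sides, and characterizes positivity via a separate lemma (\cref{lem:optim-cgf-nonzero}) applied to $g$ and $-g$. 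You instead prove \emph{joint} inf-compactness of $F$ directly when $g$ is not $\nu$-essentially constant, via the two-slice Jensen argument (conditioning on disjoint positive-measure sets $A_1,A_2$ around two points of the essential range and using inf-compactness of $\con\psi$ to bound $t\bar m_i+\lambda$, then subtracting). This is a correct and elementary replacement for the cite, but it forces a case split: as you implicitly acknowledge, $F$ is \emph{not} jointly inf-compact when $g$ is constant (take $\lambda=-tc$ when $\phi'(\infty)=\infty$), so you need the separate explicit computation there. Both routes are sound; yours is more self-contained but longer, while the paper's is more uniform at the cost of leaning on Variational Analysis.

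One small remark on your constant-$g$ case: you correctly observe that $\inf_{s\leq B(t)}\con\psi(s)$ equals $\con\psi(B(t))$ when $B(t)\leq 0$ and $0$ otherwise (using that $\con\psi$ is non-increasing on $(-\infty,0]$ with $\con\psi(0)=0$), and that any negative value of $B(t)$ lies in $\inter\dom\con\psi=(-\infty,\phi'(\infty))$, so the resulting function of $t$ is indeed lsc. This closes the case cleanly and matches the paper's characterization.
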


With these definitions, we can state the main result of this
section giving an expression for the optimal lower bound function.

\begin{theorem}
\label{thm:equiv}
Let $(X,Y)$ be a $\nu$-decomposable pair for some probability measure
$\nu\in\M^1$ and let $\Xi = \set{A\in\A\given \forall \mu\in X,\;
\abs\mu(A)=0}$.  Then for all $g\in Y$ and $\eps\in\inter(\dom\lb[\,X^1] g\nu)$,
	\begin{equation}
		\lb[\,X^1] g\nu(\eps)=\con{\cgfxi g\nu\Xi}(\eps)\,.
		\label{eqn:lbdual-psi-cont}
	\end{equation}

	Furthermore, if $\lb[\,X^1] g\nu$ is lower semi-continuous, equivalently if
	strong duality holds in \eqref{eqn:lb-optimization}, then
	\eqref{eqn:lbdual-psi-cont} holds for all $\eps\in\R$.
\end{theorem}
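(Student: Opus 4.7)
The strategy is to recognize the cumulant generating function $\cgfxi g\nu\Xi$ as the convex conjugate of the optimal lower bound $\lb[\,X^1] g\nu$, at which point both claims reduce to standard biconjugate identities on $\R$.

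The main computation comes first. Applying \cref{prop:lb-conjugate} yields
\[
\con{\lb[\,X^1] g\nu}(t) = \inf\set*{\ex*\nu{\con\phi(tg+\lambda)} - t\ex\nu g - \lambda \given \lambda+\esssup_\Xi(tg) \leq \phi'(\infty)}\,.
\]
The identity $\con\phi(x) = \con\psi(x) + x$ from \cref{lem:con-psi-properties}, substituted at $x = tg+\lambda$ and integrated against the probability measure $\nu$, gives $\ex*\nu{\con\phi(tg+\lambda)} = \ex*\nu{\con\psi(tg+\lambda)} + t\ex\nu g + \lambda$. The linear terms thus cancel exactly against the $-t\ex\nu g - \lambda$ already present, and since the constraint set is unchanged, the resulting expression is precisely \cref{eq:cumul}. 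Hence $\con{\lb[\,X^1] g\nu} = \cgfxi g\nu\Xi$ as functions on $\R$.

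The rest is standard convex analysis on $\R$. By \cref{lem:lb-properties}, $\lb[\,X^1] g\nu$ is convex and non-negative, and since $\nu\in X^1$ we have $\lb[\,X^1] g\nu(0)=0$, so it is a proper convex function $\R\to\eR$. For the first claim, \cref{lem:biconj-R} then ensures $\lb[\,X^1] g\nu(\eps) = \bicon{\lb[\,X^1] g\nu}(\eps) = \con{\cgfxi g\nu\Xi}(\eps)$ for every $\eps \in \inter(\dom\lb[\,X^1] g\nu)$. For the furthermore clause, the equivalence between lower semicontinuity of $\lb[\,X^1] g\nu$ and strong duality in \eqref{eqn:lb-optimization} is the final assertion of \cref{prop:lb-conjugate}; under either equivalent condition the Fenchel--Moreau theorem (\cref{prop:conj}) gives $\lb[\,X^1] g\nu = \bicon{\lb[\,X^1] g\nu} = \con{\cgfxi g\nu\Xi}$ on all of $\R$.

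I do not anticipate a substantive obstacle, since the theorem is essentially an assembly of results already proved in the preceding subsection. The only point requiring care is the bookkeeping in the main calculation: checking that the $\Xi$-essential supremum constraint and the affine shift $-t\ex\nu g-\lambda$ from \cref{prop:lb-conjugate} combine with the recentered conjugate $\con\psi$ to reproduce exactly the cumulant generating function of \cref{def:cumulant}, with no stray terms or extra hypotheses.
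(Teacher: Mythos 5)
Your proof is correct and follows essentially the same approach as the paper's own, which similarly combines \cref{lem:lb-properties}, \cref{prop:lb-conjugate}, and Fenchel--Moreau. The only difference is that you make explicit the bookkeeping step $\con\phi(x) = \con\psi(x) + x$ that converts the conjugate formula of \cref{prop:lb-conjugate} into the form in \cref{def:cumulant}, which the paper leaves implicit ("simply a restatement \ldots using the terminology from \cref{def:cumulant}").
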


\begin{proof}
	\Cref{lem:lb-properties} implies that $\lb[\,X^1]g\nu$ is proper and
	convex, thus, by the Fenchel--Moreau theorem, we have
	$\lb[\,X^1]g\nu=\bicon{\lb[\,X^1]g\nu}$ except possibly at the boundary of
	its domain, so this is simply a restatement of \cref{prop:lb-conjugate}
	using the terminology from \cref{def:cumulant}.
\end{proof}

\Cref{prop:lb-conjugate} and \cref{thm:equiv} show that the
conjugate of the optimal lower bound only depends on the space of measures $X$,
through the $\sigma$-ideal $\Xi$, as long as $X$ forms a decomposable dual pair
with a space $Y$ of functions containing $g$.  Hence, starting from
a $\sigma$-ideal $\Xi$ and a function $g$—or more generally a class of
functions $\G$—a natural dual pair to consider is the space $X\subseteq
\M_c(\Xi)$ of all measures integrating functions in $\G$, put in dual pairing
with the subspace of $L^0(\Xi)$ of all functions integrable by measures in $X$.
Formally, we have the following definition.

\begin{definition}
\label{defn:G-dual-pair}
Let $\G$ be a subset of $L^0(\Sigma)$ for some
$\sigma$-ideal $\Sigma$. We define
	\begin{align*}
		X_\G&\eqdef \set{\mu\in\M_c(\Sigma)
			\given \forall g\in\G, \ex{\abs\mu}{\abs g}<\infty}\\
	\text{and } Y_\G&\eqdef \set{h\in L^0(\Xi)
		\given \forall \mu\in X_\G, \ex{\abs\mu}{\abs h}<\infty}\,,
	\end{align*}
	where $\Xi \eqdef \set{A\in\A\given \forall\mu\in X_\G, \abs\mu(A)=0}$.

	For brevity, if $\G=\set g$ is a singleton, we write $X_g$
	for $X_{\set g}$ and $Y_g$ for $Y_{\set g}$.
\end{definition}

\begin{remark}
	We would like to use $\Sigma$ rather than $\Xi$ in the definition of $Y_\G$,
	but need to be careful since if $\Sigma\subsetneq\Xi$ then using $\Sigma$ would
	prevent $(X_\G, Y_\G)$ from being in separating duality. Unfortunately, there
	exist pathological $\sigma$-ideals for which $\Sigma\subsetneq \Xi$ (\citet[2.\
	Corollaire]{S34}), but since for non-pathological choices of $\Sigma$ (e.g.~when
	it is the null ideal of a $\sigma$-finite, semifinite, or $s$-finite measure) we
	indeed have $\Sigma = \Xi$, we do not dwell on this distinction.
\end{remark}

\begin{lemma}
	\label{lem:xg-yg-decomposable}
	Consider a subset $\G\subseteq L^0(\Sigma)$ for some $\sigma$-ideal $\Sigma$.
	Then for every $\nu\in X_\G^+$, the pair $\paren[\big]{X_\G, Y_\G}$ is
	$\nu$-decomposable.
\end{lemma}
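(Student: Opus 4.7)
The plan is to verify the three conditions of \cref{assmp:decomp} directly. The key observation is the remark following that definition, which states that conditions (2) and (3) together force the pairing in condition (1) to be separating. Thus it suffices to check that the pairing takes finite values (so it is a well-defined bilinear form), and then establish the containment conditions (2) and the ``richness'' condition (3).

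Well-definedness of $(\mu,h)\mapsto\ex\mu h$ is immediate: the definitions of $X_\G$ and $Y_\G$ are set up so that for all $\mu\in X_\G$ and $h\in Y_\G$, $\ex{\abs\mu}{\abs h}<\infty$. For condition (2), the inclusion $Y_\G\subseteq L^0(\Xi)$ holds by definition. The inclusion $L^\infty(\Xi)\subseteq Y_\G$ follows because an essentially bounded $h$ satisfies $\ex{\abs\mu}{\abs h}\leq\norm h_\infty\cdot\abs\mu(\Omega)<\infty$ for every finite $\mu\in X_\G$, after observing that $\mu\ll\Xi$ so the value of $\ex\mu h$ is independent of the representative chosen. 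For the inclusion of measures with bounded Radon--Nikodym derivative with respect to $\nu$: if $\mu\ll\nu$ with $d\mu/d\nu\in L^\infty(\nu)$, then $\abs\mu\ll\nu\ll\Sigma$ gives $\mu\in\M_c(\Sigma)$, and for any $g\in\G$, $\ex{\abs\mu}{\abs g}\leq\norm{d\mu/d\nu}_\infty\cdot\ex\nu{\abs g}<\infty$ since $\nu\in X_\G$.

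The only part requiring a small construction is condition (3). Given $A\notin\Xi$, the definition of $\Xi$ supplies some $\mu_0\in X_\G$ with $\abs{\mu_0}(A)>0$. Taking the Hahn--Jordan decomposition $\mu_0=\mu_0^+-\mu_0^-$, at least one of $\mu_0^+(A)$ or $\mu_0^-(A)$ is positive; without loss of generality assume the former. Define $\mu\in\M^+$ by $\mu(B)\eqdef\mu_0^+(A\cap B)$ for all $B\in\A$. Then $\mu(\Omega\setminus A)=0$ and $\mu(\Omega)=\mu_0^+(A)>0$, so $\mu\neq 0$. Since $\mu\leq\abs{\mu_0}$, we have $\mu\ll\Sigma$ and $\ex\mu{\abs g}\leq\ex{\abs{\mu_0}}{\abs g}<\infty$ for every $g\in\G$, so $\mu\in X_\G^+\setminus\set 0$ as required.

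There is no serious obstacle here: the only subtlety is bookkeeping the relationship between the $\sigma$-ideals $\Sigma$ and $\Xi$ (noting $\Sigma\subseteq\Xi$ since every $\mu\in X_\G$ is $\Sigma$-null by construction), so that equivalence classes in $L^0(\Xi)$ and $L^\infty(\Xi)$ pair consistently with measures in $X_\G$. Once this is handled, conditions (2) and (3) are verified by the above, and separating duality in condition (1) then comes for free from the remark following \cref{assmp:decomp}.
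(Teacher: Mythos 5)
Your proof is correct, but it takes a slightly different route from the paper's. The paper observes (as stated in \cref{rem:decomp}) that if condition $2$ of \cref{assmp:decomp} is established in the \emph{strengthened} form---for every measure $\mu'\in X_\G$, not just the given $\nu$---then condition $3$ comes for free; accordingly the paper's proof verifies the bounded-derivative inclusion for an arbitrary $\nu\in X_\G^+$ (which covers $\abs{\mu'}$ for any $\mu'\in X_\G$) and stops there. You instead prove condition $2$ only for the specific $\nu$ of the lemma and then explicitly build the witness for condition $3$: given $A\notin\Xi$, you pick $\mu_0\in X_\G$ with $\abs{\mu_0}(A)>0$, pass to the Hahn--Jordan decomposition, and restrict the appropriate part to $A$. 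Your construction is sound, the domination $\mu\leq\abs{\mu_0}$ correctly transfers membership in $\M_c(\Sigma)$ and integrability of each $g\in\G$, and your handling of the inclusion $\Sigma\subseteq\Xi$ is also right. The trade-off is that the paper's proof is more compact by leaning on the remark, while yours is more self-contained and does not require the reader to unpack the second half of \cref{rem:decomp}; both appeal to the remark only for the standard implication that conditions $2$ and $3$ together force the duality to be separating.
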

\begin{proof}
	That $\mu(h)<\infty$ for all $\mu\in X_\G$ and $h\in Y_\G$ is by definition.
	As discussed in \cref{rem:decomp}, it suffices to verify that item $2$ in
	\cref{assmp:decomp} is true for all $\nu\in X_\G$, and indeed just for
	all $\nu\in X_\G^+$ since $\nu\in X_\G$ implies $\abs\nu\in X_\G^+$. Item $3$ and the
	separability of the duality between $(X_\G, Y_\G)$ then follow immediately.

	For item 2, consider $\nu\in X_\G^+$ and $\mu\in\M_c(\nu)$ such that
	$\frac{d\mu}{d\nu}\in L^\infty(\nu)$. Then for all $g\in\G$ we have
	$\ex{\abs\mu}{\abs g} =\ex{\nu}{\abs*{\frac{d\mu}{d\nu}}\cdot\abs g}<\infty$
	by Hölder's inequality, hence $\mu\in X_\G$. That $L^\infty(\Xi)\subseteq Y_\G$
	holds is immediate since every $\mu\in\M_c(\Sigma)$ integrates every $h\in
	L^\infty(\Xi)$.
\end{proof}

The following easy corollary is an ``operational'' restatement of
\cref{thm:equiv}, specialized to the dual pair of \cref{defn:G-dual-pair}, and
highlighting the duality between upper bounding the cumulant generating
function and lower bounding the $\phi$-divergence by a convex lower
semicontinuous function of the mean deviation.

\begin{corollary}
	\label{cor:equiv-operational}
	Consider a measurable function $g\in L^0(\Sigma)$ for some $\sigma$-ideal
	$\Sigma$ and let $\Xi = \set{A\in\A\given \forall\mu\in X_g,
	\abs\mu(A)=0}$.
	Then for every probability measure $\nu\in X_g^1\eqdef X_g\cap\M^1$ and
	convex lower semicontinuous function $L:\R\to\eR_{\geq 0}$, the following
	are equivalent:
	\begin{enum}
	\item $\di\mu\nu\geq L(\ex\mu g-\ex\nu g)$ for every $\mu\in X^1_g$.
	\item $\cgfxi g\nu\Xi\leq \con L$.
	\end{enum}
\end{corollary}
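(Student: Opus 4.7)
The plan is to derive the corollary directly from \cref{thm:equiv} by a short convex duality argument.

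First, I would observe that condition~(i) is precisely the pointwise inequality $L\leq \lb[X_g^1]g\nu$ on $\R$: for each $\eps\in\R$, taking the infimum over those $\mu\in X_g^1$ with $\ex\mu g-\ex\nu g=\eps$ on both sides of the inequality in~(i) yields $L(\eps)\leq \lb[X_g^1]g\nu(\eps)$ by \cref{defn:lb-function-single-g}, and conversely this pointwise inequality implies~(i); the case when no such $\mu$ exists is vacuous since then $\lb[X_g^1]g\nu(\eps)=+\infty$.

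Next, I would invoke \cref{lem:xg-yg-decomposable} to confirm that $(X_g, Y_g)$ is $\nu$-decomposable, noting that $g\in Y_g$ by construction. \Cref{prop:lb-conjugate}---which is the same identity that underlies \cref{thm:equiv}, read via $\con\psi(x)=\con\phi(x)-x$---then gives $\con{\lb[X_g^1]g\nu}=\cgfxi g\nu\Xi$ globally on $\R$.

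To close the equivalence, set $K\eqdef \cgfxi g\nu\Xi$, which is convex, lsc, and proper by \cref{prop:cgf-properties}; by hypothesis $L$ is also convex and lsc. Both therefore coincide with their biconjugates by \cref{prop:conj} (the degenerate case $L\equiv +\infty$ is handled separately: then (i) fails at $\mu=\nu$ since $\di\nu\nu=0$, and (ii) requires $K\leq -\infty$ which is impossible for nonnegative $K$, so both fail and the equivalence is trivial). Since $\bicon{\lb[X_g^1]g\nu}$ is the largest lsc convex minorant of $\lb[X_g^1]g\nu$, the pointwise bound $L\leq \lb[X_g^1]g\nu$ is equivalent to $L\leq \bicon{\lb[X_g^1]g\nu}=\con K$; taking conjugates and using $\bicon K=K$ and $\bicon L=L$, this rewrites as $K\leq \con L$, i.e.~(ii), and the chain is reversible.

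The only subtle point is the possible failure of strong duality in \cref{eqn:lb-optimization} at the boundary of $\dom\lb[X_g^1]g\nu$, meaning $\lb[X_g^1]g\nu$ itself need not be lsc; but this is absorbed by passing through the biconjugate $\bicon{\lb[X_g^1]g\nu}$, a move licensed precisely by the lower semicontinuity hypothesis on $L$.
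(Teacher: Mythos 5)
Your proof is correct and is exactly the argument the paper intends (the corollary is stated without proof as an "easy" consequence of \cref{thm:equiv}): identify (i) with the pointwise bound $L\leq\lb[X_g^1]g\nu$, use \cref{prop:lb-conjugate} together with \cref{lem:xg-yg-decomposable} to get $\con{\lb[X_g^1]g\nu}=\cgfxi g\nu\Xi$, and then translate via the order-reversing involution of conjugation on proper convex lsc functions, with the lsc hypothesis on $L$ doing precisely the work of absorbing the gap between $\lb[X_g^1]g\nu$ and its biconjugate. Your handling of the degenerate case $L\equiv+\infty$ and your explicit remark on why strong duality in \eqref{eqn:lb-optimization} is not needed are both correct and appropriately careful.
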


\begin{example}
	\label{exmp:hcr}
	The Hammersley--Chapman--Robbins bound in statistics is an
	immediate corollary of \cref{cor:equiv-operational} applied to the
	$\chi^2$-divergence given by $\phi(x)=(x-1)^2 +\delta_{\R_\geq 0}(x)$: The
	convex conjugate of $\psi(x)=x^2 + \delta_{[-1,\infty)}(x)$ is
	\[\con\psi(x)=\begin{cases}x^2/4&x\geq -2\\-1-x&x<-2\end{cases}\] and
	satisfies in particular $\con\psi(x)\leq x^2/4$, so that $\cgf
	g\nu(t)\leq\inf_\lambda \ex*\nu{(tg+\lambda)^2/4}
	=t^2\operatorname{Var}_\nu(g)/4$. Since the convex conjugate of $t\mapsto
	t^2\operatorname{Var}_\nu(g)/4$ is $t\mapsto
	t^2/\operatorname{Var}_\nu(g)$, we obtain for all $\mu,\nu\in\M^1$ and
	$g\in L^1(\nu)$ that $\chisq\mu\nu\geq (\ex\mu g-\ex\nu
	g)^2/\operatorname{Var}_\nu(g)$.
\end{example}

\Cref{thm:equiv} also gives a useful characterization of the existence of
a non-trivial lower bound by the \emph{absolute} mean deviation.

\begin{corollary}
	\label{cor:subexp-iff-lb}
	Consider a measurable function $g\in L^0(\Sigma)$ for some $\sigma$-ideal
	$\Sigma$ and let $\Xi = \set{A\in\A\given \forall\mu\in X_g,
	\abs\mu(A)=0}$.
	Then for every $\nu\in X_g^1$, the optimal lower bound
	$\lb[\,X^1_g]{\set{\pm g}}\nu$ is non-zero if and only if
	$\,0\in\inter(\dom\cgfxi g\nu\Xi)$. In other words, the following are
	equivalent
	\begin{enum}
	\item there exists a non-zero function $L:\R_{\geq 0}\to\overline\R_{\geq
	0}$ such that $\di\mu\nu\geq L\paren[\big]{\abs{\ex\mu g-\ex\nu g}}$ for
		every $\mu\in X^1_g$.
	\item the function $\cgfxi g\nu\Xi$ is finite on an open interval around $0$.
	\end{enum}
\end{corollary}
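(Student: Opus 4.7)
The plan is first to reduce condition (i) to the statement that $\lb[\,X^1_g]{\set{\pm g}}\nu$ is itself non-zero, and then to prove this refined equivalence via the conjugate duality developed in \cref{prop:lb-conjugate,thm:equiv}. The reduction is immediate: by its definition as an infimum, $\lb[\,X^1_g]{\set{\pm g}}\nu$ is the pointwise largest non-negative function $L$ for which the bound in (i) holds, so (i) holds for some non-zero $L$ if and only if $\lb[\,X^1_g]{\set{\pm g}}\nu$ itself is non-zero.

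For the forward direction, suppose $0\in\inter(\dom K)$ where $K\eqdef\cgfxi g\nu\Xi$. Then $K$ is finite on some open neighborhood $(-\delta,\delta)$ of $0$, so the lower bound $\con K(\eps)\geq t\eps-K(t)$ evaluated at $t=\pm\delta$ yields $\con K(\eps_0)>0$ and $\con K(-\eps_0)>0$ whenever $\eps_0>\max(K(\delta),K(-\delta))/\delta$. Since \cref{prop:lb-conjugate} gives $\con{\lb[\,X^1_g]g\nu}=K$, we have $\lb[\,X^1_g]g\nu\geq\bicon{\lb[\,X^1_g]g\nu}=\con K$ pointwise; hence both $\lb[\,X^1_g]g\nu(\eps_0)$ and $\lb[\,X^1_g]g\nu(-\eps_0)$ are strictly positive, and so is their minimum $\lb[\,X^1_g]{\set{\pm g}}\nu(\eps_0)$.

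For the backward direction, suppose $0\notin\inter(\dom K)$. Since $K$ is convex with $K(0)=0$, its effective domain is a subinterval of $\R$ containing $0$, so either $\dom K\subseteq[0,\infty)$ or $\dom K\subseteq(-\infty,0]$. The identity $\cgfxi{-g}\nu\Xi(t)=\cgfxi g\nu\Xi(-t)$ (immediate from \cref{def:cumulant}) combined with the invariance of $\lb[\,X^1_g]{\set{\pm g}}\nu$ under $g\mapsto-g$ lets us assume without loss of generality that $\dom K\subseteq[0,\infty)$. Then for every $x\leq 0$, $\con K(x)=\sup_{t\geq 0}\set{xt-K(t)}=0$, with the supremum attained at $t=0$ since $xt\leq 0$ and $K(t)\geq 0$ on $[0,\infty)$.

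The hard part is the final step, namely converting this pointwise identity on $\con K$ into the identity on $\lb[\,X^1_g]g\nu$ needed to conclude. My plan is to apply \cref{lem:biconj-R} to the proper convex function $\lb[\,X^1_g]g\nu$, which forces $\dom\lb[\,X^1_g]g\nu$ and $\dom\con K=\dom\bicon{\lb[\,X^1_g]g\nu}$ to agree except possibly at the (at most two) boundary points of the former, so they share the same interior. The inclusion $(-\infty,0]\subseteq\dom\con K$ established above then forces $\dom\lb[\,X^1_g]g\nu$ to be unbounded below, whence $\inter(\dom\lb[\,X^1_g]g\nu)\supseteq(-\infty,0)$. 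Consequently, for every $\eps>0$ the point $-\eps$ lies in $\inter(\dom\lb[\,X^1_g]g\nu)$, and \cref{thm:equiv} yields $\lb[\,X^1_g]g\nu(-\eps)=\con K(-\eps)=0$. Combining with $\lb[\,X^1_g]g\nu\geq 0$, we get $\lb[\,X^1_g]{\set{\pm g}}\nu(\eps)=\min(\lb[\,X^1_g]g\nu(\eps),0)=0$ for all $\eps>0$, and $\lb[\,X^1_g]{\set{\pm g}}\nu(0)=0$ trivially, so $\lb[\,X^1_g]{\set{\pm g}}\nu\equiv 0$.
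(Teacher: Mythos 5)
Your proof is correct, but it takes a noticeably longer path than the paper's. Writing $M=X^1_g$, the paper observes that $\lb[M]g\nu$ is convex, non-negative, and $0$ at $0$ (\cref{lem:lb-properties}), hence non-increasing on $(-\infty,0]$ and non-decreasing on $[0,\infty)$; consequently ``$\lb[M]g\nu(\eps)>0$ for some $\eps>0$'' is the same as $\lb[M]g\nu'(\infty)>0$, and symmetrically on the left. The statement to be proved thus collapses to $0\in\paren[\big]{\lb[M]g\nu'(-\infty),\lb[M]g\nu'(\infty)}$, and this interval is exactly $\inter\paren[\big]{\dom\con{\lb[M]g\nu}}=\inter\paren[\big]{\dom\cgfxi g\nu\Xi}$ by \cref{lem:domconj} and \cref{prop:lb-conjugate}. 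Biconjugation subtleties never arise, since asymptotic slopes and conjugate domains are insensitive to the difference between $\lb[M]g\nu$ and $\bicon{\lb[M]g\nu}$. By contrast, you split the two implications and must navigate exactly these subtleties in the backward direction: there you reduce by symmetry, compute $\con K$ explicitly on the negative half-line, invoke \cref{lem:biconj-R} to identify $\inter\paren[\big]{\dom\lb[M]g\nu}$ with $\inter\paren[\big]{\dom\bicon{\lb[M]g\nu}}$, and finally invoke \cref{thm:equiv} to transfer the equality $\con K(-\eps)=0$ from the biconjugate back to $\lb[M]g\nu$ itself. All of this is avoidable once one notices that the desired condition can be phrased entirely in terms of $\con{\lb[M]g\nu}$. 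One small gap in your forward direction is also worth flagging: you choose $\delta$ so that $K$ is finite on the \emph{open} interval $(-\delta,\delta)$ but then evaluate the Fenchel inequality at $t=\pm\delta$, where $K$ may a priori be $+\infty$, making the displayed condition on $\eps_0$ vacuous; this is fixed by shrinking $\delta$ so that $[-\delta,\delta]\subseteq\inter(\dom K)$, or by evaluating at interior points such as $t=\pm\delta/2$.
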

\begin{proof}
	Writing $M=\,X^1_g$, we have by \cref{eqn:abslb} that the function
	$\lb[M]{\set{\pm g}}\nu$ is non-zero if and only if there exists $\eps>0$
	such that $\lb[M]g\nu(\eps)\neq 0\neq \lb[M]g\nu(-\eps)$.  Since
	$\lb[M]g\nu$ is convex, non-negative, and $0$ at $0$ by
\cref{lem:lb-properties}, such an $\eps$ exists if and only if $0$ is contained
in the interval $\paren[\big]{\lb[M]g\nu'(-\infty),\lb[M] g\nu'(\infty)}$, the
interior of the domain of $\con{\lb[M]g\nu}$.
\end{proof}

\begin{remark}
	Throughout this section, we have seen the $\sigma$-ideal $\Xi$ appear in
	our results, in particular via $\cgfxi g\nu\Xi$ in
	\cref{cor:subexp-iff-lb}.  We will see in \cref{thm:ipm-lb-smoothed}
	however that when we consider a true IPM where we require the bound $L$ to
	hold jointly for all measures $\nu$ and $\mu$, we can ignore the
	$\sigma$-ideal $\Xi$ and consider only $\cgf g\nu$.
\end{remark}

\subsection{Subexponential functions and connections to Orlicz spaces}
\label{sec:subexponential}

In \cref{sec:subexponential,sec:inf-compact,sec:continuity}, we explore
properties of the set of functions satisfying the conditions of
\cref{cor:subexp-iff-lb}, i.e.\ for which there is a non-trivial lower bound of
the $\phi$-divergence in terms of the absolute mean deviation, and show its
relation to topological properties of the divergence. A reader primarily
interested in quantitative bounds for IPMs can skip to \cref{sec:ipm}.

In light of \cref{cor:subexp-iff-lb}, we need to consider the set of functions
$g$ such that $\dom \cgfxi g\nu\Xi$ contains a neighborhood
of zero. The following lemma shows that this is the case for bounded functions,
and that furthermore, when $\phi'(\infty)<\infty$, boundedness is necessary. In
other words, when $\phi'(\infty)<\infty$, the $\phi$-divergence cannot upper
bound the absolute mean deviation of an unbounded function. This is in sharp
contrast with the KL divergence (satisfying $\phi'(\infty)=\infty$), for which
such upper bounds exist as long as the function satisfies Gaussian-type tail
bounds \citep[\S 4.10]{BLM13}.

\begin{lemma}
	\label{lem:lb-finitedagger-bounded}
	\label{lem:linfty-strong-subexponential}
	Let $\Xi$ be a $\sigma$-ideal and $\nu\in\M^1_c(\Xi)$.
	If $g\in L^\infty(\Xi)$ then $\dom\cgfxi g\nu\Xi$ is
	all of $\R$, and in particular contains a neighborhood of zero. Furthermore,
	when $\phi'(\infty)<\infty$, we have conversely that if $0$ is in the
	interior of the domain of $\cgfxi g\nu\Xi$, then
	$g\in L^\infty(\Xi)$, in which case $\cgf
	g\nu(t)=\cgfxi g\nu\Xi(t)$ whenever $\abs t\cdot\paren[\big]{\esssup_\Xi
	g-\essinf_\Xi g} \leq \phi'(\infty)$.
\end{lemma}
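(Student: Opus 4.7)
The lemma has three parts: (1) $g\in L^\infty(\Xi)$ implies $\dom\cgfxi g\nu\Xi=\R$; (2) when $\phi'(\infty)<\infty$, a converse; (3) equality of $\cgf g\nu$ and $\cgfxi g\nu\Xi$ under a range condition. My plan is to derive each part directly from \cref{def:cumulant} and \cref{lem:con-psi-properties}, which gives that $\con\psi$ is non-negative, convex, with minimum $0$ at $0$, and $\inter\dom\con\psi=(-\infty,\phi'(\infty))$.

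For (1), fix any $t\in\R$. Since $g$ is $\Xi$-essentially bounded, both $\esssup_\Xi(tg)$ and $\essinf_\Xi(tg)$ are finite; choose $\lambda$ strictly smaller than $\phi'(\infty)-\esssup_\Xi(tg)$, which is feasible for the constraint in \eqref{eq:cumul} and ensures $tg+\lambda$ lies $\nu$-a.s.\ in a bounded closed subinterval of $\inter\dom\con\psi=(-\infty,\phi'(\infty))$. By convexity and finiteness, $\con\psi$ is continuous—hence bounded—on this compact subinterval, so $\con\psi(tg+\lambda)$ is integrable against the finite measure $\nu$.

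For (2), I argue contrapositively. If $\esssup_\Xi g=+\infty$, then for any $t>0$ we have $\esssup_\Xi(tg)=+\infty$, which makes the constraint $\lambda+\esssup_\Xi(tg)\leq\phi'(\infty)<\infty$ infeasible, forcing $\cgfxi g\nu\Xi(t)=+\infty$ (infimum over the empty set). Symmetrically, $\essinf_\Xi g=-\infty$ rules out every $t<0$. Hence if $0\in\inter\dom\cgfxi g\nu\Xi$, both the $\Xi$-essential supremum and infimum of $g$ must be finite, i.e., $g\in L^\infty(\Xi)$.

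For (3), the inequality $\cgf g\nu(t)\leq\cgfxi g\nu\Xi(t)$ is immediate since $\Xi$ is contained in the $\nu$-null ideal, making the $\Xi$-constraint stricter than the $\nu$-constraint. For the reverse, focus on the case $t\geq 0$, the case $t\leq 0$ being symmetric. Given any $\lambda$ feasible for $\cgf g\nu(t)$, replace it with $\lambda'\eqdef\min\bigl(\lambda,\phi'(\infty)-\esssup_\Xi(tg)\bigr)$, which is feasible for $\cgfxi g\nu\Xi(t)$. When $\lambda'<\lambda$, the hypothesis $t(\esssup_\Xi g-\essinf_\Xi g)\leq\phi'(\infty)$ rearranges to $\lambda'\geq -\essinf_\Xi(tg)\geq -\essinf_\nu(tg)$ (using $\essinf_\nu g\geq\essinf_\Xi g$), so $tg+\lambda'\geq 0$ $\nu$-a.s.; meanwhile $tg+\lambda\leq\phi'(\infty)$ $\nu$-a.s.\ by feasibility of $\lambda$. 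Thus both $tg+\lambda$ and $tg+\lambda'$ land in $[0,\phi'(\infty)]$ $\nu$-a.s.\ with $tg+\lambda'\leq tg+\lambda$. Since $\con\psi$ is convex with minimum $0$ at $0$, it is non-decreasing on $[0,\infty)$; thus $\con\psi(tg+\lambda')\leq\con\psi(tg+\lambda)$ $\nu$-a.s., and integration yields $\ex\nu{\con\psi(tg+\lambda')}\leq\ex\nu{\con\psi(tg+\lambda)}$. Taking infimum over feasible $\lambda$ gives $\cgfxi g\nu\Xi(t)\leq\cgf g\nu(t)$. The crux is this monotonicity trick: the range condition is precisely what forces the feasibility-adjusted $\lambda'$ into the region $[0,\infty)$ where $\con\psi$ is monotone, so that the $\nu$-pointwise comparison goes the right way.
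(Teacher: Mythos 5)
Your proof is correct and follows essentially the same route as the paper: all three parts hinge on the same observations about $\con\psi$ (finite on $(-\infty,\phi'(\infty))$, non-decreasing on $\R_{\geq 0}$) and on the interplay between the $\Xi$- and $\nu$-essential ranges. The only cosmetic difference is in part (1), where the paper invokes the explicit bound $\con\psi(x)\leq -x$ for $x\leq 0$ to get the quantitative estimate $\cgfxi g\nu\Xi(t)\leq 2\abs{t}\esssup_\Xi\abs g$, while you argue more softly via continuity of $\con\psi$ on a compact subinterval of the interior of its domain; both are valid, and your part (3) ``clamping'' $\lambda\mapsto\lambda'=\min\bigl(\lambda,\phi'(\infty)-\esssup_\Xi(tg)\bigr)$ is just a reformulation of the paper's restriction of the infimum to $\essinf_\Xi(tg)+\lambda\leq 0$.
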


\begin{remark}
	As already discussed, \cref{lem:lb-finitedagger-bounded} implies that when
	$\phi'(\infty)<\infty$, boundedness of $g$ is necessary for the existence
	of a non-trivial lower bound on $\di\mu\nu$ in terms of the $\abs{\ex\mu
	g -\ex\nu g}$. Moreover, we can deduce from
	\cref{lem:lb-finitedagger-bounded} that in this case, any non-trivial lower
	bound must depend on $\esssup_\Xi \abs g$ and cannot depend only on
	properties of $g$ such as its $\nu$-variance. In particular, any
	non-trivial lower bound must converge to 0 as $\esssup_\Xi \abs g$ converges
	to $+\infty$, for if it were not the case, one could obtain a non-trivial
	lower bound for an unbounded function $g$ by approximating it with bounded
	functions $g\cdot\ind\set{\abs g\leq n}$.
\end{remark}

\begin{proof}
	Recall that $(-\infty, 0]\subseteq \dom\con\psi$ and that $\con\psi(x)\leq
	-x$ for $x\leq 0$ by \cref{lem:con-psi-properties}. For $g\in L^\infty(\Xi)$,
	write $B$ for $\esssup_\Xi\abs g$, and for $t\in\R$, write $\lambda
	\eqdef - \abs t\cdot B$.  Then we have that $-2\abs t B\leq t\cdot
	g+\lambda\leq 0\leq\phi'(\infty)$ holds $\Xi$-a.s.,
	and thus also $\con\psi(tg+\lambda)\leq 2\abs
	tB$ holds $\Xi$-a.s. Thus $\cgfxi g\nu\Xi(t)$
	is at most $2\abs t\cdot B<\infty$ by definition, and since $t$ is
	arbitrary, we get $\dom\cgfxi g\nu\Xi=\R$.

	We now assume $\phi'(\infty)<\infty$ and prove the converse claim. If $\cgfxi
	g\nu\Xi(t)$ is finite for some $t\in\R$, then $tg+\lambda\leq\phi'(\infty)$
	holds $\Xi$-a.s.\ for some $\lambda\in\R$.  In particular, if it holds for
	some $t>0$, then $\esssup_\Xi g$ is finite, and if it holds for some $t<0$,
	then $\essinf_\Xi g$ is finite.

	For the remaining claim, since $\con\psi$ is non-decreasing on the
	non-negative reals we have that $\cgf g\nu (t)
	=\inf\set[\big]{\intf{\con\psi\!}\nu(tg+\lambda)\given\lambda\in\R}
	=\inf\set[\big]{\intf{\con\psi\!}\nu(tg+\lambda)\given \essinf_\Xi
	tg+\lambda\leq 0}$. But if $\esssup_\Xi\paren[\big]{t\cdot g}-
	\essinf_\Xi\paren[\big]{t\cdot g}\leq \phi'(\infty)$, then $\essinf_\Xi t\cdot
	g+\lambda\leq 0$ implies $\esssup_\Xi tg+\lambda\leq \phi'(\infty)$ and $\cgf
	g\nu (t)\geq \cgfxi g\nu\Xi(t)\geq \cgf g\nu (t)$.
\end{proof}

Since \cref{lem:lb-finitedagger-bounded} completely characterizes the existence
of a non-trivial lower bound when $\phi'(\infty)<\infty$, we focus on the case
$\phi'(\infty)=\infty$ in the remainder of this section. Recall that $\cgf g\nu
= \cgfxi g\nu\Xi$ in this case, so we only need to consider $\cgf g\nu$ in the
following definition.

\begin{definition}[$(\phi,\nu)$-subexponential functions]
\label{defn:subexponential}
Let $\nu\in\M^1$ be a probability measure. We say that the function
$g\in L^0(\nu)$ is \emph{$(\phi,\nu)$-subexponential} if
$0\in\inter(\dom\cgf g \nu)$ and we denote by $\subexp(\nu)$ the space of all such functions.
We further say that $g\in L^0(\nu)$ is \emph{strongly
$(\phi,\nu)$-subexponential} if $\dom\cgf g\nu=\R$ and denote by
$\ssubexp(\nu)$ the space of all such functions.
\end{definition}
\begin{example}
	For the case of the KL-divergence, if the pushforward $\pushforward\nu g$ of
	$\nu$ induced by $g$ on $\R$ is the Gaussian distribution (respectively the
	gamma distribution), then $g$ is strongly subexponential (respectively
	subexponential). Furthermore, it follows from \cref{ex:kl-cum} that
	$g\in\subexp(\nu)$ iff the moment-generating function of $g$ is finite on a
	neighborhood of $0$,  which is the standard definition of subexponential
	functions (see e.g.\ \citet[\S2.7]{V18}) and thus justifies our terminology.
\end{example}
\begin{example}
	\label{ex:bounded-implies-ssubexp}
	\cref{lem:lb-finitedagger-bounded} shows that $L^\infty(\nu)\subseteq
	\ssubexp(\nu)$ and that furthermore, if $\phi'(\infty)<\infty$, then
	$L^\infty(\nu)=\ssubexp(\nu)=\subexp(\nu)$.
\end{example}

We start with the following key lemma allowing us to relate the finiteness of
$\cgf g\nu$ to the finiteness of the function
$t\mapsto\intf{\con\psi\!}\nu(tg)$.

\begin{lemma}
	\label{lem:shrink}
	For $\nu\in\M^1$, $g\in L^0(\nu)$, and $t\in\dom\cgf g\nu$,
	we have that if $\phi'(\infty)=\infty$ (resp.\ $\phi'(\infty)>0$)
	then $\alpha tg\in\dom\intf{\con\psi\!}\nu$ for all $\alpha\in(0,1)$
	(resp.\ for sufficiently small $\alpha>0$).
\end{lemma}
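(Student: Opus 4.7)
The plan is to use convexity of $\con\psi$ to transfer the finiteness condition. By the definition of $\cgf g\nu$ and the assumption $t\in\dom\cgf g\nu$, there exists $\lambda_0\in\R$ such that $\lambda_0+\esssup_\nu(tg)\leq\phi'(\infty)$ and $\int\con\psi(tg+\lambda_0)\,d\nu<\infty$. I would then write $\alpha tg$ as a convex combination of $tg+\lambda_0$ and an appropriately chosen constant, and invoke Jensen's inequality.

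Specifically, for $\alpha\in(0,1)$ set $B_\alpha\eqdef-\alpha\lambda_0/(1-\alpha)$, so that the identity $\alpha tg=\alpha(tg+\lambda_0)+(1-\alpha)B_\alpha$ holds pointwise. Applying convexity of $\con\psi$ (\cref{lem:con-psi-properties}) and integrating against $\nu$ then yields
\[
\int\con\psi(\alpha tg)\,d\nu \;\leq\; \alpha\int\con\psi(tg+\lambda_0)\,d\nu+(1-\alpha)\con\psi(B_\alpha).
\]
The first term on the right is finite by choice of $\lambda_0$, so the entire right-hand side is finite as soon as $B_\alpha\in\dom\con\psi$. Since $\inter(\dom\con\psi)=(-\infty,\phi'(\infty))$ by \cref{lem:con-psi-properties}, it suffices to ensure $B_\alpha<\phi'(\infty)$.

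It then remains to choose $\alpha$ appropriately. When $\phi'(\infty)=\infty$, the constant $B_\alpha$ is finite for every $\alpha\in(0,1)$, yielding the first claim. When $\phi'(\infty)>0$ is arbitrary, the key observation is that $B_\alpha\to 0$ as $\alpha\to 0^+$, so $B_\alpha<\phi'(\infty)$ for all sufficiently small positive $\alpha$, yielding the second claim. I do not anticipate any real obstacle: the argument reduces to identifying the correct convex combination and a straightforward application of convexity, with \cref{lem:con-psi-properties} handling the characterization of $\dom\con\psi$.
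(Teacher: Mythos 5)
Your proposal is correct and matches the paper's proof essentially line for line: the same choice of $\lambda_0$ witnessing $t\in\dom\cgf g\nu$, the same convex combination $\alpha tg=\alpha(tg+\lambda_0)+(1-\alpha)\cdot\frac{-\alpha\lambda_0}{1-\alpha}$, the same use of pointwise convexity of $\con\psi$ followed by integration, and the same analysis of when $\frac{-\alpha\lambda_0}{1-\alpha}$ lies in $\dom\con\psi$ using \cref{lem:con-psi-properties}. (One minor note: the step you call ``Jensen's inequality'' is simply pointwise convexity of $\con\psi$ integrated against $\nu$, which is what the paper uses too.)
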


\begin{proof}
	Let $\lambda\in\R$ be such that
	$\ex*\nu{\con\psi(tg+\lambda)}<\infty$ (such a $\lambda$ exists
	since $t\in\dom\cgf g\nu$). Using the convexity of $\con\psi$, we get
	for any $\alpha\in(0,1)$
	\begin{align*}
		\ex*\nu{\con\psi(\alpha tg)}
		&=\ex*\nu{\con\psi\paren*{\alpha
		(tg+\lambda)+(1-\alpha)\frac{-\alpha\lambda}{1-\alpha}}}\\
		&\leq \alpha\ex*\nu{\con\psi(tg+\lambda)}
		+ (1-\alpha)\con\psi\paren*{\frac{-\alpha\lambda}{1-\alpha}}
		\,.
	\end{align*}
	The first summand is finite by definition, and if
	$-\alpha\lambda/(1-\alpha)\in \dom\con\psi\supseteq (-\infty,\phi'(\infty))$
	then so is the second summand. If $\phi'(\infty)=\infty$ this holds
	for all $\alpha\in(0,1)$, and if $\phi'(\infty)>0$ it holds for
	sufficiently small $\alpha>0$.
\end{proof}

\begin{remark}
	When $\phi'(\infty)<\infty$, it is not necessarily true that 
	any $\alpha\in(0,1)$ can be used in \cref{lem:shrink}. For example,
	\cref{lem:linfty-strong-subexponential} implies that
	$\dom\cgf g\nu=\R$ for all $g\in L^\infty(\nu)$, but since
	$\dom\con\psi\subseteq\bigl(-\infty,\phi'(\infty)\bigr]$ we have
	$\intf{\con\psi\!}\nu(tg)=\infty$ for sufficiently large 
	(possibly only positive or negative) $t$, unless $g$ is
	zero $\nu$-a.s.
\end{remark}

The following proposition gives useful characterizations of subexponential
functions in terms of the finiteness of different integral functionals of
$g$.

\begin{proposition}
\label{prop:interval-cgf-implies-absolute}
	Suppose that $\phi'(\infty)=\infty$ and fix $\nu\in\M^1$ and $g\in L^0(\nu)$.
	Then the following are equivalent:
	\begin{enum}
		\item $g$ is $(\phi,\nu)$-subexponential
		\item $\cgf{\abs g}\nu(t)<\infty$ for some $t>0$
		\item $g\in L^\theta(\nu)$ for $\theta:x\mapsto\max\set{\con\psi(x),\con\psi(-x)}$
			(here $L^\theta(\nu)$ is the Orlicz space defined in \cref{sec:orlicz-prelims})
	\end{enum}
\end{proposition}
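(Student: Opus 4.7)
The plan is to prove the cyclic chain (iii) $\Rightarrow$ (ii) $\Rightarrow$ (i) $\Rightarrow$ (iii). The main tool throughout is \cref{lem:shrink}, which (under $\phi'(\infty) = \infty$) lets one trade the infimum over $\lambda$ in the definition of $\cgf g\nu$ for a mild shrinking of $t$, so that all three conditions can be reformulated as finiteness of integrals $\intf{\con\psi\!}\nu(sh)$ for appropriate $h\in\set{g,-g,\abs g}$ and small $s>0$. The key pointwise fact to keep in mind is that the multiset $\set{tg,-tg}$ equals $\set{t\abs g,-t\abs g}$, so that
\[
\theta(tg) = \max\set{\con\psi(t\abs g),\con\psi(-t\abs g)} \geq \con\psi(t\abs g).
\]

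The implication (iii) $\Rightarrow$ (ii) is then immediate: from $\intf\theta\nu(\alpha g)<\infty$ one reads off $\intf{\con\psi\!}\nu(\alpha\abs g)\leq\intf\theta\nu(\alpha g)<\infty$, so taking $\lambda=0$ in the definition gives $\cgf{\abs g}\nu(\alpha)<\infty$. For (i) $\Rightarrow$ (iii), pick $\epsilon>0$ with $\pm\epsilon\in\dom\cgf g\nu$; \cref{lem:shrink} produces $\intf{\con\psi\!}\nu(\pm\beta g)<\infty$ for $\beta=\epsilon/2$, and from $\theta(\beta g)\leq\con\psi(\beta g)+\con\psi(-\beta g)$ (both non-negative) I conclude $\intf\theta\nu(\beta g)<\infty$, i.e.~$g\in L^\theta(\nu)$.

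The main work, and the main obstacle, lies in (ii) $\Rightarrow$ (i). Starting from $\cgf{\abs g}\nu(t_0)<\infty$ for some $t_0>0$, \cref{lem:shrink} applied to $\abs g$ yields $\intf{\con\psi\!}\nu(s\abs g)<\infty$ for every $s\in(0,t_0)$. To control $\intf{\con\psi\!}\nu(\pm sg)$ I would split on the sign of $g$: on $\set{g\geq 0}$ the integrand $\con\psi(sg)$ simply equals $\con\psi(s\abs g)$, while on $\set{g<0}$ one has $\con\psi(sg)=\con\psi(-s\abs g)\leq s\abs g$ by the bound $\con\psi(y)\leq -y$ for $y\leq 0$ from \cref{lem:con-psi-properties}. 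This reduces matters to verifying $\abs g\in L^1(\nu)$, which is the sticking point, since the hypothesis only gives finiteness of a $\con\psi$-integral and says nothing directly about linear integrals.

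To clear this, I would apply the Fenchel--Young inequality $\con\psi(y)\geq ay-\psi(a)$ at any $a>0$ in $\dom\psi$ (such an $a$ exists because the standing assumption $1\in\inter\dom\phi$ places $0$ in $\inter\dom\psi$). Substituting $y=s\abs g$ and integrating against $\nu$ yields $as\int\abs g\,d\nu\leq\intf{\con\psi\!}\nu(s\abs g)+\psi(a)<\infty$, so $\abs g\in L^1(\nu)$. Combining with the sign-split bound gives $\intf{\con\psi\!}\nu(\pm sg)\leq\intf{\con\psi\!}\nu(s\abs g)+s\int\abs g\,d\nu<\infty$ for every $s\in(0,t_0)$, so $\cgf g\nu(\pm s)\leq\intf{\con\psi\!}\nu(\pm sg)<\infty$ (by taking $\lambda=0$), placing an open interval around $0$ inside $\dom\cgf g\nu$ and closing the cycle.
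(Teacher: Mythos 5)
Your proposal is correct, and it is a genuinely different traversal of the cycle: you prove $(iii)\Rightarrow(ii)\Rightarrow(i)\Rightarrow(iii)$, whereas the paper proves $(i)\Rightarrow(ii)\Rightarrow(iii)\Rightarrow(i)$. The two approaches share the same core toolkit---\cref{lem:shrink} to exchange the infimum over $\lambda$ for a scaling of $t$, the inequality $\con\psi(y)\leq -y$ for $y\leq 0$, the bound $\theta(x)\leq\con\psi(x)+\con\psi(-x)$---but arrange them differently. The main structural difference is in how one gets from $(ii)$ to $(i)$. You prove this implication directly by splitting $\Omega$ on the sign of $g$ and bounding $\con\psi(sg)$ by $\con\psi(s\abs g)$ on $\set{g\geq 0}$ and by $s\abs g$ on $\set{g<0}$, which forces you to establish $\abs g\in L^1(\nu)$ separately; you do so cleanly via Fenchel--Young applied to $\psi$ at some $a>0$ in $\dom\psi$. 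The paper instead routes this step through $(iii)$: it introduces the auxiliary Young function $\eta(x)=\con\psi(\abs x)$, observes the sandwich $\eta\leq\theta\leq\eta+\abs\cdot$, and uses the Orlicz-space containment $L^\eta(\nu)\subseteq L^1(\nu)$ from the preliminaries to conclude $L^\eta(\nu)=L^\theta(\nu)$. In effect your Fenchel--Young step is an ad hoc rederivation, for this particular $g$, of the fact $L^\eta(\nu)\subseteq L^1(\nu)$ that the paper invokes as a black box; your argument is therefore slightly more self-contained, while the paper's is slightly shorter given the Orlicz machinery already set up. Both are valid.
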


\begin{proof}
	$(i)\implies(ii)$\quad If $\dom\cgf g\nu$ contains an open interval around
	$0$, \cref{lem:shrink} and the convexity of $\dom\intf{\con\psi}\nu$ imply
	that there exists $s>0$ such that $\ex*\nu{\con\psi(tg)}<\infty$ for all
	$\abs t<s$.  By non-negativity of $\con\psi$, $\ex*\nu{\con\psi(t\abs g)}
	\leq\ex*\nu{\con\psi(tg)+\con\psi(-tg)}<\infty$ for all $t\in(-s,s)$, which
	in turns implies $(-s,s)\subseteq \dom\cgf{\abs g}\nu$.

	$(ii)\implies(iii)$\quad Define $\eta(x)\eqdef\con\psi(\abs x)$. Since
	$\con\psi(x)\leq -x$ for $x\leq 0$ by \cref{lem:con-psi-properties}, we
	have that $\eta(x)\leq\theta(x)\leq \eta(x)+\abs x$ for all $x\in\R$. Since
	we also have $L^\eta(\nu)\subseteq L^1(\nu)$, this implies that
	$g\in L^\eta(\nu)$ if and only if $L^\theta(\nu)$. We conclude after
	observing that $\cgf{\abs g}\nu(t)<\infty$ for some $t>0$ implies that
	$g\in L^\eta(\nu)$ by \cref{lem:shrink}.

	$(iii)\implies(i)$\quad Observe that for all $t\in\R$,
	\begin{equation}\label{eq:cgf-theta}
	\max\set*{\cgf g\nu(t),\allowbreak\cgf g\nu(-t)}
	\leq\max\set*{\ex*\nu{\con\psi(tg)}, \ex*\nu{\con\psi(-tg)}}
	\leq \ex*\nu{\theta(tg)}
	\,,
	\end{equation}
	where the first inequality is by definition of $\cgf g\nu$ and the second
	inequality is by monotonicity of the integral and the definition of
	$\theta$. Since $\dom \cgf g\nu$ is convex, if there exists $t>0$ such that
	$\intf\theta\nu(tg)<\infty$, then \eqref{eq:cgf-theta} implies that
	$[-t,t]\subseteq \dom\cgf g\nu$ and $g$ is $(\phi,\nu)$-subexponential.
\end{proof}

\begin{remark}
\label{rmrk:cgf-modular-orlicz-norm}
	Though \cref{prop:interval-cgf-implies-absolute} implies that the set of
	$(\phi,\nu)$-subexponential functions is the same as the set $L^\theta(\nu)$
	for $\theta(x)=\max\set{\con\psi(x),\con\psi(-x)}$, we emphasize that the
	Luxemburg norm $\norm{\cdotarg}_\theta$ does \emph{not} capture the
	relationship between $\di\mu\nu$ and the absolute mean deviation
	$\abs{\ex\mu g - \ex\nu g}$. First, the
	function $\theta$, being a symmetrization of $\con\psi$, induces integral
	functionals which are potentially much larger than those defined by
	$\con\psi$, in particular it is possible to have $\max\set{\cgf g\nu(t),\cgf
	g\nu(-t)} < \inf_{\lambda\in\R}\intf\theta\nu(tg+\lambda) <
	\intf\theta\nu(tg)$.
	Furthermore, the Luxemburg norm summarizes the growth
	of $t\mapsto \intf\theta\nu(tg)$ with a single number (specifically its
	inverse at $1$), whereas \Cref{thm:equiv} shows that the relationship
	with the mean deviation is controlled by $\con{\cgf g\nu}$,
	which depends on the growth of $\cgf g\nu(t)$ with $t$.
\end{remark}

We are now ready to prove the main result of this section, which is that the
space $\subexp(\nu)$ of $(\phi,\nu)$-subexponential functions is the largest
space of functions which can be put in dual pairing with (the span of) all
measures $\mu\in \M_c(\nu)$ such that $\di\mu\nu<\infty$, i.e.\
$\dom\intf\phi\nu$.

\begin{theorem}
	\label{prop:subexp-pairing}
	For $\nu\in\M^1$ and $g\in L^0(\nu)$, the following are equivalent:
	\begin{enum}
		\item $g$ is $(\phi,\nu)$-subexponential, i.e.\ $g\in\subexp(\nu)$.
		\item $g$ is $\mu$-integrable for every $\mu\in \M_c(\nu)$
	with $\di\mu\nu<\infty$.
		\item $g$ is $\mu$-integrable for every $\mu\in \M^1_c(\nu)$
	with $\di\mu\nu<\infty$.
	\end{enum}
\end{theorem}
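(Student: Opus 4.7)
The plan is to close the cycle $(i)\Rightarrow(ii)\Rightarrow(iii)\Rightarrow(i)$, with $(ii)\Rightarrow(iii)$ immediate since $\M^1_c(\nu)\subseteq\M_c(\nu)$. For $(i)\Rightarrow(ii)$, fix $g\in\subexp(\nu)$ and $\mu\in\M_c(\nu)$ with $\di\mu\nu<\infty$. The normalization from the start of \cref{sec:singleg} (making $\phi$ infinite on negative reals) forces the density $h\eqdef d\mu/d\nu$ to be non-negative $\nu$-a.s., so $\mu\geq 0$. Using $0\in\inter(\dom\cgf g\nu)$, applying \cref{lem:shrink} on both sides of zero together with the monotonicity of $\con\psi$ around the origin yields $s>0$ with $\int\con\psi(s|g|)\,d\nu<\infty$; the Young lower bound $\con\psi(y)\geq\epsilon y-\psi(\epsilon)$ for some $\epsilon\in\inter\dom\psi$ (nonempty since $1\in\inter\dom\phi$) then forces $g\in L^1(\nu)$. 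The Fenchel--Young inequality for the conjugate pair $(\psi,\con\psi)$ gives $s|g|(h-1)\leq\psi(h-1)+\con\psi(s|g|)=\phi(h)+\con\psi(s|g|)$ (using $h\geq 0$ to identify $\psi(h-1)=\phi(h)$), and integrating yields
\begin{equation*}
\int|g|\,d\mu=\int|g|\,d\nu+\int|g|(h-1)\,d\nu\leq\int|g|\,d\nu+s^{-1}\paren*{\di\mu\nu+\int\con\psi(s|g|)\,d\nu}<\infty.
\end{equation*}

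For $(iii)\Rightarrow(i)$, I argue by contraposition. If $g\notin L^1(\nu)$ then $\nu$ itself witnesses the failure of (iii) (since $\di\nu\nu=0$); so suppose $g\in L^1(\nu)$ but $g\notin\subexp(\nu)$. Since $\cgf g\nu$ is convex and nonnegative with $\cgf g\nu(0)=0$, we may WLOG assume $\cgf g\nu(t)=+\infty$ for all $t>0$. Apply \cref{thm:equiv} to the $\nu$-decomposable pair $(X_g,Y_g)$ of \cref{defn:G-dual-pair,lem:xg-yg-decomposable}---whose null $\sigma$-ideal coincides with the $\nu$-null ideal since $X_g\subseteq\M_c(\nu)$---to obtain $\cgf g\nu=\con{\lb[\,X_g^1]g\nu}$. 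Combining $\cgf g\nu(1/k^3)=+\infty$ with the nonnegativity of $\lb[\,X_g^1]g\nu$ and expanding the definition of the conjugate yields, for each $k$, some $\eps_k\geq k^4$ with $\lb[\,X_g^1]g\nu(\eps_k)\leq\eps_k/k^3$; pick $\mu_k\in X_g^1$ with $\ex{\mu_k}{g}-\ex\nu g=\eps_k$ and $\di{\mu_k}\nu\leq\eps_k/k^3+1$.

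Define $\mu\eqdef\sum_k\alpha_k\mu_k$ with $\alpha_k\eqdef C/\eps_k$ and $C\eqdef(\sum_k 1/\eps_k)^{-1}$ (finite since $\eps_k\geq k^4$). Then $\mu\in\M^1_c(\nu)$ as a countable convex combination of probability measures absolutely continuous with respect to $\nu$, and joint convexity together with lower semicontinuity of the divergence (applied to finite partial sums completed by a residual proportion of $\nu$) gives $\di\mu\nu\leq\sum_k\alpha_k\di{\mu_k}\nu\leq C\sum_k(1/k^3+1/\eps_k)<\infty$. However, by Tonelli applied to $g_+$, $\int g_+\,d\mu=\sum_k\alpha_k\int g_+\,d\mu_k\geq\sum_k\alpha_k(\ex\nu g+\eps_k)=+\infty$, so $g\notin L^1(\mu)$, refuting $(iii)$. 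The main obstacle is exactly this construction: one must extract sublinearity of $\lb[\,X_g^1]g\nu$ from the blowup of $\cgf g\nu$ (via \cref{thm:equiv}) and then balance the mixture weights so that $\sum_k\alpha_k\di{\mu_k}\nu<\infty$ (to keep $\di\mu\nu$ finite) while $\sum_k\alpha_k\eps_k=+\infty$ (to push $g$ out of $L^1(\mu)$).
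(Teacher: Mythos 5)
Your proof is correct, and while direction $(i)\Rightarrow(ii)$ is essentially the paper's argument made concrete---the paper packages the same Fenchel--Young step into the Orlicz H\"older inequality for the pair $(L^{\con\theta},L^\theta)$---your proof of $(iii)\Rightarrow(i)$ takes a genuinely different route. The paper argues directly: $(iii)$ says the linear form $\mu\mapsto\ex\mu{\abs g}$ is finite on the divergence ball $C=\set{\mu\in\M^1_c(\nu)\given \di\mu\nu\leq 1}$, which is a closed bounded convex subset of the Banach space $\M_c(\nu)\cong L^1(\nu)$ and hence cs-compact; \cref{lem:convex-bounded-cs-compact} then upgrades ``bounded below'' to ``bounded above'', yielding a non-trivial lower bound on $\di\mu\nu$ in terms of $\abs{\ex\mu g - \ex\nu g}$ and hence subexponentiality via \cref{cor:subexp-iff-lb}. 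You instead argue by contraposition, extract sublinearity of $\lb[\,X_g^1]g\nu$ from the blowup of $\cgf g\nu=\con{\lb[\,X_g^1]g\nu}$, and construct an explicit countable mixture $\mu=\sum_k \alpha_k\mu_k$ whose weights $\alpha_k=C/\eps_k$ keep $\sum_k\alpha_k\di{\mu_k}\nu$ summable while $\sum_k\alpha_k\eps_k$ diverges. Both arguments use the conjugacy from \cref{prop:lb-conjugate}, but they put it to different uses: the paper invokes a soft compactness principle and immediately gets a uniform bound, while your construction is more explicit and shows concretely \emph{how} a violating measure arises when subexponentiality fails, at the price of a more delicate bookkeeping step (balancing summability of $\alpha_k$, of $\alpha_k\di{\mu_k}\nu$, and divergence of $\alpha_k\eps_k$) and needing to pass to a countable mixture via lower semicontinuity. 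A minor point worth flagging in the write-up: the ``WLOG $\cgf g\nu(t)=+\infty$ for all $t>0$'' step should be justified by replacing $g$ with $-g$ (which flips $\dom\cgf g\nu$ about $0$ and leaves $(iii)$ invariant), and the bound $\int g_+\,d\mu_k\geq\ex{\mu_k}g=\ex\nu g+\eps_k$ only gives a useful lower bound once $\eps_k$ exceeds $-\ex\nu g$, so the final divergence of $\sum_k\alpha_k\int g_+\,d\mu_k$ should be taken over a tail $k\geq K$.
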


\begin{proof}
	$(i)\implies(ii)$\quad
	If $\phi'(\infty)<\infty$ this follows since $L^\infty(\nu)=\subexp(\nu)$,
	so assume that $\phi'(\infty)=\infty$.
	If $g\in\subexp(\nu)$ then $g\in L^\theta(\nu)$ for
	$\theta(x)=\max\set{\con\psi(x), \con\psi(-x)}$ by
	\cref{prop:interval-cgf-implies-absolute}. Since $\theta\geq\con\psi$ we have
	$\con{\theta}\leq\psi$, and thus for $\mu\in\M_c(\nu)$ with
	$\di\mu\nu<\infty$,
	\begin{displaymath}
		\intf{\con\theta\!}\nu\paren*{\frac{d\mu}{d\nu}-1}
		\leq\intf\psi\nu\paren*{\frac{d\mu}{d\nu}-1}
	=\di\mu\nu<\infty\,,
	\end{displaymath}
	implying that $\frac{d\mu}{d\nu}-1 \in L^{\con{\theta}}(\nu)$. Furthermore,
	since $1\in L^\infty(\nu)\subseteq L^{\con\theta}(\nu)$ we get that
	$\frac{d\mu}{d\nu}\in L^{\con\theta}(\nu)$. Property 2.\ then follows from the fact that
	$(L^{\con\theta},L^{\theta})$ form a dual pair.

	$(ii)\implies(iii)$\quad Immediate.

	$(iii)\implies(i)$\quad Define $C\eqdef\set*{\mu\in\M^1_c(\nu)\given
	\di\mu\nu\leq 1}$, which is closed and convex as a sublevel
	set of the convex lower semicontinuous functional $\dif*\phi\nu$ on the Banach
	space $\M_c(\nu)$ with the total variation norm (recall that this
	space is isomorphic to $L^1(\nu)$ by the Radon--Nikodym theorem).
	Since furthermore $C\subseteq \M^1$, it is bounded in $\M_c(\nu)$ and so is
	cs-compact \citep[Proposition 2]{J72}. Then by assumption, the linear function
	$\mu\mapsto \ex\mu{\abs g}$ is well-defined and bounded below by $0$ on
	$C$, so \cref{lem:convex-bounded-cs-compact} implies that there exists
	$B\in\R$ such that $\ex\mu{\abs g}\leq B$ for all $\mu\in C$.
	Thus, we get that for all $\mu\in C$,
	$\abs{\ex\mu g-\ex\nu g} \leq\ex\mu{\abs g}+\ex\nu{\abs g}\leq B+\ex\nu{\abs g}$.
	In particular, if $\abs{\ex\mu g - \ex\nu g}> B + \ex \nu{\abs g}$ then
	$\di\mu\nu > 1$, proving the existence of a non-zero function $L$ such that
	$\di\mu\nu\geq L\paren[\big]{\abs{\ex\mu g - \ex\nu g}}$. This implies that
	$g\in\subexp(\nu)$ by \cref{cor:subexp-iff-lb}.
\end{proof}

We have the following characterization of the space $\ssubexp(\nu)$ of strongly
subexponential functions. In particular $\ssubexp(\nu)$ can be identified as
a set with $L^\infty(\nu)$ or the Orlicz heart $\orhrt\theta(\nu)$ depending on
whether $\phi'(\infty)$ is finite or infinite (with the finite case
from \cref{lem:lb-finitedagger-bounded}).

\begin{proposition}
\label{prop:strong-subexponential}
	Suppose that $\phi'(\infty)=\infty$ and fix $\nu\in\M^1$ and $g\in L^0(\nu)$.
	Then the following are equivalent:
	\begin{enum}
		\item $g$ is strongly $(\phi,\nu)$-subexponential, i.e.\ $g\in\ssubexp(\nu)$.
		\item $\cgf{\abs g}\nu(t)<\infty$ for all $t>0$.
		\item $g\in \orhrt\theta(\nu)$ for $\theta:x\mapsto\max\set{\con\psi(x),\con\psi(-x)}$.
	\end{enum}
\end{proposition}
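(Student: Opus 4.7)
My plan is to mirror the structure of the proof of \cref{prop:interval-cgf-implies-absolute}, cycling through the three implications $(i)\Rightarrow(ii)\Rightarrow(iii)\Rightarrow(i)$, but strengthening each step so that the ``for some $t>0$'' quantifier is upgraded to ``for all $t>0$''. The key engine for pushing quantifiers through will be \cref{lem:shrink}, which in the present setting $\phi'(\infty)=\infty$ gives that $\dom\cgf{h}\nu$ being an interval around a point $t_0$ implies $\intf{\con\psi}\nu(\alpha t_0 h)<\infty$ for every $\alpha\in(0,1)$.

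For $(i)\Rightarrow(ii)$, given any $t>0$, I would pick some $t'>t$ with $t'\in\dom\cgf g\nu=\R$ and set $\alpha=t/t'\in(0,1)$; \cref{lem:shrink} then yields $\ex*\nu{\con\psi(tg)}<\infty$, and the symmetric argument with $-t'$ gives $\ex*\nu{\con\psi(-tg)}<\infty$. Using $\con\psi\geq 0$, this produces the pointwise bound $\con\psi(t\abs g)\leq \con\psi(tg)+\con\psi(-tg)$, whence $\cgf{\abs g}\nu(t)\leq\ex*\nu{\con\psi(t\abs g)}<\infty$ by choosing $\lambda=0$ in the defining infimum of $\cgf{\abs g}\nu$. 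For $(iii)\Rightarrow(i)$, the same monotonicity trick from \cref{prop:interval-cgf-implies-absolute} goes through verbatim: for every $t\in\R$, $\cgf g\nu(t)\leq\ex*\nu{\theta(tg)}<\infty$ since $g\in\orhrt\theta(\nu)$ means $\intf\theta\nu(\alpha g)<\infty$ for every $\alpha>0$.

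The main step is $(ii)\Rightarrow(iii)$, where the upgrade from $L^\theta$ to $\orhrt\theta$ is the new content. Setting $\eta(x)\eqdef\con\psi(\abs x)$ as in \cref{prop:interval-cgf-implies-absolute}, I would first note that the assumption $\cgf{\abs g}\nu(t)<\infty$ for every $t>0$ together with \cref{lem:shrink} (applied with $t'>s$ in $\dom\cgf{\abs g}\nu$ and $\alpha=s/t'$) yields $\ex*\nu{\con\psi(s\abs g)}<\infty$ for every $s>0$, i.e.\ $\intf\eta\nu(sg)<\infty$ for every $s\in\R$, so that $g\in\orhrt\eta(\nu)$. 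It then remains to upgrade membership in $\orhrt\eta(\nu)$ to membership in $\orhrt\theta(\nu)$: from $\eta\leq\theta\leq\eta+\abs\cdotarg$ (established in \cref{prop:interval-cgf-implies-absolute} using $\con\psi(x)\leq-x$ for $x\leq 0$) and from $\orhrt\eta(\nu)\subseteq L^\eta(\nu)\subseteq L^1(\nu)$, I get $\intf\theta\nu(\alpha g)\leq\intf\eta\nu(\alpha g)+\alpha\ex*\nu{\abs g}<\infty$ for every $\alpha>0$, hence $g\in\orhrt\theta(\nu)$.

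I do not expect serious obstacles here: the argument is essentially a quantifier-by-quantifier refinement of the earlier proposition, and all the convex-analytic ingredients (\cref{lem:con-psi-properties,lem:shrink}, the comparison $\eta\leq\theta\leq\eta+\abs\cdotarg$, and the inclusion $L^\eta(\nu)\subseteq L^1(\nu)$ that holds because $\phi'(\infty)=\infty$ implies $\con\psi$ grows at least linearly at $+\infty$) are already in place. The one subtlety worth being careful about is that the infimum defining $\cgf{\abs g}\nu(t)$ can always be upper-bounded by the choice $\lambda=0$ only because $0\leq \phi'(\infty)=\infty$, so the admissibility constraint in \cref{def:cumulant} is trivially satisfied; this is why the argument in $(i)\Rightarrow(ii)$ specifically requires $\phi'(\infty)=\infty$ rather than merely $\phi'(\infty)>0$.
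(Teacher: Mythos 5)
Your proof is correct and matches the paper's own argument essentially step for step: the same cyclic implications, the same use of \cref{lem:shrink} to pass from $\dom\cgf g\nu$ to $\dom\intf{\con\psi}\nu$ under $\phi'(\infty)=\infty$, the same auxiliary function $\eta(x)=\con\psi(\abs x)$, and the same comparison $\eta\leq\theta\leq\eta+\abs{\cdot}$ together with $\orhrt\eta(\nu)\subseteq L^1(\nu)$ to pass between Orlicz hearts. You are only slightly more explicit than the paper in unwinding the application of \cref{lem:shrink} (choosing $t'>t$ and $\alpha=t/t'$), which the paper states in compressed form.
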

\begin{proof}
	$(i)\implies(ii)$\quad Since $\phi'(\infty)=\infty$, \cref{lem:shrink}
	implies that $tg\in\dom\intf{\con\psi\!}\nu$ for all $t\in\R$, and since
	$\con\psi$ is non-negative we have for each $t>0$ that $\cgf{\abs
	g}\nu(t)\leq \ex*\nu{\con\psi(t\abs g)}\leq
	\ex*\nu{\con\psi(tg)+\con\psi(-tg)}<\infty$.

	$(ii)\implies(iii)$\quad Define $\eta:x\mapsto\con\psi(\abs x)$, so that
	by \cref{lem:shrink} we have $\ex*\nu{\eta(tg)}=\ex*\nu{\con\psi(t\abs g)}
	<\infty$ for all $t>0$, and hence Property 2.\ implies
	$g\in \orhrt\eta(\nu)$. As in the proof of
	\cref{prop:interval-cgf-implies-absolute}, 
	$\eta(x)\leq\theta(x)\leq\eta(x)+\abs x$ for all $x\in\R$ and since
	$\orhrt\eta(\nu)\subseteq L^1(\nu)$, we have that $g\in
	\orhrt\eta(\nu)$ iff $g\in \orhrt\theta(\nu)$.

	$(iii)\implies(i)$\quad Immediate since for $t\in\R$, $\cgf
	g\nu(t)\leq\ex*\nu{\con\psi(tg)} \leq\ex*\nu{\theta(tg)}<\infty$.
\end{proof}

Finally, we collect several statements from this section and express them in
a form which will be convenient for subsequent sections.

\begin{corollary}
	\label{cor:topological-summary}
	Define
	$\theta(x)\eqdef\max\set{\con\psi(x),\con\psi(-x)}$. Then we have
	$\ssubexp(\nu)\subseteq\subexp(\nu)\subseteq L^1(\nu)$ and
	$\dom\intf\phi\nu\subseteq L^{\con\theta}(\nu)\subseteq L^1(\nu)$.
	Furthermore, $L^{\con\theta}(\nu)$ is in dual pairing with both
	$\subexp(\nu)$ and $\ssubexp(\nu)$, and when $\phi'(\infty)=\infty$ the
	topology induced by $\norm{\cdotarg}_{\theta}$ on $\ssubexp(\nu)$ is
	complete and compatible with the pairing.
\end{corollary}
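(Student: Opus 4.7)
The plan is to chain together the identifications established in \cref{sec:subexponential} with the basic Orlicz-space facts from \cref{sec:orlicz-prelims}. A key preliminary observation I will use throughout is that $\theta$ majorizes $\con\psi$ by construction, so $\con\theta\leq\bicon\psi=\psi$ by the Fenchel--Moreau theorem applied to the proper convex lsc function $\psi=\phi(\cdot+1)$.

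First, I dispatch the inclusion chain $\ssubexp(\nu)\subseteq\subexp(\nu)\subseteq L^1(\nu)$. The first inclusion is immediate from \cref{defn:subexponential}. For the second, I split cases: if $\phi'(\infty)<\infty$, then \cref{ex:bounded-implies-ssubexp} identifies $\subexp(\nu)=L^\infty(\nu)\subseteq L^1(\nu)$; otherwise \cref{prop:interval-cgf-implies-absolute} gives $\subexp(\nu)=L^\theta(\nu)$, and the standard Orlicz inclusion $L^\theta(\nu)\subseteq L^1(\nu)$ recalled in \cref{sec:orlicz-prelims} finishes the job.

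Next, I verify $\dom\intf\phi\nu\subseteq L^{\con\theta}(\nu)\subseteq L^1(\nu)$. For $f\in\dom\intf\phi\nu$, the pointwise bound $\con\theta\leq\psi$ combined with $\phi(x)=\psi(x-1)$ yields $\ex*\nu{\con\theta(f-1)}\leq\ex*\nu{\psi(f-1)}=\ex*\nu{\phi(f)}<\infty$, so $f-1\in L^{\con\theta}(\nu)$; since $1\in L^\infty(\nu)\subseteq L^{\con\theta}(\nu)$ and Orlicz spaces are vector spaces, $f\in L^{\con\theta}(\nu)$. The inclusion $L^{\con\theta}(\nu)\subseteq L^1(\nu)$ is again the standard Orlicz inclusion.

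For the dual pairing claim, the same case split shows $\subexp(\nu)\subseteq L^\theta(\nu)$ and $\ssubexp(\nu)\subseteq L^\theta(\nu)$: when $\phi'(\infty)<\infty$ they both equal $L^\infty(\nu)\subseteq L^\theta(\nu)$ by \cref{ex:bounded-implies-ssubexp}, and otherwise \cref{prop:interval-cgf-implies-absolute,prop:strong-subexponential} identify them with $L^\theta(\nu)$ and $\orhrt\theta(\nu)\subseteq L^\theta(\nu)$ respectively. The generalized Hölder inequality of \cref{sec:orlicz-prelims} then makes the integration bilinear form well-defined on $L^{\con\theta}(\nu)\times L^\theta(\nu)$, and the inclusion of bounded measurable functions in both spaces makes it separating on each side, hence also on the subspaces $\subexp(\nu)$ and $\ssubexp(\nu)$. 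Finally, when $\phi'(\infty)=\infty$, \cref{lem:con-psi-properties} gives $\inter\dom\con\psi=\R$ so $\theta$ is everywhere finite and $\dom\theta=\R$; \cref{prop:strong-subexponential} then identifies $\ssubexp(\nu)$ with $\orhrt\theta(\nu)$, which is a Banach space under $\norm{\cdotarg}_\theta$ with continuous dual isomorphic to $L^{\con\theta}(\nu)$ by \cref{sec:orlicz-prelims}, providing both completeness and compatibility with the pairing.

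There is no deep obstacle here: the main care needed is to carry through the case distinction between $\phi'(\infty)<\infty$ and $\phi'(\infty)=\infty$, since only in the latter case do we have the clean Orlicz-theoretic identifications $\subexp(\nu)=L^\theta(\nu)$ and $\ssubexp(\nu)=\orhrt\theta(\nu)$, and only then is $\dom\theta=\R$ available to invoke the dual-space identification for the Orlicz heart.
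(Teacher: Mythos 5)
Your proof is correct and follows essentially the same route as the paper's: the Orlicz identifications $\subexp(\nu)=L^\theta(\nu)$ and $\ssubexp(\nu)=\orhrt\theta(\nu)$ from \cref{prop:interval-cgf-implies-absolute} and \cref{prop:strong-subexponential}, the Fenchel--Moreau bound $\con\theta\leq\psi$ for the containment $\dom\intf\phi\nu\subseteq L^{\con\theta}(\nu)$ (which is exactly the argument in the proof of \cref{prop:subexp-pairing}), the standard Orlicz inclusions and Hölder inequality for the rest, and the $\dom\theta=\R$ dual-space identification for the final claim. The only difference is that you make the case split on $\phi'(\infty)$ fully explicit where the paper leaves the $\phi'(\infty)<\infty$ case implicit (handled via \cref{ex:bounded-implies-ssubexp}), which if anything is slightly more careful.
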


\begin{proof}
	The containment $\subexp(\nu)\subseteq L^1(\nu)$ is because $\subexp(\nu)$
	is equal as a set to the Orlicz space $L^\theta(\nu)$ by
	\cref{prop:interval-cgf-implies-absolute}, and the containment
	$\dom\intf\phi\nu\subseteq L^{\con\theta}(\nu)$ can be found in the proof of
	$(i)\implies(ii)$ of \cref{prop:subexp-pairing}. The fact that
	$\paren[\big]{L^{\con\theta}(\nu), \subexp(\nu)}$ form a dual pair is also
	immediate from the identification of $\subexp(\nu)$ with $L^\theta(\nu)$
	as a set. Finally, the last claim follows from the identification of
	$\ssubexp(\nu)$ with $\orhrt{\theta}(\nu)$ as a set and the fact that when
	$\phi'(\infty)=\infty$, then $\dom\theta=\R$ implying that the topological dual of
	the Banach space $(\orhrt\theta(\nu),\norm{\cdotarg}_\theta)$ is isomorphic to
	$(L^{\con\theta}(\nu),\norm{\cdotarg}_{\con\theta})$.
\end{proof}

\subsection{Inf-compactness of divergences and connections to strong
duality}
\label{sec:inf-compact}

In this section, we study the question of inf-compactness of the functional
$\dif\phi\nu$ and that of its restriction $\dif*\phi\nu$ to probability
measures. Specifically, we wish to understand under which topology the
information ``ball'' $\B_{\phi,\nu}(\tau)\eqdef
\set{\mu\in\M\given\di\mu\nu\leq\tau}$ is compact.  Beyond being a natural
topological question, it also has implications for strong duality in
\cref{thm:equiv}, since the following lemma shows that compactness of the ball
under suitable topologies implies strong duality.

\begin{lemma}
\label{lem:lb-lsc-if-compact}
	For every $g$, $\nu$, and $M$ as in \cref{defn:lb-function-single-g},
	if $\mu\mapsto \di\mu\nu$ is inf-compact (or even countably
	inf-compact) with respect to a topology on $M$ such that
	$\mu\mapsto\ex\mu g$ is continuous, then $\lb[M]g\nu$ is inf-compact
	(and in particular lower semicontinuous), so that strong duality holds
	in \cref{thm:equiv}.
\end{lemma}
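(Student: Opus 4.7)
The plan is to write the sublevel set $\set{\eps \in \R \given \lb[M]g\nu(\eps) \leq t}$ as the continuous image of a sublevel set of $\mu\mapsto\di\mu\nu$, and then invoke that continuous images preserve (countable) compactness. Specifically, set $K_t \eqdef \set{\mu \in M \given \di\mu\nu \leq t}$ and let $\Psi: M \to \R$ be the map $\Psi(\mu) \eqdef \ex\mu g - \ex\nu g$; the former is (countably) compact by hypothesis and the latter is continuous by assumption. I would show the identity $\set{\eps \in \R \given \lb[M]g\nu(\eps) \leq t} = \Psi(K_t)$ for every $t \in \R$.

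One inclusion is immediate: if $\eps = \Psi(\mu)$ with $\mu \in K_t$, then $\lb[M]g\nu(\eps) \leq \di\mu\nu \leq t$ by definition of the infimum. For the reverse inclusion, given $\eps$ with $\lb[M]g\nu(\eps) \leq t$, I would choose $\mu_n \in M$ with $\Psi(\mu_n) = \eps$ and $\di{\mu_n}\nu \leq t + 1/n$. The sets $K_{t+1/n} \cap \Psi^{-1}(\eps)$ then form a decreasing sequence of non-empty closed subsets of the (countably) compact set $K_{t+1}$---inf-compactness of $\mu\mapsto\di\mu\nu$ gives closed sublevel sets, and $\Psi^{-1}(\eps)$ is closed by continuity of $\Psi$. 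Using the characterization of countable compactness that every decreasing sequence of non-empty closed subsets has non-empty intersection, $K_t \cap \Psi^{-1}(\eps)$ is non-empty, producing a $\mu \in K_t$ with $\Psi(\mu) = \eps$.

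Having established the identity, $\set{\eps \in \R \given \lb[M]g\nu(\eps) \leq t}$ is (countably) compact in $\R$ as a continuous image, and since $\R$ is metrizable the two notions coincide, so this set is compact. Thus $\lb[M]g\nu$ is inf-compact, which in turn implies lower semicontinuity, and then the equivalence stated in \cref{thm:equiv} immediately gives strong duality in \cref{eqn:lb-optimization}.

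I do not expect a genuine obstacle here: the proof is essentially a one-line continuous-image argument, with the only care being the decreasing-intersection characterization of countable compactness needed to accommodate the weaker hypothesis. A stylistic alternative would be to apply \cref{lem:optimal-value-convex} directly to the jointly convex function $(\eps,\mu)\mapsto\di\mu\nu+\delta_{\set{0}}(\Psi(\mu)-\eps)$ after verifying joint inf-compactness via essentially the same image argument, but the direct route above is cleaner and uniformly treats both the compact and countably compact cases.
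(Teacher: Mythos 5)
Your argument is correct and takes a genuinely different route from the paper's. The paper treats this as a one-line instance of \cref{lem:optimal-value-convex}: it notes that the jointly convex perturbation function $(\eps,\mu)\mapsto\di\mu\nu+\delta_{\set 0}(\ex\mu g-\ex\nu g-\eps)$ is inf-compact on $\R\times M$ (its sublevel sets are homeomorphic copies of your $K_t$ via $\mu\mapsto(\ex\mu g-\ex\nu g,\mu)$), so partial minimization preserves inf-compactness. You instead unfold that abstraction into a direct sublevel-set computation, identifying $\set{\eps\given\lb[M]g\nu(\eps)\leq t}=\Psi(K_t)$ and dealing with possible non-attainment of the infimum by a nested-intersection argument. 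Your approach is more elementary and, as a useful byproduct, shows the infimum defining $\lb[M]g\nu(\eps)$ is actually attained whenever it is finite. One small caveat for the countably-compact variant you aim to treat uniformly: the reverse-inclusion step uses that $K_{t+1/n}$ is closed in $M$, which is automatic for compact subsets of a Hausdorff space but not, in general, for countably compact ones. A slicker route that sidesteps this entirely is to observe directly that $\set{\eps\given\lb[M]g\nu(\eps)\leq t}=\bigcap_{n\geq 1}\Psi(K_{t+1/n})$, a nested countable intersection of subsets of $\R$, each of which is compact (being a continuous image of a countably compact set into the metrizable space $\R$); that intersection is compact with no closedness required of the $K_{t+1/n}$ inside $M$. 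This is a minor point, as the paper's own invocation of \cref{lem:optimal-value-convex} also literally only covers the fully inf-compact hypothesis.
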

\begin{proof}
	Recall from \cref{eqn:lb-optimization} that
	\[
		\lb[M]g\nu(\eps)
			=\inf_{\mu\in M}\di\mu\nu+\delta_{\set 0}(\ex\mu g-\ex\nu g-\eps)
	\]
	where $f(\eps,\mu)= \di\mu\nu + \delta_{\set 0}(\ex\mu g-\ex\nu g-\eps)$
	is convex. Furthermore, under the stated assumption, we have that
	$f$ is also inf-compact so that \cref{lem:optimal-value-convex}
	gives the claim.
\end{proof}

Throughout this section, we assume that $\phi'(\infty)=\infty$,\footnote{When
$\phi'(\infty)<\infty$, compactness of information balls is very dependent on
the specific measure space $(\Omega,\A,\nu)$, and in this work we avoid such
conditions.} which implies that $\dom\con\psi=\R$ by
\cref{lem:con-psi-properties}, and furthermore that $\mu\in\M_c(\nu)$ whenever
$\di\mu\nu<\infty$ and hence $\dif\phi\nu=\intf\phi\nu$ and
$\B_{\phi,\nu}(\tau)\subset \M_c(\nu)$ for all $\tau\geq 0$.  It is well known
that in this case, $\B_{\phi,\nu}(\tau)$ is compact in the weak topology
$\sigma\paren[\big]{L^1(\nu),L^\infty(\nu)}$ (e.g.\ \citet[Corollary 2B]{R71} or
\citet{TV93}). This fact can be derived as a simple consequence of the
Dunford--Pettis theorem since $\B_{\phi,\nu}(\tau)$ is uniformly integrable by
the de la Vallée-Poussin theorem (see e.g.\ \citet[pages 67--68]{Valadier70}). In
light of \cref{lem:lb-lsc-if-compact}, it is however useful to understand
whether $\B_{\phi,\nu}(\tau)$ is compact under topologies for which
$\mu\mapsto\ex\mu g$ is continuous, where $g$ could be unbounded.
Léonard \citep[Theorem 3.4]{L01_Inverse} showed, in the context of
convex integral functionals on Orlicz spaces, that
strong duality holds when $g\in\ssubexp(\nu)$, and in this section we
reprove this result in the language of $\phi$-divergences
by noting (as is implicit in \citet[Lemma 3.1]{L01_Inverse}) that $\B_{\phi,\nu}(\tau)$ is compact for
the initial topology induced by the maps of the form $\mu\mapsto\ex\mu
g$ for all strongly subexponential function $g\in\ssubexp(\nu)$.

\begin{proposition}
	\label{prop:inf-compact-small-orlicz}
	Fix $\nu\in \M^1$ and define
	$\theta:x\mapsto\max\set{\con\psi(x),\con\psi(-x)}$ as in
	\cref{prop:strong-subexponential}. If $\phi'(\infty) = \infty$, then the
functional $\intf\phi\nu$ is $\sigma\paren[\big]{L^{\con\theta}(\nu), \ssubexp(\nu)}$
	inf-compact.
\end{proposition}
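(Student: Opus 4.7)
The plan is to recognize that, under the assumption $\phi'(\infty)=\infty$, Corollary~\ref{cor:topological-summary} lets us identify $\ssubexp(\nu)$ as a set with the Orlicz heart $\orhrt\theta(\nu)$ and its Banach-space dual with $(L^{\con\theta}(\nu),\norm{\cdotarg}_{\con\theta})$. Thus the topology $\sigma\paren[\big]{L^{\con\theta}(\nu),\ssubexp(\nu)}$ is precisely the weak-* topology on this dual, and inf-compactness will follow from Banach--Alaoglu once I show that sublevel sets of $\intf\phi\nu$ are (i) norm-bounded in $L^{\con\theta}(\nu)$ and (ii) weak-* closed.

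For step (i), I would use the pointwise inequality $\theta\geq \con\psi$, which holds because $\con\psi\geq 0$ everywhere by \cref{lem:con-psi-properties} and $\theta$ is the maximum of $\con\psi(\cdotarg)$ and $\con\psi(-\cdotarg)$. Taking convex conjugates and using that $\psi=\bicon\psi$, this gives $\con\theta\leq \psi$ pointwise, so $\intf{\con\theta}\nu(f-1)\leq \intf\psi\nu(f-1)=\intf\phi\nu(f)\leq \tau$ whenever $\intf\phi\nu(f)\leq\tau$. A routine scaling argument using convexity of $\con\theta$ with $\con\theta(0)=0$ then bounds $\norm{f-1}_{\con\theta}\leq\max(\tau,1)$, and hence $\norm{f}_{\con\theta}\leq \max(\tau,1)+\norm{1}_{\con\theta}$ since $1\in L^\infty(\nu)\subseteq L^{\con\theta}(\nu)$.

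For step (ii), \cref{prop:i-dual} gives that $\intf\phi\nu$ is lower semicontinuous on $L^1(\nu)$ with respect to the weak topology $\sigma\paren[\big]{L^1(\nu),L^\infty(\nu)}$. The inclusion $L^{\con\theta}(\nu)\hookrightarrow L^1(\nu)$ is continuous when both spaces carry the weak topologies induced by $L^\infty(\nu)$ (the target topology pulled back equals $\sigma\paren[\big]{L^{\con\theta}(\nu),L^\infty(\nu)}$ by definition), so $\intf\phi\nu$ restricted to $L^{\con\theta}(\nu)$ is lsc for $\sigma\paren[\big]{L^{\con\theta}(\nu),L^\infty(\nu)}$. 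Since $L^\infty(\nu)\subseteq \ssubexp(\nu)$ by \cref{ex:bounded-implies-ssubexp}, the topology $\sigma\paren[\big]{L^{\con\theta}(\nu),\ssubexp(\nu)}$ is finer, hence $\intf\phi\nu$ is also lsc for this finer topology, and the sublevel sets are weak-* closed.

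Combining the two steps, each sublevel set is weak-* closed and contained in a norm ball of $L^{\con\theta}(\nu)$; by Banach--Alaoglu applied to the dual of $\orhrt\theta(\nu)$, such a ball is weak-* compact, and a closed subset of a compact set is compact. The main obstacle is the norm-boundedness in step (i), since one must correctly relate the Young function $\con\theta$ (which controls the Luxemburg norm) to $\phi$ (which controls the divergence); the key observation that makes this work is that non-negativity of $\con\psi$ forces $\con\theta\leq\psi$, and without the standing normalization $\phi(1)=0$ with $0\in\partial\phi(1)$ this inequality would fail.
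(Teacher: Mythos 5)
Your proof is correct, but it takes a genuinely different route from the paper's.  The paper first conjugates: by \cref{prop:i-dual} on the dual pair $\paren[\big]{L^{\con\theta}(\nu),\ssubexp(\nu)}$, the conjugate of $\intf\phi\nu$ is $\intf{\con\phi}\nu$; \cref{lem:shrink} shows $\intf{\con\phi}\nu$ is finite (hence, being convex lsc on a Banach space, continuous) everywhere on $\ssubexp(\nu)$; and then Moreau's theorem that the conjugate of a function continuous at a point of its domain is weakly inf-compact finishes the argument.  You instead argue directly on the sublevel sets of $\intf\phi\nu$: the comparison $\con\theta\leq\psi$ (a quantitative version of the inclusion $\dom\intf\phi\nu\subseteq L^{\con\theta}(\nu)$ already appearing in \cref{cor:topological-summary}) gives norm-boundedness, lsc of $\intf\phi\nu$ for $\sigma\paren[\big]{L^1(\nu),L^\infty(\nu)}$ passes to the finer topology $\sigma\paren[\big]{L^{\con\theta}(\nu),\ssubexp(\nu)}$ to give weak-$*$ closedness, and Banach--Alaoglu on the dual $\paren[\big]{\orhrt\theta(\nu),\norm\cdotarg_\theta}^\star\cong L^{\con\theta}(\nu)$ closes the argument.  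Moreau's theorem is itself typically proved via Banach--Alaoglu, so the proofs are morally close, but yours is more hands-on (you prove boundedness and closedness directly rather than invoking the duality with continuity) whereas the paper's is slicker and makes the ``finite $\Rightarrow$ continuous $\Rightarrow$ conjugate inf-compact'' mechanism explicit, which is reused as a black box elsewhere.  Your observation that $\con\theta\leq\psi$ (and $\con\theta(0)=0$) depends on the normalization $0\in\partial\phi(1)$ is also correct and worth flagging.
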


\begin{proof}
By \cref{cor:topological-summary}, we know that $\paren[\big]{\ssubexp(\nu),
\norm{\cdotarg}_\theta}$ is a Banach space in dual pairing with
$L^{\con\theta}(\nu)$.  Thus, from \cref{prop:i-dual}, the integral functional
$\intf{\con\phi}\nu$ defined on $\ssubexp(\nu)$ is convex, lower
semicontinuous, and has conjugate $\intf{\bicon\phi}\nu=\intf\phi\nu$ on
$L^{\con\theta}(\nu)$. Furthermore, from \cref{lem:shrink} we know for every
$g\in\ssubexp(\nu)$ that $\intf{\con\phi}\nu(g)<\infty$, so
$\intf{\con\phi}\nu$ is convex, lsc, and finite everywhere on a Banach space,
and thus continuous everywhere by \citet[2.10]{B64}.  Finally, \citet[Proposition
1]{M64} implies that its conjugate $\intf\phi\nu$ is inf-compact on
$L^{\con\theta}(\nu)$ with respect to the weak topology
$\sigma\paren[\big]{L^{\con\theta}(\nu),\ssubexp(\nu)}$.
\end{proof}

\begin{remark}
	This result generalizes \citet[Corollary 2B]{R71} since 
	$L^\infty(\nu)\subseteq \ssubexp(\nu)$ whenever $\phi'(\infty)=\infty$ (see
	\cref{ex:bounded-implies-ssubexp}).
\end{remark}

\begin{corollary}
	\label{cor:inf-compact-tilde}
	Under the same assumptions and notations as
	\cref{prop:inf-compact-small-orlicz}, the functional $\dif*\phi\nu$ is
$\sigma\paren[\big]{L^{\con\theta}(\nu), \ssubexp(\nu)}$ inf-compact.
\end{corollary}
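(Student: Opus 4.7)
The plan is to reduce this to \cref{prop:inf-compact-small-orlicz} by observing that the restriction to probability measures only intersects sublevel sets with a weakly closed half-hyperplane. Since we assumed at the start of \cref{sec:singleg} that $\phi=\pos\phi$, \cref{lem:encoding} gives $\dif*\phi\nu(\mu)=\dif\phi\nu(\mu)+\delta_{\set 1}\paren{\mu(\Omega)}$. Moreover, because $\phi'(\infty)=\infty$ forces $\dif\phi\nu(\mu)=+\infty$ whenever $\mu\not\ll\nu$, every sublevel set of $\dif*\phi\nu$ lies in $\M_c(\nu)$; via the Radon--Nikodym identification of $\M_c(\nu)$ with $L^1(\nu)$, we may view $\dif*\phi\nu$ as the function on $L^{\con\theta}(\nu)$ given by $f\mapsto \intf\phi\nu(f)+\delta_{\set 1}\paren{\ex\nu f}$.

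Next, I would observe that the mass constraint is continuous in the relevant weak topology. Indeed $\ind_\Omega\in L^\infty(\nu)\subseteq \ssubexp(\nu)$ by \cref{ex:bounded-implies-ssubexp} (valid for any $\phi$ since $L^\infty\subseteq \ssubexp$ unconditionally), so the linear form $f\mapsto \ex\nu f=\ip{f}{\ind_\Omega}$ is continuous on $L^{\con\theta}(\nu)$ with respect to $\sigma\paren{L^{\con\theta}(\nu),\ssubexp(\nu)}$. Hence the affine set $H\eqdef\set{f\in L^{\con\theta}(\nu)\given \ex\nu f=1}$ is closed in this topology.

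Finally, for any $\tau\in\R$, the sublevel set
\[
	\set{f\in L^{\con\theta}(\nu)\given \dif*\phi\nu(f)\leq\tau}
	=\set{f\in L^{\con\theta}(\nu)\given \intf\phi\nu(f)\leq\tau}\cap H
\]
is the intersection of a weakly compact set (by \cref{prop:inf-compact-small-orlicz}) with the weakly closed set $H$, and is therefore weakly compact. This establishes the desired inf-compactness. The argument is essentially routine given \cref{prop:inf-compact-small-orlicz}; the only minor point worth verifying explicitly is that $\ind_\Omega$ lies in $\ssubexp(\nu)$, but this is immediate from boundedness.
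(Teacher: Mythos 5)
Your proof is correct and takes essentially the same approach as the paper: writing the sublevel set of $\dif*\phi\nu$ as the intersection of a sublevel set of $\intf\phi\nu$ (compact by \cref{prop:inf-compact-small-orlicz}) with the preimage of $\{1\}$ under the mass functional $\mu\mapsto\ex\mu{\ind_\Omega}$, which is closed since $\ind_\Omega\in L^\infty(\nu)\subseteq\ssubexp(\nu)$ makes this functional weakly continuous. The paper phrases the decomposition directly via the observation that $\phi(x)=\infty$ for $x<0$ forces $\M^+$-membership on the effective domain of $\intf\phi\nu$, while you route through \cref{lem:encoding}, but these are the same fact stated two ways.
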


\begin{proof}
	Observe that since $\phi(x)=\infty$ for $x<0$, we have for every
	$\tau\in\R$ that $\set{\mu\in L^{\con\theta}(\nu)\given
	\dif*\phi\nu(\mu)\leq\tau} =\set{\mu\in\M^1\cap L^{\con\theta}(\nu)\given
\intf\phi\nu(\mu)\leq\tau} =\set{\mu\in L^{\con\theta}(\nu)\given
\intf\phi\nu(\mu)\leq\tau}\cap f^{-1}(1)$ where $f:\mu\to\ex\mu {\ind_\Omega}$
is continuous in the weak topology
$\sigma\paren[\big]{L^{\con\theta}(\nu),\ssubexp(\nu)}$ since
$L^\infty(\nu)\subseteq \ssubexp(\nu)$ by
\cref{lem:linfty-strong-subexponential}.  Hence, $\M^1\cap\B_{\phi,\nu}(\tau)$
is compact as a closed subset of a compact set.
\end{proof}

\begin{corollary}
\label{cor:ball-compact-initial}
	If $\phi'(\infty)=\infty$, then for every $\tau\in\R$ the
	sets $\B_{\phi,\nu}(\tau)$ and $\M^1\cap\B_{\phi,\nu}(\tau)$ are
	compact in the initial topology induced by $\set{\mu\mapsto\ex\mu g\given
	g\in\ssubexp(\nu)}$.
\end{corollary}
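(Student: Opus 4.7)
The plan is to reduce the claim to the two preceding inf-compactness results by identifying the initial topology with the weak topology on an appropriate Orlicz space.

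First, I would observe that the hypothesis $\phi'(\infty)=\infty$, combined with \cref{rem:phi'-finite-infinite}, forces $\di\mu\nu=+\infty$ whenever $\mu\not\ll\nu$. Hence $\B_{\phi,\nu}(\tau)\subseteq \M_c(\nu)$ and on this set we have $\dif\phi\nu=\intf\phi\nu$. By \cref{cor:topological-summary}, $\dom\intf\phi\nu\subseteq L^{\con\theta}(\nu)$, so in particular $\B_{\phi,\nu}(\tau)\subseteq L^{\con\theta}(\nu)$ after identifying $\M_c(\nu)$ with $L^1(\nu)$ via Radon--Nikodym. Likewise $\M^1\cap \B_{\phi,\nu}(\tau)\subseteq L^{\con\theta}(\nu)$.

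Next, I would note that for every $g\in\ssubexp(\nu)$ the linear functional $\mu\mapsto \ex\mu g$ is well-defined and continuous on $L^{\con\theta}(\nu)$ endowed with the weak topology $\sigma\paren[\big]{L^{\con\theta}(\nu),\ssubexp(\nu)}$: this is by definition of the weak topology, using the pairing from \cref{cor:topological-summary}. Conversely, the weak topology is by definition the coarsest topology making these functionals continuous. Therefore, the restriction to $L^{\con\theta}(\nu)$ of the initial topology on $\M$ induced by $\set{\mu\mapsto\ex\mu g\given g\in\ssubexp(\nu)}$ coincides exactly with $\sigma\paren[\big]{L^{\con\theta}(\nu),\ssubexp(\nu)}$.

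Combining these observations, the ball $\B_{\phi,\nu}(\tau)$ (resp.\ $\M^1\cap\B_{\phi,\nu}(\tau)$) inherits the same topology whether one views it as a subset of $\M$ with the initial topology from $\ssubexp(\nu)$ or as a sublevel set of $\intf\phi\nu$ (resp.\ of $\dif*\phi\nu$) in $L^{\con\theta}(\nu)$ with its weak topology. \cref{prop:inf-compact-small-orlicz} and \cref{cor:inf-compact-tilde} show that the latter is compact, and the statement follows. The only step requiring a little care is the verification that the two topologies agree on the ball; but since the ball lies inside $L^{\con\theta}(\nu)$ and both topologies are generated by the same family of linear functionals (finite on the ball), the identification is immediate.
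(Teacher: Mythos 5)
Your argument is correct and is exactly the one the paper intends: the paper's proof simply says ``Immediate from \cref{prop:inf-compact-small-orlicz,cor:inf-compact-tilde},'' and what you have written out — containment of the ball in $L^{\con\theta}(\nu)$, identification of the initial topology restricted to $L^{\con\theta}(\nu)$ with $\sigma\paren[\big]{L^{\con\theta}(\nu),\ssubexp(\nu)}$, and then the two inf-compactness results — is precisely the unpacking of that ``immediate.''
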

\begin{proof}
	Immediate from \cref{prop:inf-compact-small-orlicz,cor:inf-compact-tilde}.
\end{proof}

\begin{corollary}
\label{cor:L-lsc}
	Let $\nu\in\M^1$ be a probability measure and assume that $\phi'(\infty)=\infty$.
	If $g\in L^0(\nu)$ is strongly $(\phi,\nu)$-subexponential and
	$M\subseteq\M^1_c(\nu)$ is a convex set of probability measures containing
	every $\mu\in\M^1_c(\nu)$ with $\di\mu\nu<\infty$, then the function
	$\lb[M]g\nu$ is lower semicontinuous.
\end{corollary}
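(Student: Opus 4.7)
The plan is to invoke Lemma \ref{lem:lb-lsc-if-compact}, which reduces the lower semicontinuity of $\lb[M]g\nu$ to exhibiting a topology on $M$ under which $\mu\mapsto\di\mu\nu$ is inf-compact while $\mu\mapsto\ex\mu g$ is continuous. The natural candidate is the subspace topology on $M$ inherited from the initial topology on $\M$ generated by the evaluation maps $\set{\mu\mapsto\ex\mu{g'}\given g'\in\ssubexp(\nu)}$. Continuity of $\mu\mapsto\ex\mu g$ is then automatic, since $g\in\ssubexp(\nu)$ by hypothesis.

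For inf-compactness, I would show that for every $\tau\in\R$ the sublevel set $\set{\mu\in M\given\di\mu\nu\leq\tau}$ coincides with $\M^1\cap\B_{\phi,\nu}(\tau)$. The inclusion from left to right is trivial since $M\subseteq\M^1$. For the reverse, the assumption $\phi'(\infty)=\infty$ forces any $\mu\in\M^1$ with $\di\mu\nu<\infty$ to be $\nu$-absolutely continuous (cf.~Remark \ref{rem:phi'-finite-infinite}), so such a $\mu$ lies in $\M^1_c(\nu)$ with finite divergence, and the hypothesis on $M$ then ensures $\mu\in M$. Corollary \ref{cor:ball-compact-initial} asserts precisely that $\M^1\cap\B_{\phi,\nu}(\tau)$ is compact in the initial topology generated by $\ssubexp(\nu)$-evaluations, so this same set is compact in the subspace topology on $M$.

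The only point that might warrant care is the passage from compactness on $\M$ to compactness on the subspace $M$, but since the compact set in question is itself entirely contained in $M$, this is vacuous---compactness is an intrinsic topological property. The role of $\phi'(\infty)=\infty$ is twofold: it lets us identify $\dif\phi\nu$-sublevel sets with $\M^1_c(\nu)$-sublevel sets, and via \cref{lem:con-psi-properties} it gives $\dom\con\psi=\R$, which is what makes $\ssubexp(\nu)$ rich enough for \cref{cor:ball-compact-initial} to apply. Applying \cref{lem:lb-lsc-if-compact} with the topology above then yields the desired conclusion (and in fact shows that $\lb[M]g\nu$ is inf-compact, not merely lower semicontinuous).
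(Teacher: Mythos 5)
Your proof is correct and takes exactly the same route as the paper, which simply cites \cref{lem:lb-lsc-if-compact} and \cref{cor:ball-compact-initial}; you have merely spelled out the verification (identifying the sublevel sets via $\phi'(\infty)=\infty$, and noting continuity of $\mu\mapsto\ex\mu g$ since $g\in\ssubexp(\nu)$) that the paper leaves implicit.
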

\begin{proof}
	Follows from \cref{lem:lb-lsc-if-compact} and
	\cref{cor:ball-compact-initial}.
\end{proof}
\begin{remark}
	\label{rem:lsc-ssubexp}
	\cref{cor:L-lsc} does not apply when $\phi'(\infty)<\infty$ or
	$g\in\subexp(\nu)\setminus\ssubexp(\nu)$ (e.g.\ when the pushforward
	measure $\pushforward\nu g$ is gamma-distributed in the case of the KL
	divergence), and it would be interesting to identify conditions other than
	inf-compactness of $\dif\phi\nu$ under which $\lb g\nu$ is lower
	semicontinuous.
\end{remark}

\subsection{Convergence in \titlephi/-divergence and weak convergence}
\label{sec:continuity}

Our goal in this section is to relate two notions of convergence for a sequence
of probability measures $(\nu_n)_{n\in\mathbb{N}}$ and $\nu\in\M^1$: (i)
$\di{\nu_n}\nu\to 0$,\footnote{Throughout this section, we restrict our
attention to $\phi$ which are not the constant $0$ on a neighborhood of $1$,
i.e.\ such that $1\not\in\inter\set{x\in\R\given \phi(x)=0}$, as
otherwise it is easy to construct probability measures $\mu\neq \nu$ such that
$\di\mu\nu = 0$, hence $\di{\nu_n}\nu\to 0$ does not define a meaningful
convergence notion.} and (ii) $\abs{\ex{\nu_n} g - \ex\nu g}\to 0$ for
$g\in\L^0(\Omega)$. Specifically, we would like to identify the largest class
of functions $g\in\L^0(\Omega)$ such that \emph{convergence in
$\phi$-divergence} (i) implies (ii). In other words, we would like to identify
the finest initial topology induced by linear forms $\mu\mapsto \ex\mu g$ for
which (sequential) convergence is implied by (sequential) convergence in
$\phi$-divergence.\footnote{The natural notion of convergence in
$\phi$-divergence defines a topology on the space of probability measures for
which continuity and sequential continuity coincide (see
e.g.~\citet{K60,D64,H07}), so it is without loss of generality that we consider
only sequences rather than nets in the rest of this section. Note that the
information balls $\set{\mu\in\M^1\given \di\mu\nu<\tau}$ for $\tau>0$ need
not be neighborhoods of $\nu$ in this topology, and the information balls do
not in general define a basis of neighborhoods for a topology on the space of
probability measures \citep{C62,C64,C67_Top,D98}.}
This question is less quantitative than computing the best lower bound of the
$\phi$-divergence in terms of the absolute mean deviation, since it only
characterizes when $\abs{\ex{\nu_n} g-\ex\nu g}$ converges to $0$, whereas the
optimal lower bound quantifies the \emph{rate of convergence} to $0$ when it
occurs.

This has been studied in the specific case of the Kullback--Leibler divergence
by Harremoës, who showed \citep[Theorem
25]{H07} that $\kl{\nu_n}\nu\to 0$ implies $\abs{\ex{\nu_n}g-\ex\nu
   g}\to 0$ for every non-negative function $g$ whose moment
   generating function is finite at some positive real (in fact, the converse
   was also shown in the same paper under a so-called \emph{power-dominance}
   condition on $\nu$). In this section, we generalize this to an arbitrary
   $\phi$-divergence and show that convergence in $\phi$-divergence
	 implies $\ex{\nu_n}g\to\ex\nu g$ if and only if $g$ is
   $(\phi,\nu)$-subexponential.

This question is also closely related to the one of understanding the
relationship between weak convergence and \emph{modular convergence} in Orlicz
spaces (e.g.\ \citet{N50} or \citet{M83}). Although convergence in
$\phi$-divergence as defined above only formally coincides with the notion of
modular convergence when $\phi$ is symmetric about $1$ (though this can
sometimes be relaxed \citep{H67}) and satisfies the so-called $\Delta_2$ growth
condition, it is possible that this line of work could be adapted to the
question studied in this section.

We start with the following proposition, showing that this question is
equivalent to the differentiability of $\con{\lb g\nu}$ at $0$.

\begin{proposition}
\label{prop:cont-iff-diff}
	Let $\nu\in\M^1$, $g\in\L^1(\nu)$, and $M\subseteq\M^1$ be a
	convex set of measures integrating $g$ and containing $\nu$.
	Then the following are equivalent:
	\begin{enum}
		\item $\lim_{n\to\infty}\ex{\nu_n}g=\ex\nu g$ for all
			$(\nu_n)_{n\in\mathbb N}\in M^{\mathbb N}$ such that
			$\lim_{n\to\infty}\di{\nu_n}\nu=0$.
		\item $\lb[M]g\nu$ is strictly convex at $0$, that is
			$\lb[M]g\nu(\eps)=0$ if and only if $\eps=0$.
		\item $\partial \con{\lb[M]g\nu}(0)=\set 0$, that is $\con{\lb[M]g\nu}$ is
			differentiable at $0$ and $\conder{\lb[M]g\nu}(0) = 0$.
	\end{enum}
\end{proposition}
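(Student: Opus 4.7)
The plan is to prove the chain of equivalences using convex analysis together with a short direct sequence argument, and to use throughout that $\lb[M]g\nu$ is proper, convex, non-negative, and satisfies $\lb[M]g\nu(0)=0$ by \cref{lem:lb-properties} (since $\nu\in M$).

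First I would establish $(ii)\iff(iii)$ purely via convex duality. Since $0$ is a global minimizer of $\lb[M]g\nu$ we have $0\in\partial\lb[M]g\nu(0)$, and \cref{prop:conj} then gives $\con{\lb[M]g\nu}(0)=0$. Unfolding the definition of the subdifferential, $\eps\in\partial\con{\lb[M]g\nu}(0)$ is equivalent to $t\cdot\eps\leq\con{\lb[M]g\nu}(t)$ for every $t\in\R$, which is in turn equivalent to $\bicon{\lb[M]g\nu}(\eps)\leq 0$; combined with non-negativity, this simplifies to $\bicon{\lb[M]g\nu}(\eps)=0$. Hence (iii) says exactly that the zero set of $\bicon{\lb[M]g\nu}$ is $\set 0$, and $(iii)\Rightarrow(ii)$ is then immediate from $\bicon{\lb[M]g\nu}\leq\lb[M]g\nu$. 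For the converse, suppose $\bicon{\lb[M]g\nu}(\eps^\star)=0$ for some $\eps^\star\neq 0$; by convexity and non-negativity, the zero set of $\bicon{\lb[M]g\nu}$ then contains the segment from $0$ to $\eps^\star$, and by \cref{lem:biconj-R} the functions $\lb[M]g\nu$ and $\bicon{\lb[M]g\nu}$ agree except at the at-most-two boundary points of $\dom\lb[M]g\nu$, so I can pick $\eps\neq 0$ in the relative interior of this segment with $\lb[M]g\nu(\eps)=0$, contradicting (ii).

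For $(i)\iff(ii)$, the direction $(i)\Rightarrow(ii)$ is immediate from the infimum definition of $\lb[M]g\nu$: if $\lb[M]g\nu(\eps)=0$ for some $\eps\neq 0$, then there exist $\mu_k\in M$ with $\ex{\mu_k}g-\ex\nu g=\eps$ and $\di{\mu_k}\nu<1/k$, so this sequence witnesses failure of (i). For the reverse, suppose (i) fails: there exist $\nu_n\in M$ with $\di{\nu_n}\nu\to 0$ and $\eps_n\eqdef \ex{\nu_n}g-\ex\nu g\not\to 0$; passing to a subsequence, assume $\eps_n\geq\delta$ for some $\delta>0$ (the case $\eps_n\leq-\delta$ is symmetric). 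Then setting $t_n\eqdef\delta/(2\eps_n)\in(0,1/2]$ and $\nu'_n\eqdef t_n\nu_n+(1-t_n)\nu$, convexity of $M$ yields $\nu'_n\in M$, by construction $\ex{\nu'_n}g-\ex\nu g=\delta/2$, and joint convexity of the divergence gives $\di{\nu'_n}\nu\leq t_n\di{\nu_n}\nu\to 0$. Hence $\lb[M]g\nu(\delta/2)=0$ with $\delta/2\neq 0$, contradicting (ii).

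The main subtlety is this convex-combination step: because $\lb[M]g\nu$ need not be lower semicontinuous in general (as cautioned after \cref{thm:equiv}, since strong duality can fail on the boundary of $\dom\lb[M]g\nu$), I cannot close off the argument by taking a limit of a bounded subsequence of $\eps_n$. Rescaling the deviating measures toward $\nu$ bypasses this by manufacturing measures attaining \emph{exactly} the mean deviation $\delta/2$ while driving the divergence to $0$, and uses only convexity of $M$ and joint convexity of $\di\cdot\cdot$, both of which are in hand.
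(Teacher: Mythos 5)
Your proof is correct. The equivalence $(ii)\iff(iii)$ and the direction $(i)\Rightarrow(ii)$ match the paper's argument essentially step for step: both identify $\partial\con{\lb[M]g\nu}(0)$ with the zero set of $\bicon{\lb[M]g\nu}$ and then relate that zero set to the zero set of $\lb[M]g\nu$ itself via \cref{lem:biconj-R}, and both construct a witnessing sequence from $\lb[M]g\nu(\eps)=0$ for the easy implication.

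Where you diverge from the paper is in $(ii)\Rightarrow(i)$. The paper argues directly from monotonicity: since $\lb[M]g\nu$ is convex, non-negative, and zero at zero, it is non-increasing on $\R_{\leq 0}$ and non-decreasing on $\R_{\geq 0}$; so if $\abs{\eps_n}\geq\eps$ infinitely often then $\di{\nu_n}\nu\geq\lb[M]g\nu(\eps_n)\geq\min\set{\lb[M]g\nu(\eps),\lb[M]g\nu(-\eps)}>0$ infinitely often, contradicting $\di{\nu_n}\nu\to 0$. You instead rescale $\nu_n$ toward $\nu$ to land on an exact mean-deviation value $\delta/2$ and use convexity of $\di\cdot\nu$ in the first argument (joint convexity is more than you need here). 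Both arguments are sound and neither invokes lower semicontinuity of $\lb[M]g\nu$; your cautionary remark about lsc is accurate as a warning against a third (limit-taking) route, but the paper's monotonicity argument already sidesteps it and is a bit shorter. Your rescaling trick is a reasonable alternative and makes explicit why only convexity of $M$ and of the divergence are being used, at the cost of a slightly longer construction.
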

\begin{proof}
	$(i)\implies(ii)$\quad Assume for the sake of contradiction that $\lb[M]
	g\nu(\eps)=0$ for some $\eps\neq 0$. Then by definition of $\lb[M]g\nu$,
	there exists a sequence $(\nu_n)_{n\in\mathbb N}\in M^{\mathbb N}$ such
	that for all $n\in\mathbb N$, $\di{\nu_n}\nu \leq 1/n$ and
	$\ex{\nu_n}g-\ex\nu g=\eps$, thus contradicting $(i)$. Hence, $\lb[M]
	g\nu(\eps) = 0$ if and only if $\eps=0$, which is equivalent to strict
	convexity at $0$ since $\lb[M]g\nu$ is convex with global minimum
	$\lb[M]g\nu(0)=0$ by \cref{lem:lb-properties}.

	$(ii)\implies(i)$\quad Let $(\nu_n)_{n\in\mathbb N}\in M^{\mathbb N}$ be
	a sequence such that $\lim_{n\to\infty}\di{\nu_n}\nu=0$.  By definition of
	$\lb[M]g\nu$, we have that $\di{\nu_n}\nu\geq \lb[M]g\nu
	\paren[\big]{\ex{\nu_n}g-\ex\nu g}\geq0$ for all $n\in\mathbb N$, and in
	particular $\lim_{n\to\infty}\lb[M]g\nu\paren[\big]{\ex{\nu_n}g-\ex\nu g}=0$.
Assume for the sake of contradiction that $\ex{\nu_n} g$ does
not converge to $\ex\nu g$. This implies the existence of $\eps> 0$ such that
$\abs{\ex{\nu_n}g-\ex\nu g}\geq \eps$ for infinitely many $n\in\mathbb N$.  But
	then $\lb[M]g\nu\paren[\big]{\ex{\nu_n}g-\ex\nu g}\geq\min\set{\lb[M]
	g\nu(\eps),\lb[M]g\nu(-\eps)}>0$ for infinitely many $n\in\mathbb N$,
a contradiction.

$(ii)\iff(iii)$\quad By a standard characterization of the subdifferential (see
	e.g.\ \citet[Theorem 2.4.2(iii)]{Z02}), we have that $\partial\con{\lb[M]
	g\nu}(0) =\set{x\in\R \given \con{\lb[M]g\nu}(0)+\bicon{\lb[M]g\nu}(x)=0\cdot
	x} =\set{x\in\R\given \bicon{\lb[M]g\nu}(x)=0}$. Since $\lb[M]g\nu$ is convex,
non-negative, and $0$ at $0$, this subdifferential contains $\eps\neq 0$ if and
	only if there exists $\eps\neq 0$ with $\lb[M]g\nu(\eps)= 0$.
\end{proof}

The above proposition characterizes continuity in terms of the
differentiability at $0$ of the conjugate of the optimal lower bound function,
or equivalently by \cref{prop:lb-conjugate}, differentiability of the function
$\cgfxi g\nu\Xi$. In the previous section we investigated in detail the
finiteness (or equivalently by convexity, the continuity) of these functions
around $0$; in this section we show that continuity at $0$ is equivalent to
differentiability at $0$ assuming that $\phi$ is not the constant $0$ on
a neighborhood of $1$.
\begin{proposition}
	\label{prop:cgf-diff-iff-cont}
	Assume that $1\not\in\inter \set{x\in\R\given \phi(x)=0}$. Then for
	every $\sigma$-ideal $\Xi$, probability measure $\nu\in\M^1_c(\Xi)$,
	and $g\in L^0(\Xi)$, we have that $0\in\inter\dom\cgfxi g\nu\Xi$ if
	and only if $\cgfxi g\nu\Xi'(0)=0$.
\end{proposition}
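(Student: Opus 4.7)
Let $K\eqdef\cgfxi g\nu\Xi$, which by \cref{prop:cgf-properties} is convex, lower semicontinuous, non-negative, and satisfies $K(0)=0$. Since $0$ is a global minimum of $K$, any derivative existing at $0$ must equal $0$, so the claim reduces to showing that $K$ is differentiable at $0$ iff $0\in\inter\dom K$. The backward direction is immediate because the existence of a real derivative at $0$ forces both one-sided difference quotients to be finite, hence $K$ finite on a neighborhood of $0$. For the forward direction, assume $0\in\inter\dom K$; the one-sided derivatives $K'_\pm(0)$ are then finite and satisfy $K'_-(0)\leq 0\leq K'_+(0)$, and I will prove $K'_+(0)\leq 0$ (the inequality $K'_-(0)\geq 0$ being symmetric), which yields $K'(0)=0$.

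By Fenchel--Young duality and $\psi(x)=\phi(x+1)$, the subdifferential $\partial\con\psi(0)$ equals $\set{x\in\R\given \psi(x)=0}=\set{y-1\given \phi(y)=0}$; hence the hypothesis $1\notin\inter\set{y\in\R\given \phi(y)=0}$ is equivalent to $0$ being an endpoint of the interval $\partial\con\psi(0)$. In particular, the support function $\sigma(y)\eqdef\sup_{s\in\partial\con\psi(0)}sy$ vanishes identically on one of the half-lines $\R_{\geq 0}$ or $\R_{\leq 0}$. For each $c\in\R$, convexity of $\con\psi$ combined with $\con\psi(0)=0$ makes $t\mapsto \con\psi(t(g+c))/t$ non-decreasing on $(0,\infty)$ with pointwise limit $\sigma(g+c)$ as $t\to 0^+$; whenever $\ex\nu{\con\psi(t_0(g+c))}<\infty$ for some $t_0>0$ making $(t_0,t_0c)$ feasible in the infimum defining $K$, dominated convergence (using the $t_0$ value as the dominating function) gives $\lim_{t\to 0^+}\ex\nu{\con\psi(t(g+c))}/t=\ex\nu{\sigma(g+c)}$. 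Combined with $K(t)\leq\ex\nu{\con\psi(t(g+c))}$ by definition of $K$, this yields the key estimate $K'_+(0)\leq\ex\nu{\sigma(g+c)}$.

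To let $c$ vary freely, I start from a feasible $(t_1,\lambda_1)$ with $t_1>0$ and $\ex\nu{\con\psi(t_1g+\lambda_1)}<\infty$ (available because $0\in\inter\dom K$), and take convex combinations with pairs $(0,\lambda')$ for $\lambda'\in\dom\con\psi$; by convexity of $\con\psi$ these produce feasible pairs $(\mu t_1,\mu\lambda_1+(1-\mu)\lambda')$ with finite integrals, and their ratios $c_\mu$ tend to $\pm\infty$ as $\mu\to 0^+$ depending on the sign of $\lambda'$. Under the hypothesis, $\ex\nu{\sigma(g+c)}\to 0$ as $c$ is pushed toward the free half-line: for instance, if $\partial\con\psi(0)\subseteq(-\infty,0]$ then $\sigma$ vanishes on $\R_{\geq 0}$ and $\ex\nu{\sigma(g+c)}\to 0$ as $c\to+\infty$ by dominated convergence, using $g\in L^1(\nu)$ (which holds under $0\in\inter\dom K$ via $g\in\subexp(\nu)\subseteq L^1(\nu)$ when $\phi'(\infty)=\infty$ by \cref{cor:topological-summary}, and via $g\in L^\infty(\Xi)\subseteq L^1(\nu)$ when $\phi'(\infty)<\infty$ by \cref{lem:lb-finitedagger-bounded}). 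Hence $K'_+(0)\leq 0$, completing the proof. The main technical hurdle is carefully interchanging the limits $t\to 0^+$ and $c\to\pm\infty$ with the $\nu$-integral; this is handled by combining the monotonicity of $\con\psi(ty)/t$ in $t$ with the convex-combination construction above, and is most delicate when $\phi'(\infty)<\infty$ since the feasibility constraint $\lambda+t\esssup_\Xi g\leq\phi'(\infty)$ then binds.
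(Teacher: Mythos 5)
Your plan takes a genuinely different route from the paper's proof, and a unified one at that: the paper splits into two cases and treats them with rather different tools, handling $\phi'(\infty)<\infty$ by the direct substitution $\lambda=\sigma tB$ (with $B=\esssup_\Xi\abs g$ and the sign $\sigma$ chosen so that $\phi(1+\sigma x)>0$ for $x>0$) and handling $\phi'(\infty)=\infty$ by appealing to the Levin/Ioffe--Tikhomirov characterization of subdifferentials of integral functionals and a pointwise-vanishing argument. Your approach instead works throughout with the one-sided directional derivative $(\con\psi)'(0;\cdotarg)$, identified as the support function $\sigma$ of $\partial\con\psi(0)=\set{x\given\psi(x)=0}$, and derives $K'_+(0)\leq\ex\nu{\sigma(g+c)}$ by monotone convergence of the secant slopes $\con\psi(t(g+c))/t$, then pushes $c$ along the ``free'' half-line. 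This avoids the measurable-selection machinery entirely and replaces it with dominated convergence, which is appealingly elementary; your convex-combination construction of feasible pairs $(\mu t_1,\mu\lambda_1+(1-\mu)\lambda')$ is a nice way to manufacture the admissible shifts $c_\mu$ while respecting the constraint $\lambda+t\esssup_\Xi g\leq\phi'(\infty)$.

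That said, the write-up is a sketch and several of the points you flag as ``hurdles'' genuinely need to be tied down. In the outer limit $c_\mu\to\pm\infty$ you invoke dominated convergence ``using $g\in L^1(\nu)$,'' but when $\phi'(\infty)=0$ the support function satisfies $\sigma(y)=+\infty$ for $y>0$, so a dominating function of the form $a\abs g$ is unavailable; you instead need to dominate by $\sigma(g+c_{\mu_0})$ for a fixed admissible $\mu_0$ (which is integrable because the inner monotone-convergence step already gives $\ex\nu{\sigma(g+c_{\mu_0})}<\infty$), together with the monotonicity of $\sigma$ and of $\mu\mapsto c_\mu$. Relatedly, the direction in which you may send $c$ is forced by the feasibility constraint exactly in the degenerate cases: when $\phi'(\infty)=0$ only $c\to-\infty$ is admissible, but fortunately that is precisely the direction on which $\sigma$ vanishes (since $\partial\con\psi(0)\supseteq[0,\infty)$ forces $\partial\con\psi(0)\subseteq[0,\infty)$ under the hypothesis), so the two constraints align; it would be worth making this matching explicit. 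Finally, the identification $\lim_{t\to 0^+}\con\psi(ty)/t=\sigma(y)$ should be stated carefully for $y$ outside $\dom\con\psi/\,t$: the monotone limit is well-defined in $[0,+\infty]$ and agrees with $\sigma(y)$ even at $+\infty$, and the domination by $\con\psi(t_0(g+c))/t_0$ then forces $\sigma(g+c)<\infty$ $\nu$-a.s. Filling in these points would turn a plausible plan into a complete alternative proof.
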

\begin{proof}
	The if direction is immediate, since differentiability at $0$ implies
	continuity at $0$. Thus, for the remainder of the proof we assume
	that $\cgfxi g\nu\Xi$  is finite on a neighborhood of $0$.

	We first consider the case $\phi'(\infty)<\infty$, where
	\cref{lem:lb-finitedagger-bounded} implies $g\in L^\infty(\Xi)$.
	Define $B\eqdef\esssup_\Xi \abs g$, and let $\sigma\in\set{-1,1}$ be such that
	$\phi(1+\sigma x)>0$ for all $x>0$ as exists by assumption on $\phi$. Since
	$\psi$ is non-negative and $0$ at $0$, a standard characterization of the
	subdifferential (e.g.\ \citet[Theorem 2.4.2(iii)]{Z02}) implies that the
	function $t\mapsto \con\psi(\sigma \abs t)$ has derivative $0$ at $0$.
	Then for all $t\in\R$, by considering $\lambda=\sigma tB$ in
	\eqref{eq:cumul}, we obtain $\cgfxi g\nu\Xi(t)$
	is at most $\ex\nu{ \con\psi(tg+\sigma
	tB)}+\delta_{[-\infty,\phi'(\infty)]}(2\sigma\abs tB)\leq \con\psi(2\sigma
	\abs tB)+\delta_{[-\infty,\phi'(\infty)]}(2\sigma\abs tB)$. Now, if
	$\sigma=-1$ then $2\sigma\abs tB\leq 0\leq \phi'(\infty)$ for all $t$, and
	if $\sigma=1$ then necessarily $\phi'(\infty)>0$ and so $2\sigma\abs tB\leq
	\phi'(\infty)$ for sufficiently small $\abs t$.  Thus, we have for
	sufficiently small $\abs t$ that $\cgfxi g\nu\Xi(t)$
	is between $0$ and $\con\psi(2\sigma\abs tB)$, both of which are $0$ with
	derivative $0$ at $0$, completing the proof in this case.

	Now, assume that $\phi'(\infty)=\infty$, so that we have $\cgfxi g\nu\Xi
	=\cgf g\nu=\inf_{\lambda\in\R} f(\cdot ,\lambda)$ for $f(t,\lambda) \eqdef
	\ex\nu{\con\psi(tg+\lambda)}$. Note that $\psi\geq 0$ implies $f\geq 0$, so
	since $\cgf g\nu(0)=f(0,0)=0$ we have by standard results in convex analysis
	(e.g.\ \citet[Theorem 2.6.1(ii)]{Z02}) that $\partial \cgf g\nu(0)=\set{\con
	t\given (\con t, 0)\in \partial f(0,0)}$. Furthermore, by assumption $\cgf
	g\nu$ is finite on a neighborhood of $0$, so since $\cgf g\nu=\cgf{g+c}\nu$
	for all $c\in\R$, \cref{lem:shrink} implies $\inter\paren*{\dom\cgf
	g\nu}\times \R\subseteq\dom f$ and in particular $(0,0) \in\inter\dom f$.
	Thus, defining for each $\omega\in\Omega$ the function
	$f_\omega(t,\lambda)\eqdef \con\psi(t\cdot g(\omega)+\lambda)$ (where here
	and in the rest of the proof we fix some representative $g\in\L^0(\Omega)$),
	standard results on convex integral functionals (e.g.\ \citet[Theorem 1]{L68} or
	\citet[Formula (7)]{IT69}) imply that $(\con t,\con\lambda)\in \partial
	f(0,0)$ if and only $(\con t, \con\lambda)=\paren[\big]{\ex\nu{\con
	t_\omega}, \ex\nu{\con \lambda_\omega}}$ for measurable functions $\con
	t_\omega, \con \lambda_\omega:\Omega\to\R$ such that $(\con t_\omega,
	\con\lambda_\omega)\in \partial f_\omega(0,0)$ holds $\nu$-a.s.

	Now, for each $\omega\in\Omega$, we have that $(\con
	t_\omega,\con\lambda_\omega)\in \partial f_\omega(0,0)$ if and only if
	$\con\psi(t\cdot g(\omega)+\lambda)\geq \con t_\omega\cdot
	t+\con\lambda_\omega \cdot \lambda$ for all $(t,\lambda)\in\R^2$. By
	considering $t=0$, this implies that
	$\con\lambda_\omega\in\partial\con\psi(0)=\set{x\in\R\given \psi(x)=0}$,
	which is contained in either $\R_{\geq 0}$ or $\R_{\leq 0}$ since $\psi$ is
	not $0$ on a neighborhood of $0$. Then since the integral of a function of
constant sign is zero if and only if it is zero almost surely, we have that
$(\con t,0)=\paren[\big]{\ex\nu{\con t_\omega},\ex\nu{\con\lambda_\omega}}$ if
and only if $\con\lambda_\omega=0$ holds $\nu$-a.s. But $(\con t_\omega,0)
\in\partial f_\omega(0,0)$ if and only if for all $t\in\R$ we have $\con
t_\omega\cdot t\leq\inf_\lambda \con\psi(t\cdot g(\omega)+\lambda)
=\con\psi(0)=0$, i.e.~if and only if $\con t_\omega=0$.

	Putting this together, we get that $\partial \cgf g\nu(0)=\set{\con t\given
	(\con t, 0)\in \partial f(0,0)} = \set{\ex\nu{\con t_\omega} \given (\con
	t_\omega, 0)\in \partial f_\omega(0,0)\text{ $\nu$-a.s.}}=\set{\ex\nu{\con
	t_\omega} \given \con t_\omega=0\text{ $\nu$-a.s.}}=\set 0$ and $\cgf
	g\nu'(0)=0$ as desired.
\end{proof}
\begin{remark}
	If $\phi$ is $0$ on a neighborhood of $1$, then it is easy to show that
	$\cgf g\nu$ is not differentiable at $0$ unless $g$ is $\nu$-essentially
	constant. Thus, the above proposition shows that the following are
	equivalent: (i) $1\not\in\inter\set{x\in\R\given\phi(x)=0}$, (ii) for
	every $g$, continuity of $\cgf g\nu$ at $0$ implies
	differentiability at $0$, (iii) $\di\mu\nu=0$ for probability measures $\mu$
	and $\nu$ if and only if $\mu=\nu$.

	A similar (but simpler) proof shows that the following are equivalent: (i)
	$\phi$ strictly convex at $1$, (ii) for every $g$, continuity of $t\mapsto
	\intf{\con\psi\!}\nu(tg)$ at $0$ implies differentiability at $0$, and (iii)
	$\di\mu\nu=0$ for finite measures $\mu$ and $\nu$ if and only if $\mu=\nu$.
	The similarity of the statements in both cases suggest there may be a common
	proof of the equivalences using more general techniques in convex analysis.
\end{remark}

Thus, combining the previous two propositions and \cref{prop:lb-conjugate}
computing the convex conjugate of the optimal lower bound function, we obtain
the following theorem.

\begin{theorem}
\label{thm:continuity-without-ac}
Assume that $1\notin\inter(\set{x\in\R\given
\phi(x)=0})$. Then for a $\sigma$-ideal $\Sigma$, $g\in L^0(\Sigma)$ and
$\nu\in X^1_g$, writing
$\Xi = \set{A\in\A\given \forall\mu\in X_g, \abs\mu(A)=0}$,
the following are equivalent:
	\begin{enum}
	\item for all $(\nu_n)_{n\in\mathbb N}\in \M^1_c(\Xi)^{\mathbb N}$,
		$\lim_{n\to\infty}\di{\nu_n}\nu= 0$ implies
		that $g$ is $\nu_n$-integrable for sufficiently large
		$n$ and $\lim_{n\to\infty}\ex{\nu_n}g=\ex\nu g$.
	\item for all $(\nu_n)_{n\in\mathbb N}\in (X^1_g)^{\mathbb N}$,
		$\lim_{n\to\infty}\di{\nu_n}\nu= 0$ implies
		$\lim_{n\to\infty}\ex{\nu_n}g=\ex\nu g$.
	\item $\partial \cgfxi g\nu\Xi(0)=\set 0$, i.e.\ $\cgfxi g\nu\Xi$ is differentiable
			at $0$ with $\cgfxi g\nu\Xi'(0)=0$.
		\item $0\in\inter(\dom \cgfxi g\nu\Xi)$, that is, $g\in L^\infty(\Xi)$
			when $\phi'(\infty)<\infty$ and $g\in\subexp(\nu)$ 
			when $\phi'(\infty)=\infty$.
	\end{enum}
\end{theorem}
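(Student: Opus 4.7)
The plan is to prove the cycle (i) $\Rightarrow$ (ii) $\Rightarrow$ (iii) $\Rightarrow$ (iv) $\Rightarrow$ (i), where the three middle implications follow almost mechanically from the propositions developed earlier in the excerpt, and the closing step ``(iv) $\Rightarrow$ (i)'' is where the only nontrivial bookkeeping takes place. First, (i) $\Rightarrow$ (ii) is immediate: every $\mu\in X_g$ satisfies $\abs\mu(A)=0$ for all $A\in\Xi$ by definition of $\Xi$, so $X_g^1\subseteq\M^1_c(\Xi)$, and membership $\nu_n\in X_g$ makes $g$ automatically $\nu_n$-integrable. For (ii) $\Rightarrow$ (iii), I would invoke \cref{prop:cont-iff-diff} with $M=X_g^1$---which is convex, contains $\nu$ by hypothesis, and consists of measures integrating $g$ by definition of $X_g$---to conclude that (ii) is equivalent to $\partial\con{\lb[X_g^1] g\nu}(0)=\set 0$. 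By \cref{prop:lb-conjugate} applied to the $\nu$-decomposable pair $(X_g,Y_g)$ furnished by \cref{lem:xg-yg-decomposable}, this convex conjugate coincides (after simplifying via the identity $\con\psi(y)=\con\phi(y)-y$) with $\cgfxi g\nu\Xi$, giving (iii).

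For (iii) $\Rightarrow$ (iv), differentiability at a point is stronger than finiteness on a neighborhood, so $0\in\inter(\dom\cgfxi g\nu\Xi)$ follows trivially. To convert this into the concrete description in (iv), I would split by the sign of $\phi'(\infty)$: when $\phi'(\infty)<\infty$, \cref{lem:linfty-strong-subexponential} identifies the condition with $g\in L^\infty(\Xi)$; when $\phi'(\infty)=\infty$, the observation after \cref{def:cumulant} gives $\cgfxi g\nu\Xi=\cgf g\nu$, so the condition becomes $g\in\subexp(\nu)$ by \cref{defn:subexponential}. The reverse implication (iv) $\Rightarrow$ (iii) is exactly \cref{prop:cgf-diff-iff-cont}, whose standing hypothesis $1\notin\inter\set{x\in\R: \phi(x)=0}$ matches the theorem's assumption.

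To close the cycle with (iv) $\Rightarrow$ (i), I would take $(\nu_n)\in\M^1_c(\Xi)^{\mathbb N}$ with $\di{\nu_n}\nu\to 0$ and first argue $\nu_n\in X_g^1$ for all sufficiently large $n$. If $\phi'(\infty)<\infty$, then (iv) gives $g\in L^\infty(\Xi)$, which is integrable against every finite measure in $\M_c(\Xi)$, so $\nu_n\in X_g^1$ for every $n$. If $\phi'(\infty)=\infty$, the finiteness of $\di{\nu_n}\nu$ for large $n$ forces $\nu_n\ll\nu$ by \cref{rem:phi'-finite-infinite}, and then \cref{prop:subexp-pairing} applied to $g\in\subexp(\nu)$ delivers $\nu_n$-integrability of $g$. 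Once $\nu_n\in X_g^1$ eventually, (ii)---already established as equivalent to (iv)---yields $\ex{\nu_n}g\to\ex\nu g$, completing (i). The main obstacle is not any single implication but precisely this last integrability step: the bare assumption $\nu_n\in\M^1_c(\Xi)$ in (i) does not by itself force $g$ to be $\nu_n$-integrable, and bridging this gap is exactly where the dual-pairing characterization of subexponential functions (\cref{prop:subexp-pairing}) plays its essential role.
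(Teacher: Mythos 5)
Your proposal is correct and follows essentially the same route as the paper: (ii)~$\Leftrightarrow$~(iii) via Proposition~\ref{prop:cont-iff-diff} together with $\con{\lb[X_g^1]g\nu}=\cgfxi g\nu\Xi$ from Proposition~\ref{prop:lb-conjugate}; (iii)~$\Leftrightarrow$~(iv) via Proposition~\ref{prop:cgf-diff-iff-cont} and Lemma~\ref{lem:lb-finitedagger-bounded}; (i)~$\Rightarrow$~(ii) by definition of $X_g^1$; and (iv)~$\Rightarrow$~(i) using $L^\infty(\Xi)$-integrability when $\phi'(\infty)<\infty$ and Proposition~\ref{prop:subexp-pairing} (plus $\nu_n\ll\nu$ for finite divergence) when $\phi'(\infty)=\infty$. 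The only cosmetic difference is that you chain (i)$\Rightarrow$(ii)$\Rightarrow$(iii)$\Rightarrow$(iv)$\Rightarrow$(i) as a cycle while the paper proves (ii)--(iv) equivalent and then closes (i) separately, but the underlying lemmas and reasoning are identical.
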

\begin{proof}
	The equivalence of (ii)-(iv) is immediate from
	\cref{prop:cont-iff-diff,prop:cgf-diff-iff-cont} since \cref{prop:lb-conjugate}
	implies $\con{\lb[X^1_g]g\nu}=\cgfxi g\nu\Xi$.  That (i) implies (ii) is
	immediate by definition of $X^1_g$. The reformulation of
	$0\in\inter(\dom\cgfxi g\nu\Xi)$ depending on the finiteness of
	$\phi'(\infty)$ uses \cref{lem:lb-finitedagger-bounded} and
	\cref{defn:subexponential}. Finally that (ii) and (iv) implies (i) is
	immediate when $\phi'(\infty)<\infty$—
	since every $\mu\in\M^1_c(\Xi)$ integrates every $g\in L^\infty(\Xi)$—and
	follows from \cref{prop:subexp-pairing} otherwise.
\end{proof}
 \section{Optimal bounds relating \titlephi/-divergences and IPMs}
\label{sec:ipm}

In this section we generalize \cref{thm:equiv} on the optimal lower bound
function for a single measure and function to the case of sets of measures and
measurable functions.

\subsection{On the choice of definitions}
\label{sec:optimal-lb-def}

When considering a class of functions $\G$, there are several ways to define
a lower bound of the divergence in terms of the mean deviation of functions in
$\G$. The first one is to consider the IPM $\ipmsymb\G$ induced by $\G$ and to ask
for a function $L$ such that $\di\mu\nu\geq L\paren[\big]{\ipm\mu\nu}$ for all
probability measures $\mu$ and $\nu$, leading to the following definition of
the optimal bound.

\begin{definition}
\label{defn:lb-function}
	Let $\G\subseteq\L^0(\Omega)$ be a non-empty set of measurable functions
	and let $N,M\subseteq\M^1$ be two sets of probability measures such that
	$\G\subseteq L^1(\nu)$ for every $\nu\in N\cup M$. The optimal lower bound
	function $\lb[M]\G N:\R_{\geq 0}\to\eR_{\geq 0}$ is defined by
	\begin{displaymath}
		\lb[M]\G N\paren*{\eps}
		\eqdef \inf\set[\Big]{\di\mu\nu \given (\nu,\mu)\in N\times M\land
		\sup_{g\in\G}\paren[\big]{\ex\mu g-\ex\nu g}=\eps}\,.
	\end{displaymath}
  We also for convenience extend the definition to the negative reals by
	\begin{displaymath}
		\lb[M]\G N\paren*{\eps}\eqdef \lb[M]{-\G} N\paren*{-\eps}
		= \inf\set[\Big]{\di\mu\nu \given (\nu,\mu)\in N\times M\land
      \inf_{g\in\G}\paren[\big]{\ex\mu g-\ex\nu g}=\eps}
	\end{displaymath}
  for $\eps<0$ where $-\G\eqdef\set{-g\given g\in\G}$.
\end{definition}

\begin{remark}
  To motivate the definition of $\lb[M]\G N$ on the negative reals, note that
  the equality $\sup_{g\in \G}\ex\mu g-\ex\nu g=\eps$ for $\eps\geq 0$ constrains by
  how ``much above $0$'' an element of $\G$ can distinguish $\mu$ and $\nu$,
  whereas the constraint $\inf_{g\in\G}\ex\mu g-\ex\nu g=-\eps$ analogously constrains
  how much \emph{below} $0$ an element of $\G$ can distinguish them.
	When $\G$ is closed under negation, then $\sup_{g\in \G}
  \ex\mu g-\ex\nu g=\ipm[\G]\mu\nu=-\inf_{g\in\G}\ex\mu g-\ex\nu g$
  and $\lb[M]\G N$ is even and exactly quantifies the smallest value taken by the $\phi$-divergence
  given a constraint on the IPM defined by $\G$. 
\end{remark}

An alternative definition, using the notations of \cref{defn:lb-function}, is
to consider the largest function $L$ such that $\di\mu\nu\geq L(\ex\mu g-\ex\nu
g)$ for all $(\nu,\mu)\in N\times M$ and $g\in\G$. It is easy to see that this
function can simply be expressed as
\begin{displaymath}
	\inf_{g\in\G} \lb[M]gN\paren*{\eps}
	=\inf_{\substack{g\in\G\\\nu\in N}} \lb[M] g\nu(\eps)
	= \inf\set[\Big]{\di\mu\nu\given (\nu,\mu,g)\in N\times M\times\G\land
		\ex\mu g-\ex\nu g=\eps}\,.
\end{displaymath}
Observe that $\inf_{g\in\G} \lb[M]gN = \lb[M]\G N$
when $\G=\set{g}$ or $\G=\set{-g, g}$. More generally, the goal of this section
is to explore the relationship between $\lb[M] \G N$ and $\inf_{g\in\G}
\lb[M]gN$. In particular, we will show that assuming a basic convexity condition
on the set of measures $M$, these functions can differ only on their (at most
countably many) discontinuity points.

\begin{lemma}
\label{lem:ipm-lb-increasing}
Let $N,M\subseteq \M^1$ be two sets of probability measures with $N\subseteq M$
and $M$ convex. Then the functions $\lb[M]\G N$ and $\inf_{g\in\G} \lb[M] gN$
are non-negative, $0$ at $0$, and are non-decreasing on the non-negative reals. 
\end{lemma}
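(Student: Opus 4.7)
Non-negativity is immediate: since $\nu$ and $\mu$ are probability measures we have $\di\mu\nu\geq 0$, so both infima are taken over non-negative quantities. For the values at $0$, pick any $\nu\in N\subseteq M$ and note that $\mu=\nu$ is feasible in both optimization problems with value $\di\nu\nu=0$, because $\ex\nu g-\ex\nu g=0$ for every $g\in\G$ and hence $\sup_{g\in\G}(\ex\nu g-\ex\nu g)=0$ as well. Combined with non-negativity this gives $\lb[M]\G N(0)=\inf_{g\in\G}\lb[M]gN(0)=0$.

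For monotonicity of $\inf_{g\in\G}\lb[M]gN$ on $\R_{\geq 0}$, I would invoke \cref{lem:lb-properties} to obtain that for each individual $g\in\G$ and $\nu\in N$, the function $\lb[M]g\nu$ is convex, non-negative, and vanishes at $0$; any such convex function attains its minimum at $0$ and is therefore non-decreasing on $\R_{\geq 0}$. Taking the pointwise infimum of the family $\set{\lb[M]g\nu\given g\in\G,\nu\in N}$ preserves this property, since an infimum of non-decreasing functions is non-decreasing.

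For monotonicity of $\lb[M]\G N$ on $\R_{\geq 0}$, the key trick is the convex combination in $M$: given $0\leq\eps_1\leq\eps_2$ with $\eps_2>0$, I would take an arbitrary feasible pair $(\nu,\mu)\in N\times M$ for $\eps_2$ (i.e.~with $\sup_{g\in\G}(\ex\mu g-\ex\nu g)=\eps_2$) and set $\mu_t\eqdef(1-t)\nu+t\mu$ for $t=\eps_1/\eps_2\in[0,1]$. Since $\nu\in N\subseteq M$ and $M$ is convex, $\mu_t\in M$. Linearity of expectation gives $\ex{\mu_t}g-\ex\nu g=t(\ex\mu g-\ex\nu g)$, so factoring the non-negative $t$ out of the supremum yields $\sup_{g\in\G}(\ex{\mu_t}g-\ex\nu g)=t\eps_2=\eps_1$, making $(\nu,\mu_t)$ feasible for $\eps_1$. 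Joint convexity of the $\phi$-divergence together with $\di\nu\nu=0$ gives $\di{\mu_t}\nu\leq t\di\mu\nu\leq\di\mu\nu$, so taking the infimum over $(\nu,\mu)$ yields $\lb[M]\G N(\eps_1)\leq\lb[M]\G N(\eps_2)$. The boundary case $\eps_2=0$ forces $\eps_1=0$ and is trivial.

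No serious obstacle is anticipated; the only mild care required is the factoring $\sup_{g\in\G}t(\ex\mu g-\ex\nu g)=t\sup_{g\in\G}(\ex\mu g-\ex\nu g)$, which is valid precisely because $t\geq 0$, which in turn is why the argument is restricted to the non-negative reals (on the negative reals, a symmetric construction using $-\G$ would be required, but the statement does not claim monotonicity there).
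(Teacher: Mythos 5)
Your proof is correct and follows essentially the same route as the paper's. For the monotonicity of $\lb[M]\G N$ you use exactly the paper's convex-combination trick (form $\mu_t=(1-t)\nu+t\mu$ with $t=\eps_1/\eps_2$, exploit $\nu\in N\subseteq M$ and convexity of $M$ to stay in $M$, scale the supremum by $t\geq 0$, and use convexity of $\dif\phi\nu$ with $\di\nu\nu=0$ to bound the divergence). The only cosmetic difference is in how the $\inf_{g\in\G}\lb[M]gN$ case is handled: the paper observes that the result for $\lb[M]\G N$ specializes to singletons $\G=\set g$, and a pointwise infimum of non-negative, $0$-at-$0$, non-decreasing functions retains those properties; you instead invoke \cref{lem:lb-properties} for the (stronger) convexity of each $\lb[M]g\nu$ and then pass to the infimum. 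Both are valid and interchangeable.
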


\begin{proof}
	It is sufficient to prove the result for $\lb[M]\G N$, since the result for
	$\inf_{g\in \G}\lb[M] gN$ follows from the fact that taking infima preserves
	sign and monotonicity. Non-negativity and being $0$ at $0$ follow from
	non-negativity of $\di\mu\nu$ with $\di\nu\nu=0$.

	Fix $0\leq x\leq y$ and consider $\alpha>\lb[M]\G N(y)$, so that by definition
	there exist $\mu\in M$ and $\nu\in N$ with $\di\mu\nu<\alpha$ and
	$\sup_{g\in\G}\paren[\big]{\ex\mu g-\ex\nu g}=y$. Define $\mu'=x/y\cdot \mu
	+ (1-x/y)\cdot\nu$, which is a probability measure in $M$ since $\nu\in
	N\subseteq M$ and $M$ is convex. Then we have for every $g\in\G$ that
	$\ex{\mu'}g-\ex\nu g =x/y\cdot \paren[\big]{\ex\mu g-\ex\nu g}$, and thus
	$\sup_{g\in\G}\paren[\big]{\ex{\mu'}g-\ex\nu g}=x$. Furthermore, by convexity
	of $\dif\phi\nu$ we have $\di{\mu'}\nu\leq x/y\cdot \di\mu\nu
	+ (1-x/y)\cdot\di\nu\nu <x/y\cdot \alpha\leq \alpha$ since $x/y \leq 1$. This
	implies that $\lb[M]\G N(x)<\alpha$ and since $\alpha$ can be made arbitrarily
	close to $\lb[M]\G N(y)$ that $\lb[M]\G N(x)\leq \lb[M]\G N(y)$.
\end{proof}

\begin{remark}
	\label{rem:lb-convex}
	For convex sets of measures $M$ and $N$ and a single function $g\in
	L^1(\nu)$, a simple adaptation of \cref{lem:lb-properties} shows that
	$\lb[M]gN$ is convex, non-decreasing, and non-negative on the non-negative
	reals. \Cref{lem:ipm-lb-increasing} extends the latter two properties to the
	case of $\lb[M]\G N$ for a set of functions $\G$, and in fact its proof
	shows that $\lb[M]\G N(y)/y$ is non-decreasing, which is necessary for
	convexity. It would be interesting to characterize the set of $\G$, $N$,
	and $M$ for which $\lb[M]\G N$ and $\inf_{g\in\G}\lb[M]gN$ are in fact
	convex.
\end{remark}

\begin{proposition}
	\label{prop:optimal-lb-equivalence}
	Under the assumptions of \cref{lem:ipm-lb-increasing}, we have for
	every $\eps>0$ that
	\begin{displaymath}
		\lim_{\eps'\to\eps^-} \inf_{g\in\G} \lb[M]gN(\eps')
		\leq
		\lb[M]\G N(\eps)
		\leq
		\inf_{g\in\G}\lb[M]gN(\eps)
		\,,
	\end{displaymath}
	with equality if $\inf_{g\in\G}\lb[M]gN$ is lower semicontinuous
	(equivalently left-continuous) at $\eps$ or if $\G$ is compact in the
	initial topology on $\L^0$ induced by the maps $\ip{\mu-\nu}\cdotarg$ for
	$\mu\in M$ and $\nu\in N$.
\end{proposition}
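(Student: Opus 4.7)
My plan is to prove both inequalities via a single convex-interpolation trick that already underlies the proof of \cref{lem:ipm-lb-increasing}: if $(\nu,\mu,g)$ satisfies $\ex\mu g-\ex\nu g=\alpha>0$, then for any $\beta\in(0,\alpha]$ the measure $\mu'\eqdef(\beta/\alpha)\mu+(1-\beta/\alpha)\nu$ lies in $M$ by convexity and the containment $N\subseteq M$, it realizes the value $\beta$ for the same $g$ (and scales \emph{every} $\G$-mean deviation by the same factor $\beta/\alpha$), and by convexity of $\dif\phi\nu$ together with $\di\nu\nu=0$ it satisfies $\di{\mu'}\nu\leq(\beta/\alpha)\di\mu\nu\leq\di\mu\nu$. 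This lets me turn a feasible witness for one problem into a feasible witness for the other at no greater divergence.

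For the upper bound $\lb[M]\G N(\eps)\leq\inf_{g\in\G}\lb[M]gN(\eps)$, I fix $g\in\G$ and $(\nu,\mu)\in N\times M$ with $\ex\mu g-\ex\nu g=\eps$ (there is nothing to prove otherwise). The value $s\eqdef\sup_{h\in\G}(\ex\mu h-\ex\nu h)$ is at least $\eps>0$ since $g\in\G$, so the trick with $\alpha=s$, $\beta=\eps$ produces $\mu'\in M$ with $\sup_{h\in\G}(\ex{\mu'}h-\ex\nu h)=\eps$ and $\di{\mu'}\nu\leq\di\mu\nu$, yielding $\lb[M]\G N(\eps)\leq\di\mu\nu$. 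Taking the infimum first over admissible $(\nu,\mu)$ and then over $g\in\G$ gives the claim. For the lower bound, I fix any $\eps'\in(0,\eps)$ and $(\nu,\mu)\in N\times M$ attaining $\sup_{g\in\G}(\ex\mu g-\ex\nu g)=\eps$; by definition of supremum there exists $g\in\G$ with $\alpha\eqdef\ex\mu g-\ex\nu g>\eps'$, and the trick with $\beta=\eps'$ produces $\mu'\in M$ with $\ex{\mu'}g-\ex\nu g=\eps'$ and $\di{\mu'}\nu\leq\di\mu\nu$. Thus $\inf_{h\in\G}\lb[M]hN(\eps')\leq\di\mu\nu$, and taking the infimum over admissible $(\nu,\mu)$ yields $\inf_{g\in\G}\lb[M]gN(\eps')\leq\lb[M]\G N(\eps)$. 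Since $\inf_{g\in\G}\lb[M]gN$ is non-decreasing on $\R_{\geq 0}$ by \cref{lem:ipm-lb-increasing}, its left-limit at $\eps$ exists and letting $\eps'\to\eps^-$ closes the argument.

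For the equality statements, I would first note that a non-decreasing function is lower semicontinuous at a point iff it is left-continuous there (the parenthetical equivalence in the statement), so if $\inf_{g\in\G}\lb[M]gN$ is lsc at $\eps$ its left-limit coincides with its value at $\eps$ and the three quantities in the chain collapse to one. Alternatively, under the compactness hypothesis on $\G$, each linear form $g\mapsto\ex\mu g-\ex\nu g$ with $(\nu,\mu)\in N\times M$ is continuous on $\G$ by definition of the initial topology, so its supremum on $\G$ is attained at some $g^\star\in\G$; but then $\ex\mu{g^\star}-\ex\nu{g^\star}=\eps$ gives $\inf_{g\in\G}\lb[M]gN(\eps)\leq\lb[M]{g^\star}N(\eps)\leq\di\mu\nu$, and taking the infimum over $(\nu,\mu)$ yields $\inf_{g\in\G}\lb[M]gN(\eps)\leq\lb[M]\G N(\eps)$.

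I do not expect a serious obstacle here: the only subtle point is handling the case in which the supremum $\sup_{h\in\G}(\ex\mu h-\ex\nu h)$ is not attained for a given $(\nu,\mu)$, which is precisely why the left-limit (rather than the value at $\eps$) appears in the generic lower bound, and why either left-continuity of $\inf_{g\in\G}\lb[M]gN$ at $\eps$ or compactness of $\G$ in the specified topology is exactly the hypothesis that removes this slack and promotes the chain of inequalities to equalities.
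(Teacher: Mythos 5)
Your proof is correct, and its engine---the convex interpolation $\mu'=(\beta/\alpha)\mu+(1-\beta/\alpha)\nu$, which rescales every $\G$-mean deviation by $\beta/\alpha$ while not increasing $\di\mu\nu$---is the same one that powers the paper's argument, but the two proofs package it quite differently. The paper absorbs the interpolation once into the monotonicity statement of \cref{lem:ipm-lb-increasing}, and from there works purely with constraint-set containments (replacing $\sup_{g\in\G}(\ex\mu g-\ex\nu g)=\eps$ by $\geq\eps$, and $\exists g$ with $=\eps$ by $\geq\eps'$), a single $\inf\sup\geq\sup\inf$ step, and monotonicity, never re-invoking the interpolation. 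You instead apply the interpolation directly and constructively to transform a feasible witness for one infimum into a feasible witness for the other: once with $\alpha$ equal to the supremum over $\G$, so as to bring that supremum down to exactly $\eps$, and once with $\alpha=\ex\mu g-\ex\nu g$ for a near-maximizing $g$, so as to bring a single mean deviation down to $\eps'<\eps$. The two versions are comparable in length and rigor; yours makes the role of the convexity of $M$ and of $N\subseteq M$ fully explicit at the point of use, whereas the paper's chain of inequalities is more compact precisely because that work was outsourced to the lemma. One small point worth flagging is that your upper-bound step with $\alpha=s=\sup_{h\in\G}(\ex\mu h-\ex\nu h)$ tacitly assumes $s<\infty$; otherwise $\mu'=\nu$ and the scaled supremum is $0$, not $\eps$. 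The paper passes over the same possibility when it identifies $\inf\set{\di\mu\nu\given\sup_{g\in\G}(\ex\mu g-\ex\nu g)\geq\eps}$ with $\inf_{\eps'\geq\eps}\lb[M]\G N(\eps')$ for $\eps'$ ranging over the reals, so I would not call this a gap in your proof, but a line dispatching the case $s=+\infty$ (for instance by noting that it already forces $\inf_{g\in\G}\lb[M]gN\equiv 0$ via your lower-bound scaling applied to $g$'s with arbitrarily large mean deviation) would strengthen either write-up. Your treatment of the two equality criteria, via left-continuity of a non-decreasing function and via attainment of the supremum for compact $\G$, matches the paper's exactly.
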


\begingroup
\allowdisplaybreaks
\begin{proof}
	Under the assumptions of \cref{lem:ipm-lb-increasing} we have
	$\inf_{g\in\G}\lb[M]gN$ and $\lb[M]\G N$ are non-decreasing on
	the positive reals. Thus, we have
	\begin{align}
		\inf_{g\in\G}\lb[M]gN(\eps) &= \inf\set[\Big]{\di\mu\nu\given (\nu,\mu)\in N\times M\land
		\exists g\in\G, \ex\mu g-\ex\nu g=\eps}
		\nonumber\\
		&\geq
		\inf\set[\Big]{\di\mu\nu\given (\nu,\mu)\in N\times M\land
		\sup_{g\in\G}\paren[\big]{\ex\mu g-\ex\nu g}\geq\eps}
		\label{eqn:inf-geq-class}\\
		&=\inf_{\eps'\geq\eps} \lb[M]\G N(\eps') = \lb[M]\G N(\eps)
		\label{eqn:inf-nondec} \\
		&=
		\inf\set[\Big]{\di\mu\nu \given (\nu,\mu)\in N\times M\land
		\forall\eps'<\eps\;\exists g\in\G,\; \ex\mu g-\ex\nu g\geq\eps'}
		\nonumber\\
		&\geq\sup_{\eps'<\eps}\inf\set[\Big]{\di\mu\nu \given (\nu,\mu)\in N\times M\land
		\exists g\in\G, \ex\mu g-\ex\nu g\geq\eps'}
		\nonumber\\
		&=\sup_{\eps'<\eps}\inf\set[\Big]{\di\mu\nu \given (\nu,\mu,g)\in N\times M\times\G\land
		\ex\mu g-\ex\nu g\geq\eps'}\nonumber\\
		&=\sup_{\eps'<\eps}\inf_{g\in\G}\lb[M]gN(\eps')=
		\lim_{\eps'\to\eps^-} \inf_{g\in\G} \lb[M]gN(\eps')
		\label{eqn:inf-lb-nondecreasing}
	\end{align}
	where \cref{eqn:inf-geq-class} is since if there is $g\in\G$ with $\ex\mu
	g-\ex\nu g=\eps$ then $\sup_{g\in\G}\ex\mu g-\ex\nu g\geq\eps$,
	\cref{eqn:inf-nondec} is because $\lb[M]\G N$ is non-decreasing, and
	\cref{eqn:inf-lb-nondecreasing} is because $\inf_{g\in G}\lb[M]gN$ is
	non-decreasing.

		For the equality claims, since $\inf_{g\in\G} \lb[M]gN$ is
		non-decreasing, lower semicontinuity at $\eps$ is equivalent to
		left-continuity, and $\lim_{\eps'\to\eps^-}\inf_{g\in\G}\lb[M]gN(\eps')
		= \inf_{g\in G}\lb[M]gN(\eps)$ in this case. If $\G$ is compact in the
claimed topology, then $\sup_{g\in\G}\paren[\big]{\ex\mu g-\ex\nu g}$ is the
supremum of the continuous function $\ip{\mu-\nu}\cdotarg$ on the compact set
$\G$, so that $\sup_{g\in\G}\paren[\big]{\ex\mu g-\ex\nu g}
=\max_{g\in\G}\paren[\big]{\ex\mu g-\ex\nu g}$ and thus
\cref{eqn:inf-geq-class} is an equality.
\end{proof}
\endgroup
\begin{corollary}
	\label{cor:optimal-lb-equivalence}
  Under the assumptions of \cref{lem:ipm-lb-increasing} we have that
	$\inf_{g\in\G}\lb[M]gN$ and $\lb[M]\G N$ are non-increasing on the
  non-positive reals, non-decreasing on the non-negative reals, $0$ at
  $0$, and differ only on their (at most countably many) discontinuity points,
  at which $\inf_{g\in\G}\lb[M]gN\geq\lb[M]\G N$. In particular, they
  have the same convex conjugate and biconjugate.
\end{corollary}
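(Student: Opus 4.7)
The plan is to reduce every assertion to the non-negative case via the symmetry built into \cref{defn:lb-function}, and then extract what remains from the sandwich in \cref{prop:optimal-lb-equivalence}. I first establish monotonicity and the value at $0$. For non-negative arguments this is exactly \cref{lem:ipm-lb-increasing}. For non-positive arguments I use that, by \cref{defn:lb-function}, $\lb[M]\G N(\eps)=\lb[M]{-\G}N(-\eps)$ and, by substituting $h=-g$ in \cref{defn:lb-function-single-g}, $\lb[M]gN(\eps)=\lb[M]{-g}N(-\eps)$; this converts ``non-decreasing on $[0,\infty)$ for the class $-\G$'' into ``non-increasing on $(-\infty,0]$ for the class $\G$'' for both functions, and vanishing at $0$ is immediate.

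Next I would locate the agreement set. Since each function is monotone on each of $(-\infty,0]$ and $[0,\infty)$, each has at most countably many discontinuities, and the union is still countable. Applying \cref{prop:optimal-lb-equivalence} to $\G$ for $\eps>0$, and to $-\G$ at $-\eps>0$ for $\eps<0$ (using the identities above), gives the sandwich
\[
\lim_{\eps'\to\eps^-}\inf_{g\in\G}\lb[M]gN(\eps')\le \lb[M]\G N(\eps)\le \inf_{g\in\G}\lb[M]gN(\eps)\qquad(\eps>0),
\]
and the analogous statement with $\eps^+$ replacing $\eps^-$ for $\eps<0$. At every point where $\inf_{g\in\G}\lb[M]gN$ is one-sided continuous on the side facing $0$---which, by monotonicity, holds off a countable set---the outer terms coincide and force $\lb[M]\G N(\eps)=\inf_{g\in\G}\lb[M]gN(\eps)$; at the remaining exceptional points the rightmost inequality still gives $\inf_{g\in\G}\lb[M]gN\ge \lb[M]\G N$.

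Finally, I would deduce equality of the conjugates. The pointwise inequality $\lb[M]\G N\le \inf_{g\in\G}\lb[M]gN$ from the previous step yields $\con{\lb[M]\G N}\ge \con{\inf_{g\in\G}\lb[M]gN}$. For the reverse, writing $h\eqdef\inf_{g\in\G}\lb[M]gN$ and fixing $t\in\R$ and $\eps>0$, I use the sandwich to bound
\[
t\eps-\lb[M]\G N(\eps)\le t\eps-\lim_{\eps'\to\eps^-}h(\eps')=\lim_{\eps'\to\eps^-}\bigl(t\eps'-h(\eps')\bigr)\le \con h(t),
\]
where the middle equality uses continuity of $\eps'\mapsto t\eps'$ (and is vacuous when the left limit is infinite, as then the leftmost term is $-\infty$). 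The cases $\eps<0$ (with $\eps^+$ in place of $\eps^-$) and $\eps=0$ (trivial, since both functions vanish and $\con h(t)\ge 0$) are handled analogously, so taking the supremum over $\eps$ gives $\con{\lb[M]\G N}(t)\le \con h(t)$; a second conjugation then yields equality of the biconjugates.

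The only real subtlety lies in the bookkeeping of one-sided limits: the sandwich in \cref{prop:optimal-lb-equivalence} involves a \emph{left} limit because of monotonicity on $[0,\infty)$, and under the $\eps\leftrightarrow -\eps$ symmetry it becomes a \emph{right} limit on $(-\infty,0]$, so tracking which side faces $0$ is what makes the displayed chain in the last step correct on both sides of the origin. Everything else is elementary manipulation of monotone limits and the defining $\sup$ of the Fenchel conjugate.
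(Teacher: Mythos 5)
Your proof is correct and follows essentially the same route as the paper's: apply the symmetry $\lb[M]\G N(\eps)=\lb[M]{-\G}N(-\eps)$ to transfer the sandwich of \cref{prop:optimal-lb-equivalence} to the negative reals, then deduce equality of conjugates from the fact that both functions are squeezed between $\inf_{g\in\G}\lb[M]gN$ and its one-sided limits. The only difference is one of packaging: the paper observes in one line that the two functions have the same lower-semicontinuous regularization and appeals to the standard fact that the Fenchel conjugate depends only on the lsc hull, whereas you unpack that abstract step into an explicit two-inequality computation with the defining supremum, taking care of the one-sided limits and the infinite-limit degenerate case. Both are sound; yours is more elementary and self-contained, the paper's is more economical.
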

\begin{proof}
  Applying \cref{prop:optimal-lb-equivalence} to $\lb[M]{-\G}N(-\eps)$ for
  $\eps < 0$ gives the claim for the negative reals. Since the functions share the same lsc regularization (the
  largest lsc function lower bounding them pointwise), they also share their
  convex conjugate and biconjugate.
\end{proof}
\begin{remark}
  \Cref{cor:optimal-lb-equivalence} is key because, as we will see in
  \cref{sec:ipm-lb-comp}, it lets us reduce the problem of computing
  the optimal lower bound on an IPM to the case of a single function $g$
  considered in \cref{sec:singleg}.
\end{remark}
\begin{remark}
\Cref{cor:optimal-lb-equivalence} also implies that $\inf_{g\in
\G}\lb[M] gN$ and $\lb[M]\G N$ have the same \emph{(generalized) inverse}. This
inverse consists simply of the best bounds on the mean deviation, that is
the largest non-positive function $V$ and smallest non-negative function $U$ such that $V\paren[\big]{\di\mu\nu}
\leq\ex\mu g-\ex\nu g\leq U\paren[\big]{\di\mu\nu}$ for all $(\mu,\nu,g) \in M\times N\times\G$, or
equivalently such that $\ipm\mu\nu\leq U\paren[\big]{\di\mu\nu}$ for all
$(\mu,\nu)\in M\times N$ when $\G$ is closed under negation. In this language, any discontinuity of
$\inf_{g\in\G}\lb[M]gN$ corresponds to an interval on which $U$ is constant,
i.e.~in which changing the value of the divergence does not change the largest
possible value of $\ipm\mu\nu$.
\end{remark}

We conclude this section with two lemmas showing how the lower bound is
preserved under natural transformations of the sets of functions $\G$ or
measures $M,N$.

\begin{lemma}
\label{lem:ipm-preserved-closed-convex}
	For every set $\G\subseteq\L^0(\Omega)$ and pair of measures
	$\mu,\nu\in X_\G$, we have that
	\[
	\sup_{g\in\G}\paren[\big]{\ex\mu g-\ex\nu g}
		=\sup_{g\in\overline{\convhull\G}}\paren[\big]{\ex\mu g-\ex\nu g}
	\]
	where $\overline{\convhull\G}$ is the $\sigma(Y_\G,X_\G)$-closed
	convex hull of $\G$.
\end{lemma}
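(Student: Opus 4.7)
The plan is a short two-line argument using the weak topology $\sigma(Y_\G, X_\G)$ under which the pairing was set up in \cref{defn:G-dual-pair}, combined with the elementary fact that supremum of a linear functional is unaffected by taking convex hulls and the fact that supremum of a continuous functional is unaffected by taking closures.

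First, the inequality $\sup_{g\in\G}\paren[\big]{\ex\mu g-\ex\nu g} \leq \sup_{g\in\overline{\convhull\G}}\paren[\big]{\ex\mu g-\ex\nu g}$ is immediate since $\G\subseteq\overline{\convhull\G}$. For the reverse inequality, note that since $X_\G$ is a vector space, $\mu-\nu\in X_\G$, and so the linear functional $\ell:g\mapsto \ip{\mu-\nu}g = \ex\mu g - \ex\nu g$ is, by definition of the weak topology $\sigma(Y_\G, X_\G)$, continuous on $Y_\G$. Then I would argue in two steps: $(a)$ $\sup_{g\in \convhull\G}\ell(g) = \sup_{g\in\G}\ell(g)$ because any element of $\convhull\G$ is a finite convex combination $\sum_i \alpha_i g_i$ with $g_i\in\G$, and by linearity $\ell(\sum_i\alpha_i g_i) = \sum_i\alpha_i \ell(g_i) \leq \sup_{g\in\G}\ell(g)$; $(b)$ $\sup_{g\in\overline{\convhull\G}}\ell(g) = \sup_{g\in\convhull\G}\ell(g)$ because for any $g\in\overline{\convhull\G}$ there is a net $(g_\alpha)\subseteq \convhull\G$ with $g_\alpha\to g$, hence $\ell(g)=\lim_\alpha\ell(g_\alpha)\leq \sup_{g'\in\convhull\G}\ell(g')$ by continuity of $\ell$.

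Chaining $(a)$ and $(b)$ yields the desired equality. There is no real obstacle: the entire argument is a standard application of the two facts above, and the only thing to check is that $\mu-\nu\in X_\G$ (immediate) and that the pairing $(x,y)\mapsto\ip xy$ is continuous in its second argument under $\sigma(Y_\G, X_\G)$ (by construction of the weak topology, as recalled in \cref{ex:weak}).
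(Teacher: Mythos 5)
Your proof is correct and uses essentially the same ingredients as the paper's: that $\mu-\nu\in X_\G$ and hence $g\mapsto\ip{\mu-\nu}g$ is a $\sigma(Y_\G,X_\G)$-continuous linear functional. The paper packages steps $(a)$ and $(b)$ into a single observation --- the halfspace $\set{h\in Y_\G\given \ip{\mu-\nu}h\leq \sup_{g\in\G}\ip{\mu-\nu}g}$ is closed and convex and contains $\G$, hence contains $\overline{\convhull\G}$ --- but your two-step decomposition (linearity handles the convex hull, continuity handles the closure) is the same argument presented more explicitly.
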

\begin{proof}
	We have $\G\subseteq\overline{\convhull\G}$, and furthermore since
	$\ip{\mu-\nu}\cdotarg$ is a $\sigma(Y_\G,X_\G)$-continuous linear
	function we have that the set $\set[\big]{h\in Y_\G\given
	\ip{\mu-\nu}h \leq \sup_{g\in\G}\paren*{\ex\mu g-\ex\nu g}}$ is convex,
	$\sigma(Y_\G,X_\G)$-closed, and contains $\G$, and so also
	contains $\overline{\convhull\G}$.
\end{proof}

\begin{lemma}
	\label{lem:lb-pushforward}
	For every $g\in\L^0(\Omega)$, we have $\lbsing[X^1_g]g{X^1_g}
	=\lb[\pushforward{X^1_g}g]{\ident_{\R}}{\pushforward{X^1_g}g}$ where
	$\pushforward{X^1_g}g=\set[\big]{\pushforward\nu g\given\nu\in X^1_g}$ is
	the set of probability measures on $\R$ obtained by pushing forward through
	$g$ the probability measures $\nu\in X^1_g$.
	Furthermore, for every $\nu\in\M^1$ and $g\in L^1(\nu)$ we have that
	$\lbsing[X^1_g] g\nu=\lb[\pushforward{X^1_g}g]{\ident_{\R}}{\pushforward\nu g}$.
\end{lemma}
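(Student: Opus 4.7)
The plan is to prove both equalities via the data-processing inequality (DPI) for $\phi$-divergences in one direction, and to saturate it by an explicit lifting construction on $\Omega$ in the other. Recall that DPI asserts $\di[\phi]{\pushforward\alpha T}{\pushforward\beta T}\leq\di\alpha\beta$ for any measurable map $T$, which follows from Jensen's inequality applied to the convex $\phi$ (equivalently, from the joint convexity of $\di\cdotarg\cdotarg$ expressed via conditional expectation against $\sigma(T)$).

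For the direction $\lbsing[X^1_g]g{X^1_g}\geq\lb[\pushforward{X^1_g}g]{\ident_\R}{\pushforward{X^1_g}g}$, given $(\mu,\nu)\in(X^1_g)^2$ with $\ex\mu g-\ex\nu g=\eps$, write $\hat\mu\eqdef\pushforward\mu g$ and $\hat\nu\eqdef\pushforward\nu g$. Change of variables gives $\ex{\hat\mu}{\ident_\R}-\ex{\hat\nu}{\ident_\R}=\eps$, so $(\hat\mu,\hat\nu)\in(\pushforward{X^1_g}g)^2$ is feasible for the right-hand problem, and DPI yields $\di{\hat\mu}{\hat\nu}\leq\di\mu\nu$. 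Taking infima gives the inequality.

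For the reverse direction, given feasible $(\hat\mu,\hat\nu)\in(\pushforward{X^1_g}g)^2$, pick $\nu_0,\mu_0\in X^1_g$ with $\pushforward{\nu_0}g=\hat\nu$ and $\pushforward{\mu_0}g=\hat\mu$, set $\nu\eqdef\nu_0$, and construct $\mu$ realizing equality in DPI. Let $\hat\mu=\hat\mu_c+\hat\mu_s$ be the Lebesgue decomposition of $\hat\mu$ with respect to $\hat\nu$, fix a Borel set $A\subseteq\R$ with $\hat\nu(A)=0=\hat\mu_s(\R\setminus A)$, let $h\eqdef d\hat\mu_c/d\hat\nu$, and define
\[
  \mu\eqdef(h\circ g)\cdot\nu_0+\mu_0\big\vert_{g^{-1}(A)}\,.
\]
Routine verifications give $\mu\in X^1_g$, $\pushforward\mu g=\hat\mu_c+\hat\mu_s=\hat\mu$ (so the mean constraint holds), that the two summands form the Lebesgue decomposition of $\mu$ with respect to $\nu_0$ since $\nu_0(g^{-1}(A))=\hat\nu(A)=0$, and crucially
\[
  \di\mu{\nu_0}=\ex{\nu_0}{\phi(h\circ g)}+\phi'(\infty)\cdot\hat\mu_s(\R)=\ex{\hat\nu}{\phi\circ h}+\phi'(\infty)\cdot\hat\mu_s(\R)=\di{\hat\mu}{\hat\nu}
\]
by change of variables, completing the reverse inequality.

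The main obstacle is the singular piece of the construction: saturating DPI demands lifting $\hat\mu_s$ to a measure on $\Omega$ that pushes forward to $\hat\mu_s$ while remaining $\nu_0$-singular. This is exactly where the hypothesis $\hat\mu\in\pushforward{X^1_g}g$ is used—the restriction $\mu_0\vert_{g^{-1}(A)}$ pushes forward to $(\pushforward{\mu_0}g)\vert_A=\hat\mu\vert_A=\hat\mu_s$ and is $\nu_0$-singular because $\nu_0(g^{-1}(A))=0$. The ``furthermore'' claim is obtained by the same strategy with $\nu$ fixed throughout: $(\geq)$ is immediate by DPI applied to each candidate $\mu\in X^1_g$, and $(\leq)$ uses the analogous lifting with $\nu_0$ replaced by the fixed $\nu$.
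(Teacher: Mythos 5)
Your proof is correct, and while it reaches the same conclusion it takes a genuinely different route from the paper's argument. The paper works with a single dominating measure $\xi\eqdef\frac12(\mu_0+\nu_0)$ and builds $\mu,\nu$ by replacing $\frac{d\mu_0}{d\xi}$ and $\frac{d\nu_0}{d\xi}$ with their conditional expectations given $\sigma(g)$; the symmetric expression of the divergence (\cref{rmrk:divergence-symmetric-defn}) in terms of the jointly convex perspective function then yields $\di\mu\nu=\di{\hat\mu}{\hat\nu}$ by change of variables and $\di\mu\nu\leq\di{\mu_0}{\nu_0}$ by Jensen, with singular components handled uniformly inside the $\xi$-densities. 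You instead prove the two inequalities separately---data processing for one direction, an explicit saturating lift for the other---and your lift deliberately leaves $\nu$ untouched, performing the Lebesgue decomposition of $\hat\mu$ on $\R$ first and then lifting its absolutely continuous part as $(h\circ g)\cdot\nu_0$ and its $\hat\nu$-singular part as $\mu_0|_{g^{-1}(A)}$. Keeping $\nu$ fixed makes the ``furthermore'' claim a verbatim repetition and in particular covers the case $\hat\mu\not\ll\hat\nu$; the paper's remark (take $\xi=\nu_0=\nu$ when $\mu\ll\nu$) only addresses the absolutely continuous sub-case of that statement, since when $\mu_0\not\ll\nu$ the conditional-expectation construction would in general alter $\nu$. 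The tradeoff is that the paper's single construction is more compact and does not need the data-processing inequality as a separate ingredient, whereas your explicit handling of the two Lebesgue pieces makes the role of the singular component more transparent and yields the ``furthermore'' claim for free.
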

\begin{proof}
	We first prove the main claim.
	As in \cref{ex:ipm-rv}, we have for every $\mu,\nu\in X^1_g$ that
	$\ex\mu g-\ex\nu g
		=\ex*{\pushforward\mu g}{\ident_{\R}}
			-\ex*{\pushforward\nu g}{\ident_{\R}}$,
	so it suffices to show for every $\mu_0,\nu_0\in X^1_g$ the existence of
	$\mu,\nu\in X^1_g$ with $\pushforward\mu g=\pushforward{\mu_0}g$,
	$\pushforward\nu g=\pushforward{\nu_0}g$, and
	$\di{\pushforward{\mu_0}g}{\pushforward{\nu_0}g}=\di\mu\nu
	\leq\di{\mu_0}{\nu_0}$.

	For this, write $\xi=\frac12(\mu_0+\nu_0)$ so that $\mu_0,\nu_0\ll\xi$ and
	$\xi\in X^1_g$, and define the measures $\mu,\nu\in \M^1_c(\xi)$ by
	$\frac{d\mu}{d\xi}=\frac{d\pushforward{\mu_0}g}{d\pushforward\xi g}\circ g$
	and
	$\frac{d\nu}{d\xi}=\frac{d\pushforward{\nu_0}g}{d\pushforward\xi g}\circ g$
	(note that these are just the conditional expectations of
	$\frac{d\mu_0}{d\xi}$ and $\frac{d\nu_0}{d\xi}$ with respect to $g$).
	It remains to show that $\mu$ and $\nu$ have the desired properties,
	for which we first note that for every (Borel) measurable function
	$h:\R^3\to\R\cup\set{+\infty}$ we have
	\begin{align*}
		\ex*\xi{h\paren*{\frac{d\mu}{d\xi},\frac{d\nu}{d\xi},g}}
		&=
		\ex*\xi{h\paren*{\frac{d\pushforward{\mu_0}g}{d\pushforward\xi g}\circ g,\frac{d\pushforward{\nu_0}g}{d\pushforward\xi g}\circ g,g}}
		\\
		&=
		\ex*{\pushforward\xi g}{h\paren*{\frac{d\pushforward{\mu_0}g}{d\pushforward\xi g},\frac{d\pushforward{\nu_0}g}{d\pushforward\xi g},\ident_{\R}}}\,.
	\end{align*}
	Then taking $h(x,y,z)=x$ we get $\ex\mu\Omega=\ex\mu{\ind_\Omega}
		=\ex{\pushforward{\mu_0}g}{\ind_{\R}}=\ex{\mu_0}{\ind_\Omega}=1$,
		and similarly by taking $h(x,y,z)=y$ we get $\ex\nu\Omega=1$.
	Taking $h(x,y,z)=x\cdot \abs{z}$ we get $\ex\mu{\abs g}
		=\ex{\mu_0}{\abs g}<\infty$ so that $\mu\in X^1_g$,
		and similarly by taking $h(x,y,z)=y\cdot\abs z$ we get
		$\ex\nu{\abs g}=\ex{\nu_0}{\abs g}<\infty$ and $\nu\in X^1_g$.
	Finally, as in \cref{rmrk:divergence-symmetric-defn}, taking
	$h(x,y,z)=y\cdot \phi(x/y)$ if $y\neq 0$ and $h(x,y,z)=x\cdot \phi'(\infty)$
	if $y=0$ gives $\di\mu\nu=\di{\pushforward{\mu_0}g}{\pushforward{\nu_0}g}$,
	and furthermore Jensen's inequality implies that
	$\di\mu\nu\leq\di{\mu_0}{\nu_0}$ since $h$ is convex.

	The furthermore claim is analogous after noting that since when
	$\mu\ll\nu$ and $g\in L^1(\nu)$ we can take $\xi=\nu_0=\nu$.
\end{proof}

\subsection{Derivation of the bound}
\label{sec:ipm-lb-comp}

In this section we give our main results computing optimal lower bounds on
a $\phi$-divergence given an integral probability metric. Note that from
\cref{sec:optimal-lb-def}, the optimal lower bound is simply the infimum of the
optimal lower bound $\lb g\nu$ for each $g\in\G$ and $\nu\in N$. Since
$\con{\lb g\nu}=\cgf g\nu$ by \cref{prop:lb-conjugate}, and given the
order-reversing property of convex conjugacy, it is natural to consider the
best \emph{upper bound} on $\cgf g\nu$ which holds \emph{uniformly} over all
$g\in \G$ and $\nu\in N$. Formally, we have the following definition.

\begin{definition}
	\label{defn:cgfset}
	Let $\Xi$ be a $\sigma$-ideal, $\G\subseteq L^0(\Xi)$ be a set of
	measurable functions, and $N\subseteq \M^1_c(\Xi)$ be a set of measures. We
	write $\cgfxi\G N\Xi(t)\eqdef \sup\set{\cgfxi g\nu\Xi(t)\given
		(g,\nu)\in\G\times N}$ and $\cgf\G N(t)\eqdef \sup\set{\cgf 
		g\nu(t)\given (g,\nu)\in\G\times N}$.
\end{definition}

Note that $\cgfxi\G N\Xi$ is convex and lower semicontinuous as a supremum of
convex and lower semicontinuous functions. Furthermore, as alluded to before
\cref{defn:cgfset}, we expect $\cgfxi \G N\Xi$ to be equal to the conjugate of
the optimal lower bound functions. This is stated formally in the following
\lcnamecref{thm:ipm-lb-arbitrary-N} which also gives a sufficient condition
under which the optimal lower bound functions are convex and lower
semicontinuous (see also \cref{rem:optimal-lb-lsc} below).

\begin{theorem}
	\label{thm:ipm-lb-arbitrary-N}
	Let $(X,Y)$ be a dual pair with $X\subseteq \M$ and $Y\subseteq L^0(\Xi)$
	where $\Xi= \set{A\in\A\given \forall \mu\in X, \abs\mu(A) = 0}$.  Consider
	$\G\subseteq Y$ and $N\subseteq X^1\eqdef X\cap\M^1$ and assume that
	$(X,Y)$ is decomposable with respect to all measures in $N$. Then we have
	\begin{equation}
		\label{eq:lb-conjugate}
		\con{\lb[\,X^1] \G N}
	=\con{\paren[\bigg]{\inf_{g\in\G}\lb[\,X^1] gN}}
	= \cgfxi\G N\Xi\,.
	\end{equation}
\end{theorem}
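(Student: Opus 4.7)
The plan is to derive \eqref{eq:lb-conjugate} by combining two earlier results. The first equality, between $\con{\lb[\,X^1]\G N}$ and $\con{\paren*{\inf_{g\in\G}\lb[\,X^1]gN}}$, is precisely the content of Corollary \ref{cor:optimal-lb-equivalence}; its hypotheses (from Lemma \ref{lem:ipm-lb-increasing}) are easily checked since $X^1$ plays the role of $M$ and is convex as the intersection of the vector space $X$ with the convex set $\M^1$, while the inclusion $N\subseteq X^1$ is given.

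For the second equality, I first unfold the infimum. Taking $\G=\{g\}$ in Definition \ref{defn:lb-function} gives $\lb[\,X^1]gN(\eps)=\inf_{\nu\in N}\lb[\,X^1]g\nu(\eps)$, so that $\inf_{g\in\G}\lb[\,X^1]gN=\inf_{(g,\nu)\in\G\times N}\lb[\,X^1]g\nu$. Swapping conjugation with the pointwise infimum, using the elementary identity that the convex conjugate of a pointwise infimum equals the pointwise supremum of the conjugates (an immediate consequence of the definition of the conjugate), yields
\[
\con{\paren[\bigg]{\inf_{g\in\G}\lb[\,X^1]gN}}(t)=\sup_{(g,\nu)\in\G\times N}\con{\lb[\,X^1]g\nu}(t)\,.
\]
For each fixed $\nu\in N$, the pair $(X,Y)$ is $\nu$-decomposable by hypothesis and $\G\subseteq Y$, so Proposition \ref{prop:lb-conjugate} applies and identifies $\con{\lb[\,X^1]g\nu}(t)$ with $\cgfxi g\nu\Xi(t)$. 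This identification requires rewriting the optimand of \eqref{eqn:lbdual}, expressed in terms of $\con\phi$, in terms of $\con\psi$ via the relation $\con\psi(x)=\con\phi(x)-x$ from Lemma \ref{lem:con-psi-properties}, which converts $\ex*\nu{\con\phi(tg+\lambda)}-t\cdot\ex\nu g-\lambda$ into $\ex*\nu{\con\psi(tg+\lambda)}$. The conclusion then follows from Definition \ref{defn:cgfset}.

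I do not anticipate any substantial obstacle: the proof amounts to unpacking definitions and invoking earlier results in the right order. The only mild subtlety is the reconciliation between the $\con\phi$ formulation of Proposition \ref{prop:lb-conjugate} and the $\con\psi$ formulation of the cumulant generating function, which is exactly the transition from \eqref{eqn:lbdual} to \eqref{eqn:lbdual-psi-cont} underlying Theorem \ref{thm:equiv}.
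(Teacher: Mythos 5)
Your proof is correct and follows essentially the same route as the paper's: the first equality is \cref{cor:optimal-lb-equivalence}, and the second is obtained by rewriting $\inf_{g\in\G}\lb[\,X^1]gN$ as a double infimum over $(g,\nu)\in\G\times N$, exchanging conjugation with the infimum, applying \cref{prop:lb-conjugate} pointwise, and invoking \cref{defn:cgfset}. The only extra content you supply is the explicit verification of the hypotheses of \cref{lem:ipm-lb-increasing} and the $\con\phi$-to-$\con\psi$ translation $\con\psi(x)=\con\phi(x)-x$, both of which the paper leaves implicit (the latter is what underlies the identification $\con{\lb[\,X^1]g\nu}=\cgfxi g\nu\Xi$ asserted in \cref{thm:equiv}).
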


\begin{proof}
	The first equality in \eqref{eq:lb-conjugate} follows from
  \cref{cor:optimal-lb-equivalence}. For the second
	equality, we have
	\begin{displaymath}
	\con{\paren[\bigg]{\inf_{g\in\G}\lb[\,X^1] gN}}
=\con{\paren[\Bigg]{\inf_{\substack{g\in\G\\\nu\in N}}\lb[\,X^1]g\nu}}
=\sup_{\substack{g\in \G\\\nu\in N}}\con{\lb[\,X^1]g\nu}
=\sup_{\substack{g\in \G\\\nu\in N}}\cgfxi g\nu\Xi
=\cgfxi \G N\Xi\,,
	\end{displaymath}
	where we used successively the definition of $\lb[\,X^1]gN$, 
	the fact that $\con{\paren{\inf_{\alpha\in A} f_\alpha}}
	= \sup_{\alpha\in A} \con f_\alpha$ for any collection
	$(f_\alpha)_{\alpha\in A}$ of functions, \cref{prop:lb-conjugate}, and
	\cref{defn:cgfset}.
\end{proof}

\begin{remark}
	When starting from a set of function $\G\subseteq L^0(\Xi)$ for some
	$\sigma$-ideal $\Xi$, a natural pair to which \cref{thm:ipm-lb-arbitrary-N}
	can be applied is the pair $(X_\G, Y_\G)$ from \cref{defn:G-dual-pair}.
\end{remark}

\begin{remark}
\label{rem:optimal-lb-lsc}
	\Cref{thm:ipm-lb-arbitrary-N} computes the conjugate
	of the optimal lower bound functions, but if this function is not convex or lsc
	and so does not coincide with its biconjugate,
	it is also useful to discuss what we can be said about
	$\lb[\,X^1]\G N$ or $\inf_{g\in\G}\lb[\,X^1] gN$ themselves.

	First note that for all $g\in\G$ and $\nu\in N$, $\lb[\,X^1] g\nu$ is
	convex and non-decreasing over the non-negative reals by
	\cref{lem:lb-properties} and under the assumptions of
	\cref{thm:ipm-lb-arbitrary-N}, $\bicon{\lb[\,X^1] g\nu} = \con{\cgfxi
	g\nu\Xi}$ by \cref{prop:lb-conjugate}. Thus, we can apply
	\cref{lem:bicon-inf} and obtain for all $\eps$ that
	\begin{displaymath}
		\liminf_{\eps'\to\eps}\,\inf_{g\in\G} \lb[\,X^1]gN(\eps')
		\leq
	\inf_{\substack{g\in \G\\\nu\in N}}\con{\cgfxi g\nu\Xi}(\eps)
	\leq \inf_{g\in\G}\lb[\,X^1]gN(\eps)\,.
\end{displaymath}
Thus the function $\inf_{(g,\nu)\in \G\times N}\con{\cgfxi g\nu\Xi}$ allows us
to recover the function $\inf_{g\in\G}\lb[\,X^1] gN$ up to its points of
discontinuity which are countable by monotonicity. Similarly, by
\cref{cor:optimal-lb-equivalence} we also recover $\lb[\,X^1]\G N$ up to
its countably many points of discontinuity.

	More can be said under additional assumptions. If $\lb[\,X^1] g\nu$ is
	lower semicontinuous for each $g\in\G$ and $\nu\in N$ (e.g. when
	$\phi'(\infty)=\infty$, $X\subseteq \M_c(\nu)$, and
	$\G\subseteq\ssubexp(\nu)$ for all $\nu\in N$ by \cref{cor:L-lsc}), then
	\begin{displaymath}
		\inf_{g\in\G}\lb[\,X^1] gN(\eps)
		=\inf_{\substack{g\in \G\\\nu\in N}}\lb[\,X^1] g\nu(\eps)
		=\inf_{\substack{g\in \G\\\nu\in N}}\con{\cgfxi g\nu\Xi}(\eps)\,,
	\end{displaymath}
	Furthermore, if we also know that the function $\inf_{(g,\nu)\in\G\times
	N}\con{\cgfxi g\nu\Xi}$ is itself convex and lsc, then
	\begin{displaymath}
		\inf_{g\in\G}\lb[\,X^1]gN(\eps) = \lb[\,X^1]\G N(\eps)
		= \con{\cgfxi\G N\Xi}(\eps)\,.
	\end{displaymath}
\end{remark}

Similarly to \cref{cor:equiv-operational}, we give in the following
\lcnamecref{cor:optimal-lb-operational} an ``operational'' restatement of
\cref{thm:ipm-lb-arbitrary-N} emphasizing the duality between upper bounds on
$\cgf\G N$ and lower bounds on $\di\mu\nu$ in terms of $\ipm\mu\nu$.

\begin{corollary}
	\label{cor:optimal-lb-operational}
	Under the assumptions of \cref{thm:ipm-lb-arbitrary-N}, for every convex
	and lower semicontinuous function $L:\R_{\geq
	0}\to\eR$, the following are equivalent:
	\begin{enum}
		\item $\di\mu\nu\geq L(\ipm\mu\nu)$ for all $\nu\in N$ and
			$\mu\in X^1$.
		\item $\di\mu\nu\geq L(\abs{\ex\mu g -\ex\nu g})$ for all $g\in\G$,
			$\nu\in N$, and $\mu\in X^1$.
		\item $\cgfxi g\nu\Xi(t)\leq \con L(\abs t)$ for all $t\in\R$,
			$g\in\G$, and $\nu\in N$.
	\end{enum}
\end{corollary}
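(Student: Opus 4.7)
The plan is to prove the three-way equivalence by two reductions: (ii) $\Leftrightarrow$ (iii) via a pointwise application of \cref{cor:equiv-operational} (the single-function analogue), and (i) $\Leftrightarrow$ (ii) by exploiting that $\ipm\mu\nu = \sup_{g\in\G}\abs{\ex\mu g - \ex\nu g}$.

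For (ii) $\Leftrightarrow$ (iii), both statements are universally quantified over pairs $(g,\nu) \in \G \times N$, so it suffices to fix such a pair. I introduce the symmetrization $\widetilde L \colon \R \to \eR_{\geq 0}$ defined by $\widetilde L(\eps) \eqdef L(\abs\eps)$, which is convex and lower semicontinuous on $\R$ (taking $L$ non-decreasing without loss of generality; see below). A direct computation splitting the supremum in $\con{\widetilde L}$ over the sign of $\eps$ yields $\con{\widetilde L}(t) = \con L(\abs t)$. Applying \cref{cor:equiv-operational} to $\widetilde L$ then gives that $\di\mu\nu \geq L(\abs{\ex\mu g - \ex\nu g})$ for all $\mu \in X^1_g$ is equivalent to $\cgfxi g\nu\Xi \leq \con L(\abs\cdotarg)$, which is exactly the pointwise form of the desired equivalence.

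For (i) $\Leftrightarrow$ (ii), the direction (i) $\Rightarrow$ (ii) is immediate from $\abs{\ex\mu g - \ex\nu g} \leq \ipm\mu\nu$ combined with the monotonicity of $L$. For the converse, fix $\mu$ and $\nu$ and take the supremum of (ii) over $g \in \G$; using that $L$ is non-decreasing and lsc (hence left-continuous), one can commute $L$ past the supremum to obtain $\di\mu\nu \geq \sup_{g\in\G} L(\abs{\ex\mu g - \ex\nu g}) = L(\ipm\mu\nu)$, which is (i).

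The only technical point is the use that $L$ is non-decreasing on $\R_{\geq 0}$, which is needed both for $\widetilde L$ to be convex and to commute $L$ with suprema. This is without loss of generality, as the canonical optimal lower bound $\lb[\,X^1]\G N$ is itself non-decreasing on $\R_{\geq 0}$ by \cref{lem:ipm-lb-increasing}, so any $L$ relevant to (i), (ii), or (iii) can be replaced by its largest non-decreasing convex lsc minorant without affecting the validity of the conditions.
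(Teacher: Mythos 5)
Your route—symmetrizing $L$ and arguing pointwise over $(g,\nu)$—differs from the paper's, which instead symmetrizes the function class by setting $\G'=\G\cup(-\G)$ and applies \cref{thm:ipm-lb-arbitrary-N} to $\G'$. The strategy is viable, but two steps are incorrect as written. First, \cref{cor:equiv-operational} cannot serve as the ``pointwise form'' of (ii) $\Leftrightarrow$ (iii): it is stated specifically for the dual pair $(X_g,Y_g)$ from \cref{defn:G-dual-pair}, so its quantifier is $\mu\in X^1_g$ (all probability measures integrating $g$) and its cumulant generating function uses the $\sigma$-ideal $\Xi_g$ of $X_g$. Under the hypotheses of \cref{thm:ipm-lb-arbitrary-N} one has $X\subseteq X_g$ and therefore $\Xi_g\subseteq\Xi$, so \cref{cor:equiv-operational} gives an equivalence between conditions \emph{strictly stronger} than (ii) and (iii), from which (ii) $\Leftrightarrow$ (iii) does not follow. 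The fix is to apply \cref{prop:lb-conjugate} directly: it holds for any $\nu$-decomposable pair $(X,Y)$ and gives $\con{\lb[\,X^1]g\nu}=\cgfxi g\nu\Xi$ with exactly the corollary's $\Xi$, after which your conjugation argument with $\widetilde L$ goes through.

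Second, the reduction to non-decreasing $L$ has the direction reversed. Replacing $L$ by a \emph{minorant} $L'\leq L$ weakens (i) and (ii), so their validity is not automatically preserved in both directions. The correct replacement is the smallest non-decreasing \emph{majorant} $L^{\uparrow}(\eps)\eqdef\sup_{0\leq\eps'\leq\eps}L(\eps')$: since the relevant lower-bound functions $\lb[\,X^1]g\nu$ and $\lb[\,X^1]\G N$ are non-decreasing in $\abs\eps$, one has $f\geq L$ on $\R_{\geq 0}$ if and only if $f\geq L^{\uparrow}$ for any such $f$; moreover $L^{\uparrow}$ is still convex and lsc, and $\con{L^{\uparrow}}$ agrees with $\con L$ on $\R_{\geq 0}$, so (iii) is also unchanged by the replacement. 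With these two corrections your pointwise approach yields a clean, somewhat more modular alternative to the paper's argument, reducing the corollary to \cref{prop:lb-conjugate} rather than to \cref{thm:ipm-lb-arbitrary-N} applied to $\G\cup(-\G)$.
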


\begin{proof}
  The equivalence of (i) and (iii) follows from applying
  \cref{thm:ipm-lb-arbitrary-N} to $\G'=\G\cup-\G$, since
  $\ipm[\G]\mu\nu=\sup_{g\in\G'}\ex\mu g-\ex\nu g\geq 0$,
  $\lb[X^1]{\G'}N$ is even, and
  $\cgfxi{\G'}N\Xi(t)=\max\set{\cgfxi\G N\Xi(t),\cgfxi\G N\Xi(-t)}$.
  The equivalence of (i) and (iii) for $\set{g,-g}$ for each $g\in \G$ gives
  the equivalence of (ii) and (iii).
\end{proof}

\begin{example}[Subgaussian functions]
	\label{exmp:phi-subgaussian}
	For the Kullback--Leibler divergence, \citet[Lemma 4.18]{BLM13} shows that
	$\kl\mu\nu \geq \frac12\ipm\mu\nu^2$ for all $\mu\in\M^1$ if and only if
	$\log\ex*\nu{e^{t\paren{g-\ex\nu g}}}\leq t^2/2$ for all $g\in\G$ and
	$t\in\R$. Such a quadratic upper bound on the log moment-generating
	function is one of the characterizations of the so-called
	\emph{subgaussian} functions, which contain as a special case the class of
	bounded functions by Hoeffding's lemma \citep{H63} (see also
	\cref{exmp:hoeffding}). \Cref{cor:optimal-lb-operational} recovers this
	result by considering the (self-conjugate) function $L:t\mapsto t^2/2$, thus
	showing that Pinsker's inequality generalize to all subgaussian functions.

	\Cref{thm:ipm-lb-arbitrary-N} generalizes this further to an arbitrary
	$\phi$-divergence, showing that a subset $\G\subseteq\L^0(\Omega)$ of
	measurable functions satisfies $\di\mu\nu \geq\frac12\ipm\mu\nu^2$ for all
	$\mu\in\M^1$ if and only if $\cgf g\nu(t)\leq t^2/2$ for all $g\in\G$ and
	$t\in\R$. By analogy, we refer to functions whose cumulant generating
	function admits such a quadratic upper bound as
	\emph{$\phi$-subgaussian} functions.
\end{example}

\begin{example}
	\label{exmp:subgaussian-chisquared-experimental}
	Recall from \cref{exmp:hcr} that the $\chi^2$-divergence given by
	$\phi(x)=(x-1)^2 +\delta_{\R_\geq 0}(x)$ satisfies
	\[\con\psi(x)=\begin{cases}x^2/4&x\geq -2\\-1-x&x<-2\end{cases}\]
	and $\cgf g\nu(t)\leq\inf_\lambda \ex*\nu{(tg+\lambda)^2/4}
	=t^2\operatorname{Var}_\nu(g)/4$, showing that the class of
	$\chi^2$-subgaussian functions (see \cref{exmp:phi-subgaussian}) includes
	all those with bounded variance.
\end{example}

\begin{example}
	\label{exmp:bolley-villani}
	As a step towards understanding the Wasserstein distance,
	\citea{Bolley and Villani}{BV05} define a ``weighted total variation distance''
	between probability measures $\mu$ and $\nu$ as $\ex*{\abs{\mu-\nu}}g$ for some
	non-negative measurable function $g\in\L^0(\Omega)$, and their main result
	\citep[Theorem 2.1]{BV05} bounds this weighted total variation in terms of
	the KL divergence.

	We rederive their result by noting that the $g$-weighted total variation is
	$\ipm[g\B]\mu\nu$ for $g\B=\set{g\cdot b\given b\in\B}$ where $\B$ is the
	set of measurable functions taking values in $[-1,1]$, so that it suffices
	by \cref{thm:ipm-lb-arbitrary-N} to upper bound $\cgf{g\cdot b}\nu(t)$ for
	each $b\in\B$ in terms of $\log\ex*\nu{e^g}$ or $\log\ex*\nu{e^{g^2}}$.
	But since $g\geq 0$, we have $g\cdot b\leq\abs g=g$ and we conclude by using
	the fact that finiteness of $\log\ex*\nu{e^h}$ (resp.\
	$\log\ex*\nu{e^{h^2}}$) implies a quadratic upper bound on the
	centered log-moment generating function $\cgf h\nu(t)$ for $\abs t\leq 1/4$
	(resp.\ all $t\in\R$) for any non-negative function $h$ (see
	e.g.~\citet[Propositions 2.5.2 and 2.7.1]{V18}).
\end{example}

Finally, we show that when we take $N=X^1$ in \cref{thm:ipm-lb-arbitrary-N},
that is, we want a lower bound $L$ such that $\di\mu\nu\geq L(\ipm\mu\nu)$ for
all probability measures $\mu$ and $\nu$ in $X$, we no longer need to consider
pairs of measures such that $\mu\not\ll \nu$, and in particular we can ignore
the $\sigma$-ideal $\Xi$ in the derivation of the bound. Intuitively, this is
because when $\mu\not\ll\nu$, we now have sufficiently many measures in $N$ to
approximate $\nu$ with a measure $\nu'$ such that $\mu\ll\nu'$.

\begin{theorem}
  \label{thm:ipm-lb-smoothed}
  Let $(X, Y)$ be a dual pair with $X\subseteq \M$ and assume that $(X,Y)$
  is decomposable with respect to all probability measures in $X$.
  Then for all subsets of functions $\G\subseteq Y$,
	\[
		\con{\lb[\,X^1]\G{X^1}}=\con{\paren*{\inf_{g\in\G}\lb[\,X^1]g{X^1}}}=
		\cgf\G{X^1}\,.
	\]
	In particular, for any $\sigma$-ideal $\Sigma$ and $\G\subseteq L^0(\Sigma)$,
  the following are equivalent for every convex lsc $L:\R_{\geq 0}\to\eR$:
	\begin{enum}
  \item $\di\mu\nu\geq L(\ipm\mu\nu)$ for all $\mu,\nu\in \M_c(\Sigma)$ integrating
    all of $\G$.
		\item $\di\mu\nu\geq L(\abs{\ex\mu g-\ex\nu g})$ for all $g\in\G$ and
			$\mu,\nu\in \M_c(\Sigma)$ integrating all of $\G$.
		\item $\cgf g\nu(t)\leq \con L(\abs t)$ for all $t\in\R$, $g\in\G$, and
			$\nu\in \M_c(\Sigma)$ integrating all of $\G$.
	\end{enum}
\end{theorem}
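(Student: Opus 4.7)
I plan to first observe that the first equality in the claimed identity is immediate from \cref{thm:ipm-lb-arbitrary-N} applied with $N=X^1$, which yields
\[
	\con{\lb[\,X^1]\G{X^1}}=\con{\paren*{\inf_{g\in\G}\lb[\,X^1]g{X^1}}}=\cgfxi\G{X^1}\Xi\,,
\]
so the remaining task is to establish $\cgfxi\G{X^1}\Xi=\cgf\G{X^1}$. The direction $\cgfxi\G{X^1}\Xi\geq\cgf\G{X^1}$ is immediate because $X\subseteq\M_c(\Xi)$ gives $\esssup_\Xi(tg)\geq\esssup_\nu(tg)$ for every $\nu\in X^1$, making the feasibility constraint in $\cgfxi g\nu\Xi$ tighter than the one in $\cgf g\nu$.

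For the reverse, I will fix $(g,\nu,t)\in\G\times X^1\times\R$ and show $\cgfxi g\nu\Xi(t)\leq\sup_{\nu'\in X^1}\cgf g{\nu'}(t)$; taking suprema over $(g,\nu)$ then finishes the argument. The cases $t=0$ or $\phi'(\infty)=\infty$ are trivial (in the latter, the constraint on $\lambda$ is vacuous and the two cumulants coincide), so I assume $t\neq 0$ and $\phi'(\infty)<\infty$ and set $\alpha\eqdef\esssup_\Xi(tg)\in\R\cup\set{+\infty}$. The key idea is to approximate $\nu$ by measures $\nu_n\in X^1$ achieving $\esssup_{\nu_n}(tg)\to\alpha$: by definition of $\esssup_\Xi$ the sets $A_n\eqdef\set{tg>\alpha-1/n}$ (or $\set{tg>n}$ if $\alpha=+\infty$) lie outside $\Xi$, so item~3 of \cref{assmp:decomp} furnishes $\mu_n\in X^+\setminus\set 0$ supported on $A_n$; setting $\pi_n\eqdef\mu_n/\mu_n(\Omega)\in X^1$ and $\nu_n\eqdef(1-1/n)\nu+(1/n)\pi_n\in X^1$ gives the desired sequence.

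The heart of the argument will be showing $\liminf_n\cgf g{\nu_n}(t)\geq\cgfxi g\nu\Xi(t)$, which suffices since $\sup_{\nu'\in X^1}\cgf g{\nu'}(t)\geq\sup_n\cgf g{\nu_n}(t)\geq\liminf_n\cgf g{\nu_n}(t)$. Supposing this $\liminf$ equals some $L<\cgfxi g\nu\Xi(t)$, I would pick along a subsequence $\lambda_n\in\R$ with $\lambda_n+\esssup_{\nu_n}(tg)\leq\phi'(\infty)$ and $\ex{\nu_n}{\con\psi(tg+\lambda_n)}\leq L+o(1)$. Non-negativity of $\con\psi$ then gives $\ex\nu{\con\psi(tg+\lambda_n)}\leq(L+o(1))/(1-1/n)$, and $\lambda_n$ is bounded---from above via the feasibility constraint (since $\esssup_{\nu_n}(tg)\to\alpha$) and from below because inf-compactness of $\con\psi$ (\cref{lem:con-psi-properties}) would otherwise force the integral to diverge through Fatou applied to $\con\psi(tg+\lambda_n)\to+\infty$. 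After extracting a further subsequence, $\lambda_n\to\lambda^*$ with $\lambda^*+\alpha\leq\phi'(\infty)$, so that $\lambda^*$ is feasible for $\cgfxi g\nu\Xi$, and lower semicontinuity of $\con\psi$ combined with Fatou's lemma then yields $\ex\nu{\con\psi(tg+\lambda^*)}\leq L<\cgfxi g\nu\Xi(t)$, the desired contradiction. This Fatou/subsequence step---in particular, handling the possible discontinuity of $\con\psi$ at the boundary $\phi'(\infty)\in\partial\dom\con\psi$---is the main technical obstacle.

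Finally, for the ``In particular'' equivalence I will apply the main identity to the dual pair $(X_\G,Y_\G)$ of \cref{defn:G-dual-pair}, which is decomposable by \cref{lem:xg-yg-decomposable} and for which $X_\G^1$ coincides with the probability measures in $\M_c(\Sigma)$ integrating all of $\G$. The equivalence of (i)--(iii) then follows by repeating the duality argument of \cref{cor:optimal-lb-operational}, now using the sharpened identity $\con{\lb[\,X_\G^1]\G{X_\G^1}}=\cgf\G{X_\G^1}$---which no longer references any $\sigma$-ideal---in place of the version involving $\cgfxi\G{X^1}\Xi$.
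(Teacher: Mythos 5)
Your proposal is correct but takes a genuinely different route from the paper's. You attack the dual (cumulant) side directly: to show $\cgfxi\G{X^1}\Xi\leq\cgf\G{X^1}$ you approximate $\nu$ by $\nu_n=(1-1/n)\nu+(1/n)\pi_n$ where $\pi_n\in X^1$ is concentrated on $\set{tg>\esssup_\Xi(tg)-1/n}$ (using item~3 of decomposability), so that $\esssup_{\nu_n}(tg)\to\esssup_\Xi(tg)$, and then control near-optimal $\lambda_n$ via the feasibility constraint, inf-compactness of $\con\psi$, subsequence extraction, and Fatou's lemma. Your worry about the discontinuity of $\con\psi$ at $\phi'(\infty)$ is unfounded: lower semicontinuity at the boundary is all the Fatou step needs, and it comes for free since $\con\psi$ is a conjugate. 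The paper works instead on the primal side with the lower bound functions: by \cref{thm:ipm-lb-arbitrary-N} it suffices to show that $\inf_{g\in\G}\lb[\,X^1]g{X^1}$ and $\inf_{g\in\G,\,\nu\in X^1}\lb[\,X^1\cap\M_c(\nu)]g\nu$ have the same lsc regularization; for any lsc $L$ lower bounding the latter and $\mu,\nu\in X^1$, it sets $\nu_\delta\eqdef(1-\delta)\nu+\delta\mu$ (mixing with $\mu$ itself, not a constructed singular mass) so that $\mu\ll\nu_\delta$, notes $\ex\mu g-\ex{\nu_\delta}g=(1-\delta)(\ex\mu g-\ex\nu g)$ and $\di\mu{\nu_\delta}\leq(1-\delta)\di\mu\nu$ by convexity of $\di\mu\cdotarg$, and lets $\delta\to 0$ using lsc of $L$. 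The paper's route is shorter, needs only convexity and lsc where yours needs Fatou, inf-compactness, and subsequence compactness, and does not invoke decomposability item~3 at this step; yours has the conceptual advantage of exhibiting the identity $\cgfxi\G{X^1}\Xi=\cgf\G{X^1}$ explicitly rather than reaching it indirectly through conjugacy. The ``in particular'' part is handled the same way in both.
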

\begin{proof}
  The in particular claim follows from the main claim applied to $(X_\G, Y_\G)$ by an argument analogous
  to that of \cref{cor:optimal-lb-operational}. For the main claim, by
  \cref{thm:ipm-lb-arbitrary-N}, it suffices to show that
  $\inf_{g\in\G}\lb[\,X^1]g{X^1}=\inf_{g\in\G,\nu\in X^1}\lb[\,X^1]g\nu$
and $\inf_{g\in\G,\nu\in X^1}\lb[\,X^1\cap \M_c(\nu)]g\nu$
have the same conjugate, or simply the same lsc regularization. Since the
former is definitionally no larger than the latter, it suffices to show that
any lsc lower bound $L$ for the latter also lower bounds the former, equivalently,
that if $\di\mu\nu\geq L(\ex\mu g-\ex\nu g)$ for all $\mu\ll\nu \in X^1$ and $g\in\G$,
then this also holds for all $\mu,\nu\in X^1$.

	Given any $\mu,\nu\in X^1$ and $\delta\in[0,1]$, let $\nu_\delta=(1-\delta)\cdot \nu+\delta\cdot \mu$
	so that $\nu_\delta\in X^1$. Then for each $\delta\in[0,1]$ we have that
  $\ex\mu g-\ex{\nu_\delta}g =(1-\delta)(\ex\mu g-\ex\nu g)$ for all $g\in\G$,
	and furthermore, by convexity of $\di\mu\cdotarg$ we have
  for $\delta\in(0,1]$ that
  \[
    (1-\delta)\di\mu\nu=
    (1-\delta)\di\mu\nu +\delta\di\mu\mu
    \geq \di\mu{\nu_\delta}
    \geq L\paren[\big]{(1-\delta)\paren{\ex\mu g - \ex\nu g}}
   \]
  where the last inequality is because $\mu\ll\nu_\delta$.
  But since $L$ is lower semicontinuous, we have that
  $L(\ex\mu g-\ex\nu g)\leq\liminf_{\delta\to 0}L\paren[\big]{(1-\delta)(\ex\mu g-\ex\nu g)}
  \leq \lim_{\delta\to 0^{-}}L\paren[\big]{(1-\delta)(\ex\mu g-\ex\nu g)}$,
  and so we get that $\di\mu\nu\geq L(\ex\mu g-\ex\nu g)$ as desired.
\end{proof}

\subsection{Application to bounded functions and the total variation}
\label{sec:bounded}

In this section, we consider the problem of lower bounding the
$\phi$-divergence by a function of the total variation distance. Though it is
a well-studied problem and most of the results we derive are already known, we
consider this case to demonstrate the applicability of the results obtained in
\cref{sec:ipm-lb-comp}. In \cref{sec:vajda}, we study Vajda's problem
\citep{V72}: obtaining the best lower bound of the $\phi$-divergence by
a function of the total variation distance, and in \cref{sec:pinsker} we show
how to obtain quadratic relaxations of the best lower bound as in Pinsker's
inequality and Hoeffding's lemma.

\subsubsection{Vajda's problem}
\label{sec:vajda}

The Vajda problem \citep{V72} is to quantify the optimal relationship between
the $\phi$-divergence and the total variation, that is to compute the function
\begin{align*}
	\lbipm\B{\M^1}(\eps)
	&=\inf\set*{\di\mu\nu\given(\mu,\nu)\in\M^1\times\M^1\land\tv\mu\nu=\eps}\\
	&=\inf\set*{\di\mu\nu\given (\mu,\nu)\in\M^1\times\M^1\land\ipm[\B]\mu\nu=\eps}
\end{align*}
where $\B$ is the set of measurable functions $\Omega\to[-1,1]$. In this
section, we use \cref{thm:ipm-lb-smoothed} to give for an arbitrary $\phi$ an
expression for the Vajda function as the convex conjugate of a natural
geometric quantity associated with the function $\con\psi$, the inverse of its
\emph{sublevel set volume function}, which we call the \emph{height-for-width}
function.

\begin{definition}
The \emph{sublevel set volume} function $\sls{\con\psi}:\R_{\geq 0} \to\R_{\geq
0}$ maps $h\in\R$ to the Lebesgue measure of the sublevel set
$\set*{x\in\R\given \con\psi(x)\leq h}$. Since $\con\psi$ is convex and
inf-compact, the sublevel sets are compact intervals and their Lebesgue measure
is simply their length.

The \emph{height-for-width} function $\height{\con\psi}:\R_{\geq 0}
\to\overline\R$ is the (right) inverse of the sublevel set volume
function given by $\height{\con\psi}(w)
=\inf\set*{h\in\overline\R\given \sls{\con\psi}(h)\geq w}$.
\end{definition}

To understand this definition, note that since $\con\psi$ is defined
on $\R$, the sublevel set volume function can be interpreted as
giving for each height $h$ the length of longest horizontal line segment
that can be placed in the epigraph of $\con\psi$ but no higher
than $h$. The inverse, the height-for-width function, asks for the
minimal height at which one can place a horizontal line segment of
length $w$ in the epigraph of $\con\psi$. See \cref{fig:height} for
an illustration of this in the case of $\con\psi(x)=e^x-x-1$,
corresponding to the Kullback--Leibler divergence.

The following lemma shows that the height-for-width function can be
equivalently formulated as the optimal value of a simple convex optimization
problem.

\begin{lemma}
\label{lem:height-equivalence}
For all $w\in\R_{\geq 0}$, $\height{\con\psi}(w)=\inf_{\lambda\in\R}\max\set*{
\con\psi(\lambda+w/2),\con\psi(\lambda-w/2)}$. Furthermore, if for $w>0$ there
exists $\lambda_w$ such that $\con\psi(\lambda_w-w/2)=\con\psi(\lambda_w+w/2)$,
then $\height{\con\psi}(w)= \con\psi(\lambda_w-w/2)=\con\psi(\lambda_w+w/2)$.
\end{lemma}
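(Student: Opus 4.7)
The strategy is to unfold the definitions of the sublevel set volume function and its inverse, reducing the first equality to a swap of quantifiers, and then to analyze the convex function $F(\lambda)\eqdef\max\set{\con\psi(\lambda-w/2),\con\psi(\lambda+w/2)}$ using the monotonicity properties of $\con\psi$ implied by $\con\psi\geq 0=\con\psi(0)$.

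For the first equality, by \cref{lem:con-psi-properties} the function $\con\psi$ is convex and inf-compact, so each sublevel set $\set{x\in\R\given \con\psi(x)\leq h}$ is a compact interval $[a(h),b(h)]$ of length $\sls{\con\psi}(h)=b(h)-a(h)$. An interval $[\lambda-w/2,\lambda+w/2]$ of length $w$ fits inside $[a(h),b(h)]$ if and only if its two endpoints do, i.e.\ if and only if $\max\set{\con\psi(\lambda-w/2),\con\psi(\lambda+w/2)}\leq h$. Hence $\sls{\con\psi}(h)\geq w$ exactly when there exists $\lambda$ with $F(\lambda)\leq h$. Setting $m\eqdef\inf_\lambda F(\lambda)$, this gives $\set{h\geq 0\given \sls{\con\psi}(h)\geq w}=[m,\infty)$ or $(m,\infty)$, so $\height{\con\psi}(w)=\inf_\lambda F(\lambda)$.

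For the second claim, fix $w>0$ and suppose $c\eqdef\con\psi(\lambda_w-w/2)=\con\psi(\lambda_w+w/2)$. I need to show $\inf_\lambda F(\lambda)=c$; since $F(\lambda_w)=c$ the upper bound is automatic. If $c=0$ this is immediate as $F\geq 0$. If $c>0$, let $Z\eqdef\set{x\in\R\given \con\psi(x)=0}$, a convex (possibly degenerate) interval $[\alpha,\beta]$ containing $0$ since $\con\psi(0)=0$ and $\con\psi\geq 0$. The hard part is to show $\con\psi$ is \emph{strictly} decreasing on $(-\infty,\alpha]$ and \emph{strictly} increasing on $[\beta,\infty)$: if, say, $\con\psi(a)=\con\psi(b)>0$ for some $a<b\leq\alpha$, then convexity applied to $a<b\leq\alpha$ with $\con\psi(\alpha)=0$ yields $\con\psi(b)\leq\frac{\alpha-b}{\alpha-a}\con\psi(a)<\con\psi(a)$, a contradiction. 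Given this strict monotonicity, the two points $\lambda_w\pm w/2$ (both outside $Z$ since $c>0$) cannot both lie in $(-\infty,\alpha)$ or both in $(\beta,\infty)$, so they must straddle $Z$: $\lambda_w-w/2<\alpha\leq\beta<\lambda_w+w/2$.

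To conclude, for any $\lambda>\lambda_w$ we have $\lambda+w/2>\lambda_w+w/2>\beta$, and strict monotonicity on $[\beta,\infty)$ gives $\con\psi(\lambda+w/2)>c$, so $F(\lambda)>c$. Symmetrically, $\lambda<\lambda_w$ forces $\con\psi(\lambda-w/2)>c$. Hence $\lambda_w$ minimizes $F$ and $\height{\con\psi}(w)=F(\lambda_w)=c$. The main technical obstacle is the strict monotonicity of $\con\psi$ off its zero set, but this follows cleanly from convexity and $\con\psi(0)=0$ as indicated.
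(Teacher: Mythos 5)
Your proof is correct and takes essentially the same approach as the paper: the first equality rests on matching ``a length-$w$ interval $[\lambda-w/2,\lambda+w/2]$ lies in $\set*{x\given\con\psi(x)\leq h}$'' with ``$\max\set*{\con\psi(\lambda-w/2),\con\psi(\lambda+w/2)}\leq h$'', and the second on the V-shape of $\con\psi$ about its minimum. The only stylistic difference is that you establish \emph{strict} monotonicity of $\con\psi$ off its zero set to obtain $F(\lambda)>c$ for $\lambda\neq\lambda_w$, whereas the paper only invokes the non-strict bound $\con\psi(\lambda-w/2)\geq\con\psi(\lambda_w-w/2)$ for $\lambda<\lambda_w$ (and its mirror), which already suffices because $F(\lambda_w)=c$ is attained.
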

\begin{proof}
For every $w\geq 0$, define the function
$h_w:\lambda\mapsto\max\set{\con\psi(\lambda-w/2),\con\psi(\lambda+w/2)}$
which is the supremum of two convex inf-compact functions with overlapping
domain, and so is itself proper, convex, and inf-compact. In particular,
$h_w$ achieves its global minimum $y_w\in\R$, where by definition and
convexity of $\con\psi$ we have $y_w$ is the smallest number such that there
exists an interval $[\lambda-w/2,\lambda+w/2]$ of length $w$ such that
$\con\psi([\lambda-w/2,\lambda+w/2])\subseteq (-\infty,y_w]$, and thus
$y_w=\inf\set*{x\in\overline\R\given \sls{\con\psi}(x)\geq
w}=\height{\con\psi}(w)$ as desired.

For the remaining claim, consider $w>0$ for which there is $\lambda_w\in\R$ such that
$\con\psi(\lambda_w-w/2)=\con\psi(\lambda_w+w/2)$. By convexity of $\con\psi$
we have for every $\lambda<\lambda_w$ that $\con\psi(\lambda-w/2)\geq
\con\psi(\lambda_w-w/2)$, and analogously for every $\lambda>\lambda_w$ that
$\con\psi(\lambda+w/2)\geq\con\psi(\lambda_w+w/2)$. Thus for every $\lambda$
we have $\max\set{\con\psi(\lambda-w/2),\con\psi(\lambda+w/2)}\geq \min\set{
	\con\psi(\lambda_w-w/2),\con\psi(\lambda_w+w/2)}=\con\psi(\lambda_w-w/2)
=\con\psi(\lambda_w+w/2)$, so the result follows from the main claim.
\end{proof}

\begin{figure*}[htbp]
	\centering
	\BeginAccSupp{method=escape,Alt={A graph of the function
		f(x)=exp(x)-x-1, with a horizontal line segment at approximately 0.12
		which is placed in the epigraph of f and has length 1 and
		has both of its endpoints on the curve, along with a similar
		line at height approximately 1.01 of length 3, and a final
		line at a height(t) between lambda(t)-t/2 and lambda(t)+t/2}}
	\begin{tikzpicture}[scale=1.6]
		\draw[domain=-3.5:1.64361,smooth,variable=\x,thick] plot ({\x},{exp(\x)-\x-1});
		\draw[dashed] (-4,0) -- (2,0);
		\draw[dashed] (-4,0) -- (-4,2.55);

		\node[anchor=north] at (-0.541,0) {$\approx -0.54$};
		\node[anchor=north] at (0.458,0) {$\approx 0.46$};
		\node[anchor=north] at (1.52,0) {$\lambda(w)+w/2$};
		\node[circle,fill,inner sep=0.25ex] at (1.52,0) {};
		\node[circle,fill,inner sep=0.25ex] at (-3,0) {};
		\node[circle,fill,inner sep=0.25ex] at (0,0) {};
		\node[circle,fill,inner sep=0.25ex] at (0.46,0) {};
		\node[circle,fill,inner sep=0.25ex] at (-0.54,0) {};
		\node[circle,fill,inner sep=0.25ex] at (-4,0.123) {};
		\node[circle,fill,inner sep=0.25ex] at (-4,1.01) {};
		\node[circle,fill,inner sep=0.25ex] at (-4,2.05) {};
		\node[anchor=north] at (-3,0) {$\lambda(w)-w/2$};
		\node[anchor=north] at (0,0) {$0$};
		\node[anchor=east] at (-4,0.123) {$\height{\con\psi}(1)\approx 0.12$};
		\node[anchor=east] at (-4,2.05) {$\height{\con\psi}(w)$};
		\node[anchor=east] at (-4,1.01) {$\height{\con\psi}(3)\approx 1.01$};
		\draw[<->] (-0.541,0.123) -- node[yshift=2.2ex] {width $1$} (0.458,0.123);
		\draw[<->] (-3,2.05) -- node[yshift=2.2ex] {width $w$} (1.52,2.05);
		\draw[<->] (-1.85,1.01) -- node[yshift=2.2ex] {width $3$} (1.15,1.01);
	\end{tikzpicture}
	\EndAccSupp{}
	\caption{Illustration of height-for-width function for $\con\psi(x)=e^x-x-1$}
	\label{fig:height}
\end{figure*}
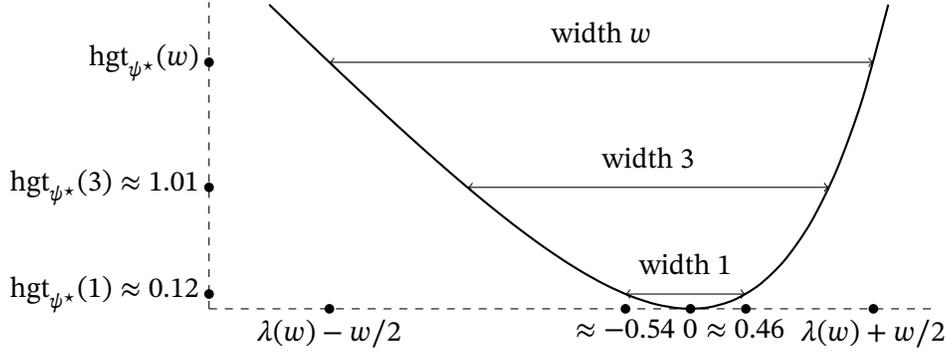

\begin{example}
\label{example:height-kl}
	For the case of the KL divergence for which $\con\psi(w)=e^w-w-1$, one can
	compute that $\con\psi(\lambda(w)+w/2)=\con\psi(\lambda(w)-w/2)$ for
	$\lambda(w)=-\log\frac{e^{w/2}-e^{-w/2}}{w}=-\log\frac{2\sinh(w/2)}w$, so
	that $\height{\con\psi}(w)=-1+\frac w2\coth\frac w2+\log\frac{2\sinh(w/2)}w$.
\end{example}

The duality result of \cref{thm:ipm-lb-smoothed} computes the
biconjugate of the optimal bound $\lbipm\B{\M^1}$, so we first prove
that this function is convex and lsc.
\begin{lemma}
\label{lem:lb-is-k-b-star}
	Let $M$ the set of probability measures supported on $\set{-1,1}$.
	Then $\lbipm\B{\M^1}=\lbipm{\ident_{\set{-1,1}}}M$ is convex and lower
	semicontinuous. In particular
	$\lbipm\B{\M^1}(\eps)=\con{\cgf\B{\M^1}}(\eps)$ for $\eps\geq 0$.
\end{lemma}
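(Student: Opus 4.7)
The plan is to prove the three assertions in sequence: the equality of the two optimal-bound functions, the convexity and lower semicontinuity of this common function, and the Fenchel--Moreau identification with $\con{\cgf\B{\M^1}}$.

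For the equality $\lbipm\B{\M^1}=\lbipm{\ident_{\set{-1,1}}}M$, I verify both inequalities for $\eps\geq 0$, with $\eps<0$ following by evenness of both sides (using that $-\B=\B$ and that the bijection $x\mapsto -x$ on $\set{-1,1}$ preserves the $\phi$-divergence while flipping the mean deviation). For ``$\geq$'': given $\mu,\nu\in\M^1$ with $\tv\mu\nu=\eps$, take a common dominating measure $\lambda$ and set $A=\set*{d\mu/d\lambda\geq d\nu/d\lambda}$, so that $b=\ind_A-\ind_{\Omega\setminus A}\in\B$ attains $\ex\mu b-\ex\nu b=\tv\mu\nu=\eps$; pushing forward through $b$ yields $\tilde\mu,\tilde\nu\in M$ with mean deviation $\eps$ and $\di{\tilde\mu}{\tilde\nu}\leq\di\mu\nu$ by the data-processing content of the proof of \cref{lem:lb-pushforward}. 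For ``$\leq$'': the non-triviality of $\A$ supplies $A\in\A$ with $\emptyset\subsetneq A\subsetneq\Omega$ together with probability measures $\rho_\pm$ on $\Omega$ supported in $A$ and $\Omega\setminus A$ respectively; given $\tilde\mu,\tilde\nu\in M$ with $\ex{\tilde\mu}\ident-\ex{\tilde\nu}\ident=\eps$, setting $\mu=\tilde\mu(\set 1)\rho_++\tilde\mu(\set{-1})\rho_-$ and analogously for $\nu$, a direct Radon--Nikodym calculation against the dominating measure $\rho_++\rho_-$ gives $\di\mu\nu=\di{\tilde\mu}{\tilde\nu}$, while the same $b$ witnesses $\tv\mu\nu=\eps$.

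For convexity and lower semicontinuity, I parameterize $M$ by $[0,1]$ via $p\mapsto\tilde\mu_p\eqdef p\delta_1+(1-p)\delta_{-1}$: the function $F(p,q)\eqdef\di{\tilde\mu_p}{\tilde\mu_q}$ is jointly convex and lower semicontinuous on the compact square $[0,1]^2$ and hence inf-compact, while the constraint $\ex{\tilde\mu_p}\ident-\ex{\tilde\mu_q}\ident=2(p-q)=\eps$ is affine in $(p,q)$. Then \cref{lem:optimal-value-convex} yields that $\lbipm{\ident_{\set{-1,1}}}M(\eps)=\inf\set*{F(p,q)\given p-q=\eps/2}$ is convex and inf-compact as a function of $\eps$, in particular lower semicontinuous.

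The final formula then follows by Fenchel--Moreau duality: \cref{thm:ipm-lb-smoothed} applied to the decomposable pair $(\M,\L^b)$ (which contains $\B$) gives $\con{\lbipm\B{\M^1}}=\cgf\B{\M^1}$, and combined with the convexity, lower semicontinuity, and properness of $\lbipm\B{\M^1}$ just established, \cref{prop:conj} yields $\lbipm\B{\M^1}=\bicon{\lbipm\B{\M^1}}=\con{\cgf\B{\M^1}}$ on all of $\R$, hence in particular for $\eps\geq 0$. The main obstacle will be executing the ``$\leq$'' embedding cleanly: constructing suitable measures $\rho_\pm$ on $\Omega$ from only the bare non-triviality hypothesis on $\A$, and verifying the identity $\di\mu\nu=\di{\tilde\mu}{\tilde\nu}$ uniformly when some of $\tilde\mu(\set{\pm 1}),\tilde\nu(\set{\pm 1})$ vanish so that the singular parts contribute correctly when $\phi'(\infty)$ is finite.
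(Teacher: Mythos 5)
Your proof is correct, and it departs from the paper's in a genuinely different (and more elementary) way in the key reduction $\lbipm\B{\M^1}=\lbipm{\ident_{\set{-1,1}}}M$. The paper obtains this identity via the machinery developed earlier in \cref{sec:ipm}: it invokes Banach--Alaoglu and Krein--Milman to show $\B$ is the $\sigma(\L^b,\M)$-closed convex hull of the $\set{-1,1}$-valued functions, then \cref{lem:ipm-preserved-closed-convex} to pass to $\ipmsymb{\extreme(\B)}$, then \cref{cor:optimal-lb-equivalence} and \cref{lem:lb-pushforward} to reduce to measures on $\set{-1,1}$. You instead prove the two inequalities directly: the ``$\geq$'' direction by pushing $(\mu,\nu)$ forward through the optimal discriminator $b=\ind_A-\ind_{\Omega\setminus A}$ (with $A$ a Hahn set for $\mu-\nu$) together with data processing, and the ``$\leq$'' direction by embedding $M$ back into $\M^1$ via two mutually singular probability measures $\rho_\pm$ and verifying $\di\mu\nu=\di{\tilde\mu}{\tilde\nu}$ by a direct Radon--Nikodym computation that (correctly) extends to the boundary case $\tilde\nu(\set{\pm1})=0$ via the $\phi'(\infty)$ convention. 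The obstacle you flag at the end is not actually a difficulty: the bare non-triviality hypothesis $\set{\emptyset,\Omega}\subsetneq\A$ supplies a set $A$ with $\emptyset\subsetneq A\subsetneq\Omega$, and Dirac masses $\delta_{\omega_\pm}$ at points $\omega_+\in A$, $\omega_-\notin A$ furnish the required $\rho_\pm$. Your argument is shorter and self-contained; the paper's is heavier but, as the authors remark, is phrased so as to suggest how to generalize beyond the total variation to other IPMs whose defining class $\G$ admits an extreme-point reduction. Your treatment of the convexity/inf-compactness via the $[0,1]$-parameterization of $M$ and \cref{lem:optimal-value-convex}, and the final Fenchel--Moreau step via \cref{thm:ipm-lb-smoothed} on the decomposable pair $(\M,\L^b)$, coincide with the paper's.
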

\begin{proof}
	By \cref{thm:ipm-lb-smoothed} we have that $\con{\lbipm\B{\M^1}}
	=\cgf\B{\M^1}$, so the in particular statement follows immediately from
	the main claim.
	The main claim, that $\lbipm\B{\M^1}=\lbipm{\ident_{\set{-1,1}}}M$ is
	convex and lower semicontinuous, is well-known and can easily be
	derived using the methods of e.g.~\citet{V72}, but we include a proof
	here in our language for completeness and to illustrate how it could
	be generalized beyond the total variation.

	Note that the set $\B=[-1,1]^\Omega\cap\L^0(\Omega)$ is convex, and
	furthermore is $\sigma(\L^b(\Omega),\M)$-compact by the Banach--Alaoglu
	theorem, and so by the Krein--Milman theorem $\B$ is the
	$\sigma(\L^b(\Omega),\M)$-closed convex hull of its extreme points
	$\extreme(\B)=\set{-1,1}^\Omega \cap\L^0(\Omega)$ the set of
	measurable $\set{-1,1}$-valued functions. Thus,
	\cref{lem:ipm-preserved-closed-convex} implies
	$\ipmsymb\B=\ipmsymb{\extreme(\B)}$, and so
	$\lbipm\B{\M^1}=\lbipm{\extreme(\B)}{\M^1}$.

	We now prove that $\inf_{g\in\extreme(\B)}\lb g{\M^1}$ is convex
	and lsc, which by \cref{cor:optimal-lb-equivalence} also implies
	$\lbipm{\extreme(\B)}{\M^1}=\inf_{g\in\extreme(\B)}\lb g{\M^1}$
	is convex and lsc.
	By \cref{lem:lb-pushforward}, for each $g\in\extreme(\B)$
	we have $\lb g{\M^1}= \lb[M_g]{\ident_{\set{-1,1}}}{M_g}$ for $M_g=
	\set{\pushforward\mu g\given \mu\in \M^1}$. In particular,
	if $g$ is constant this set is the singleton $M_g=\set{\delta_{g(\Omega)}}$,
	and if $g$ is non-constant then it is exactly the set $M$ of
	probability measures supported on $\set{-1,1}$.
	Thus, $\inf_{g\in\extreme(\B)}\lb g{\M^1}=
		\lb[M]{\ident_{\set{-1,1}}}M$.

	Note that the set $M$ with the total variation norm is homeomorphic
	to the unit interval $[0,1]$ via the linear map
	$p\mapsto p\cdot \delta_{\set 1}+(1-p)\cdot\delta_{\set{-1}}$.
	Then the function
	$f:\R\times M^2\to \eR$ given by $f\paren[\big]{\eps,(\mu,\nu)}
	=\di\mu\nu+\delta_{\set 0}\paren*{\ex\mu{\ident_{\set{-1,1}}}-\ex\nu{\ident_{\set{-1,1}}}-\eps}$
	is jointly convex and lower semicontinuous, and hence since
	$M$ is compact also inf-compact.
	Thus, by \cref{lem:optimal-value-convex}, the function
	$\lb[M]{\ident_{\set{-1,1}}}M=\inf_{(\mu,\nu)\in M^2} f\paren[\big]{\cdotarg,(\mu,\nu)}$
	is convex and inf-compact as desired.
\end{proof}

\Cref{lem:lb-is-k-b-star} implies that it suffices to compute $\cgf\B{\M^1}$.
\begin{lemma}
\label{lem:k-b-value}
	$\cgf\B{\M^1}(t)=\height{\con\psi}(2t)$ for every $t\geq0$.
\end{lemma}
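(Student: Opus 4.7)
The plan is to establish the two matching inequalities $\cgf\B{\M^1}(t)\leq\height{\con\psi}(2t)$ and $\cgf\B{\M^1}(t)\geq\height{\con\psi}(2t)$ for each $t\geq 0$.

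For the upper bound, I would exploit pointwise convexity of $\con\psi$. Fix any $g\in\B$, $\nu\in\M^1$, and $\lambda\in\R$ with $\lambda+t\leq\phi'(\infty)$ (other values of $\lambda$ make $\con\psi(\lambda+t)=\infty$ so cannot achieve the infimum). Since $g(\omega)\in[-1,1]$, we have $tg(\omega)+\lambda\in[\lambda-t,\lambda+t]$, and convexity of $\con\psi$ on this interval gives $\con\psi(tg(\omega)+\lambda)\leq\max\set{\con\psi(\lambda-t),\con\psi(\lambda+t)}$ pointwise. Integrating against $\nu$ and taking the infimum over admissible $\lambda$ yields $\cgf g\nu(t)\leq\inf_\lambda\max\set{\con\psi(\lambda-t),\con\psi(\lambda+t)}=\height{\con\psi}(2t)$ by \cref{lem:height-equivalence}. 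Taking the supremum over $(g,\nu)\in\B\times\M^1$ completes the upper bound.

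For the lower bound, I would exhibit a supremizing family of Bernoulli pushforwards. Using non-triviality of $(\Omega,\A)$, fix a measurable partition $\Omega=A\sqcup A^c$ with $\emptyset\subsetneq A\subsetneq\Omega$, and let $g=\ind_A-\ind_{A^c}\in\B$. For each $p\in(0,1)$, choose any probability measure $\nu_p$ with $\nu_p(A)=p$, so that $\pushforward{\nu_p}g=p\delta_1+(1-p)\delta_{-1}$ has essential supremum $1$; hence $\cgf g{\nu_p}(t)=\inf_{\lambda\leq\phi'(\infty)-t}[p\con\psi(\lambda+t)+(1-p)\con\psi(\lambda-t)]$. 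Set $f(p,\lambda)\eqdef p\con\psi(\lambda+t)+(1-p)\con\psi(\lambda-t)$ on the compact convex set $[0,1]$ times the convex set $(-\infty,\phi'(\infty)-t]$: $f$ is linear (hence continuous, both quasiconvex and quasiconcave) in $p$ and convex lsc in $\lambda$, so Sion's minimax theorem applies and gives $\sup_{p\in[0,1]}\inf_\lambda f(p,\lambda)=\inf_\lambda\sup_{p\in[0,1]}f(p,\lambda)=\inf_\lambda\max\set{\con\psi(\lambda+t),\con\psi(\lambda-t)}=\height{\con\psi}(2t)$ by \cref{lem:height-equivalence} once more. Furthermore, $p\mapsto\inf_\lambda f(p,\lambda)$ is concave as an infimum of affine functions in $p$, so its supremum on $[0,1]$ coincides with its supremum on $(0,1)$; combining, $\cgf\B{\M^1}(t)\geq\sup_{p\in(0,1)}\cgf g{\nu_p}(t)=\height{\con\psi}(2t)$.

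The main subtlety is that the essential supremum appearing in the feasibility constraint of $\cgf g\nu(t)$ is not continuous in $\nu$ at the extreme Bernoulli measures $\delta_{-1}$ and $\delta_1$, where the constraint on $\lambda$ strictly relaxes and the value of $\cgf{\ident}{\xi_p}(t)$ generally drops. I sidestep this by only using $p\in(0,1)$ in the lower-bound construction, where the constraint is uniformly $\lambda\leq\phi'(\infty)-t$, and invoking concavity of $p\mapsto\inf_\lambda f(p,\lambda)$ on $[0,1]$ to recover the full supremum without needing the endpoint values. Beyond this bookkeeping, the argument is a routine application of Sion's minimax combined with the variational characterization of $\height{\con\psi}$ from \cref{lem:height-equivalence}.
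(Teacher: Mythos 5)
Your proof is correct and takes a genuinely different, more self-contained route than the paper's. The paper first establishes \cref{lem:lb-is-k-b-star}—that $\lbipm\B{\M^1}$ coincides with the optimal bound for the two-point pushforward family and is convex and lower semicontinuous—using Krein--Milman on $\B$, the pushforward reduction \cref{lem:lb-pushforward}, and compactness of the Bernoulli simplex, and then passes through the duality of \cref{thm:ipm-lb-smoothed} to identify $\cgf\B{\M^1}=\con{\lbipm\B{\M^1}}=\sup_{\nu\in M}\cgf{\ident_{\set{-1,1}}}\nu$ before invoking Sion's minimax. You instead give two matching inequalities: the upper bound is a direct pointwise estimate ($\con\psi$ being convex on $[\lambda-t,\lambda+t]$ is bounded by its endpoint values, giving $\cgf g\nu(t)\leq\height{\con\psi}(2t)$ for any $g\in\B$, $\nu\in\M^1$), and the lower bound uses an explicit two-point construction plus Sion, with the endpoint discontinuities of the essential-supremum constraint cleanly sidestepped by the concavity of $p\mapsto\inf_\lambda f(p,\lambda)$. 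Both proofs converge on the same minimax computation via \cref{lem:height-equivalence} and Sion. Your approach is more elementary and avoids the structural machinery, but that machinery is not wasted in the paper: \cref{lem:lb-is-k-b-star} (convexity and lower semicontinuity of $\lbipm\B{\M^1}$) is still needed downstream in \cref{cor:vajda} to recover $\lbipm\B{\M^1}$ from its conjugate. One cosmetic nit: your parenthetical claiming that $\lambda$ with $\lambda+t>\phi'(\infty)$ ``cannot achieve the infimum'' is not literally true when $\esssup_\nu g<1$—but this is harmless, since restricting to $\lambda+t\leq\phi'(\infty)$ only shrinks the feasible set (which is legitimate because $\esssup_\nu(tg)\leq t$ ensures such $\lambda$ are admissible), and an infimum over a subset still upper-bounds $\cgf g\nu(t)$.
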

\begin{proof}
	For $M=\set{p\cdot \delta_{\set 1}+(1-p)\cdot\delta_{\set{-1}}
		\given p\in[0,1]}$,
	we have by \cref{lem:lb-is-k-b-star,thm:ipm-lb-smoothed}
	that $\cgf\B{\M^1}=\con{\lbipm\B{\M^1}}=
	\con{\lb[M]{\ident_{\set{-1,1}}}M}=\sup_{\nu\in M}\cgf{\ident_{\set{-1,1}}}\nu$.
	For $p\in[0,1]$ we have
	$\cgf{\ident_{\set{-1,1}}}{p\cdot \delta_{\set 1}+(1-p)\cdot\delta_{\set{-1}}}
	=\inf_{\lambda\in\R}\paren[\big]{
		p\cdot \con\psi(t+\lambda)+(1-p)\cdot \con\psi(-t+\lambda)}$,
	so that
	\begin{equation}
		\cgf\B{\M^1}(t)=\sup_{p\in[0,1]}\inf_{\lambda\in\R}
		\paren[\big]{p\cdot\con\psi(\lambda+t)+(1-p)\cdot\con\psi(\lambda-t)}
		\,.
		\label{eqn:kb-minmax}
	\end{equation}
	This mixed optimization problem is convex in $\lambda$ for each $p$
	and linear in $p$ for each $\lambda\in\R$, and the interval $[0,1]$
	is compact, so by the Sion minimax theorem \citep{Sion58} we can
	swap the supremum and infimum to get
	\begin{align*}
		\cgf\B{\M^1}(t)
		&=\inf_{\lambda\in\R}\sup_{p\in[0,1]}
		\paren[\big]{p\cdot\con\psi(\lambda+t)+(1-p)\cdot\con\psi(\lambda-t)}
		\\
		&=\inf_{\lambda\in\R}\max\set{\con\psi(\lambda+t),\con\psi(\lambda-t)}
	\end{align*}
	so the claim follows from \cref{lem:height-equivalence}.
\end{proof}

\begin{example}
\label{exmp:optimal-hoeffding}
	For the Kullback--Leibler divergence, since $\cgf g\nu(t)
	=\log\ex\nu{e^{t(g-\ex\nu g)}}$ as in \cref{ex:dv},
	\cref{lem:k-b-value,example:height-kl}
	imply that the optimal bound on the
	cumulant generating function of a random variable $g$ with $\ex\nu g=0$ and
	$m\leq g\leq M$ $\nu$-a.s. is $\log\ex\nu{e^{tg}}\leq
	\height{\con\psi}{[(M-m)t]}
	=-1+\frac{M-m}2\coth\frac{M-m}2+\log\frac{2\sinh((M-m)t/2)}t$. This is
	a refinement of Hoeffding's lemma, which gives the upper bound of $(M-m)^2t^2/8$,
	which we will also derive as consequence of a general quadratic
	relaxation on the height function in \cref{exmp:hoeffding}.
\end{example}

\begin{corollary}
\label{cor:vajda}
	$\lbipm\B{\M^1}(\eps)=\con{\height{\con\psi}}(\eps/2)$ for all $\eps\geq0$.
	In particular, if $\height{\con\psi}$ is differentiable then
	$\lbipm\B{\M^1}(2\height{\con\psi}'(x))=x\height{\con\psi}'(x)
	-\height{\con\psi}(x)$.
\end{corollary}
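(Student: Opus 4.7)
The plan is to chain \cref{lem:lb-is-k-b-star} and \cref{lem:k-b-value}, and then identify the resulting convex conjugate with $\con{\height{\con\psi}}$. By \cref{lem:lb-is-k-b-star}, $\lbipm\B{\M^1}(\eps)=\con{\cgf\B{\M^1}}(\eps)$ for $\eps\geq 0$, and by \cref{lem:k-b-value}, $\cgf\B{\M^1}(t)=\height{\con\psi}(2t)$ for $t\geq 0$. Since $\B$ is closed under negation, $\cgf\B{\M^1}$ is even, so $\cgf\B{\M^1}(t)=\height{\con\psi}(2\abs{t})$ for all $t\in\R$.

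Next I would compute the conjugate directly. Extending $\height{\con\psi}$ to $\R$ by $+\infty$ on $(-\infty,0)$, for $\eps\geq 0$ the following chain holds:
\[
\con{\cgf\B{\M^1}}(\eps)
=\sup_{t\in\R}\set*{t\eps-\height{\con\psi}(2\abs{t})}
=\sup_{t\geq 0}\set*{t\eps-\height{\con\psi}(2t)}
=\sup_{s\geq 0}\set*{s\cdot\eps/2-\height{\con\psi}(s)}
=\con{\height{\con\psi}}(\eps/2),
\]
where the second equality uses $\eps\geq 0$ together with the evenness of $t\mapsto\height{\con\psi}(2\abs{t})$ to restrict to $t\geq 0$ without loss, and the third substitutes $s=2t$. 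This gives the main identity.

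For the ``in particular'' claim, I would invoke the Fenchel--Young inequality with equality (\cref{prop:conj}). The function $\height{\con\psi}$ is convex---by \cref{lem:height-equivalence} it is the infimum over $\lambda$ of the jointly convex family $(w,\lambda)\mapsto\max\set*{\con\psi(\lambda+w/2),\con\psi(\lambda-w/2)}$---and is non-decreasing on $\R_{\geq 0}$ (the sublevel-volume constraint strengthens as $w$ grows), so differentiability at $x\geq 0$ forces $\height{\con\psi}'(x)\geq 0$ and $\partial\height{\con\psi}(x)=\set*{\height{\con\psi}'(x)}$. Fenchel--Young with equality then yields $\con{\height{\con\psi}}(\height{\con\psi}'(x))=x\,\height{\con\psi}'(x)-\height{\con\psi}(x)$, and substituting $\eps=2\,\height{\con\psi}'(x)\geq 0$ into the main identity gives the desired formula.

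There is essentially no obstacle beyond bookkeeping once one recognizes the substitution $s=2t$ and the evenness reduction; the main identity is really just a rescaling of \cref{lem:k-b-value} pushed through the conjugate, and the differentiability statement is the standard Fenchel--Young equality condition.
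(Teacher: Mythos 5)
Your proof is correct and follows essentially the same route as the paper's: chain \cref{lem:lb-is-k-b-star} and \cref{lem:k-b-value}, identify the resulting conjugate with $\con{\height{\con\psi}}(\eps/2)$ via the rescaling $s=2t$, and obtain the supplemental claim from the Fenchel--Young equality condition for differentiable convex functions. You have merely made explicit the small bookkeeping steps (evenness reduction, the substitution, convexity of $\height{\con\psi}$) that the paper's one-line proof leaves to the reader.
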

\begin{proof}
	The main claim is immediate from \cref{lem:lb-is-k-b-star,lem:k-b-value},
	and the supplemental claim follows from the explicit expression for the
	convex conjugate for differentiable functions.
\end{proof}

\begin{example}
	For the Kullback--Leibler divergence, using \cref{example:height-kl},
	the supplemental claim of \cref{cor:vajda} applied to $x=2t$ gives
	$\lbipm\B{\M^1}(V(t))=\log\frac t{\sinh t}+t\coth t-\frac{t^2}{\sinh^2 t}$
	for $V(t)=2\coth t-\frac t{\sinh^2 t} -1/t$, which is exactly the
	formula derived by \citet{FHT03}.
\end{example}

\begin{remark}
	\Cref{cor:vajda} shows that lower bounds on the $\phi$-divergence
	in terms of the total variation are equivalent to upper bounds on
	the height-for-width function $\height{\con\psi}$, equivalently to
	lower bounds on the sublevel set volume function of $\con\psi$.
	The complementary problem of obtaining upper bounds on the
	sublevel set volume function is of interest in harmonic analysis
	due to its connection to studying oscillatory integrals (e.g.\
	\citet[Chapter 8, Proposition 2]{S93} and \citet[\S1-2]{CCW99}), and it
	would be interesting to see if techniques from that literature
	could be applied in this context.
\end{remark}

\begin{remark}
\label{rmrk:height-dagger}
	Since the total variation $\tv\mu\nu$ is symmetric in terms of
	$\mu$ and $\nu$, the optimal lower bound on $\di\mu\nu$
	in terms of $\tv\mu\nu$ is the same as the optimal lower bound
	on $\di\nu\mu=\di[\phi^\dagger]\mu\nu$ for $\phi^\dagger=x\phi(1/x)$.
	By \cref{cor:vajda}, this implies that $\height{\con\psi}
	=\height{\con{(\psi^\dagger)}}$ (note that this can also be
	derived directly from the definition).
\end{remark}

\subsubsection{Application to Pinsker-type inequalities}
\label{sec:pinsker}

\Cref{cor:vajda} implies that to obtain Pinsker-type inequalities, it suffices
to upper bound the height function $\height{\con\psi}(t)$ by a quadratic
function of $t$. In this section, we show such bounds under mild assumptions
on $\con\psi$, both rederiving optimal Pinsker-type inequalities for the
Kullback--Leibler divergence and $\alpha$-divergences for $-1\leq\alpha\leq 2$
due to \citea{Gilardoni}{G10}, and deriving new but not necessarily optimal
Pinsker-type inequalities for all $\alpha\in\R$. We proceed by giving two
arguments approximating the minimizer $\lambda(t)$ in the optimization
problem defining the height (\cref{lem:height-equivalence}), and an argument
that works directly with the optimal $\lambda(t)$.

We begin with the crudest but most widely applicable bound.
\begin{corollary}
\label{cor:psipp-monotone}
If $\phi$ is twice differentiable on its domain and $\phi''$ is monotone,
then $\height{\con\psi}(t)\leq t^2/(2\phi''(1))$ for all $t\geq 0$.
Equivalently, for such $\phi$ we have that $\di\mu\nu\geq \frac{\phi''(1)}8
\cdot \tv\mu\nu^2$ for all $\mu,\nu\in\M^1$.
\end{corollary}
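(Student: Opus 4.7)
The plan is to derive a one-sided quadratic upper bound on $\con\psi$, then use the infimum formula of \cref{lem:height-equivalence} to bound the height by evaluating at a judicious $\lambda$, and finally dualize via \cref{cor:vajda} for the equivalent formulation. Recall from the normalizations of \cref{sec:singleg} that $\psi(x)\eqdef\phi(x+1)$ satisfies $\psi(0)=\psi'(0)=0$ and $\psi''(0)=\phi''(1)\eqdef\sigma$; by hypothesis $\phi''$, hence $\psi''$, is monotone on its domain, so I split into two cases based on the direction of monotonicity.

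Suppose first that $\psi''$ is non-decreasing. Then $\psi''(x)\geq\sigma$ for every $x\geq 0$ in the interior of $\dom\psi$, and integrating twice from $0$ yields $\psi(x)\geq \sigma x^2/2$ there; this extends to all of $\R_{\geq 0}$ by lower semicontinuity at the boundary of $\dom\psi$ and by $\psi\equiv +\infty$ outside. Moreover, since $\psi'(0)=0$ and $\psi'$ is non-decreasing, for $y\geq 0$ the map $x\mapsto xy-\psi(x)$ is non-decreasing on $(-\infty,0]$, so that
\[
\con\psi(y)=\sup_{x\geq 0}\set[\big]{xy-\psi(x)}\leq \sup_{x\geq 0}\set[\big]{xy-\sigma x^2/2}=\frac{y^2}{2\sigma}\quad\text{for all }y\geq 0.
\]
Applying \cref{lem:height-equivalence} with $\lambda=w/2$ then gives $\height{\con\psi}(w)\leq \max\set{\con\psi(w),\con\psi(0)}=\con\psi(w)\leq w^2/(2\sigma)$. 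The case where $\psi''$ is non-increasing is handled symmetrically: one obtains $\con\psi(y)\leq y^2/(2\sigma)$ for $y\leq 0$, and the choice $\lambda=-w/2$ yields $\height{\con\psi}(w)\leq \con\psi(-w)\leq w^2/(2\sigma)$.

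For the ``equivalently'' clause, I invoke \cref{cor:vajda} to write $\lbipm\B{\M^1}(\eps)=\con{\height{\con\psi}}(\eps/2)$. Since conjugation reverses the pointwise order and $\height{\con\psi}$ is defined on $\R_{\geq 0}$, the bound $\height{\con\psi}(t)\leq t^2/(2\sigma)$ implies
\[
\lbipm\B{\M^1}(\eps)\geq \sup_{t\geq 0}\set[\big]{t\eps/2-t^2/(2\sigma)}=\sigma\eps^2/8
\]
for $\eps\geq 0$, which is exactly the claimed inequality $\di\mu\nu\geq \frac{\phi''(1)}{8}\tv\mu\nu^2$.

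The only delicate point is extending the quadratic lower bound on $\psi$ from the interior of $\dom\psi$ to the entire relevant half-line, which is resolved by lower semicontinuity together with the convention $\psi=+\infty$ off its domain; the remainder is a routine application of Fenchel conjugacy combined with the infimum representation of the height. Notably, the monotonicity hypothesis on $\phi''$ is used only to choose a single half-line on which the quadratic majorization of $\con\psi$ is valid, and the corresponding one-sided choice of $\lambda$ exploits the fact that $\con\psi(0)=0$ to discard the opposite endpoint in \cref{lem:height-equivalence}.
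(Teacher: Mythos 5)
Your proof is correct and follows essentially the same approach as the paper's: a one-sided quadratic majorization of $\con\psi$ obtained from the monotonicity of $\phi''$, together with the choice $\lambda=\pm w/2$ in \cref{lem:height-equivalence} so that one endpoint is $0$ and $\con\psi(0)=0$. You make explicit a couple of steps the paper leaves implicit (restricting the supremum defining $\con\psi(y)$ to $x\geq 0$, and the dualization via \cref{cor:vajda}), while the paper additionally records the trivial case $\phi''(1)=0$ separately.
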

\begin{proof}
	If $\phi''(1)=0$, then the claim is trivial, so we assume that $\phi''(1)>0$.
	If $\phi''$ is non-decreasing, we have by Taylor's theorem that
	$\phi(x)\geq \frac{\phi''(1)}2 (x-1)^2$ for $x\geq 1$, equivalently
	$\psi(x)\geq \frac{\phi''(1)}2x^2$ for $x\geq 0$, so that
	$\con\psi(x)\leq \frac{1}{2\phi''(1)}x^2$ for $x\geq 0$.
	Then $\height{\con\psi}(t)=\inf_{\lambda\in\R}\max\set{\con\psi(\lambda-t/2),
	\con\psi(\lambda+t/2)}\leq\max\set{\con\psi(0),\con\psi(t)}\leq t^2/(2\phi''(1))$.
	On the other hand, if $\phi''$ is non-increasing, then analogously we have
	$\con\psi(x)\leq \frac{1}{2\phi''(1)}x^2$ for $x\leq 0$, so that
	Then $\height{\con\psi}(t)=\inf_{\lambda\in\R}\max\set{\con\psi(\lambda-t/2),
	\con\psi(\lambda+t/2)}\leq\max\set{\con\psi(0),\con\psi(-t)}\leq t^2/(2\phi''(1))$.
\end{proof}

\begin{example}
\label{example:crude-pinsker}
	Most of the standard $\phi$-divergences satisfy the condition of
	\cref{cor:psipp-monotone}, in particular the $\alpha$-divergences given
	by $\phi_\alpha=\frac{x^\alpha-\alpha(x-1)-1}{\alpha(\alpha-1)}$
	have $\phi''_\alpha(x)=x^{\alpha-2}$ which is monotone for all $\alpha$.
	As a result, we get for all $\alpha$ the (possibly suboptimal) Pinsker
	inequality $\di[\phi_\alpha]\mu\nu\geq\frac18\cdot\tv\mu\nu^2$ for
	all $\mu,\nu\in\M^1$. Such a bound appears to be new for
	$\alpha>2$, but for $\alpha\in[-1,2]$ \citea{Gilardoni}{G10} established
	the better bound $\di[\phi_\alpha]\mu\nu\geq\frac12\cdot\tv\mu\nu^2$,
	extending the standard case of the Kullback--Leibler divergence $\alpha=1$.
	We rederive this optimal constant for these divergences below, and also
	give general conditions under which such bounds hold.
\end{example}

\Cref{cor:psipp-monotone} used the crude linear relaxation
$-t/2\leq\lambda(t)\leq t/2$. In the following Corollary, we derive a tighter
Pinsker-type inequality by using a Taylor expansion of $\lambda(t)$.

\begin{corollary}
\label{cor:height-thrice-diff}
	Suppose that $\phi$ strictly convex and twice differentiable on its domain,
	thrice differentiable at $1$ and that
	\[
		\frac{27\phi''(1)}{\paren*{3-z\phi'''(1)/\phi''(1)}^3}\leq \phi''(1+z)
	\]
	for all $z\geq -1$.
	Then $\height{\con\psi}(t)\leq t^2/(8\phi''(1))$ for all
	$t\geq 0$, equivalently, for such $\phi$ we have $\di\mu\nu\geq
	\frac{\phi''(1)}2 \cdot \tv\mu\nu^2$ for all $\mu,\nu\in\M^1$.
\end{corollary}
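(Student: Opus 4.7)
By \cref{cor:vajda}, the claim is equivalent to showing $\height{\con\psi}(t)\le t^2/(8\phi''(1))$ for all $t\ge 0$. My plan is to construct an explicit convex lower bound $\tilde\psi\le\psi$ whose conjugate I can analyze directly, and then apply the monotonicity $f_1\le f_2 \Rightarrow \sls{f_1}\ge\sls{f_2}\Rightarrow\height{f_1}\le\height{f_2}$: dualizing the lower bound gives $\con\psi\le\con{\tilde\psi}$, so it suffices to prove the corresponding inequality for $\height{\con{\tilde\psi}}$.

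Setting $r\eqdef\phi'''(1)/\phi''(1)$, the natural candidate is
\[
\tilde\psi(z) \eqdef \frac{3\phi''(1)\,z^2}{2(3-zr)},
\]
defined where $z\ge -1$ and $3-zr>0$ and equal to $+\infty$ elsewhere. This is chosen so that $\tilde\psi''(z)=27\phi''(1)/(3-zr)^3$ coincides with the right-hand side of the hypothesis, while the matching initial conditions $\tilde\psi(0)=\tilde\psi'(0)=0=\psi(0)=\psi'(0)$ allow one to integrate $\psi''\ge\tilde\psi''$ twice from $z=0$ to obtain $\psi\ge\tilde\psi$ wherever $\tilde\psi$ is finite. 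Where $\tilde\psi=+\infty$---possible only when $r>0$ and $z\ge 3/r$---the hypothesis would require $\phi''(1+z)=+\infty$, which twice differentiability on $\dom\phi$ forces to mean $1+z\notin\dom\phi$, so $\psi(z)=+\infty=\tilde\psi(z)$ there as well.

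The conjugate is then computed in closed form via the substitution $s=3-zr$: on the range of $y$ whose preimage under $\tilde\psi'$ lies in the interior of the effective domain, one finds
\[
\con{\tilde\psi}(y) = \frac{9\phi''(1)}{2r^2}\Bigl(\sqrt{1+\tfrac{2ry}{3\phi''(1)}}-1\Bigr)^2
\]
(with limiting expression $y^2/(2\phi''(1))$ as $r\to 0$), while $\con{\tilde\psi}$ is affine on the remaining range of $y$, corresponding to the boundary critical point $z=-1$. A direct case analysis on $h$---splitting on whether the sublevel set $\{\con{\tilde\psi}\le h\}$ fits inside the ``quadratic'' region or extends into the affine tail---then shows that this sublevel set is an interval of length at least $2\sqrt{2\phi''(1)\,h}$. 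Taking $h=t^2/(8\phi''(1))$ then yields sublevel set length at least $t$, giving $\height{\con{\tilde\psi}}(t)\le t^2/(8\phi''(1))$ and completing the proof.

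The main technical obstacle is the ``affine tail'' subcase of the length argument: after parametrizing appropriately, the excess of the total length over $2\sqrt{2\phi''(1)\,h}$ simplifies (via the identity $(1-\delta)^2+3\delta^2/r-3/(3+r)=\epsilon^2(r+3)/r$ under the substitution $\delta\eqdef|r|\sqrt{2h/(9\phi''(1))}=|r|/(3+r)+\epsilon$) to a manifestly non-negative quantity. Care is required for sign conventions when $r<0$---where $3/(3+r)>1$ and the quadratic region is bounded above by a boundary constraint rather than cut off from below---but the final length bound is symmetric in $r$.
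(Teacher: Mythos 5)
Your proof takes a genuinely different route from the paper's. The paper works directly with $\con\psi$: it picks the trial value $\lambda(t)=L(t)=-ct^2/24$ (the second-order Taylor expansion of the optimal $\lambda$, with $c=-\phi'''(1)/\phi''(1)^2$), shows the reduced inequality $\con\psi(L(t)+st/2)\le t^2/(8\phi''(1))$ by differentiating, inverting $(\con\psi)'$ via strict convexity of $\phi$, and substituting until the hypothesis on $\phi''$ emerges. You instead push the relaxation to the primal side: you construct an explicit convex minorant $\tilde\psi\le\psi$ with $\tilde\psi''$ matching the right-hand side of the hypothesis, compute $\con{\tilde\psi}$ in closed form (your formula is correct --- I verified the calculus), and then bound the length of its sublevel sets. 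Your approach is attractive because it does not use strict convexity to invert any derivative and reduces everything to a one-dimensional algebra problem about $\con{\tilde\psi}$; the paper's approach has the advantage of never leaving the original $\con\psi$ and being more obviously a ``relaxation of the height's defining infimum.''

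That said, a few points in your argument are imprecise or incomplete. First, the claim that $\tilde\psi=+\infty$ (for $z\ge -1$) can happen ``only when $r>0$ and $z\ge 3/r$'' misses the case $r<-3$, where $3/r\in(-1,0)$ and $\tilde\psi=+\infty$ on $[-1,3/r]$. Second, the justification ``the hypothesis would require $\phi''(1+z)=+\infty$'' is not correct as written: for $r>0$ and $z>3/r$ one has $3-zr<0$, so the hypothesis becomes $\text{(negative)}\le\phi''(1+z)$ and is vacuous, not constraining. The correct argument is a boundary one --- the hypothesis for $z\to(3/r)^{\mp}$ forces $\phi''(1+z)\gtrsim 27\phi''(1)/(3-zr)^3$, and integrating this twice shows $\phi(1+z)\to+\infty$, hence $1+3/r\notin\dom\phi$ and $\psi=+\infty$ beyond. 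Third, while your identity checks out for the $r>0$ affine tail (I verified it gives the excess length $\frac{3\phi''(1)}{2r}\cdot\frac{(r+3)\epsilon^2}{r}\ge 0$), the $r<0$ analysis is not ``merely symmetric'': for $-3<r<0$ the domain of $\con{\tilde\psi}$ is bounded above at $y=-3\phi''(1)/(2r)$, so the sublevel set can be cut off on the right rather than extending only into a left affine tail, and for $r\le -3$ the $z=-1$ boundary point is not in $\dom\tilde\psi$ at all. These cases do still give sublevel-set length $\ge 2\sqrt{2\phi''(1)h}$ (e.g.\ for $r\le -3$ the truncated length $\frac{3\phi''(1)(1+\delta)^2}{2|r|}\ge\frac{6\phi''(1)\delta}{|r|}$ follows from $(1-\delta)^2\ge 0$), but they need to be worked out rather than waved through. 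With these repairs the argument is sound.
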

\begin{remark}
	The Pinsker constant in \cref{cor:height-thrice-diff} is
	best-possible, since if $\phi$ is twice-differentiable at $1$, then
	Taylor's theorem gives the local expansion
	$\phi(x)=\phi''(1)/2\cdot(x-1)^2 + o\paren*{(x-1)^2}$, and thus the
	distributions $\mu_\eps=(1/2+\eps/2,1/2-\eps/2)$ and $\nu=(1/2,1/2)$
	on the set $\set{0,1}$ have $\tv{\mu_\eps}\nu=\eps$ and
	$\di{\mu_\eps}\nu=\phi''(1)/2\cdot\eps^2 +o(\eps^2)$.
\end{remark}
\begin{proof}
	Under suitable regularity assumptions on $\phi$ and
	$\con\psi$, one can easily show that the second order
	expansion of the function $\lambda(t)$ implicitly defined
	by $\con\psi(\lambda(t)+t/2)=\con\psi(\lambda(t)-t/2)$
	is $L(t)=-\frac{ct^2}{24}$ for
	$c=(\con\psi)'''(0)/(\con\psi)''(0)=-\phi'''(1)/\phi''(1)^2$.
	Taking this as given, we show under the stated assumptions
	of the proposition that for $L(t)=-\frac{ct^2}{24}$
	and $c=-\phi'''(1)/\phi''(1)^2$, we have that $\con\psi(L(t)+st/2)\leq
	t^2/(8\phi''(1))$ for $s\in\set{\pm 1}$. Since both sides are
	$0$ at $0$, it thus suffices to show $\paren[\big]{L'(t)+s/2}
	(\con\psi)'(L(t)+st/2)\leq t/(4\phi''(1))$. Now, let $\lesseqgtr$
	indicate $\leq$ if $L'(t)+s/2\geq 0$ and $\geq$ if $L'(t)+s/2\leq
	0$. Since $\phi$ strictly convex implies
	$\psi'=((\con\psi)')^{-1}$ is strictly increasing,
	we thus have that this is equivalent to
	\begin{equation}
		L(t)+st/2 \lesseqgtr \psi'\paren*{\frac{t/(4\phi''(1))}{L'(t)+s/2}}
		\label{eqn:relaxedheight}
	\end{equation}
	Write $z=\frac{t/(4\phi''(1))}{L'(t)+s/2}=\frac{t/(4\phi''(1))}{-ct/12+s/2}$
	so that $z$ has the same sign as $L'(t)+s/2$ and
	$t=\frac{6sz\phi''(1)}{3+cz\phi''(1)}$.
	Plugging this in and using the fact that $s^2=1$, we wish to show that
	\begin{equation}
		\frac{3z\phi''(1)(6+cz\phi''(1))}{2(3+cz\phi''(1))^2}-\psi'(z)
		\lesseqgtr 0
	\label{eqn:relaxedheightz}
	\end{equation}
	for all $z$ such that $t\geq 0$.
	The left hand side of \cref{eqn:relaxedheightz} is $0$ at $0$,
	so since $z>0$ implies $\lesseqgtr$ is $\leq$ and $z<0$ implies
	$\lesseqgtr$ is $\geq$, it suffices to show that the derivative
	of the left-hand side of \cref{eqn:relaxedheightz} with respect
	to $z$ is non-positive for all $z$. This derivative is
	\begin{equation}
		\frac{27\phi''(1)}{\paren*{3+cz\phi''(1)}^3}-\psi''(z)
		=
		\frac{27\phi''(1)}{\paren*{3-z\phi'''(1)/\phi''(1)}^3}-\phi''(1+z)
		\label{eqn:relaxedheightzp}
	\end{equation}
	which since $\dom\psi\subseteq [-1,\infty)$ is non-positive for
	all $z$ if and only if it is non-positive for all $z\geq -1$.
\end{proof}

\begin{example}[\citet{G10}]
\label{example:height-alpha}
	For the $\alpha$-divergences, we have $\phi_\alpha''(x)=x^{\alpha-2}$,
	and $\phi_\alpha'''(x)=(\alpha-2)x^{\alpha-3}$ so that
	\cref{cor:height-thrice-diff} is equivalent to the condition
	$\frac{27}{(3+(2-\alpha)z)^3}\leq (1+z)^{\alpha-2}$ for $z\geq
	-1$. Note that this is true for $z=0$ for all $\alpha$, and the
	derivative of $\frac{27(1+z)^{2-\alpha}}{(3+(2-\alpha)z)^3}$
	with respect to $z$ is
	$\frac{27(\alpha-2)(\alpha+1)z(1+z)^{1-\alpha}}{(3+(2-\alpha)z)^4}$.
	Thus, for $\alpha\in[-1,2]$ the sign of the derivative is
	the opposite of the sign of $z$, and the condition holds for
	all $z\geq -1$, recovering the result of \citea{Gilardoni}{G10}
	as desired.
\end{example}

\begin{example}
\label{exmp:hoeffding}
	For the case of the Kullback--Leibler divergence,
	\cref{example:height-alpha} rederives Pinsker's inequality and Hoeffding's
	lemma.
\end{example}

Finally, we show that one can also obtain optimal Pinsker-type
inequalities while arguing directly about the optimal $\lambda(t)$,
for which we need the following lemma.
\begin{lemma}
\label{lem:height-derivative}
	Suppose that $f:\R\to\overline\R$ is a convex function continuously
	differentiable on $(a,b)$ the interior of its domain with a unique
	global minimum and such that $\lim_{x\to a^+}f(x)=\infty=\lim_{x\to b^-}f(x)$.
	Then there is a continuously differentiable function $\lambda:(a-b,b-a)\to\R$ such
	that $\height f(t)=f(\lambda(t)+t/2)=f(\lambda(t)-t/2)$
	and
	\begin{align}
		\lambda'(t)
		&=
		\frac{f'\left(\lambda(t)+t/2\right)+f'\left(\lambda(t)-t/2\right)}{2
		\left(f'\left(\lambda(t)-t/2\right)-f'\left(\lambda(t)+t/2\right)\right)}
		\label{eqn:lambdader}\\
		\height f'(t)
		&=
		\frac{f'(\lambda(t)+t/2)f'(\lambda(t)-t/2)}{f'(\lambda(t)-t/2)-f'(\lambda(t)+t/2)}
		\,.
		\label{eqn:heightder}
	\end{align}
\end{lemma}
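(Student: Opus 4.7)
The plan is to proceed in four steps: establish existence of $\lambda(t)$ by an intermediate value argument, prove uniqueness from the unique-minimum hypothesis, obtain $C^1$ regularity via the implicit function theorem, and finally derive the two derivative formulas by implicit differentiation and the chain rule.

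For existence, fix $t\in(a-b,b-a)$ with $t>0$ and consider the continuous function $G_t:\lambda\mapsto f(\lambda+t/2)-f(\lambda-t/2)$ on the nonempty open interval $(a+t/2,\,b-t/2)$. The boundary behavior $\lim_{x\to a^+}f(x)=\lim_{x\to b^-}f(x)=+\infty$ sends $G_t$ to $-\infty$ at the left endpoint (where $f(\lambda-t/2)\to+\infty$ while $f(\lambda+t/2)$ remains bounded) and to $+\infty$ at the right, so the intermediate value theorem yields $\lambda(t)$ with $G_t(\lambda(t))=0$. By \cref{lem:height-equivalence}, the common value $f(\lambda(t)\pm t/2)$ equals $\height f(t)$, and the case $t<0$ is symmetric (indeed $\lambda(-t)=\lambda(t)$ by this uniqueness). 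For uniqueness, convexity of $f$ makes $f'$ non-decreasing, hence $G_t$ is non-decreasing, so its zero set is an interval; if that interval contained distinct points $\alpha<\beta$, then $G_t'\equiv 0$ on $[\alpha,\beta]$ would force $f'$ to be constant on $[\alpha-t/2,\beta+t/2]$ with some slope $c$, and the condition $G_t(\alpha)=c\cdot t=0$ would give $c=0$, making $f$ constant on that interval and contradicting the uniqueness of its global minimum.

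The same non-degeneracy argument applied at $(\lambda(t),t)$ shows that $\partial_\lambda F(\lambda(t),t)=f'(\lambda(t)+t/2)-f'(\lambda(t)-t/2)>0$ for $t\neq 0$, where $F(\lambda,t)\eqdef f(\lambda+t/2)-f(\lambda-t/2)$. Since $F\in C^1$ by hypothesis, the implicit function theorem gives $\lambda\in C^1$ on $(a-b,b-a)\setminus\{0\}$, and implicit differentiation of $F(\lambda(t),t)=0$ using $\partial_t F=\tfrac12(f'(\lambda+t/2)+f'(\lambda-t/2))$ immediately yields \cref{eqn:lambdader}. Continuity of $\lambda$ at $t=0$ with $\lambda(0)=m$ (the unique minimizer) follows from a standard subsequence argument combined with uniqueness, and the symmetry $\lambda(t)=\lambda(-t)$ combined with $C^1$ regularity on the punctured interval gives $\lambda'(0)=0$, extending $\lambda$ to a $C^1$ function on all of $(a-b,b-a)$.

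Finally, differentiating $\height f(t)=f(\lambda(t)+t/2)$ by the chain rule gives $\height f'(t)=f'(\lambda(t)+t/2)(\lambda'(t)+\tfrac12)$, and substituting \cref{eqn:lambdader} and simplifying (the $f'(\lambda+t/2)$ terms in numerator cancel, leaving a clean $f'(\lambda-t/2)$ in the numerator) yields \cref{eqn:heightder}. The main obstacle is the degeneracy $\partial_\lambda F(m,0)=0$ at $t=0$, where IFT fails and the claimed formulas become indeterminate; this is resolved by the symmetry-plus-continuity argument above, which is the only nontrivial point beyond correctly setting up the IFT.
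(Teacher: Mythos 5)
You have correctly noticed a genuine gap that the paper's proof glosses over: at $t=0$, $\partial_\lambda F(m,0)=f'(m)-f'(m)=0$, so the implicit function theorem does not apply and the formula for $\lambda'(t)$ becomes $0/0$. However, your proposed resolution does not close the gap. You argue that evenness of $\lambda$ together with $C^1$ regularity on the punctured interval forces $\lambda'(0)=0$; but evenness only gives that $\lambda'$ is odd on $(a-b,b-a)\setminus\{0\}$, which shows $\lambda'(0^-)=-\lambda'(0^+)$ \emph{if} both one-sided limits exist. That is exactly what remains to be established. The oddness is consistent with the two one-sided derivatives being equal and opposite nonzero numbers, in which case $\lambda$ is not differentiable at $0$ — so the argument is circular.

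In fact, the lemma as stated is false under the $C^1$ hypothesis alone. Take
\[
	f(x)=\begin{cases}x^2 & x\geq 0\\ x^4 & x<0\end{cases}
\]
which is convex, $C^1$ on $\R=(a,b)$, has unique global minimum at $m=0$, and tends to $+\infty$ at $\pm\infty$. For $t>0$, the equation $f(\lambda+t/2)=f(\lambda-t/2)$ with $\lambda+t/2>0>\lambda-t/2$ reads $(\lambda+t/2)^2=(\lambda-t/2)^4$; solving perturbatively gives $\lambda(t)=-t/2+t^2+O(t^3)$, hence $\lambda'(0^+)=-1/2$. By evenness $\lambda'(0^-)=+1/2$, so $\lambda$ has a kink at $0$ and is not $C^1$ there. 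The lemma's conclusion (and its derivative formulas, which evaluate to $0/0$ at $t=0$) therefore fails without further hypotheses. The correct repair is to either restrict the conclusion to $(a-b,b-a)\setminus\{0\}$, or add a regularity assumption such as $f$ being twice differentiable at its minimizer $m$ with $f''(m)>0$; the latter is what holds in the paper's actual application (\cref{cor:psipp-concave}), where $\phi''>0$ is assumed and $\con\psi$ is consequently $C^2$ by the cited duality result, forcing $\lambda(t)=m+o(t)$ and hence $\lambda'(0)=0$.

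Apart from the treatment at $t=0$, your proof follows the same route as the paper (IVT for existence, strict monotonicity for uniqueness, IFT for smoothness, implicit differentiation for the formulas), and your argument that $\partial_\lambda F(\lambda(t),t)>0$ for $t\neq 0$ correctly fills in a step that the paper leaves implicit (the paper says $G_t$ is ``strictly increasing,'' which does not by itself give a nonzero derivative).
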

\begin{proof}
	For each $t\in(a-b,b-a)$, the function $\lambda\mapsto f(\lambda+t/2)-f(\lambda-t/2)$
	is continuously differentiable on its domain $(a+\frac{\abs t}2,b-\frac{\abs t}2)$,
	with limits $-\infty$ and $\infty$. Thus, for all such $t$ there exists
	$\lambda$ satisfying the implicit equation $f(\lambda(t)+t/2)=f(\lambda(t)-t/2)$,
	which by \cref{lem:height-equivalence} also defines $\height f(t)$.
	Furthermore, the fact that $f$ has a unique global minimum implies this function
	is strictly increasing in $\lambda$ for each $t$, and thus the implicit function
	theorem guarantees the existence of the claimed continuously differentiable
	$\lambda(t)$.

	Given the existence of $\lambda(t)$, we have by its definition that
	$\frac{d}{dt} f(\lambda(t)+t/2)=\frac{d}{dt} f(\lambda(t)-t/2)$,
	which implies by the chain rule the claimed value for
	$\lambda'(t)$, which since $\height f'(t)=\frac{d}{dt}
	f(\lambda(t)+t/2)$ implies the claimed expressions for the
	derivative of $\height f$.
\end{proof}

Using the previous lemma, we obtain the same optimal Pinsker-type inequality
as in \cref{cor:height-thrice-diff} under related but incomparable
assumptions.

\begin{corollary}
\label{cor:psipp-concave}
If $\phi$ is strictly convex, has a positive second derivative on its domain,
$1/\phi''$ is concave, and $\lim_{x\to\phi'(\infty)^-}
\con\psi(x)=\infty$ (e.g.~if $\phi'(\infty)=\infty$), then
$\height{\con\psi}(t)\leq t^2/(8\phi''(1))$ for
all $t\geq 0$. Equivalently, for such $\phi$ we have $\di\mu\nu\geq
	\frac{\phi''(1)}2 \cdot \tv\mu\nu^2$ for all $\mu,\nu\in\M^1$.
\end{corollary}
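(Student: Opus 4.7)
Setting $c\eqdef\phi''(1)$, the plan is to apply \cref{lem:height-derivative} to $f=\con\psi$, reduce the target $\height{\con\psi}(t)\leq t^2/(8c)$ to the derivative bound $\height{\con\psi}'(t)\leq t/(4c)$, translate this into an integral inequality involving only $\phi''$, and verify it using the concavity of $1/\phi''$. The hypotheses of \cref{lem:height-derivative} hold for $\con\psi$: strict convexity of $\phi$ with $\phi''>0$ makes $\psi\eqdef\phi(\cdotarg+1)$ strictly convex and essentially smooth, so $\con\psi$ is $C^1$ on the interior of its own domain with unique global minimum $\con\psi(0)=0$; the right endpoint limit is the standing assumption, and the corresponding limit at the other endpoint follows from standard convex duality (failing which, the argument below is run on the interval of $t$ where $\lambda(t)$ is defined and extended by continuity). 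The lemma yields a $C^1$ function $\lambda(t)$ with
\begin{equation*}
\height{\con\psi}'(t) = \frac{a(t)\,b'(t)}{a(t)+b'(t)},
\end{equation*}
where $a(t)\eqdef(\con\psi)'(\lambda(t)+t/2)\geq 0$ and $b'(t)\eqdef-(\con\psi)'(\lambda(t)-t/2)\geq 0$. Since $\height{\con\psi}(0)=\con\psi(0)=0$, it suffices to integrate the target derivative bound.

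To translate into $\phi$-level statements, I would use $(\con\psi)'=(\psi')^{-1}$ and $\psi'=\phi'(\cdotarg+1)$: the defining relations for $a$ and $b'$ rewrite as $\phi'(1+a)=\lambda+t/2$ and $\phi'(1-b')=\lambda-t/2$, so that
\begin{equation*}
t = \phi'(1+a)-\phi'(1-b') = \int_{-b'}^{a}\phi''(1+s)\,ds,
\end{equation*}
and the target $\height{\con\psi}'(t)\leq t/(4c)$ is equivalent to
\begin{equation*}
\int_{-b'}^{a}\phi''(1+s)\,ds\;\geq\;\frac{4cab'}{a+b'}.
\end{equation*}
Setting $h\eqdef 1/\phi''$, the concavity of $h$ together with $h>0$ on its domain forces the tangent line $1/c+h'(1)\,s$ at $s=0$ to dominate $h(1+s)$ throughout the integration range (and to be strictly positive there, since it dominates a positive quantity), which inverts to $\phi''(1+s)\geq c/(1+Bs)$ with $B\eqdef c\,h'(1)$. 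Integrating reduces the target to the AM-GM inequality $c(a+b')\geq 4cab'/(a+b')$ when $B=0$, and otherwise, after substituting $X=aB$ and $Y=b'B$ (the case $B<0$ reducing to $B>0$ by the $a\leftrightarrow b'$ symmetry of the problem), to the elementary inequality
\begin{equation*}
\log\frac{1+X}{1-Y}\;\geq\;\frac{4XY}{X+Y},\qquad X\geq 0,\ Y\in[0,1).
\end{equation*}

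The main obstacle is this last inequality: a single Cauchy--Schwarz bound on $\int_{-Y}^X dt/(1+t)$ falls short by a term cubic in $X-Y$, so I would instead chain two sharp auxiliary bounds. The Padé-type inequalities $\log(1+X)\geq 2X/(2+X)$ for $X\geq 0$ and $-\log(1-Y)\geq 2Y/(2-Y)$ for $Y\in[0,1)$ are routine: each difference vanishes at $0$ and has derivative equal to $X^2/\bigl[(1+X)(2+X)^2\bigr]$ and $Y^2/\bigl[(1-Y)(2-Y)^2\bigr]$ respectively, both non-negative. Summing yields $\log\frac{1+X}{1-Y}\geq\frac{2X}{2+X}+\frac{2Y}{2-Y}$, and the chain closes with the algebraic identity
\begin{equation*}
\frac{2X}{2+X}+\frac{2Y}{2-Y}-\frac{4XY}{X+Y}
=\frac{4(X-Y-XY)^2}{(2+X)(2-Y)(X+Y)}\;\geq\;0,
\end{equation*}
which is verified by clearing denominators and simplifying. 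Combining these steps and integrating the resulting derivative bound from $0$ to $t$ completes the proof.
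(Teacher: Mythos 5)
Your proof is correct, and it takes a genuinely different route from the paper's after the common starting point. Both begin with \cref{lem:height-derivative} and the observation that $\height{\con\psi}'(t)=ab'/(a+b')$ with $a=(\con\psi)'(\lambda(t)+t/2)\geq 0$ and $b'=-(\con\psi)'(\lambda(t)-t/2)\geq 0$, and both aim to show $\height{\con\psi}'(t)\leq t/(4\phi''(1))$. The paper proceeds by an arithmetic--harmonic mean bound, differentiates the resulting first-order inequality once more to reduce to a pointwise statement about $(\con\psi)''$, invokes the second-derivative duality $(\con\psi)''=1/\psi''\circ(\con\psi)'$ (via Gorni/Hiriart-Urruty--Lemaréchal), and finishes with a single Jensen step on the concave function $1/\psi''$, where the Jensen convex combination of the two arguments collapses to $0$ by the defining relation for $\lambda'(t)$. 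You instead pull everything back to the primal level via $\psi'=\phi'(\cdotarg+1)$, which turns the claim into the integral inequality $\int_{-b'}^{a}\phi''(1+s)\,ds\geq 4\phi''(1)ab'/(a+b')$; concavity of $1/\phi''$ is then applied only once, as a tangent-line bound at $s=0$ giving $\phi''(1+s)\geq \phi''(1)/(1+Bs)$, and the remaining work is the explicit scalar inequality $\log\frac{1+X}{1-Y}\geq 4XY/(X+Y)$, which you settle by chaining Padé bounds and a sum-of-squares identity. The paper's route is shorter and structurally cleaner, but needs twice-differentiability of $\con\psi$ and the second-derivative conjugacy formula; yours avoids both, at the price of a more computational finish, and has the pleasant feature of exposing exactly which elementary inequality is hiding behind the Jensen step. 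One small point worth making explicit: your assertion that $\lim_{x\to-\infty}\con\psi(x)=\infty$ is ``standard convex duality'' is in fact exactly the inf-compactness of $\con\psi$ recorded in \cref{lem:con-psi-properties}, and the supergradient used in the tangent-line bound need not be $h'(1)$ literally if $h=1/\phi''$ fails to be differentiable at $1$ — any element of $\partial h(1)$ works.
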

\begin{proof}
	By standard results in convex analysis, the
	existence and positivity of $\psi''$ imply that $\con\psi$
	is itself twice differentiable (e.g.\ \citet[Proposition
	6.2.5]{HUL93} or \citet[Proposition 1.1]{G91}). Thus, by
	\cref{lem:height-derivative}, it suffices to show that
	$\height{\con\psi}'(t)\leq t/(4\phi''(1))$, or equivalently
	\begin{equation}
		\frac{(\con\psi)'(\lambda(t)+t/2)(\con\psi)'(\lambda(t)-t/2)}{
		(\con\psi)'(\lambda(t)-t/2)-(\con\psi)'(\lambda(t)+t/2)}
		\leq \frac t{4\phi''(1)}.
		\label{eqn:heightderpsi}
	\end{equation}
	Since $\con\psi(\lambda(t)+t/2)=\con\psi(\lambda(t)-t/2)$ and
	$\con\psi$ has global minimum at $0$, we have $\lambda(t)-t/2\leq 0$
	and $\lambda(t)+t/2\geq 0$, and $(\con\psi)'(\lambda(t)-t/2)\leq 0$
	and $(\con\psi)'(\lambda(t)+t/2)\geq 0$. Thus, we have that
	the left-hand side of \cref{eqn:heightderpsi} is half the harmonic
	mean of $(\con\psi)'(\lambda(t)+t/2)$ and $-(\con\psi)'(\lambda(t)-t/2)$,
	so it suffices by the arithmetic mean--harmonic mean inequality to prove
	\begin{equation}
		(\con\psi)'(\lambda(t)+t/2)-(\con\psi)'(\lambda(t)-t/2)\leq \frac{t}{\phi''(1)}.
		\label{eqn:relaxedheightder}
	\end{equation}
	Since \cref{eqn:relaxedheightder} holds when $t=0$, it suffices
	to prove that
	\begin{equation}
		(1/2+\lambda'(t))\cdot (\con\psi)''(\lambda(t)+t/2)
		+(1/2-\lambda'(t))\cdot (\con\psi)''(\lambda(t)-t/2)\leq \frac1{\phi''(1)}.
	\end{equation}
	By the relationship between the second derivative of a
	function and the one of its conjugate (e.g.\ \citet[Proposition
	6.2.5]{HUL93}), this is equivalent to
	\begin{equation}
		\frac{1/2+\lambda'(t)}{\psi''\paren[\big]{(\con\psi)'(\lambda(t)+t/2)}}
		+
		\frac{1/2-\lambda'(t)}{\psi''\paren[\big]{(\con\psi)'(\lambda(t)-t/2)}}
		\leq
		\frac1{\phi''(1)}.
		\label{eqn:relaxedheightsecondder}
	\end{equation}
	Now, by \cref{eqn:lambdader}, we have that $\lambda'(t)\in[-1/2,1/2]$,
	so that by Jensen's inequality and the concavity of $1/\psi''$,
	the left-hand side of \cref{eqn:relaxedheightsecondder} is at most
	\begin{equation}
	1/\psi''\paren[\Big]{(1/2+\lambda'(t))(\con\psi)'(\lambda(t)+t/2)
	-(\lambda'(t)-1/2)(\con\psi)'(\lambda(t)-t/2)}.
	\label{eqn:jensenedder}
	\end{equation}
	Finally, since by definition
	$\con\psi(\lambda(t)+t/2)=\con\psi(\lambda(t)-t/2)$, the term
	inside $1/\psi''$ in \cref{eqn:jensenedder} is
	$0$, so since $\psi(x)=\phi(1+x)$ we are done.
\end{proof}

\begin{example}
	For the $\alpha$-divergences, we have
	$1/\phi_\alpha''(x)=x^{2-\alpha}$ which is concave for
	$\alpha\in[1,2]$, so \cref{cor:psipp-concave} applies for
	these divergences. Furthermore, by \cref{rmrk:height-dagger},
	we can consider the reverse $\alpha$-divergences with
	$\phi_\alpha^\dagger(x) =x\phi_\alpha(1/x)$ which has
	$1/(\phi_\alpha^\dagger)''(x) =x^{1+\alpha}$, which is concave
	for $\alpha\in[-1,0]$.
\end{example}
 \section{Discussion}
\label{sec:discussion}

Throughout this paper, the $\phi$-cumulant generating function has proved
central in explicitating the relationship between $\phi$-divergences and
integral probability metrics. As a starting point, the identity $\cgf g\nu
= \con{\lb g\nu}$ (\cref{thm:equiv}) expresses the cumulant generating function
as the convex conjugate of the best lower bound of $\di\mu\nu$ in terms of
$\ex\mu g - \ex\nu g$. This establishes a ``correspondence principle'' by which
properties of the relationship between $\phi$-divergences and integral
probability metrics translate by duality into properties of the cumulant
generating function, and vice versa. An advantage of this correspondence is
that the function $\cgf g\nu$, being expressed as the solution of
a single-dimensional convex optimization problem (\cref{def:cumulant}), is
arguably easier to evaluate and analyze than its counterpart $\lb g\nu$,
expressed as the solution to an infinite-dimensional optimization problem.
Following \cref{thm:equiv}, several results from the present paper can be seen
as instantiations of this ``correspondence principle'' and we summarize some of
them in \cref{table:summary}.

\begin{table}[ht]
	\centering
	\renewcommand{\arraystretch}{1.5}
	\begin{tabular}{@{}rm{12.5em}m{15em}@{}} \toprule
			Ref.& Property of the $\phi$-cumulant\newline generating function
					   & Property of the $\phi$-divergence\\

			\midrule

			\S\ref{sec:char}
			& $\cgf g\nu(t)\leq B(t)$ for all $t\in\R$
			&$\di\mu\nu\geq \con B\paren[\big]{\ex\mu g - \ex\nu g}$\newline for all
			$\mu\in X^1_g$\\

			\S\ref{sec:subexponential}
			& $0\in\inter(\dom\cgf g\nu)$ &
			$\di\mu\nu\geq L\paren[\big]{\abs{\ex\mu g - \ex\nu g}}$\newline for
			some $L\not\equiv 0$, all $\mu\in X^1_g$\\

			\S\ref{sec:continuity}
			& $\cgf g\nu$ differentiable at $0$ &
			$\di{\nu_n}\nu\to 0$ implies\newline $\ex{\nu_n}g\to\ex\nu g$ for
			all $(\nu_n)\in \paren*{X^1_g}^{\mathbb{N}}$\\

			\S\ref{sec:ipm-lb-comp}
			& $\cgf g\nu(t)\leq E(t)$ for all\newline $t\in\R$, $g\in\G$,
			$\nu\in X^1_\G$
			&$\di\mu\nu\geq \con E\paren[\big]{\ipm\mu\nu}$\newline for all
			$\mu,\nu\in X^1_\G$\\

			\S\ref{sec:bounded}
			& $\height{\con\psi}(2t)\leq B(t)$ for all $t\in\R$
			&$\di\mu\nu\geq \con B\paren[\big]{\tv\mu\nu}$\newline for all $\mu,\nu\in\M^1$\\
											  \bottomrule
		\end{tabular}
		\caption{Several examples, proved in this paper, of the dual
			correspondence between properties of the $\phi$-cumulant generating
			function and properties of the relationship between the
			$\phi$-divergence and mean deviations. Throughout, $\mu\in\M^1$,
			$g\in L^1(\nu)$, $B:\R\to\R$ is arbitrary, $E:\R\to\R$ is even,
			$\G\subseteq\L^0$, and $X^1_g$ and $X^1_\G$ are as in
      \cref{defn:G-dual-pair}.}
		\label{table:summary}
\end{table}

A limitation of this correspondence is that it only describes the optimal lower
bound function $\lb g \nu$ via its convex conjugate. When $\lb g \nu$ is lower
semicontinuous, this is without any loss of information by the Fenchel--Moreau
theorem, but in general this only provides information about the
\emph{biconjugate} $\bicon{\lb g\nu}$. While $\lb g\nu$ and $\bicon{\lb g\nu}$
differ in at most two points, as discussed in \cref{sec:char}, the difference
between the optimal lower bound and its biconjugate is potentially much more
important when considering a class of functions $\G$ or a class of measures $N$
as in \cref{sec:optimal-lb-def}. Some conditions under which this lower bound
$\lb \G N$ is necessarily convex and lower semicontinuous were derived in
\cref{sec:inf-compact,sec:bounded}, and we gave a characterization of $\lb \G N$
up to countably many points in \cref{rem:optimal-lb-lsc} regardless, but this
does not completely answer the question (cf.~Remarks~\ref{rem:lsc-ssubexp} and
\ref{rem:lb-convex}). We believe that an interesting direction for future work
would be to identify natural necessary or sufficient conditions under which
$\lb\G N$ is convex or lower semicontinuous.

\section*{Acknowledgements}

The authors are grateful to Flavio du Pin Calmon, Benjamin L.~Edelman, Julien
Fageot, Salil Vadhan, and the anonymous reviewers for their helpful comments
and suggestions on an earlier draft of this paper, and to Julien Fageot and
Salil Vadhan for suggesting the problem considered in \cref{sec:continuity}.
 
\let\cite\oldcite
\newcommand{\etalchar}[1]{$^{#1}$}

\bibliographystyle{alphainits}

\begin{thebibliography}{RRGGP12}

\bibitem[AH20]{AH20}
R.~Agrawal and T.~Horel.
\newblock Optimal {{Bounds}} between {{$f$}}-{{Divergences}} and {{Integral
  Probability Metrics}}.
\newblock In {\em Proceedings of the 37th {{International Conference}} on
  {{Machine Learning}} ({{ICML}} 2020)}, volume 119 of {\em Proceedings of
  {{Machine Learning Research}}}. {PMLR}, July 2020.

\bibitem[AS66]{AS66}
S.~M. Ali and S.~D. Silvey.
\newblock A general class of coefficients of divergence of one distribution
  from another.
\newblock {\em Journal of the Royal Statistical Society. Series B},
  28(1):131--142, 1966.

\bibitem[AS06]{AS06}
Y.~Altun and A.~Smola.
\newblock Unifying divergence minimization and statistical inference via convex
  duality.
\newblock In G.~Lugosi and H.~U. Simon, editors, {\em Learning Theory}, pages
  139--153, Berlin, Heidelberg, 2006. Springer.

\bibitem[BBR{\etalchar{+}}18]{BBROBCH18}
M.~I. Belghazi, A.~Baratin, S.~Rajeshwar, S.~Ozair, Y.~Bengio, A.~Courville,
  and D.~Hjelm.
\newblock Mutual information neural estimation.
\newblock In J.~Dy and A.~Krause, editors, {\em Proceedings of the 35th
  International Conference on Machine Learning}, volume~80 of {\em Proceedings
  of Machine Learning Research}, pages 531--540, Stockholmsmässan, Stockholm
  Sweden, 10--15 Jul 2018. PMLR.

\bibitem[BC77]{BC77}
A.~{Ben-Tal} and A.~Charnes.
\newblock A dual optimization framework for some problems of information theory
  and statistics.
\newblock Technical report, Center for Cybernetic Studies, University of Texas,
  Austin, 1977.

\bibitem[BC79]{BC79}
A.~{Ben-Tal} and A.~Charnes.
\newblock A dual optimization framework for some problems of information theory
  and statistics.
\newblock {\em Problems of Control and Information Theory. Problemy Upravlenija
  i Teorii Informacii}, 8(5-6):387--401, 1979.

\bibitem[BCR84]{BCR84}
C.~Berg, J.~P.~R. Christensen, and P.~Ressel.
\newblock {\em Introduction to Locally Convex Topological Vector Spaces and
  Dual Pairs}, pages 1--15.
\newblock Springer, New York, NY, 1984.

\bibitem[BG99]{BG99}
S.~G. Bobkov and F.~Götze.
\newblock Exponential integrability and transportation cost related to
  logarithmic {S}obolev inequalities.
\newblock {\em Journal of Functional Analysis}, 163(1):1--28, April 1999.

\bibitem[BH79]{BH79}
J.~Bretagnolle and C.~Huber.
\newblock Estimation des densités: risque minimax.
\newblock {\em {Z}eitschrift für {W}ahrscheinlichkeitstheorie und Verwandte
  Gebiete}, 47(2):119--137, Jan 1979.

\bibitem[BK06]{BK06}
M.~Broniatowski and A.~Keziou.
\newblock Minimization of $\phi$-divergences on sets of signed measures.
\newblock {\em Studia Scientiarum Mathematicarum Hungarica}, 43(4):403--442,
  2006.

\bibitem[BL91]{BL91}
J.~M. Borwein and A.~S. Lewis.
\newblock Duality relationships for entropy-like minimization problems.
\newblock {\em SIAM Journal on Control and Optimization}, 29(2):325--338, 1991.

\bibitem[BL93]{BL93}
J.~M. Borwein and A.~S. Lewis.
\newblock Partially-finite programming in ${L}_1$ and the existence of maximum
  entropy estimates.
\newblock {\em SIAM Journal on Optimization}, 3(2):248--267, 1993.

\bibitem[BLM13]{BLM13}
S.~Boucheron, G.~Lugosi, and P.~Massart.
\newblock {\em Concentration Inequalities: A Nonasymptotic Theory of
  Independence}.
\newblock Oxford University Press, Oxford, 2013.

\bibitem[Bou87]{B87}
N.~Bourbaki.
\newblock {\em Topological {{Vector Spaces}}}.
\newblock Elements of {{Mathematics}}. {Springer-Verlag}, {Berlin}, 1987.
\newblock Translated by H.G. Eggleston \& S. Madan from \emph{Espaces
  vectoriels topologiques}, Masson, Paris, 1981.

\bibitem[Br{\o}64]{B64}
A.~Br{\o}ndsted.
\newblock Conjugate convex functions in topological vector spaces.
\newblock {\em Matematisk-fysiske Meddelelser udgivet af det Kongelige Danske
  Videnskabernes Selskab}, 34(2):27, 1964.

\bibitem[BV05]{BV05}
F.~Bolley and C.~Villani.
\newblock Weighted {{Csisz\'ar}}-{{Kullback}}-{{Pinsker}} inequalities and
  applications to transportation inequalities.
\newblock {\em Annales de la Facult\'e des sciences de Toulouse :
  Math\'ematiques}, 14(3):331--352, 2005.

\bibitem[CCW99]{CCW99}
A.~Carbery, M.~Christ, and J.~Wright.
\newblock Multidimensional {{van der Corput}} and sublevel set estimates.
\newblock {\em Journal of the American Mathematical Society}, 12(4):981--1015,
  1999.

\bibitem[CGG99]{CGG99}
I.~Csiszár, F.~Gamboa, and E.~Gassiat.
\newblock {MEM} pixel correlated solutions for generalized moment and
  interpolation problems.
\newblock {\em IEEE Transactions on Information Theory}, 45(7):2253--2270,
  November 1999.

\bibitem[CM03]{CM03}
I.~Csisz{\'a}r and F.~Mat{\'u}{\v s}.
\newblock Information projections revisited.
\newblock {\em IEEE Transactions on Information Theory}, 49(6):1474--1490, June
  2003.

\bibitem[CM12]{CM12}
I.~Csisz{\'a}r and F.~Mat{\'u}š.
\newblock Generalized minimizers of convex integral functionals, {B}regman
  distance, {P}ythagorean identities.
\newblock {\em Kybernetika}, 48(4):637--689, 2012.

\bibitem[Csi62]{C62}
I.~Csisz{\'a}r.
\newblock {Informationstheoretische Konvergenzbegriffe im Raum der
  Wahrscheinlichkeitsverteilungen}.
\newblock {\em A Magyar Tudom\'anyos Akad\'emia. Matematikai Kutat\'o
  Int\'ezet\'enek K\"ozlem\'enyei}, 7:137--158, 1962.

\bibitem[Csi63]{C63}
I.~Csiszár.
\newblock Eine informationstheoretische {U}ngleichung und ihre {A}nwendung auf
  den {B}eweis der {E}rgodizität von {M}arkoffschen {K}etten.
\newblock {\em A Magyar Tudományos Akadémia Matematikai Kutató Intézetének
  Közleményei}, 8(1--2):85--108, 1963.

\bibitem[Csi64]{C64}
I.~Csisz{\'a}r.
\newblock {\"Uber topologische und metrische Eigenschaften der relativen
  Information der Ordnung {{$\alpha$}}}.
\newblock In {\em {Transactions of the Third Prague Conference on Information
  Theory, Statistical Decision Functions, Random Processes, 1962}}, pages
  63--73. {Publishing House of the Czechoslovak Academy of Science}, {Prague},
  1964.

\bibitem[Csi67a]{C67_Top}
I.~Csisz{\'a}r.
\newblock On topological properties of {{$f$}}-divergences.
\newblock {\em Studia Scientiarum Mathematicarum Hungarica}, 2:329--339, 1967.

\bibitem[Csi67b]{C67}
I.~Csiszár.
\newblock Information-type measures of difference of probability distributions
  and indirect observations.
\newblock {\em Studia Sci. Math. Hungar}, 2:299–318, 1967.

\bibitem[Csi75]{C75}
I.~Csiszár.
\newblock ${I}$-divergence geometry of probability distributions and
  minimization problems.
\newblock {\em Ann. Probab.}, 3(1):146--158, 02 1975.

\bibitem[DRG15]{DRG15}
G.~K. Dziugaite, D.~M. Roy, and Z.~Ghahramani.
\newblock Training generative neural networks via maximum mean discrepancy
  optimization.
\newblock In {\em Proceedings of the Thirty-First Conference on Uncertainty in
  Artificial Intelligence}, UAI’15, page 258–267, Arlington, Virginia, USA,
  2015. AUAI Press.

\bibitem[Dud64]{D64}
R.~M. Dudley.
\newblock On sequential convergence.
\newblock {\em Transactions of the American Mathematical Society},
  112(3):483--507, 1964.

\bibitem[Dud98]{D98}
R.~M. Dudley.
\newblock Consistency of {{M}}-{{Estimators}} and {{One}}-{{Sided Bracketing}}.
\newblock In E.~Eberlein, M.~Hahn, and M.~Talagrand, editors, {\em High
  {{Dimensional Probability}}}, pages 33--58. {Birkh\"auser Basel}, {Basel},
  1998.

\bibitem[DV76]{DV76}
M.~D. Donsker and S.~R.~S. Varadhan.
\newblock Asymptotic evaluation of certain {M}arkov process expectations for
  large time—{III}.
\newblock {\em Communications on Pure and Applied Mathematics}, 29(4):389--461,
  1976.

\bibitem[ES89]{ES89}
G.~A. Edgar and L.~Sucheston.
\newblock On maximal inequalities in {{Orlicz}} spaces.
\newblock In R.~D. Mauldin, R.~M. Shortt, and C.~E. Silva, editors, {\em
  Contemporary {{Mathematics}}}, volume~94, pages 113--129. {American
  Mathematical Society}, {Providence, Rhode Island}, 1989.

\bibitem[ET99]{ET99}
I.~Ekeland and R.~Témam.
\newblock {\em Convex Analysis and Variational Problems}.
\newblock Society for Industrial and Applied Mathematics, 1999.

\bibitem[FHT03]{FHT03}
A.~A. Fedotov, P.~Harremoës, and F.~Topsøe.
\newblock Refinements of {P}insker's inequality.
\newblock {\em IEEE Transactions on Information Theory}, 49(6):1491--1498, June
  2003.

\bibitem[GBR{\etalchar{+}}12]{GBRSS12}
A.~Gretton, K.~M. Borgwardt, M.~J. Rasch, B.~Sch{\"o}lkopf, and A.~Smola.
\newblock A kernel two-sample test.
\newblock {\em J. Mach. Learn. Res.}, 13(25):723–773, March 2012.

\bibitem[Gil06]{G06}
G.~L. Gilardoni.
\newblock On the minimum $f$-divergence for given total variation.
\newblock {\em Comptes Rendus Mathematique}, 343(11):763 -- 766, 2006.

\bibitem[Gil08]{G08}
G.~L. Gilardoni.
\newblock An improvement on {V}ajda's inequality.
\newblock In V.~Sidoravicius and M.~E. Vares, editors, {\em In and Out of
  Equilibrium 2}, volume~60 of {\em Progress in Probability}, pages 299--304.
  Birkhäuser Basel, Basel, 2008.

\bibitem[Gil10]{G10}
G.~L. Gilardoni.
\newblock On {P}insker's and {V}ajda's type inequalities for {C}siszár's
  $f$-divergences.
\newblock {\em IEEE Transactions on Information Theory}, 56(11):5377--5386, Nov
  2010.

\bibitem[GL10]{GL10}
N.~Gozlan and C.~Léonard.
\newblock Transport inequalities. {A} survey.
\newblock {\em Markov Processes and Related Fields}, 16(4):635--736, 2010.

\bibitem[Gor91]{G91}
G.~Gorni.
\newblock Conjugation and second-order properties of convex functions.
\newblock {\em Journal of Mathematical Analysis and Applications},
  158(2):293--315, July 1991.

\bibitem[GSS14]{GSS14}
A.~Guntuboyina, S.~Saha, and G.~Schiebinger.
\newblock Sharp inequalities for $f$-divergences.
\newblock {\em IEEE Transactions on Information Theory}, 60(1):104--121, Jan
  2014.

\bibitem[Har07]{H07}
P.~Harremo{\"e}s.
\newblock Information {{Topologies}} with {{Applications}}.
\newblock In I.~Csisz{\'a}r, G.~O.~H. Katona, G.~Tardos, and G.~Wiener,
  editors, {\em Entropy, {{Search}}, {{Complexity}}}, volume~16, pages
  113--150. {Springer Berlin Heidelberg}, {Berlin, Heidelberg}, 2007.
\newblock Series Title: Bolyai Society Mathematical Studies.

\bibitem[Her67]{H67}
H.~H. Herda.
\newblock On non-symmetric modular spaces.
\newblock {\em Colloquium Mathematicum}, 17(2):333--346, 1967.

\bibitem[HL93]{HUL93}
J.-B. {Hiriart-Urruty} and C.~Lemar{\'e}chal.
\newblock {\em Convex {{Analysis}} and {{Minimization Algorithms I}}}, volume
  305 of {\em Grundlehren Der Mathematischen {{Wissenschaften}}}.
\newblock {Springer Berlin Heidelberg}, {Berlin, Heidelberg}, 1993.

\bibitem[Hoe63]{H63}
W.~Hoeffding.
\newblock Probability inequalities for sums of bounded random variables.
\newblock {\em Journal of the American Statistical Association},
  58(301):13--30, 1963.

\bibitem[HV11]{HV11}
P.~Harremo{\"e}s and I.~Vajda.
\newblock On {{Pairs}} of {{$f$}}-{{Divergences}} and {{Their Joint Range}}.
\newblock {\em IEEE Transactions on Information Theory}, 57(6):3230--3235, June
  2011.

\bibitem[IT69]{IT69}
A.~D. Ioffe and V.~M. Tikhomirov.
\newblock On minimization of integral functionals.
\newblock {\em Functional Analysis and Its Applications}, 3(3):218--227, Jul
  1969.

\bibitem[Jam72]{J72}
G.~J.~O. Jameson.
\newblock Convex series.
\newblock {\em Mathematical Proceedings of the Cambridge Philosophical
  Society}, 72(1):37--47, July 1972.

\bibitem[JHW17]{JHW17}
J.~Jiao, Y.~Han, and T.~Weissman.
\newblock Dependence measures bounding the exploration bias for general
  measurements.
\newblock In {\em 2017 {{IEEE International Symposium}} on {{Information
  Theory}} ({{ISIT}})}, pages 1475--1479, June 2017.

\bibitem[Kem69]{K69}
J.~H.~B. Kemperman.
\newblock On the optimum rate of transmitting information.
\newblock {\em The Annals of Mathematical Statistics}, 40(6):2156--2177, 12
  1969.

\bibitem[KFG06]{KFG06}
M.~Khosravifard, D.~Fooladivanda, and T.~A. Gulliver.
\newblock Exceptionality of the {{Variational Distance}}.
\newblock In {\em Proceedings of the 2006 {{IEEE Information Theory
  Workshop}}}, pages 274--276, {Chengdu, China}, October 2006. {IEEE}.

\bibitem[KFG07]{KFG07}
M.~Khosravifard, D.~Fooladivanda, and T.~A. Gulliver.
\newblock Confliction of the {{Convexity}} and {{Metric Properties}} in
  $f$-{{Divergences}}.
\newblock {\em IEICE Transactions on Fundamentals of Electronics,
  Communications and Computer Sciences}, E90-A(9):1848--1853, September 2007.

\bibitem[Kis60]{K60}
J.~Kisy{\'n}ski.
\newblock {Convergence du type $\mathcal L$}.
\newblock {\em Colloquium Mathematicum}, 7(2):205--211, 1960.

\bibitem[KL51]{KL51}
S.~Kullback and R.~A. Leibler.
\newblock On {{Information}} and {{Sufficiency}}.
\newblock {\em Annals of Mathematical Statistics}, 22(1):79--86, March 1951.

\bibitem[K{\"o}n86]{K86}
H.~K{\"o}nig.
\newblock Theory and applications of superconvex spaces.
\newblock In {\em Aspects of Positivity in Functional Analysis ({{T\"ubingen}},
  1985)}, volume 122 of {\em North-{{Holland Math}}. {{Stud}}.}, pages 79--118.
  {North-Holland, Amsterdam}, 1986.

\bibitem[Kul59]{K59}
S.~Kullback.
\newblock {\em Information theory and statistics}.
\newblock Wiley, New York, 1959.

\bibitem[{Kul}67]{K67}
S.~{Kullback}.
\newblock A lower bound for discrimination information in terms of variation.
\newblock {\em IEEE Transactions on Information Theory}, 13(1):126--127,
  January 1967.

\bibitem[L{\'e}o01a]{L01_Inverse}
C.~L{\'e}onard.
\newblock Minimization of {{Energy Functionals Applied}} to {{Some Inverse
  Problems}}.
\newblock {\em Applied Mathematics and Optimization}, 44(3):273--297, January
  2001.

\bibitem[L{\'e}o01b]{L01}
C.~L{\'e}onard.
\newblock Minimizers of energy functionals.
\newblock {\em Acta Mathematica Hungarica}, 93(4):281--325, 2001.

\bibitem[L{\'e}o07]{Leonard07}
C.~L{\'e}onard.
\newblock Orlicz {{Spaces}}.
\newblock
  https://leonard.perso.math.cnrs.fr/papers/Leonard-Orlicz\%20spaces.pdf, April
  2007.

\bibitem[Lev68]{L68}
V.~L. Levin.
\newblock Some properties of support functionals.
\newblock {\em Mathematical Notes of the Academy of Sciences of the USSR},
  4(6):900--906, December 1968.

\bibitem[LZ56]{LZ56}
W.~Luxemburg and A.~Zaanen.
\newblock Conjugate spaces of {O}rlicz spaces.
\newblock {\em Indagationes Mathematicae (Proceedings)}, 59:217--228, 1956.

\bibitem[Mar86]{M86}
K.~Marton.
\newblock A simple proof of the blowing-up lemma (corresp.).
\newblock {\em IEEE Transactions on Information Theory}, 32(3):445--446, May
  1986.

\bibitem[Mor63]{M63}
T.~Morimoto.
\newblock {M}arkov processes and the {H}-theorem.
\newblock {\em Journal of the Physical Society of Japan}, 18(3):328--331, March
  1963.

\bibitem[Mor64]{M64}
J.~J. Moreau.
\newblock Sur la fonction polaire d'une fonction semi-continue
  sup{\'e}rieurement.
\newblock {\em Comptes rendus hebdomadaires des s{\'e}ances de l'Acad{\'e}mie
  des sciences}, 258:1128--1130, 1964.

\bibitem[MT50]{MT50}
M.~Morse and W.~Transue.
\newblock Functionals $f$ {{Bilinear Over}} the {{Product}} ${A}\times {B}$ of
  {{Two Pseudo}}-{{Normed Vector Spaces}}: {{II}}. {{Admissible Spaces A}}.
\newblock {\em Annals of Mathematics}, 51(3):576--614, 1950.

\bibitem[M{\"u}l97]{M97}
A.~M{\"u}ller.
\newblock Integral probability metrics and their generating classes of
  functions.
\newblock {\em Advances in Applied Probability}, 29(2):429--443, 1997.

\bibitem[Mus83]{M83}
J.~Musielak.
\newblock {\em Orlicz {{Spaces}} and {{Modular Spaces}}}, volume 1034 of {\em
  Lecture {{Notes}} in {{Mathematics}}}.
\newblock {Springer Berlin Heidelberg}, {Berlin, Heidelberg}, 1983.

\bibitem[Nak50]{N50}
H.~Nakano.
\newblock {\em Modulared Semi-Ordered Linear Spaces}.
\newblock Tokyo Mathematical Book Series,v. 1. {Maruzen Co.}, {Tokyo}, 1950.

\bibitem[NCM{\etalchar{+}}17]{NCMQW17}
R.~Nock, Z.~Cranko, A.~K. Menon, L.~Qu, and R.~C. Williamson.
\newblock $f$-{{GAN}}s in an information geometric nutshell.
\newblock In {\em Proceedings of the 31st International Conference on Neural
  Information Processing Systems}, NIPS’17, page 456–464, Red Hook, NY,
  USA, 2017. Curran Associates Inc.

\bibitem[NCT16]{NCT16}
S.~Nowozin, B.~Cseke, and R.~Tomioka.
\newblock $f$-{{GAN}}: Training generative neural samplers using variational
  divergence minimization.
\newblock In {\em Proceedings of the 30th International Conference on Neural
  Information Processing Systems}, NIPS’16, page 271–279, Red Hook, NY,
  USA, 2016. Curran Associates Inc.

\bibitem[NWJ08]{NWJ08}
X.~Nguyen, M.~J. Wainwright, and M.~I. Jordan.
\newblock Estimating divergence functionals and the likelihood ratio by
  penalized convex risk minimization.
\newblock In J.~C. Platt, D.~Koller, Y.~Singer, and S.~T. Roweis, editors, {\em
  Advances in Neural Information Processing Systems 20}, pages 1089--1096.
  Curran Associates, Inc., 2008.

\bibitem[NWJ10]{NWJ10}
X.~Nguyen, M.~J. Wainwright, and M.~I. Jordan.
\newblock Estimating divergence functionals and the likelihood ratio by convex
  risk minimization.
\newblock {\em IEEE Trans. Inf. Theor.}, 56(11):5847–5861, November 2010.

\bibitem[Pin60]{P60}
M.~S. Pinsker.
\newblock Informatsiya i informatsionnaya ustoichivost’ sluchainykh velichin
  i protsessov.
\newblock {\em Probl. Peredachi Inf.}, 7, 1960.

\bibitem[Pin64]{P63}
M.~S. Pinsker.
\newblock {\em Information and Information Stability of Random Variables and
  Processes}.
\newblock Holden-Day, 1964.
\newblock Translation of \citet{P60} by Amiel Feinstein.

\bibitem[R{\'e}n61]{R61}
A.~R{\'e}nyi.
\newblock On measures of entropy and information.
\newblock In J.~Neyman, editor, {\em Proceedings of the Fourth Berkeley
  Symposium on Mathematical Statistics and Probability}, volume~I, pages
  547--561. University of California Press, 1961.

\bibitem[Roc66]{R66}
R.~T. Rockafellar.
\newblock Level sets and continuity of conjugate convex functions.
\newblock {\em Transactions of the American Mathematical Society},
  123(1):46--63, 1966.

\bibitem[Roc68]{R68}
R.~T. Rockafellar.
\newblock Integrals which are convex functionals.
\newblock {\em Pacific J. Math.}, 24(3):525--539, 1968.

\bibitem[Roc71]{R71}
R.~T. Rockafellar.
\newblock Integrals which are convex functionals. {II}.
\newblock {\em Pacific J. Math.}, 39(2):439--469, 1971.

\bibitem[Roc76]{R76}
R.~T. Rockafellar.
\newblock Integral functionals, normal integrands and measurable selections.
\newblock In J.~P. Gossez, E.~J. Lami~Dozo, J.~Mawhin, and L.~Waelbroeck,
  editors, {\em Nonlinear Operators and the Calculus of Variations}, pages
  157--207, Berlin, Heidelberg, 1976. Springer Berlin Heidelberg.

\bibitem[RR91]{RR91}
M.~M. Rao and Z.~D. Ren.
\newblock {\em Theory of {{Orlicz}} Spaces}.
\newblock Number 146 in Monographs and Textbooks in Pure and Applied
  Mathematics. {M. Dekker}, {New York}, 1991.

\bibitem[RRGGP12]{RRGP12}
A.~Ruderman, M.~Reid, D.~García-García, and J.~Petterson.
\newblock Tighter variational representations of $f$-divergences via
  restriction to probability measures.
\newblock In J.~Langford and J.~Pineau, editors, {\em Proceedings of the 29th
  International Conference on Machine Learning (ICML '12)}, pages 671--678, New
  York, NY, USA, July 2012. Omnipress.

\bibitem[RW98a]{R98_14}
R.~T. Rockafellar and R.~J.-B. Wets.
\newblock Measurability.
\newblock In Berger et~al. \cite{R98}, chapter~14, pages 642--683.

\bibitem[RW98b]{R98}
R.~T. Rockafellar and R.~J.-B. Wets.
\newblock {\em Variational {{Analysis}}}, volume 317 of {\em Grundlehren Der
  Mathematischen {{Wissenschaften}}}.
\newblock {Springer Berlin Heidelberg}, {Berlin, Heidelberg}, 1998.

\bibitem[RW09]{RW09}
M.~D. Reid and R.~C. Williamson.
\newblock Generalised {P}insker inequalities.
\newblock In {\em {COLT} 2009 - The 22nd Conference on Learning Theory,
  Montreal, Quebec, Canada, June 18-21, 2009}, 2009.

\bibitem[RW11]{RW11}
M.~D. Reid and R.~C. Williamson.
\newblock Information, divergence and risk for binary experiments.
\newblock {\em J. Mach. Learn. Res.}, 12:731--817, 2011.

\bibitem[RZ20]{RZ20}
D.~{Russo} and J.~{Zou}.
\newblock How much does your data exploration overfit? {C}ontrolling bias via
  information usage.
\newblock {\em IEEE Transactions on Information Theory}, 66(1):302--323, Jan
  2020.

\bibitem[San57]{S57}
I.~N. Sanov.
\newblock On the probability of large deviations of random variables.
\newblock {\em Mat. Sb. (N.S.)}, 42(84):11–44, 1957.

\bibitem[Sas18]{S18}
I.~Sason.
\newblock On $f$-{{Divergences}}: {{Integral Representations}}, {{Local
  Behavior}}, and {{Inequalities}}.
\newblock {\em Entropy}, 20(5):383, May 2018.

\bibitem[SFG{\etalchar{+}}12]{SFGSL12}
B.~K. Sriperumbudur, K.~Fukumizu, A.~Gretton, B.~Schölkopf, and G.~R.~G.
  Lanckriet.
\newblock On the empirical estimation of integral probability metrics.
\newblock {\em Electronic Journal of Statistics}, 6:1550--1599, 2012.

\bibitem[SGF{\etalchar{+}}09]{SGFLS09}
B.~K. Sriperumbudur, A.~Gretton, K.~Fukumizu, G.~R.~G. Lanckriet, and
  B.~Schölkopf.
\newblock A note on integral probability metrics and $\phi$-divergences.
\newblock {\em CoRR}, abs/0901.2698v1, 2009.

\bibitem[Sim90]{S90}
S.~Simons.
\newblock The occasional distributivity of {$\circ$} over $\overset+e$ and the
  change of variable formula for conjugate functions.
\newblock {\em Nonlinear Analysis: Theory, Methods \& Applications},
  14(12):1111--1120, January 1990.

\bibitem[Sio58]{Sion58}
M.~Sion.
\newblock On general minimax theorems.
\newblock {\em Pacific Journal of Mathematics}, 8(1):171--176, March 1958.

\bibitem[Ste93]{S93}
E.~M. Stein.
\newblock {\em Harmonic Analysis: Real-Variable Methods, Orthogonality, and
  Oscillatory Integrals}.
\newblock Number~43 in Princeton {{Mathematical Series}}. {Princeton University
  Press, Princeton, NJ}, 1993.

\bibitem[SV16]{SV16}
I.~Sason and S.~Verd{\'u}.
\newblock $f$-{{Divergence Inequalities}}.
\newblock {\em IEEE Transactions on Information Theory}, 62(11):5973--6006,
  November 2016.

\bibitem[Szp34]{S34}
E.~Szpilrajn.
\newblock {Remarques sur les fonctions compl\`etement additives d'ensemble et
  sur les ensembles jouissant de la propri\'et\'e de Baire}.
\newblock {\em Fundamenta Mathematicae}, 22(1):303--311, 1934.

\bibitem[TV93]{TV93}
M.~{Teboulle} and I.~{Vajda}.
\newblock Convergence of best $\phi$-entropy estimates.
\newblock {\em IEEE Transactions on Information Theory}, 39(1):297--301, 1993.

\bibitem[Vaj70]{V70}
I.~Vajda.
\newblock Note on discrimination information and variation.
\newblock {\em IEEE Transactions on Information Theory}, 16(6):771--773,
  November 1970.

\bibitem[Vaj72]{V72}
I.~Vajda.
\newblock On the $f$-divergence and singularity of probability measures.
\newblock {\em Periodica Mathematica Hungarica}, 2(1):223--234, Mar 1972.

\bibitem[Vaj73]{V73}
I.~Vajda.
\newblock $\chi^\alpha$-divergence and generalized {{Fisher}}'s information.
\newblock In {\em Transactions of the {{Sixth Prague Conference}} on
  {{Information Theory}}, {{Statistical Decision Functions}}, {{Random
  Processes}}}, pages 873--886. {Academia}, {Prague}, 1973.

\bibitem[Val70]{Valadier70}
M.~Valadier.
\newblock Int{\'e}gration de convexes ferm{\'e}s notamment d'{\'e}pigraphes.
  {{Inf}}-convolution continue.
\newblock {\em Rev. Fran{\c c}aise Informat. Recherche Op{\'e}rationnelle},
  4(S{\'e}r. R-2):57--73, 1970.

\bibitem[Ver18]{V18}
R.~Vershynin.
\newblock {\em High-Dimensional Probability: An Introduction with Applications
  in Data Science}.
\newblock Number~47 in Cambridge Series in Statistical and Probabilistic
  Mathematics. {Cambridge University Press}, {Cambridge}, 2018.

\bibitem[Z{\u a}l02]{Z02}
C.~Z{\u a}linescu.
\newblock {\em Convex Analysis in General Vector Spaces}.
\newblock {World Scientific}, {River Edge, N.J. ; London}, 2002.

\end{thebibliography}

\let\oldcite\cite
\appendix
\section{Deferred proofs}
\label{sec:deferred_proofs}

In this section, for the sake of completeness, we include proofs of
results that follow from standard tools in convex analysis.

\subsection{Proof of \cref{lem:convex-bounded-cs-compact}}
\label{sec:convex-bounded-proof}

\Cref{lem:convex-bounded-cs-compact} follows immediately from \citet[Remark
1.9]{K86} stated in the general context of superconvex structures,
which applies to cs-compact sets by \citet[Example 1.6(0)]{K86}. For
completeness, we include a proof here in the language of topological
vector spaces.

First, a convex function which is upper bounded on a cs-closed set in the sense
of \citea{Jameson}{J72} satisfies an infinite-sum version of convexity called
cs-convexity (convex-series convexity) by \citea{Simons}{S90}.
\begin{lemma}
\label{lem:cs-closed-cs-convex}
	Let $C$ be a cs-closed subset of a real Hausdorff topological vector space
	and let $f:C\to\R$ be a convex function such that  $\sup_{x\in
	C}f(x)<\infty$. Then $f$ is cs-convex.
\end{lemma}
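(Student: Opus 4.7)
The plan is to verify the defining property of cs-convexity directly: given a convex series $\sum_{n=1}^\infty \lambda_n x_n$ converging to some point $x$, with all $x_n \in C$, $\lambda_n \geq 0$, and $\sum_n \lambda_n = 1$, I would show that $f(x) \leq \sum_{n=1}^\infty \lambda_n f(x_n)$, with the right-hand side interpreted via the $\limsup$ of partial sums (which coincides with the ordinary sum whenever it converges).

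First I would dispose of the degenerate case where only finitely many $\lambda_n$ are nonzero, for which the inequality is immediate from ordinary finite convexity of $f$. Otherwise, writing $s_N = \sum_{n\leq N} \lambda_n$ and $t_N = 1 - s_N > 0$, I would define the tail average
\[
	y_N \eqdef \frac{1}{t_N}\sum_{n>N} \lambda_n x_n = \frac{x - \sum_{n\leq N}\lambda_n x_n}{t_N},
\]
which is itself a convergent convex series of points in $C$, and hence lies in $C$ by the cs-closedness hypothesis. Applying ordinary (finite) convexity of $f$ to the decomposition $x = \sum_{n\leq N}\lambda_n x_n + t_N y_N$ then yields
\[
	f(x) \leq \sum_{n\leq N} \lambda_n f(x_n) + t_N f(y_N)
\]
for every $N$.

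To complete the argument, writing $M \eqdef \sup_{C} f < \infty$, the hypothesis gives $f(y_N) \leq M$, so $t_N f(y_N) \leq t_N M \to 0$ as $N\to\infty$. Passing to the $\limsup$ in $N$ on both sides (and using $\limsup(a_N+b_N) \leq \limsup a_N + \limsup b_N$) produces
\[
	f(x) \leq \limsup_N \sum_{n\leq N}\lambda_n f(x_n),
\]
which is exactly the desired cs-convexity inequality.

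The main subtlety is ensuring the tail term $t_N f(y_N)$ does not spoil the limit. The upper bound $\sup_C f < \infty$ is used in precisely this place, via $t_N f(y_N) \leq t_N M \to 0$, so the contribution of $y_N$ vanishes in the limit even though $y_N$ itself need not converge; a lower bound on $f$ is not required. The cs-closedness of $C$ is used exclusively to guarantee $y_N \in C$ so that $f(y_N)$ is defined and upper-bounded. Both hypotheses are therefore essential and are deployed at disjoint, clearly identified places in the proof.
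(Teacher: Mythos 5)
Your proof is correct and follows the same approach as the paper's: split the series into a finite head and a tail average $y_N$, use cs-closedness to put $y_N \in C$, apply finite convexity, and invoke the upper bound $\sup_C f < \infty$ to kill the tail contribution as $N\to\infty$. The only minor difference is that you pass to the $\limsup$ where the paper proves the (formally stronger) $\liminf$ inequality, but your own bound $t_N f(y_N) \leq t_N M \to 0$ already yields the $\liminf$ version with no extra work, since for any $\varepsilon>0$ you eventually have $f(x)-\varepsilon \leq \sum_{n\leq N}\lambda_n f(x_n)$.
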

\begin{proof}
	Let $(\lambda_n)_{n\in\mathbb N}\in\R^{\mathbb N}$ be a sequence of real
	numbers such that $\sum_{i=0}^\infty \lambda_i = 1$ and $\lambda_n\geq 0$
	for all $n\in\mathbb N$.  Let $(x_n)_{n\in\mathbb N}\in C^{\mathbb N}$ be
	a sequence of elements in $C$ such that $r_0 \eqdef
	\sum_{i=0}^\infty\lambda_ix_i$ exists, and thus is in $C$ since $C$ is
	cs-closed. We wish to show that
	\begin{equation}
		f(r_0)\leq \liminf_{n\to\infty} \sum_{i=0}^n \lambda_if(x_i)
		\,.
	\label{eqn:cs-convex}
	\end{equation}
	Define for each $n\in\mathbb{N}$ the partial sums
	\[
		\Lambda_n \eqdef \sum_{i=0}^n\lambda_i
		\quad\text{and}\quad
		s_n \eqdef \Lambda_n^{-1}\sum_{i=0}^n \lambda_ix_i
		\,.
	\]
	If $\Lambda_n=1$ for some $n\in\mathbb N$ then \cref{eqn:cs-convex} is
	immediate from convexity. Otherwise, we have that for each
	$n\geq 0$,
	\[
		r_n\eqdef\frac{r_0-\Lambda_n\cdot s_n}{1-\Lambda_n}
		=\sum_{i=n+1}^\infty \frac{\lambda_i}{1-\Lambda_n}\cdot x_i
	\]
	is an element of $C$ since $C$ is cs-closed, so that
	by convexity of $f$
	\begin{align*}
		f(r_0)
		&\leq \Lambda_n\cdot  f(s_n) + (1-\Lambda_n)\cdot f(r_n)\\
		&\leq \sum_{i=0}^n \lambda_i f(x_i) + (1-\Lambda_n)\cdot \sup_{x\in C}f(x)
		\,.
	\end{align*}
	Since $\sup_{x\in C}f(x)<\infty$ and $\lim_{n\to\infty}\Lambda_n=1$
	by assumption, the previous inequality implies \cref{eqn:cs-convex}
	as desired.
\end{proof}

Second, cs-convex functions are necessarily bounded
below on cs-compact sets. Together with \cref{lem:cs-closed-cs-convex}, this
implies \cref{lem:convex-bounded-cs-compact}.
\begin{lemma}
\label{lem:cs-convex-cs-compact}
	Let $f:C\to\R$ be a cs-convex function on a cs-compact subset $C$ of a real
	Hausdorff topological vector space. Then $\inf_{x\in C}f(x)>-\infty$.
\end{lemma}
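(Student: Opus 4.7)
The plan is to argue by contradiction, using the cs-compactness of $C$ to convert an unbounded-below sequence of values into a convex series whose image under $f$ must diverge, while cs-convexity of $f$ forces the value at the limit point to inherit this divergence.

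Specifically, suppose $\inf_{x\in C} f(x) = -\infty$. Then I can pick a sequence $(x_n)_{n\in\mathbb{N}}$ in $C$ with $f(x_n) \leq -4^n$ for each $n$. Setting $\lambda_n \eqdef 2^{-n-1}$ gives non-negative weights with $\sum_{n=0}^\infty \lambda_n = 1$. Because $C$ is cs-compact (i.e.\ closed under convergent convex series in the sense of Jameson), the series $y \eqdef \sum_{n=0}^\infty \lambda_n x_n$ converges to an element of $C$.

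Next I apply cs-convexity of $f$ to this convex series to obtain
\[
f(y) \leq \liminf_{n\to\infty} \sum_{i=0}^n \lambda_i f(x_i) \leq \liminf_{n\to\infty}\sum_{i=0}^n 2^{-i-1}\cdot(-4^i) = \liminf_{n\to\infty}\paren[\Big]{-\sum_{i=0}^n 2^{i-1}} = -\infty.
\]
Since $f$ takes values in $\R$, this is a contradiction, yielding $\inf_{x\in C} f(x) > -\infty$.

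There is essentially no obstacle here: the argument is a direct exploitation of the compatibility between the infinite-convex-combination structure assumed on both $C$ (cs-compactness) and $f$ (cs-convexity). The only mild subtlety is choosing the decay of $f(x_n)$ to dominate the decay of $\lambda_n$, but any geometric growth exceeding $2^n$ suffices.
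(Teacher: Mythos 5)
Your proof is correct and takes essentially the same approach as the paper: both choose $f(x_n)\leq -4^n$ with weights summing to $1$, use cs-compactness to form the convex series in $C$, and then apply the cs-convexity inequality $f(y)\leq\liminf_n\sum_{i\leq n}\lambda_i f(x_i)$ to produce the contradiction. The only cosmetic differences are your phrasing by contradiction (the paper states it as a contrapositive) and a slight shift in the geometric weights.
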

\begin{proof}
	We prove the contrapositive, that if $\inf_{x\in C}f(x)=-\infty$ then $f$
	is not cs-convex.  Indeed, if $\inf_{x\in C}f(x)=-\infty$, then for each
	$n\in\mathbb{N}$ there exists $x_n\in C$ with $f(x_n)\leq -4^n$. Since $C$
	is cs-compact, the element $\overline x\eqdef\sum_{i=1}^\infty 2^{-i}\cdot
	x_i$ exists and is in $C$. But then
	\[
		\liminf_{n\to\infty} \sum_{i=1}^n 2^{-i}\cdot f(x_i)\leq 
		\liminf_{n\to\infty} \sum_{i=1}^n 2^{-i}\cdot -4^i = -\infty
		< f(\overline x)\,,
	\]
	proving that $f$ is not cs-convex.
\end{proof}

\subsection{\titlephi/-cumulant generating function}
\label{sec:cgf-deferred}

\begin{lemma}[\Cref{lem:con-psi-properties} restated]
The function $\con\psi:x\mapsto\con\phi(x)-x$ is non-negative, convex, and inf-compact.
Furthermore, it satisfies $\con\psi(0)=0$, $\con\psi(x)\leq -x$ when $x\leq 0$, and
$\inter(\dom\con\psi)=\paren[\big]{-\infty,\phi'(\infty)}$.
\end{lemma}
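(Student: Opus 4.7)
The plan is to reduce everything to a single identity via change of variables and then read off the claimed properties. Specifically, substituting $s=t+1$ in the definition of $\con\psi$ yields the key identity
\begin{displaymath}
	\con\psi(y)=\sup_{t\in\R}\paren[\big]{ty-\phi(t+1)}=\sup_{s\in\R}\paren[\big]{(s-1)y-\phi(s)}=\con\phi(y)-y\,,
\end{displaymath}
so $\dom\con\psi=\dom\con\phi$ and properties of $\con\psi$ reduce to those of $\con\phi$ up to an explicit linear correction. Throughout, I will use the standing assumptions: $\dom\phi\subseteq\R_{\geq 0}$ and $\phi\geq 0$ with $\phi(1)=0$ (the WLOG normalizations stated at the beginning of Section 5), together with $1\in\inter\dom\phi$ (assumed in Section 4.3).

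With this identity in hand, convexity and lower semicontinuity are immediate from \cref{prop:conj}. Non-negativity and $\con\psi(0)=0$ follow from the Fenchel--Young inequality applied at $x=0$, since $\psi(0)=\phi(1)=0$. For the bound when $y\leq 0$: for any $x\in\dom\phi\subseteq\R_{\geq 0}$ we have $xy\leq 0\leq\phi(x)$, so $\con\phi(y)=\sup_x\paren[\big]{xy-\phi(x)}\leq 0$, and the identity above yields $\con\psi(y)\leq -y$. For the domain, \cref{lem:domconj} applied to $\phi$ gives $\inter(\dom\con\phi)=\paren[\big]{\phi'(-\infty),\phi'(+\infty)}$; combining the bound $\con\phi(y)\geq y\cdot 1-\phi(1)=y$ (from plugging $x=1$) with $\con\phi(y)\leq 0$ for $y\leq 0$ shows $\con\phi$ is finite on all of $(-\infty,0]$, so $\phi'(-\infty)=-\infty$, and $\inter(\dom\con\psi)=(-\infty,\phi'(+\infty))$ since conjugation by a linear term leaves the domain unchanged.

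Inf-compactness of the proper convex lsc function $\con\psi$ on $\R$ reduces to showing $\con\psi(y)\to+\infty$ as $y\to\pm\infty$, since sublevel sets are automatically closed intervals. The assumption $1\in\inter\dom\phi$ provides $x_-\in[0,1)$ and $x_+>1$ with $\phi(x_\pm)<+\infty$, and the lower bound $\con\psi(y)\geq(x_\pm-1)\cdot y-\phi(x_\pm)$ (obtained by plugging $s=x_\pm$ into the sup defining $\con\psi$) then drives $\con\psi(y)\to+\infty$ as $y\to-\infty$ (via $x_-$, since $x_--1<0$) and as $y\to+\infty$ (via $x_+$, since $x_+-1>0$; this case is trivial if $\phi'(+\infty)<+\infty$, as then $\con\psi\equiv+\infty$ on $(\phi'(+\infty),+\infty)$). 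There is no serious obstacle: the whole argument is a direct application of standard convex analysis on $\R$, with the main point being to carefully track which assumption on $\phi$ is used where, particularly the essential role of $1\in\inter\dom\phi$ in guaranteeing coercivity at both infinities.
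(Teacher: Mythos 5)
Your proof is correct and follows essentially the same route as the paper's: derive the identity $\con\psi=\con\phi-\ident$ by a change of variables, read off convexity and lower semicontinuity, use $\psi\geq 0 = \psi(0)$ for non-negativity and $\con\psi(0)=0$, invoke \cref{lem:domconj} for the domain, and establish inf-compactness via linear lower bounds coming from $0\in\inter\dom\psi$. The only differences are cosmetic: the paper proves $\con\psi(x)\leq -x$ by restricting the supremum in the definition of $\con\psi$ directly, while you pass through $\con\phi(y)\leq 0$; and the paper observes $\phi'(-\infty)=-\infty$ directly from $\dom\phi\subseteq\R_{\geq 0}$, while you deduce it from the finiteness of $\con\phi$ on $(-\infty,0]$. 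Both choices are fine and yield the same conclusions.
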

\begin{proof}
	We have that $\con\psi(x)=\sup_{y\in\R}\paren*{y\cdot x-\phi(y+1)}
	=\sup_{y\in\R}\paren*{(y-1)\cdot x-\phi(y)}
	\allowbreak=-x+\sup_{y\in\R}\paren*{y\cdot x-\phi(y)} =\con\phi(x)-x$.
	Non-negativity of $\con\psi$ holds since $\con\psi(x)\geq 0\cdot x-
	\psi(0)=0$, and convexity and lower semicontinuity hold for any convex
	conjugate. For inf-compactness, we have since $0\in\inter\dom\phi$ by
	assumption that there exists $\alpha>0$ with $[-\alpha,\alpha]
	\subseteq\dom\psi$, so that $\con\psi(y) \geq \max\set*{\alpha\cdot y -
	\psi(\alpha), -\alpha\cdot y - \psi(-\alpha)} \geq \alpha\cdot \abs y -
	\max\set{\psi(\alpha), \psi(-\alpha)}$, so that the sublevel sets
	of $\con\psi$ are closed and bounded and thus compact.

	The claim about $\dom\psi$ is immediate from \cref{lem:domconj} since
$\dom\phi\subseteq\R_{\geq 0}$ implies $\phi'(-\infty)=-\infty$.  Finally,
$\dom\phi\subseteq\R_{\geq 0}$ also implies for $x\leq 0$ that
$\con\psi(x)=\sup_{y\geq -1}\paren[\big]{y\cdot x-\psi(y)} \allowbreak\leq
\sup_{y\geq -1}y\cdot x -\inf_{y\geq -1}\psi(y)=-x$ where the last equality is
because $\psi\geq 0$ and $x\leq 0$.
\end{proof}

\begin{proposition}[\Cref{prop:cgf-properties} restated]
For every $\sigma$-ideal $\Xi$, probability measure $\nu\in\M^1_c(\Xi)$, and
$g\in L^0(\Xi)$, $\cgfxi g\nu\Xi:\R\to\overline\R$ is
non-negative, convex, lsc, and satisfies $\cgfxi
g\nu\Xi(0)=0$.

	Furthermore, if $g$ is not $\nu$-essentially constant then $\cgfxi g\nu\Xi$
	is inf-compact. If there exists $c\in\R$ such that $g=c$ $\nu$-almost
	surely, then there exists $t>0$ (resp.\ $t<0$) such that $\cgfxi
	g\nu\Xi(t)>0$ if and only if $\phi'(\infty)<\infty$ and $\esssup_\Xi g>c$
	(resp.\ $\essinf_\Xi g<c$).
\end{proposition}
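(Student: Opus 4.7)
The easy properties come first. Non-negativity follows from $\con\psi \geq 0$ (\cref{lem:con-psi-properties}), and $\cgfxi g\nu\Xi(0) = 0$ because $\lambda = 0$ is feasible (since $0 \in \dom\con\psi$ implies $0 \leq \phi'(\infty)$) and yields $\ex*\nu{\con\psi(0)} = 0$. Convexity follows from \cref{lem:optimal-value-convex} since the joint objective $F(t, \lambda) \eqdef \ex*\nu{\con\psi(tg+\lambda)}$ is convex (integration preserves convexity of $\con\psi$) and the constraint $\lambda + \esssup_\Xi(tg) \leq \phi'(\infty)$ defines a closed convex set (as $t \mapsto \esssup_\Xi(tg)$ is piecewise linear).

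The inf-compactness claim is the heart of the proof. Assuming $g$ is not $\nu$-essentially constant, pick $a < b$ with $\nu(A), \nu(B) > 0$ for $A \eqdef \set{g \leq a}$ and $B \eqdef \set{g \geq b}$. For any feasible $(t, \lambda)$ with $t \geq 0$, set $x = ta + \lambda$ and $y = tb + \lambda$, so $y - x = t(b-a)$. Using that $\con\psi$ has minimum value $0$ at $0$ (hence is non-increasing on $(-\infty, 0]$ and non-decreasing on $[0, \phi'(\infty)]$), a case analysis on the signs of $x$ and $y$---noting that on $A$, $tg+\lambda \leq x$, and on $B$, $y \leq tg+\lambda \leq \phi'(\infty)$ by the feasibility constraint---gives
\[
    F(t, \lambda) \geq
    \begin{cases}
        \nu(A)\,\con\psi(x) + \nu(B)\,\con\psi(y) & \text{if } x \leq 0 \leq y,\\
        \nu(A)\,\con\psi(x) & \text{if } y \leq 0,\\
        \nu(B)\,\con\psi(y) & \text{if } x \geq 0.
    \end{cases}
\]
In the first case at least one of $|x|, |y|$ is at least $t(b-a)/2$, and in the others $|x|$ or $|y|$ is at least $t(b-a)$; by inf-compactness of $\con\psi$ the corresponding bound grows to $+\infty$ as $t \to \infty$, uniformly in feasible $\lambda$. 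For fixed $t$ and $|\lambda| \to \infty$, one of the latter two cases applies with $|x|$ or $|y| \sim |\lambda|$, again forcing $F(t,\lambda) \to \infty$. A symmetric analysis handles $t \to -\infty$. Thus $F$ has bounded sublevel sets in $\R^2$, and since it is also jointly lower semicontinuous (by Fatou's lemma and closedness of the constraint set), it is jointly inf-compact; by \cref{lem:optimal-value-convex} this descends to inf-compactness, and hence lower semicontinuity, of $\cgfxi g\nu\Xi$.

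The final characterization assumes $g = c$ $\nu$-a.s. Substituting $\mu = tc + \lambda$ rewrites the optimization for $t \geq 0$ as $\cgfxi g\nu\Xi(t) = \inf\set{\con\psi(\mu) \given \mu \leq \phi'(\infty) - t\delta}$, where $\delta \eqdef \esssup_\Xi g - c \geq 0$. If $\delta = 0$ or $\phi'(\infty) = \infty$, then $\mu = 0$ is feasible and $\cgfxi g\nu\Xi(t) = 0$. If instead $\delta > 0$ and $\phi'(\infty) < \infty$, then for $t > \phi'(\infty)/\delta$ the constraint forces $\mu < 0$ and the infimum equals $\con\psi(\phi'(\infty) - t\delta)$ by monotonicity of $\con\psi$ on $(-\infty, 0]$; this becomes strictly positive for $t$ large enough, since $\con\psi$ is not identically zero on $(-\infty, 0]$ (otherwise $\psi'(0^-) = -\infty$ would contradict continuity of $\psi$ at $0$, which holds as $0 \in \inter\dom\psi$). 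The case $t \leq 0$ with $\essinf_\Xi g < c$ is analogous. The main obstacle is the asymmetric behavior of $\con\psi$ in the inf-compactness step, requiring careful bookkeeping of its one-sided monotonicity together with the feasibility upper bound $tg+\lambda \leq \phi'(\infty)$.
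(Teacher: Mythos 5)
Your proof takes a more elementary route than the paper's. Where the paper introduces the joint function $\optimcgf g\nu\Xi(t,\lambda)$ and appeals to standard results on parametric minimization (Rockafellar--Wets, Propositions 1.17 and 3.32) after verifying that the slice $\set{\lambda\in\R\given \optimcgf g\nu\Xi(0,\lambda)=0}$ is compact, you instead derive inf-compactness of the joint objective directly from hands-on lower bounds using the split $A=\set{g\leq a}$, $B=\set{g\geq b}$, and the one-sided monotonicity of $\con\psi$. That is a legitimate and arguably more transparent argument, and the case analysis (while a bit informal about uniformity in $\lambda$ for fixed $t$) can be made rigorous. Your derivation of the constant-case dichotomy by substituting $\mu=tc+\lambda$ is also correct, and in fact cleaner than invoking the paper's Lemma on $\set{t\given\exists\lambda,\optimcgf g\nu\Xi(t,\lambda)=0}$.

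However, there is a genuine gap in establishing lower semicontinuity of $\cgfxi g\nu\Xi$. The proposition asserts lsc unconditionally, but you obtain it only as a consequence of joint inf-compactness, which you prove only when $g$ is not $\nu$-essentially constant. When $g=c$ $\nu$-a.s.\ with $\esssup_\Xi g > c$ and $\phi'(\infty)<\infty$, the joint function is \emph{not} inf-compact (the argmin direction $(0,\lambda)$ with $\con\psi(\lambda)=0$ has a compact but multi-point range, yet the feasible set is a half-plane, and the joint objective does not blow up as $t$ varies inside $[0,\phi'(\infty)/\delta]$). Joint lower semicontinuity alone does not guarantee lsc of the inf-projection---a one-parameter family can lose closedness in the limit, which is precisely why Rockafellar--Wets requires level-boundedness in $\lambda$ locally uniformly in $t$, a condition the paper verifies via the compactness of the $t=0$ slice. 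Your explicit formula $\cgfxi g\nu\Xi(t)=\con\psi\paren[\big]{\min\set{0,\phi'(\infty)-t\delta}}$ (for $t\geq 0$) is in fact lsc, being a composition of the lsc function $\con\psi$ with a continuous map into $(-\infty,\phi'(\infty))$, but you need to observe this explicitly; as written, the constant case is never reconciled with the lsc claim. The fix is short: either note the lsc of the closed-form expression, or adopt the paper's route and point out that $\set{\lambda\given\optimcgf g\nu\Xi(0,\lambda)=0}\subseteq\set{\lambda\given\con\psi(\lambda)=0}$ is compact by inf-compactness of $\con\psi$, which gives lsc of the inf-projection by the standard parametric minimization theorem without any dichotomy on $g$.
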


We prove this in steps, using the following important function:
\begin{definition}
	For every $\sigma$-ideal $\Xi$, probability measure
	$\nu\in\M^1_c(\Xi)$, and $g\in L^0(\Xi)$, define
	\begin{align*}
		\optimcgf g\nu\Xi(t,\lambda)
		&\eqdef
		\begin{cases}
			\ex*\nu{\con\psi(tg+\lambda)}&\text{if }\esssup_\Xi (tg+\lambda)\leq\phi'(\infty)\\
			+\infty&\text{otherwise}
		\end{cases}
		\\
		&=
		\ex*\nu{\con\psi(tg+\lambda)}
		+
		\begin{cases}
			0&\text{if }tg+\lambda\in[-\infty,\phi'(\infty)]\text{ $\Xi$-a.e.}\\
			+\infty&\text{otherwise}
		\end{cases}
		\,,
	\end{align*}
	so that $\cgfxi g\nu\Xi=\inf_{\lambda\in\R}\optimcgf g\nu\Xi(\cdotarg,\lambda)$.
\end{definition}
\begin{lemma}
	\label{lem:optim-cgf-lsc}
	For every $\sigma$-ideal $\Xi$, probability measure
	$\nu\in\M^1_c(\Xi)$, and $g\in L^0(\Xi)$, the function
	$\optimcgf g\nu\Xi$ is non-negative, convex, lsc, and the set
	$\set[\big]{\lambda\in\R\given \optimcgf g\nu\Xi(0,\lambda)=0}$
	is compact and contains $0$.
\end{lemma}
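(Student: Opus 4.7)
The plan is to verify each of the four assertions separately, relying on the properties of $\con\psi$ established in \cref{lem:con-psi-properties}: namely that $\con\psi$ is convex, lsc, non-negative, and inf-compact, satisfies $\con\psi(0)=0$, and has $\dom\con\psi\subseteq(-\infty,\phi'(\infty)]$. The overall strategy is first to recast the domain of $\optimcgf g\nu\Xi$ in terms of simple half-planes parametrized by $M\eqdef\esssup_\Xi g$ and $m\eqdef\essinf_\Xi g$ (allowing $\pm\infty$), and then to deduce convexity, lsc, and the zero-set claim from this description.

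First, non-negativity is immediate: since $\nu\in\M^1$ and $\con\psi\geq 0$, the integral part is non-negative, and the indicator part only ever raises the value to $+\infty$. For convexity, I would rewrite the domain constraint $\esssup_\Xi(tg+\lambda)\leq\phi'(\infty)$ as the intersection of the closed half-planes $tM+\lambda\leq\phi'(\infty)$ (for $t\geq 0$) and $tm+\lambda\leq\phi'(\infty)$ (for $t\leq 0$), which is visibly convex; joint convexity of $(t,\lambda)\mapsto\con\psi(tg(\omega)+\lambda)$ at each $\omega$ (composition of the convex $\con\psi$ with an affine map) is preserved by integration against $\nu$, and adding the indicator of a convex set preserves convexity.

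Third, for lower semicontinuity, the same half-plane description of the domain shows it is closed in $\R^2$, so it suffices to prove lsc on the domain. Given $(t_n,\lambda_n)\to(t,\lambda)$ inside the domain, $t_ng(\omega)+\lambda_n\to tg(\omega)+\lambda$ pointwise, so lsc of $\con\psi$ yields $\liminf_n\con\psi(t_ng(\omega)+\lambda_n)\geq\con\psi(tg(\omega)+\lambda)$ pointwise, and Fatou's lemma (applicable since $\con\psi\geq 0$) gives $\liminf_n\ex*\nu{\con\psi(t_ng+\lambda_n)}\geq\ex*\nu{\con\psi(tg+\lambda)}$, as desired.

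Finally, for the zero set at $t=0$: since $\dom\con\psi\subseteq(-\infty,\phi'(\infty)]$, the constraint $\lambda\leq\phi'(\infty)$ is already implicit in $\con\psi(\lambda)$, hence $\optimcgf g\nu\Xi(0,\lambda)=\ex*\nu{\con\psi(\lambda)}=\con\psi(\lambda)$, and the claim reduces to showing $\set{\lambda\in\R\given\con\psi(\lambda)=0}$ is compact and contains $0$. Containment is the identity $\con\psi(0)=0$, and compactness of this level set follows from inf-compactness of $\con\psi$. I do not anticipate any real obstacle; the only subtle point is correctly handling the (possibly infinite) values of $\esssup_\Xi g$ and $\essinf_\Xi g$ when verifying that the effective domain is both convex and closed, but the half-plane reformulation takes care of this uniformly.
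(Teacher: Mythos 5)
Your proof is correct and follows the same structure as the paper's—Fatou's lemma plus lower semicontinuity of $\con\psi$ for lsc of the integral part, convexity of the integrand and of the constraint set for joint convexity, and inf-compactness of $\con\psi$ for the zero set—with your observation $\optimcgf g\nu\Xi(0,\lambda)=\con\psi(\lambda)$ giving a slightly tidier route to the zero-set claim than the paper's separate arguments for closedness, containment of $0$, and boundedness. One small flaw in the write-up: when $\phi'(\infty)<\infty$ and $\esssup_\Xi g=+\infty$, the set $\set{(t,\lambda)\given t\,\esssup_\Xi g+\lambda\leq\phi'(\infty)}$ is convex but \emph{not} closed (the points $(-1/n,c)$ with $c>\phi'(\infty)$ lie in it yet converge to $(0,c)$, which does not), so ``intersection of closed half-planes'' does not literally cover the infinite case you flag as subtle; the effective domain is nevertheless closed and convex, most directly as the $\phi'(\infty)$-sublevel set of the convex lsc function $(t,\lambda)\mapsto\max\set*{t\,\esssup_\Xi g,\,t\,\essinf_\Xi g}+\lambda$, which is implicitly what the paper invokes via closedness of $[-\infty,\phi'(\infty)]$.
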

\begin{proof}
	Non-negativity of $\optimcgf g\nu\Xi$ is immediate from non-negativity
	of $\con\psi$. The function
	\[
		(t,\lambda)\mapsto 
		\begin{cases}
			0&\text{if }tg+\lambda\in[-\infty,\phi'(\infty)]\text{ $\Xi$-a.e.}\\
			+\infty&\text{otherwise}
		\end{cases}
	\]
	is convex and lsc since $[-\infty,\phi'(\infty)]$ is a closed interval.

	Similarly, the convexity of $\con\psi$ implies the convexity of
	$(t,\lambda)\mapsto \ex*\nu{\con\psi(tg+\lambda)}$.  Furthermore, by
	Fatou's lemma and since $\con\psi$ is lsc we have for every sequence
	$(t_n,\lambda_n)\to(t,\lambda)$ that
	\begin{displaymath}
		\liminf_{n\to\infty}\ex*\nu{\con\psi(t_ng+\lambda_n)}
		\geq\ex*\nu{\liminf_{n\to\infty}\con\psi(t_ng+\lambda_n)}
		\geq\ex*\nu{\con\psi(tg+\lambda)}\,,
	\end{displaymath}
	so that this function is also lower semicontinuous.

	Finally,
	$\set[\big]{\lambda\in\R\given \optimcgf g\nu\Xi(0,\lambda)=0}$
	is a sublevel set of a non-negative lsc function and so is closed,
	it contains $0$ since $\con\psi(0)=0$ and $\phi'(\infty)\geq0$,
	and is bounded since it is contained in the compact set
	$\set[\big]{\lambda\in\R\given\con\psi(\lambda)=0}$.
\end{proof}

\begin{lemma}
	\label{lem:optim-cgf-nonzero}
	For every $\sigma$-ideal $\Xi$, probability measure
	$\nu\in\M^1_c(\Xi)$, and $g\in L^0(\Xi)$, we have
	$\R_{\geq 0}\subseteq\set{t\in\R\given \exists \lambda\in\R\land \optimcgf g\nu\Xi(t,\lambda)=0}$
	if and only if $g$ is $\nu$-essentially constant and either  $\phi'(\infty)=\infty$
	or $\esssup_\Xi g=\esssup_\nu g$.
\end{lemma}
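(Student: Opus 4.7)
The plan is to analyze the equation $\optimcgf g\nu\Xi(t,\lambda)=0$ by reducing it to two independent conditions. Since $\con\psi\geq 0$ (by \cref{lem:con-psi-properties}) and $\nu$ is a probability measure, $\optimcgf g\nu\Xi(t,\lambda)=0$ is equivalent to the conjunction of:
\begin{enum}
\item $\con\psi(tg+\lambda)=0$ holds $\nu$-almost everywhere, and
\item $\esssup_\Xi(tg+\lambda)\leq \phi'(\infty)$.
\end{enum}
The zero set of $\con\psi$ is exactly the subdifferential $\partial\psi(0)$, which equals the closed interval $[\psi'(0^-),\psi'(0^+)]$. Furthermore, since we assume $\dom\phi$ contains a neighborhood of $1$, $\dom\psi$ contains a neighborhood of $0$, so both one-sided derivatives are finite and $D\eqdef \psi'(0^+)-\psi'(0^-)<\infty$. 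Moreover, $\psi'(0^-)\leq 0\leq \psi'(0^+)$ and $\phi'(\infty)\geq 0$ by the normalization $\psi\geq 0$ with $\psi(0)=0$.

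For the forward direction, assume that for every $t\geq 0$ there exists $\lambda_t\in\R$ satisfying (i) and (ii). For $t>0$, condition (i) forces $tg+\lambda_t\in[\psi'(0^-),\psi'(0^+)]$ $\nu$-a.s., and hence $\esssup_\nu g-\essinf_\nu g\leq D/t$. Letting $t\to\infty$ yields $\esssup_\nu g=\essinf_\nu g\eqdef c$, so $g=c$ $\nu$-almost surely. Now substituting $g=c$ $\nu$-a.s. into (i) and (ii), we need $\lambda_t\in[\psi'(0^-)-tc,\psi'(0^+)-tc]$ \emph{and} $\lambda_t\leq \phi'(\infty)-t\esssup_\Xi g$. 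Nonemptiness of this intersection for all $t\geq 0$ requires $t\paren[\big]{\esssup_\Xi g - c}\leq \phi'(\infty)-\psi'(0^-)$ for all $t\geq 0$. Since the right-hand side is nonnegative but possibly finite, this is compatible with arbitrarily large $t$ only if either $\phi'(\infty)=\infty$ or $\esssup_\Xi g\leq c$. Because $\nu\in\M^1_c(\Xi)$ implies $\esssup_\Xi g\geq\esssup_\nu g= c$, the latter case gives $\esssup_\Xi g=c=\esssup_\nu g$, completing the forward direction.

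For the reverse direction, assume $g=c$ $\nu$-a.s. and that either $\phi'(\infty)=\infty$ or $\esssup_\Xi g=\esssup_\nu g=c$. For any $t\geq 0$, take $\lambda=-tc$, so that $tg+\lambda=t(g-c)=0$ holds $\nu$-a.s.; then (i) follows from $\con\psi(0)=0$. For (ii), $\esssup_\Xi(tg+\lambda)=t\paren[\big]{\esssup_\Xi g-c}$; this equals $0$ when $\esssup_\Xi g=c$, and is trivially bounded by $\phi'(\infty)$ when $\phi'(\infty)=\infty$. In both cases the inequality $\esssup_\Xi(tg+\lambda)\leq \phi'(\infty)$ holds, so $\optimcgf g\nu\Xi(t,-tc)=0$ as required. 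The main obstacle is keeping the bookkeeping straight between the $\nu$-essential and $\Xi$-essential ranges, and ensuring the finiteness of $\psi'(0^\pm)$ which underpins the ``shrinking interval'' argument in the forward direction; once $\dom\phi\ni$ neighborhood of $1$ is invoked, the rest follows from convex-analytic basics.
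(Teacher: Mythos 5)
Your proof is correct and follows essentially the same route as the paper's: split $\optimcgf g\nu\Xi(t,\lambda)=0$ into the zero-set condition on $\con\psi$ $\nu$-a.e.\ and the $\Xi$-essential upper bound, observe that the zero set of $\con\psi$ is a compact interval (you make the endpoints explicit as $[\psi'(0^-),\psi'(0^+)]$ via the subdifferential identity, whereas the paper just invokes inf-compactness to call it $[a,b]$), use the shrinking-interval argument as $t\to\infty$ to force $g$ to be $\nu$-essentially constant, then analyze the compatibility of the interval constraint with the $\Xi$-essential bound to extract the dichotomy $\phi'(\infty)=\infty$ or $\esssup_\Xi g=c$; the reverse direction with $\lambda=-tc$ is identical.
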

\begin{proof}
	If $g=c$ holds $\nu$-a.s.\ for some $c\in\R$ and either $\phi'(\infty)=\infty$
	or $\esssup_\Xi g=\esssup_\nu g=c$, then for all $t\geq 0$ we have
	$\optimcgf g\nu\Xi(t,-t\cdot c)=0$ since $tg+\lambda$ is $0$ $\nu$-a.s.\
	and at most $\phi'(\infty)\geq 0$ $\Xi$-a.e.

	Conversely, suppose $\R_{\geq 0}\subseteq\set{t\in\R\given
	\exists \lambda\in\R\land \optimcgf g\nu\Xi(t,\lambda)=0}$. Then for
	every $t\geq 0$ there is $\lambda\in\R$ such that
	$tg+\lambda\in\set{x\in\R
	\given\con\psi(x)=0}\subseteq [-\infty,\con\psi(\infty)]$ holds $\nu$-a.s.\
	and $tg+\lambda\in [-\infty, \phi'(\infty)]$ holds $\Xi$-a.e.
	Since $\con\psi$ is non-negative, convex, and inf-compact, the set $\set{x\in\R
	\given\con\psi(x)=0}$ is a compact interval $[a,b]$, and thus there is
	$\lambda\in\R$ such that $tg+\lambda\in[a,b]$ holds $\nu$-a.s.\ if
	only if $\abs t\cdot\paren[\big]{\esssup_\nu g-\essinf_\nu g}\leq
	b-a<\infty$. Thus, since this holds for all $t\in\R$, we have
	$\esssup_\nu g=\essinf_\nu g$, equivalently that $g=c$ holds
	$\nu$-a.s.\ for some $c\in\R$. Thus, the condition on $t$ reduces
	to the existence of $\lambda\in\R$ such that $tc+\lambda\in[a,b]$
	and $\esssup_\Xi tg+\lambda=tc+\lambda + t\cdot (\esssup_\Xi g-c)
	\leq\phi'(\infty)$. In particular, this implies that
	$a + t\cdot(\esssup_\Xi g-c)\leq \phi'(\infty)$ for all $t\geq 0$,
	which implies either $\phi'(\infty)=\infty$ or $\esssup_\Xi g\leq c
	=\esssup_\nu g$ as desired.
\end{proof}

We can finally prove \cref{prop:cgf-properties}.
\begin{proof}[Proof of \cref{prop:cgf-properties}]
	The main claim is immediate by applying standard results in convex
	analysis (e.g.~\citep[Propositions 1.17 and 3.32]{R98}) to
	\cref{lem:optim-cgf-lsc}. Furthermore, these results imply that
	$\set{t\in\R\given \cgfxi g\nu\Xi(t)=0}
	=\set{t\in\R\given \exists\lambda\in\R\land \optimcgf g\nu\Xi(t,\lambda)=0}$.
	
	For the supplemental claim, we have since $\cgfxi g\nu\Xi$
	is non-negative, convex, lsc, and $0$ at $0$ that it is inf-compact
	if and only if there exist $t_+>0$ and $t_-<0$ such that
	$\cgfxi g\nu\Xi(t_+),\cgfxi g\nu\Xi(t_-)>0$. The claimed
	characterization thus follows from applying \cref{lem:optim-cgf-nonzero}
	to $g$ and $-g$.
\end{proof}

\end{document}